\numberwithin{equation}{section}
\newtheorem{theorem}{Theorem}[section]
\newtheorem{theorem*}{Theorem }
\newtheorem{proposition}{Proposition}[section]
\newtheorem{corollary}{Corollary}[section]
\newtheorem{proposition*}{Proposition A\!\!}
\newtheorem{corollary*}{Corollary A\!\!}
\newtheorem{lemma}{Lemma}[section]
\newtheorem{remark}{Remark}[section]
\DeclareMathOperator{\End}{End}
\DeclareMathOperator{\id}{id}
\DeclareMathOperator{\Id}{Id}
\DeclareMathOperator{\tr}{tr}
\DeclareMathOperator{\res}{\mathbf{res}}
\DeclareMathOperator{\Aff}{Aff}
\DeclareSymbolFontAlphabet{\mathbb}{AMSb}
\DeclareSymbolFontAlphabet{\mathbbl}{bbold}
\DeclareMathOperator{\rr}{\mathbbl{r}}
  \DeclareMathOperator{\dd}{\mathbbl{d}}
 \def\equationautorefname~#1\null{(#1)\null}
\DeclareMathOperator{\Det}{\mathrm{Det}}
\DeclareMathOperator{\Mat}{Mat}
\DeclareMathOperator{\Co}{Co}
\DeclareMathOperator{\Str}{Str}
\renewcommand{\det}{\mathbf{det}}
\renewcommand{\tr}{\mathbf{tr}}
    \newcommand*{\qrr@gobblenexttocentry}[5]{}
    \newcommand*{\qrr@gobblenexttocentry}[4]{}
\newcommand*{\addsubsection}{%
    \addtocontents{toc}{\protect\qrr@gobblenexttocentry}%
    \subsection}
\begin{document}
 

\title[bi-differential operators on  a simple real   Jordan algebra]{Conformally covariant bi-differential operators on  a simple real   Jordan algebra}

\author{Salem Ben Sa\"id, Jean-Louis Clerc and Khalid Koufany }
\address{Institut Elie Cartan de Lorraine, Universi\'e de Lorraine}
\email{(Salem.Bensaid / Jean-Louis.Clerc / Khalid.Koufany) @univ-lorraine.fr}
\keywords{Rankin-Cohen brackets, covariant bi-differential operators,  degenerate principal series, local Zeta functions, simple  real  Jordan algebras}
\subjclass[2000]{Primary 43A85. Secondary 58J70, 22446}

\dedicatory{\`A Jacques Faraut avec reconnaissance}

 \maketitle
 
 \begin{abstract}
 For a simple real Jordan algebra $V,$ a family of bi-differential operators from $\mathcal{C}^\infty(V\times V)$ to $\mathcal{C}^\infty(V)$ is constructed. These operators are covariant under the rational action of the conformal group of $V.$ They generalize the classical {\em Rankin-Cohen} brackets (case $V=\mathbb{R}$).
 \end{abstract}

 \section*{Introduction}
The {\em Rankin-Cohen brackets}  are well known  examples of \emph{covariant bi-differential operators}  (see e.g. \cite{El,  Cohen, PZ,   Pevzner2, zagier}). They appeared in the theory of modular forms as constant coefficients holomorphic bi-differential operators on the upper half-plane in $\mathbb C$. Such operators are covariant with respect to representations of the holomorphic discrete series of the Lie group $\mathrm{SL}(2,\mathbb R)$. They even have predecessors, known as \emph{transvectants} (see e.g. \cite{Olver}), much used in  the classical  theory of invariants. The transvectants can be interpreted as constant coefficients bi-differential operators on $\mathbb R$  covariant with respect to representations of $\mathrm{SL}(2,\mathbb R)$ containing a finite dimensional subrepresentation. More generally, the principal series  representations of  $\mathrm{SL}(2,\mathbb R)$ are indexed by $(\lambda, \varepsilon)\in \mathbb C\times  \{\pm\}$. Given two such representations $\pi_{\lambda, \varepsilon}$ and  $\pi_{\mu,\eta}$, and given a positive  integer $N,$ there exists a family of constant coefficients bi-differential operators $B_{\lambda, \mu}^{(N)}$ on $\mathbb R$ which are covariant with respect to $(\pi_{\lambda, \varepsilon}\otimes \pi_{\mu, \eta}, \pi_{\lambda+\mu+2N, \varepsilon\eta})$. Moreover, the family depends rationally on the parameters $\lambda$ and $\mu$. For special  values of  $\lambda$ and $\mu$ (corresponding to  cases where the representations are reducible), they coincide with the Rankin-Cohen brackets or the transvectants.

There are many efforts to study and construct such operators in more general geometric situations (see e.g. \cite{or, kp, kp2, PZ,  Z, choi}). In this paper, we describe a method for building such operators   in the context of   Jordan algebras. To be  more explicit, let  $V$ be a simple real Jordan algebra, and let $\Co(V)$ be its   conformal group.  
It turns out to be more convenient  to work with  a   group $G$ which is  locally isomorphic   to $\mathrm{Co}(V)$; more precisely  $G$ is a twofold covering   of the {\em proper conformal group} (for details see \autoref{sec-DPS-KS}). This group is a simple Lie group which acts rationally on $V.$ The subgroup $P$ of affine conformal maps is a maximal parabolic subgroup of $G$. The pair $(G,P)$ has the following properties :
\begin{itemize}
\item[$-$]  the unipotent radical of $P$ is abelian,
\item[$-$]  $P$ is conjugate to its opposite subgroup $\overline P$.
\end{itemize}
Conversely, a pair $(G,P)$, where $G$ is a simple Lie group and $P$ is a parabolic subgroup of $G$,  satisfying  the two above properties is known to be associated to a simple real Jordan algebra. For earlier use of Jordan algebras from this point of view, see e.g. \cite{BSZ, HKM, KS, sahi1, sahi2, Mollers}.

The map
\[V\ni v \mapsto n_v \overline{P},\]
where $n_v$ is the translation $x\mapsto x+v$, has a dense open image in $\mathcal{X}=G/\overline P$ (the \emph{big Bruhat cell}).

The characters of $\overline P$ are parametrized by $(\lambda, \varepsilon) \in \mathbb C\times \{ \pm\}$ and we form the corresponding line bundles $\mathcal E_{\lambda, \varepsilon}$ over $\mathcal{X}$. The natural action of $G$ on $\Gamma(\mathcal E_{\lambda, \varepsilon})$, the space of  sections of  $\mathcal E_{\lambda, \varepsilon}$, gives raise to a  smooth representation $\pi_{\lambda, \varepsilon}$.  These representations constitute the \emph{degenerate principal series} of $G.$ This \emph{compact realization} of $\pi_{\lambda, \varepsilon}$ has another useful realization on functions defined on $V,$ called the \emph{non-compact} realization.

Let us come to the crucial result of this paper. Given two representations $\pi_{\lambda, \varepsilon}$ and $\pi_{\mu, \eta}$, we  construct a   differential operator on $\mathcal{X}\times \mathcal{X}$ \[F_{(\lambda ,\varepsilon),(\mu,\eta)}: \Gamma(\mathcal E_{\lambda, \varepsilon} \boxtimes \mathcal E_{\mu, \eta})
\longrightarrow \Gamma(\mathcal E_{\lambda+1, -\varepsilon} \boxtimes \mathcal E_{\mu+1, -\eta})
\]
which is covariant with respect to $(\pi_{\lambda, \varepsilon}\otimes \pi_{\mu,\eta}, \pi_{\lambda+1, -\varepsilon}\otimes \pi_{\mu+1,-\eta})$. To construct covariant bi-differential operators is then easy. In fact, let $\res$ be the restriction map from $\mathcal{X}\times \mathcal{X}$ to the diagonal $\{(x,x), \; x\in \mathcal{X}\}\simeq \mathcal{X}$. For any $N\in \mathbb N^*,$ the bi-differential operator 
\[B_{(\lambda,\varepsilon), (\mu,\eta)}^{(N)} : = \res \circ F_{(\lambda+N-1, \varepsilon), (\mu+N-1,\eta)}\circ\dots\circ F_{(\lambda,\varepsilon), (\mu,\eta)}\ 
\]
is covariant with respect to $(\pi_{\lambda, \varepsilon} \otimes \pi_{\mu, \eta}, \pi_{\lambda+\mu+2N, \varepsilon\eta})$.
This idea of obtaining covariant \emph{bi-differential operators on $\mathcal{X}$} from a covariant \emph{differential operator on $\mathcal{X}\times \mathcal{X}$} is reminiscent of the use of the $\Omega$-process in the classical construction of the transvectants.

The construction of $F_{(\lambda,\varepsilon),( \mu,\eta)}$ follows a process that was introduced in \cite{bc} for $V=\mathbb R^{n,0}$ and $G={SO}_0(1,n+1)$ and further used  in \cite{Clerc} for $V=\Mat(n,\mathbb  R)$ and   $G= SL(2n,\mathbb R).$ This approach  uses two main ingredients : the first one is the normalized Knapp-Stein operators $\widetilde J_{\lambda, \varepsilon}$ for the degenerate principal series which   intertwines $\pi_{\lambda, \varepsilon}$ and $\pi_{\frac{2n}{r}-\lambda, \varepsilon}$, where $n$ and $r$ denote   the dimension and the rank of $V$, respectively. The second   ingredient is an  operator $M$ which  in the non-compact realization  becomes  the multiplication operator is given by
\begin{equation*} 
Mf(x,y) = \det(x-y) f(x,y),\quad x,y\in V\times V.
\end{equation*}
Here $\det$ is the determinant polynomial of the Jordan algebra $V.$ The operator $M$ has a ``universal"  intertwining property due to the covariance of $\det(x-y)$ under the diagonal action of the group $G.$

The operator $F_{(\lambda, \varepsilon), (\mu,\eta)}$ corresponds to the following commutative diagram 
$$
\begin{CD}
\mathcal H_{(\lambda,\varepsilon), (\mu,\eta)} @> F_{(\lambda, \varepsilon), (\mu,\eta)}>>  \mathcal H_{(\lambda+1,-\varepsilon),(\mu+1,-\eta)}\\
@V \widetilde J_{\lambda,\varepsilon}\otimes \widetilde J_{\mu,\eta} V  V @AA\widetilde J_{\frac{2n}{r}-\lambda-1, -\varepsilon}\otimes\, \widetilde J_{\frac{2n}{r}-\mu-1,-\eta}A\\
\mathcal H_{(\frac{2n}{r}-\lambda,\varepsilon), (\frac{2n}{r}-\mu,\eta)} @>M> >\mathcal H_{(\frac{2n}{r}-\lambda-1, -\varepsilon),(\frac{2n}{r}-\mu-1,-\eta)}
\end{CD}
$$
where $\mathcal H_{(\lambda,\varepsilon), (\mu,\eta)} $ stands for $\Gamma(\mathcal E_{\lambda, \varepsilon}\boxtimes \mathcal E_{\mu, \eta})$, i.e. the space of smooth sections of the tensor bundle $\mathcal E_{\lambda, \varepsilon}\boxtimes \mathcal E_{\mu, \eta}$ over $\mathcal{X}\times \mathcal{X}$.

The fact that $F_{(\lambda, \varepsilon), (\mu,\eta)}$ is covariant (for the diagonal action of $G$) is an obvious consequence of its definition. The main difficulty is to show that $F_{(\lambda, \varepsilon), (\mu,\eta)}$ is a \emph{differential} operator.
This is done by working in the non-compact realization. Indeed, there are three steps. First, we prove the following  \emph{main identity}\footnote{We use the notation, for any $a \in\mathbb{R}^*$, $a^{s,+}=|a|^s$  and $a^{s,-}=\text{sign}(a)| a |^s$.} (\autoref{Dstgen}):

\begin{theorem*}[Main identity]
For $(s,\varepsilon)$ and $(t,\eta)$ in $\mathbb C\times \{\pm\}$, there exists a differential operator $D_{s,t}$ on $V\times V$ such that  for all  $(x,y)\in V^\times \times V^\times$, we have 
\begin{equation*}
\det\left(\frac{\partial}{\partial x}-\frac{\partial}{\partial y}\right)\circ \left[\det(x)^{s,\varepsilon}\det(y)^{t,\eta}\right]=\left[\det(x)^{s-1,-\varepsilon}\det(y)^{t-1,-\eta}\right]\circ D_{s,t}.
\end{equation*} 
The differential operator $D_{s,t}$ has polynomial coefficients in $x,y$ and also in $s,t$.
\end{theorem*}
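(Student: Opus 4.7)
The plan is to work in the non-compact realization on $V\times V$, prove the identity pointwise on the Zariski-dense open subset $V^\times\times V^\times$ (where $\det(x)\det(y)\ne 0$), and verify that the resulting operator $D_{s,t}$ has polynomial rather than merely rational coefficients.

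I would first establish a one-variable preparatory lemma by induction on $|\alpha|$: for any multi-index $\alpha$,
\[
\partial_x^\alpha\,\det(x)^{s,\varepsilon}=\det(x)^{s-|\alpha|,(-1)^{|\alpha|}\varepsilon}\,P_\alpha(x,s),
\]
with $P_\alpha\in\mathbb{C}[x,s]$. The base case is $\partial_i\det(x)^{s,\varepsilon}=s\,\det(x)^{s-1,-\varepsilon}\,(\partial_i\det)(x)$; the inductive step combines the two a priori distinct powers of $\det$ arising from the product rule into a single one by means of the polynomial identity $\det(x)\cdot\det(x)^{s-k-1,\varepsilon'}=\det(x)^{s-k,-\varepsilon'}$. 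A standard Leibniz argument then yields, for any constant-coefficient operator $Q(\partial)$ of order $d$ and any smooth $f$,
\[
Q(\partial_x)\bigl[\det(x)^{s,\varepsilon}f\bigr]=\sum_{k=0}^{d}\det(x)^{s-k,(-1)^k\varepsilon}\bigl(Q_k(s)f\bigr),
\]
with each $Q_k(s)$ a differential operator whose coefficients are polynomial in $(x,s)$.

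Next, using the polarization $\det(\xi-\zeta)=\sum_{k=0}^{r}(-1)^k\sigma_{r-k,k}(\xi,\zeta)$ of $\det$ into bidegrees $(r-k,k)$, together with the two-variable Leibniz rule, I would expand
\[
\det(\partial_x-\partial_y)\bigl[\det(x)^{s,\varepsilon}\det(y)^{t,\eta}\,g\bigr]
\]
as a finite sum of terms $\det(x)^{s-i,(-1)^i\varepsilon}\det(y)^{t-j,(-1)^j\eta}\,R_{i,j}(g)$ with $i+j\le r$, where each $R_{i,j}$ has coefficients polynomial in $(x,y,s,t)$. After dividing by the target prefactor $\det(x)^{s-1,-\varepsilon}\det(y)^{t-1,-\eta}$, the four contributions with $(i,j)\in\{0,1\}^2$ retain polynomial coefficients (the leftover factors being $1$, $\det(x)$, $\det(y)$, or $\det(x)\det(y)$), whereas the contributions with $i\ge 2$ or $j\ge 2$ a priori introduce denominators $\det(x)^{i-1}$ or $\det(y)^{j-1}$.

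The main obstacle is thus to establish the cancellation of all such apparent denominators. I would handle it by first verifying the identity for $\varepsilon=\eta=+$ and non-negative integers $s,t$: in this regime $\det(x)^s\det(y)^t$ is an ordinary polynomial, and one checks directly that $\det(\partial_x-\partial_y)[\det(x)^s\det(y)^t\,g]$ is divisible by $\det(x)^{s-1}\det(y)^{t-1}$ in $\mathbb{C}[x,y]$ with a polynomial quotient, by combining the Bernstein--Sato identity $\det(\partial)\det(x)^{s+1}=b(s)\det(x)^s$ for the Jordan norm with its polarized analogues for the operators $\sigma_{r-k,k}(\partial_x,\partial_y)$. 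Since both sides depend polynomially on $(s,t)$ and the identity holds on the Zariski-dense set $\mathbb{N}^2$, it extends by polynomial continuation to all $(s,t)$; the sign parameters $\varepsilon,\eta$ are then handled on each connected component of $V^\times$, and this simultaneously confirms the polynomial dependence of $D_{s,t}$ on $(s,t)$ claimed in the statement.
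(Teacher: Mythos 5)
Your one-variable lemma is correct as stated, and you have correctly located the crux of the problem: the naive drop $\partial^\alpha\det(x)^{s,\varepsilon}=\det(x)^{s-|\alpha|,(-1)^{|\alpha|}\varepsilon}P_\alpha(x,s)$ produces, after dividing by $\det(x)^{s-1,-\varepsilon}\det(y)^{t-1,-\eta}$, apparent denominators $\det(x)^{i-1}$ and $\det(y)^{j-1}$ for $i,j\geq 2$. The gap is that you never actually establish that these cancel. The mechanism that forces cancellation is representation-theoretic and is not captured by your bidegree expansion: the space $\mathcal W(\Delta)$ of partial derivatives of $\det$ decomposes as $\bigoplus_{k}\mathcal W(\Delta_k)$ into irreducible $\mathrm{Str}(V)$-modules, and for $p\in\mathcal W(\Delta_k)$ the \emph{sharp} Bernstein identity $p(\partial)\Delta^s=b_{k,d}(s)\,p^\sharp\,\Delta^{s-1}$ (FK, Prop.\ XI.5.1, quoted in the paper as \autoref{BS-for-p}) drops the power by \emph{exactly one}, no matter what $\deg p=k$ is. The paper's proof first changes coordinates $(u,v)=(x,x+y)$ so that $\det(\partial_x-\partial_y)$ becomes the one-variable operator $\Delta(\partial_u)$, and then uses the generalized Leibnitz formula (\autoref{Lebnitz-fg}, \autoref{Rfgh}) which is specifically engineered so that, when $\Delta(\partial_u)$ is distributed over the triple product $\Delta(u)^s\,\Delta(v-u)^t\,f(u,v-u)$, the operators landing on the first two factors always lie in some $\mathcal W(\Delta_k)$. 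Then the sharp Bernstein identity produces exactly the factor $\Delta^{s-1}\Delta^{t-1}$ with a remainder that is manifestly polynomial in $(u,v,s,t)$, so denominators never arise in the first place.

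By contrast, your expansion into bidegree pieces $\sigma_{r-k,k}(\partial_x,\partial_y)$ followed by a generic multi-index Leibniz rule does not route the derivatives through $\mathcal W(\Delta_k)$, so the operators that hit $\det(x)^s$ and $\det(y)^t$ are arbitrary $\partial^\alpha$'s, for which only the naive drop holds. Invoking "polarized analogues" for $\sigma_{r-k,k}$ by name does not supply the needed identities, and the step where you would "check directly" divisibility at integer $(s,t)$ is resting on exactly the same unproved assertion — a differential operator of order $r$ lowers the vanishing order along $\{\det=0\}$ by at most $r$, and you need it to be $1$. Two secondary remarks: for non-negative integer $s$ the function $\det(x)^{s,+}=|\det(x)|^s$ is a polynomial on a split algebra only when $s$ is even (restricting to even integers is still Zariski-dense, so this is fixable); and once the $\mathcal W(\Delta_k)$-structure is in hand, the coefficients $b_{k,d}(s)$ are already polynomial in $s$, so the polynomial-continuation step becomes unnecessary — the dependence on $(s,t)$ is manifest from the construction.
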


The second step uses the Fourier transform $\mathcal F$ on $V$. The distributions $\det (x)^{s,\varepsilon}$, which are defined for $\Re(s)$ large enough, can be extended  by analytic continuation in   $s$, yielding a meromorphic family of tempered distributions on $V.$ Their Fourier transforms can be computed using the \emph{local Zeta functional equations}  on $V$ (\autoref{E-F-Z-moins}, \autoref{E-F-Z-eucl} and \autoref{E-F-Z-pq}). Let $I_{s,\varepsilon}$ be the convolution operator with the distribution $\det(x)^{s,\varepsilon},$ and denote by $E_{s,t}$  the differential operator with polynomials coefficients on $V$ defined by
\[\mathcal F\circ E_{s,t} = D_{s,t}\circ \mathcal F.
\]
 The Fourier transform counterpart of the main identity is the following \emph{main theorem}.
\begin{theorem*}[Main theorem] For $(s,\varepsilon)$ and $(t,\eta)$ in $\mathbb C\times \{\pm\},$ we have
\begin{equation*}
M\circ \left(I_{s,\varepsilon}\otimes I_{t,\eta}\right) =\kappa(s,t) \left(I_{s+1,-\varepsilon}\otimes I_{s+1,-\eta}\right) \circ E_{-s-\frac{n}{r}, -t-\frac{n}{r}},
\end{equation*}
where $\kappa(s,t)$ is rational function on $\mathbb C\times \mathbb C$. 
\end{theorem*}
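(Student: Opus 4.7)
The plan is to conjugate the entire identity by the Fourier transform $\mathcal F$ on $V\times V$, where the two operators exchange roles: the convolution operators $I_{s,\varepsilon}\otimes I_{t,\eta}$ become multiplication operators, while the multiplication operator $M$ by $\det(x-y)$ turns, up to an explicit power $c_{0}$ of $2\pi i$, into the constant-coefficient differential operator $\det(\partial_\xi-\partial_\zeta)$. After invoking the local Zeta functional equations the statement will reduce to a single application of the Main identity already established.

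Concretely, I write the Zeta functional equation schematically as
\[
\mathcal F[\det^{s,\varepsilon}]\;=\;b(s,\varepsilon)\,\det^{-s-\frac{n}{r},\,\varepsilon^{*}},
\]
with the sign rule $\varepsilon\mapsto\varepsilon^{*}$ and the coefficient $b(s,\varepsilon)$ (a product of Gamma and trigonometric factors) read off from \autoref{E-F-Z-moins}, \autoref{E-F-Z-eucl} and \autoref{E-F-Z-pq}. Conjugating the left-hand side of the claimed identity by $\mathcal F\otimes\mathcal F$ yields
\[
c_{0}\,b(s,\varepsilon)b(t,\eta)\,\det(\partial_\xi-\partial_\zeta)\circ\bigl[\det^{-s-\frac{n}{r},\varepsilon^{*}}(\xi)\,\det^{-t-\frac{n}{r},\eta^{*}}(\zeta)\bigr].
\]
Applying the Main identity with parameters $(-s-\tfrac{n}{r},\varepsilon^{*})$ and $(-t-\tfrac{n}{r},\eta^{*})$ commutes $\det(\partial_\xi-\partial_\zeta)$ past the multiplication, producing multiplication by $\det^{-s-\frac{n}{r}-1,-\varepsilon^{*}}(\xi)\det^{-t-\frac{n}{r}-1,-\eta^{*}}(\zeta)$ followed by $D_{-s-\frac{n}{r},-t-\frac{n}{r}}$. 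Conjugating the right-hand side by $\mathcal F\otimes\mathcal F$ and using the defining property $\mathcal F\circ E_{s,t}=D_{s,t}\circ\mathcal F$, I obtain multiplication by $b(s+1,-\varepsilon)b(t+1,-\eta)\det^{-s-1-\frac{n}{r},(-\varepsilon)^{*}}(\xi)\det^{-t-1-\frac{n}{r},(-\eta)^{*}}(\zeta)$ followed by the same $D_{-s-\frac{n}{r},-t-\frac{n}{r}}$. Since $(-\varepsilon)^{*}=-\varepsilon^{*}$ in each of the three Jordan-algebra cases, both sides match up to the scalar
\[
\kappa(s,t)\;=\;\frac{c_{0}\,b(s,\varepsilon)b(t,\eta)}{b(s+1,-\varepsilon)\,b(t+1,-\eta)}.
\]
The shift $s\to s+1$ together with $\varepsilon\to-\varepsilon$ is precisely the combination under which the trigonometric factors in $b$ swap and the Gamma factors telescope via $\Gamma(z+1)=z\Gamma(z)$, so $\kappa(s,t)$ simplifies to a rational function of $(s,t)$.

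The main difficulty lies in the distributional book-keeping. Because the kernels $\det^{s,\varepsilon}$ are only meromorphic families of tempered distributions, each of the manipulations above is first to be carried out in a strip of $(s,t)$ in which the kernels are locally integrable and every composition is unambiguously defined on Schwartz functions, and then propagated to all $(s,t)\in\mathbb C\times\mathbb C$ by analytic continuation. A secondary concern is the three-case treatment: the explicit form of $b(s,\varepsilon)$ differs according to whether $V$ is Euclidean, split or of mixed signature, and each case must be checked separately, though in every case the sign rule $\varepsilon\mapsto\varepsilon^{*}$ is an involution of $\{\pm\}$ commuting with negation, so the structural argument above applies uniformly.
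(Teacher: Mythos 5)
Your strategy—conjugate by $\mathcal F\otimes\mathcal F$, turn $M$ into $\det(\partial_\xi-\partial_\zeta)$, push that past the kernels via the main identity, and re-identify with $\mathcal F\circ E_{s,t}$—is exactly the paper's approach and is correct at the level of the non-euclidean types and the euclidean cases (a), (a$'$). The genuine gap is the schematic functional equation
\[
\mathcal F[\det{}^{s,\varepsilon}]\;=\;b(s,\varepsilon)\,\det{}^{-s-\frac{n}{r},\,\varepsilon^{*}},
\]
which you assume throughout: this presupposes that the Fourier transform acts on the two-dimensional family $\{Z_{s,+},Z_{s,-}\}$ by a diagonal or anti-diagonal $2\times 2$ matrix. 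That is true in the non-euclidean cases (\autoref{E-F-Z-plus}, \autoref{E-F-Z-moins}) and in euclidean cases (a) and (a$'$), but it is simply false in the euclidean cases (b) and (c) of \autoref{E-F-Z-eucl} and in the $\mathbb R^{p,q}$ case of \autoref{E-F-Z-pq}: there $\mathcal F(Z_{s,\varepsilon})$ is a genuine linear combination $a_{\varepsilon,+}(s)Z_{-s-\frac{n}{r},+}+a_{\varepsilon,-}(s)Z_{-s-\frac{n}{r},-}$ with both coefficients generically nonzero, and in the rank-$r$, $d=1$, $r$ odd cases it is not even expressed in the $\{Z_{\pm}\}$ basis but in the auxiliary basis $\{Z^{\mathrm{e}},Z^{\mathrm{o}}\}$ of \autoref{Z-odd-even}. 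In those situations there is no single sign $\varepsilon^*$; the ``sign rule'' you invoke does not exist.

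What actually makes the argument go through in the non-diagonal cases is a finer algebraic fact that your write-up omits: the whole $2\times 2$ matrix $\mathbf{A}(s)$ transforms under $s\mapsto s+1$ by a sign and a simultaneous swap of rows and columns, namely $a_{\varepsilon,\eta}(s)=-a_{-\varepsilon,-\eta}(s+1)$ (this is \autoref{coefA(s+11)} for case (b\,--\,1), \autoref{s+1} for $\mathbb R^{p,q}$, and the analogous relations \autoref{55} for case (c\,--\,2), where one must additionally track the extra $-1$ that $\det(\partial_x-\partial_y)$ produces on the mixed terms $Z^{\mathrm{e}}\otimes Z^{\mathrm{o}}$ and $Z^{\mathrm{o}}\otimes Z^{\mathrm{e}}$). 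It is precisely this matrix-level shift identity that lets one re-package the output of the main identity as $\mathcal F(Z_{s+1,-\varepsilon})\otimes\mathcal F(Z_{t+1,-\eta})$ times the scalar $\kappa(s,t)$. Without articulating this relation, your proof does not establish that the terms recombine correctly, nor that the resulting $\kappa(s,t)$ is independent of $\varepsilon,\eta$ in those cases. To repair the argument, replace the scalar schematic equation by the matrix form of the zeta functional equation, expand $\mathcal F(Z_{s,\varepsilon})\otimes\mathcal F(Z_{t,\eta})$ as a four-term sum, apply the main identity to each term, and invoke the shift identity to re-assemble; this is exactly what the paper does case by case.
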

The third step is to prove that for $\lambda$ and  $\mu$ generic, the local expression of $F_{(\lambda, \varepsilon),(\mu,\eta)}$ in the non compact setting is equal to $F_{\lambda,\mu}:=E_{\frac{n}{r}-\lambda, \frac{n}{r}-\mu}$, 
thus establishing that $F_{(\lambda, \varepsilon),(\mu,\eta)}$ is a differential operator. 

Many of our proofs (in particular the Fourier transform computations) depend on the type of the real Jordan algebra. However,  for two simple real Jordan algebras $V_1$ and $V_2$ which are real forms of the same simple complex Jordan algebra $\mathbb{V}$, the differential operator $F_{\lambda,\mu}$  constructed for $V_1\times V_1$ and the corresponding one  for $V_2\times V_2$ are restrictions of a common holomorphic differential operator  on $\mathbb{V}\times \mathbb{V}$. This remark advocates for a more algebraic construction (valid over $\mathbb C$) of the objects we have constructed by analytical means.

Let us describe the content of this paper. After a general formulation of Leibnitz's formula for  applying any constant coefficients differential operator to a product of two functions (\autoref{Leibnitz}) we obtain (\autoref{sec-Bernstein}) the \emph{Bernstein identity} for $\det(x)^{s,\varepsilon}$ in each of the four types of simple real Jordan algebras.  We then construct (\autoref{section-Dst}) the family of differential operators  $D_{s,t}$ by proving the \emph{main identity} above. The delicate Fourier calculations using the \emph{local Zeta functional equations} for a  real Jordan algebra are presented in \autoref{sec-Zeta}. Although these matters are largely known, some aspects are new : the functional equation for  $\det(x)^{s,-}$ in the non-euclidean split case,  the functional equation for  $\det(x)^{s,\pm}$  in the euclidean case   and the role played by  two ``new'' distributions $Z_s^\text{even}$ and $Z_s^\text{odd}$ (see \autoref{Z-odd-even}).   In \autoref{sec-Est} we define  the family $E_{s,t}$  and we prove Theorem 2 above. Up to this point, we mostly use  classical abelian harmonic analysis. We now shift to semi-simple harmonic analysis.  Some facts and results about the \emph{conformal group} of a simple real Jordan algebra, the associated \emph{degenerate principal series} and the corresponding \emph{Knapp-Stein operators} are presented in  \autoref{sec-DPS-KS}. The  main theorem in terms of covariance property with respect to representations of the  group  $G$ is given in \autoref{sec-cov-Fst}. The obtained covariance properties are then used to construct covariant bi-differential operators in \autoref{sec-Bst}. 
Finally, \autoref{sec-Rpq} is devoted to the case $V=\mathbb{R}^{p,q}$ and $G=\mathrm{O}(p,q)$  where more explicit computations can be made.  Two appendices are added : the first one contains the classification of simple real Jordan algebras, the second one intends to compensate for the  rare literature on the subject of simple real Jordan algebras and offers references and/or proofs.

 \tableofcontents

\section{A generalized Lebnitz's formula}\label{Leibnitz} 
To avoid confusion in the notation, we will apply the following conventional notation for constant coefficients differential operators on a finite dimensional vector space $V$ over $\mathbb{F}=\mathbb{R}$ or $\mathbb{C}$ endowed with an $\mathbb{F}$-bilinear non-degenerate form $(\cdot\,,\,\cdot)$ : for $p\in\mathbb{C}[V]$ let $p(\frac{\partial}{\partial x})$ to  
 be the (uniquely determined constant  coefficients) differential operator such that
\[p\left(\frac{\partial}{\partial x} \right)e^{(x,y)}(x) = p(y) e^{(x,y)}.
\]

Throughout  this section we will assume that $V$ is   a real finite dimensional vector space endowed with a euclidean inner product $(\cdot\,,\,\cdot)$.

 Let $\mathcal{P}(V)$ be the space of polynomials on $V$ with real coefficients. 
Define on $\mathcal P(V)$ the \emph{Fischer inner product} by
\begin{equation*}
(p,q)_F =\left(p\left(\frac{\partial }{\partial x}\right)q\right)(0).
\end{equation*}
The Fischer inner product is an euclidean inner product on $\mathcal P(V)$ and it satisfies
\begin{equation*}
(p,qr)_F = \left(r\left(\frac{\partial}{\partial x}\right) p,q\right)_F.
\end{equation*}

Fix     $\mathbf{p}$ in  $\mathcal P(V)$ and  let $\mathcal W({\bf p})$ be the subspace of $\mathcal{P}(V)$ generated by the partial derivatives of $\mathbf{p}$. Let $ \rho= \dim  \mathcal W({\bf p})$ and choose an orthonormal basis $(\mathbf{p}_1,\dots, \mathbf{p}_\rho)$ of $\mathcal W({\bf p})$.
For any $p\in\mathcal{P}(V)$, we define $p^\flat$ by 
$$p^\flat=p\left(\frac{\partial}{\partial x}\right)\!\mathbf{p}.$$
\begin{theorem}[Generalized Leibnitz's formula]\label{Lebnitz-fg} Let $f,g\in C^\infty(V)$. Then
\begin{equation}\label{Rfg}
\mathbf{p}\left(\frac{\partial}{\partial x}\right) (fg) = \sum_{j=1}^\rho \left(\mathbf{p}^\flat_j\left(\frac{\partial}{\partial x}\right)f\right) \left(\mathbf{p}_j\left(\frac{\partial}{\partial x}\right)g\right).
\end{equation}
 This formula can be  rewritten as 
\begin{equation}\label{Rfg2}
\mathbf{p\!}\left(\frac{\partial}{\partial x}\right)(fg) = \sum_{i=1}^\rho \sum_{j=1}^\rho  \big(\mathbf{p}\, ,\mathbf{p}_i\mathbf{p}_j\big)_F\,\left(\mathbf{p}_i\left(\frac{\partial}{\partial x}\right)f\right)  \left(\mathbf{p}_j\left(\frac{\partial}{\partial x}\right) g\right).
\end{equation}
\end{theorem}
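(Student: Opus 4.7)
The plan is to reduce the identity to a statement about polynomials by testing on exponentials, prove that polynomial identity via a Taylor expansion combined with Fischer orthogonality, and then extend to arbitrary smooth $f,g$ by a jet-at-a-point argument. The second form \eqref{Rfg2} will follow formally from the first using the adjoint property of the Fischer inner product.

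First I would specialize to $f(x)=e^{(x,a)}$ and $g(x)=e^{(x,b)}$ for arbitrary $a,b\in V$, so that $fg=e^{(x,a+b)}$. Since by convention $p(\partial/\partial x)e^{(x,c)} = p(c)e^{(x,c)}$, dividing \eqref{Rfg} through by $e^{(x,a+b)}$ turns it into the polynomial identity
\[
\mathbf{p}(a+b) \;=\; \sum_{j=1}^\rho \mathbf{p}_j^\flat(a)\,\mathbf{p}_j(b).
\]
To prove this, I would use the symmetric form of Taylor's formula,
\[
\mathbf{p}(a+b) \;=\; \sum_\alpha \frac{a^\alpha}{\alpha!}\,(\partial^\alpha \mathbf{p})(b),
\]
which exhibits the polynomial $b\mapsto \mathbf{p}(a+b)$ as a linear combination of partial derivatives of $\mathbf{p}$, hence as an element of $\mathcal{W}(\mathbf{p})$ with coefficients depending polynomially on $a$. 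Expanding it in the orthonormal basis, $\mathbf{p}(a+b)=\sum_k c_k(a)\mathbf{p}_k(b)$, and extracting $c_k$ via the Fischer inner product yields
\[
c_k(a) \;=\; \bigl[\mathbf{p}_k(\partial/\partial b)\,\mathbf{p}(a+b)\bigr]_{b=0} \;=\; \bigl(\mathbf{p}_k(\partial/\partial x)\mathbf{p}\bigr)(a) \;=\; \mathbf{p}_k^\flat(a),
\]
since a constant coefficient operator in $b$ applied to $\mathbf{p}(a+b)$ is the same operator applied to $\mathbf{p}$ evaluated at $a+b$.

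To pass from exponentials to arbitrary $f,g\in C^\infty(V)$, I would observe that both sides of \eqref{Rfg}, evaluated at any point $x_0$, are bilinear forms in the jets of $f$ and $g$ at $x_0$ of order at most $\deg\mathbf{p}$; derivatives in $a,b$ of $e^{(\cdot,a)},e^{(\cdot,b)}$ at $a=b=0$ produce all monomials, so the exponential case already exhausts all possible jets, and the identity extends to $C^\infty$. Finally, for \eqref{Rfg2}, I would expand $\mathbf{p}_j^\flat=\mathbf{p}_j(\partial/\partial x)\mathbf{p}$ in the orthonormal basis using the adjoint property $(p,qr)_F = (r(\partial/\partial x)p,q)_F$ stated above, which gives $\mathbf{p}_j^\flat=\sum_i(\mathbf{p},\mathbf{p}_i\mathbf{p}_j)_F\,\mathbf{p}_i$, and substitute into \eqref{Rfg}. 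The only step with real content is the polynomial identity; the reduction to exponentials and the passage from \eqref{Rfg} to \eqref{Rfg2} are formal.
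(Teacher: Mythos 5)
Your proof is correct. It takes a genuinely different route from the paper's. The paper first reduces to polynomial $f,g$ and then, exploiting translation invariance, evaluates both sides of \autoref{Rfg} at $x=0$, rewriting $\mathbf{p}(\partial/\partial x)(fg)(0) = (\mathbf{p},fg)_F = (f(\partial/\partial x)\mathbf{p},\,g)_F$ and expanding $f(\partial/\partial x)\mathbf{p}\in\mathcal W(\mathbf p)$ in the orthonormal basis; the coefficient computation $a_j(f)=\mathbf{p}^\flat_j(\partial/\partial x)f(0)$ then directly yields \autoref{Rfg}. You instead test on exponentials and isolate the clean ``addition formula'' $\mathbf{p}(a+b)=\sum_j\mathbf{p}_j^\flat(a)\,\mathbf{p}_j(b)$, proved by noting via Taylor that $\mathbf{p}(a+\cdot)\in\mathcal W(\mathbf p)$ and extracting Fischer coefficients; a jet argument then upgrades the exponential case to general smooth $f,g$. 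Both proofs hinge on the same two facts --- $\mathcal W(\mathbf p)$ is closed under constant-coefficient differentiation, and Fischer orthogonality identifies the coefficients with $\mathbf p^\flat_j$ --- so the mathematical content is the same, but the presentation differs. Your approach is somewhat more conceptual in that it makes the reproducing-kernel/addition identity for $\mathbf p$ explicit, at the minor cost of an extra reduction step (jets); the paper's is slightly more economical, reducing once (to polynomials at $0$) and then computing directly. The passage to \autoref{Rfg2} via $\mathbf p_j^\flat=\sum_i(\mathbf p,\mathbf p_i\mathbf p_j)_F\,\mathbf p_i$ is the same in both.
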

\begin{proof}
Without loss of generality, it is possible to assume that $f$ and $g$ belong to $\mathcal P(V)$.  Since the differential operators $\mathbf{p\!}\left(\frac{\partial}{\partial x}\right),$ $ \mathbf{p}_j\left(\frac{\partial}{\partial x}\right)$ and $\mathbf{p}^\flat_j\left (\frac{\partial}{\partial x}\right) $     are invariant by translations, it is enough to prove \autoref{Rfg}  at $x=0$.

 From the definition of the Fischer inner product, we have
\[\mathbf{p}\left(\frac{\partial}{\partial x}\right)( fg)(0) = \left(\mathbf{p}\,,\,fg\right)_F= \left(f\left(\frac{\partial}{\partial x}\right)\mathbf{p}\,,\,  g \right)_F.
\]
Now  $\displaystyle  f\left(\frac{\partial}{\partial x}\right)\mathbf{p}$ belongs to $\mathcal W({\bf p})$, and therefore
\[f\left(\frac{\partial}{\partial x}\right)\mathbf{p} = \sum_{j=1}^\rho a_j(f) \mathbf{p}_j,
\]
where 
\begin{eqnarray*}
a_j(f) = \left( f\left(\frac{\partial}{\partial x}\right)\mathbf{p}\,,\,\mathbf{p}_j\right)_F
&=& (\mathbf{p}\,,\, f\mathbf{p}_j)_F\\ 
&=&\left( \mathbf{p}_j\left(\frac{\partial}{\partial x}\right)\mathbf{p}\,,\,f\right)_F 
= (\mathbf{p}^\flat_j\, ,f)_F= \mathbf{p}^\flat_j\left (\frac{\partial}{\partial x}\right)
f(0).
 \end{eqnarray*}
Hence
$$ \mathbf{p}\left(\frac{\partial}{\partial x}\right) (fg) (0)= \sum_{j=1}^\rho \left(\mathbf{p}^\flat_j\left (\frac{\partial}{\partial x}\right)
f\right)(0)\left( \mathbf{p}_j\left(\frac{\partial}{\partial x}\right)g \right)(0).$$
Now formula \autoref{Rfg2} can be obtained from \autoref{Rfg} by expressing the polynomials $\mathbf{p}^\flat_j$ in the basis $(\mathbf{p}_1,\dots, \mathbf{p}_\rho)$.
 Indeed, as $\mathbf{p}^\flat_j$ belongs to $\mathcal W({\bf p})$, we have
$$\mathbf{p}^\flat_j = \sum_{i=1}^\rho \big(\mathbf{p}^\flat_j\, , \,\mathbf{p}_i\big)_F\, \mathbf{p}_i,$$
with $$(\mathbf{p}^\flat_j, \mathbf{p}_i)_F= \left( \mathbf{p}_j\left(\frac{\partial}{\partial x}\right) \mathbf{p}, \mathbf{p}_i\right)_F
= \big(\mathbf{p},\mathbf{p}_i\mathbf{p}_j\big)_F.
$$
Therefore \autoref{Rfg2} follows immediately from \autoref{Rfg}.
\end{proof}
In the sequel we will also need a formula for computing $\mathbf{p}\left(\frac{\partial}{\partial x}\right)(fgh)$. 
\begin{proposition} Let $f,g,h\in C^\infty(V)$. Then
\begin{equation}\label{Rfgh} 
\mathbf{p}\left(\frac{\partial}{\partial x}\right)(fgh) = \sum_{i=1}^\rho \sum_{j=1}^\rho \sum_{k=1}^\rho a_{ijk}\left( \mathbf{p}_i\left(\frac{\partial}{\partial x}\right)f\right)\left( \mathbf{p}_j\left(\frac{\partial}{\partial x}\right)g\right)\left( \mathbf{p}_k\left(\frac{\partial}{\partial x}\right)h\right),
\end{equation}
where
\[a_{ijk} = \sum_{\ell=1}^\rho \big(\mathbf{p}, \mathbf{p}_i \mathbf{p}_\ell \big)_F\big(\mathbf{p}_\ell,  \mathbf{p}_j\mathbf{p}_k \big)_F.
\]
\end{proposition}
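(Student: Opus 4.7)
The plan is to iterate the generalized Leibnitz formula \autoref{Rfg2} twice, decomposing $fgh = f \cdot (gh)$ and then expanding the inner factor $\mathbf{p}_\ell\!\left(\frac{\partial}{\partial x}\right)(gh)$ by applying the same formula with $\mathbf{p}_\ell$ in place of $\mathbf{p}$.

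First, following the proof of \autoref{Lebnitz-fg}, I would reduce to the case $f, g, h \in \mathcal{P}(V)$ and, by translation invariance of the constant-coefficient operators involved, to evaluation at $x = 0$. A direct application of \autoref{Rfg2} to the grouping $f \cdot (gh)$ then yields
\[
\mathbf{p}\!\left(\frac{\partial}{\partial x}\right)(fgh) = \sum_{i,\ell} (\mathbf{p}, \mathbf{p}_i \mathbf{p}_\ell)_F \left(\mathbf{p}_i\!\left(\frac{\partial}{\partial x}\right)\! f\right) \left(\mathbf{p}_\ell\!\left(\frac{\partial}{\partial x}\right)\! (gh)\right).
\]

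The key step is to apply a variant of \autoref{Rfg2} to $\mathbf{p}_\ell\!\left(\frac{\partial}{\partial x}\right)(gh)$, again using the same orthonormal basis $(\mathbf{p}_1, \dots, \mathbf{p}_\rho)$ of $\mathcal{W}(\mathbf{p})$. Inspecting the proof of \autoref{Lebnitz-fg}, the derivation requires only that $g\!\left(\frac{\partial}{\partial x}\right) q$ belong to $\mathcal{W}(\mathbf{p})$ for every polynomial $g$ whenever $q \in \mathcal{W}(\mathbf{p})$. Since $\mathcal{W}(\mathbf{p})$ is spanned by the partial derivatives of $\mathbf{p}$ of all orders, it is manifestly stable under partial differentiation, so this property holds for any $q \in \mathcal{W}(\mathbf{p})$, in particular for $q = \mathbf{p}_\ell$. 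The same expansion argument therefore gives
\[
\mathbf{p}_\ell\!\left(\frac{\partial}{\partial x}\right)\! (gh) = \sum_{j,k} (\mathbf{p}_\ell, \mathbf{p}_j \mathbf{p}_k)_F \left(\mathbf{p}_j\!\left(\frac{\partial}{\partial x}\right)\! g\right) \left(\mathbf{p}_k\!\left(\frac{\partial}{\partial x}\right)\! h\right).
\]

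Substituting this into the first display and collecting the sum over $\ell$, the coefficient of $\left(\mathbf{p}_i\!\left(\frac{\partial}{\partial x}\right) f\right)\left(\mathbf{p}_j\!\left(\frac{\partial}{\partial x}\right) g\right)\left(\mathbf{p}_k\!\left(\frac{\partial}{\partial x}\right) h\right)$ is exactly $a_{ijk} = \sum_\ell (\mathbf{p}, \mathbf{p}_i \mathbf{p}_\ell)_F (\mathbf{p}_\ell, \mathbf{p}_j \mathbf{p}_k)_F$, which establishes \autoref{Rfgh}. The only non-mechanical point is the stability of $\mathcal{W}(\mathbf{p})$ under partial differentiation, which is needed to legitimize the second application of the Leibnitz formula with the same basis $(\mathbf{p}_j)$; beyond that, the argument is a direct iteration and I do not expect any further obstacle.
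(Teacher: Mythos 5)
Your proof is correct and follows essentially the same route the paper indicates (the paper's proof simply states that formula~\autoref{Rfg2} is applied twice). You have also correctly pinpointed the one non-trivial step — that the same orthonormal basis $(\mathbf{p}_1,\dots,\mathbf{p}_\rho)$ serves in the inner expansion of $\mathbf{p}_\ell\left(\frac{\partial}{\partial x}\right)(gh)$ because $\mathcal{W}(\mathbf{p})$ is stable under constant-coefficient differential operators — which the paper leaves implicit.
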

\begin{proof} This is a direct consequence of \autoref{Lebnitz-fg} where  formula \autoref{Rfg2} is used twice.

\end{proof}
 
\section{The Bernstein identity for $\det(x)^{s,\varepsilon}$}\label{sec-Bernstein}
\addsubsection{Background on simple real Jordan algebras}\label{background}

We will recall first some preliminary results on simple real Jordan algebras and fix notation. For more details we refer the reader to \cite{Bertram, BK, FK, Helwig}.

Let $V$ be a $n$-dimensional {\it real   Jordan algebra} with unit element ${\bf 1}$  and of rank $r$ (see \autoref{Appendix-B}). Denote by $L(x)\in \End(V)$ the multiplication by $x\in V$ and let $P(x)$ be the  quadratic operator defined by  
\begin{equation}\label{quad-rep}
P(x):=2L(x)^2-L(x^2).
\end{equation}

The {\it Jordan trace} $\tr$ is a linear form on $V$ and the {\it Jordan determinant} $\det$ is a homogeneous polynomial on $V$ of degree $r.$   In particular, they satisfy $\tr({\bf 1})=r$ and  $\det({\bf 1})=1.$  

 A real Jordan algebra $V$ is called {\it semisimple} if the symmetric bilinear  form
 $$(x,y):=\tr(xy)$$
 is non-degenerate on $V\times V$.   If in addition $V$ has no non-trivial ideal, then  $V$ is called  {\it simple}. If the  bilinear  form $(\cdot\,,\,\cdot) $ is positive definite,  then  $V$ is called    {\it euclidean} Jordan algebra. 
  

 An involutive automorphism $\alpha$ of $V$  is called  {\it Cartan involution} of $V$   if the symmetric bilinear form
\begin{equation}\label{innerprod}
\langle x ,y\rangle:=(\alpha(x),y)
\end{equation}
 is positive definite. 
 For  every semisimple Jordan algebra such a Cartan  involution always exists  and  two Cartan involutions are conjugate by an automorphism of $V$ (see \cite{Helwig}). 
  With respect  to the involution $\alpha$, the following orthogonal decomposition  holds
 $$V=V_+\oplus V_- ,$$
where $V_+:=\{x\in V;\;\; \alpha(x)=  x\}$ and $V_-:=\{x\in V;\;\; \alpha(x)=  -x\}$. The eigenspace $V_+$ is always a euclidean Jordan subalgebra of $V$ with the  same unit element ${\bf 1}.$ Notice that when $V$ is euclidean then $V_+=V$ and $V_- =\{0\}.$  Denote by $n_+$ and $r_+$  the dimension and the rank of $V_+$, respectively. 

An element $c\in V$ is said to be  {\it idempotent} if $c^2=c$. Further,  two idempotents $c_1$ and $c_2$ are called {\it orthogonal} if $c_1c_2=0$.  A non-zero idempotent  is called {\it primitive} if it cannot be written as the  sum of two non-zero orthogonal  idempotents.    

Every  set $\{c_1,\ldots,c_k\}$ of orthogonal primitive idempotents in $V_+$ with the additional condition $c_1+ \cdots+c_k={\bf 1}$ is called a {\it Jordan frame} in $V_+.$ By \cite[Theorem III.1.2]{FK}  the cardinal   of a  Jordan frame is always equal to the rank $r_+$ of $V_+$, and two Jordan frames are conjugate by an automorphism of $V_+$ (see \cite[Corollary IV.2.7]{FK}).

Fix a Jordan frame $\{c_1,\ldots,c_{r_+}\}$ in $V_+$.  By \cite[Proposition III.1.3]{FK} the spectrum of the multiplication operator $L(c_k)$ by $c_k$  is $\{0,\frac{1}{2},1\}$. Further, the operators  $L(c_1), \ldots, L(c_{r_+})$ commute and therefore are simultaneously diagonalizable. This yield  the following  {\it Peirce decomposition} 
\begin{equation}\label{ds}
V=\bigoplus_{1\leq i\leq j\leq r_+} V_{ij},
\end{equation}
where
$$\begin{array}{rll}
V_{ii}&:=V(c_i,1), & \quad1\leq i\leq r_+,\\
V_{ij}&:=V(c_i,\frac{1}{2})\cap V(c_j,\frac{1}{2}),& \quad 1\leq i<j\leq r_+.
\end{array}$$
Here  $V(c,\lambda)$ denotes the eigenspace of $L(c)$ corresponding to the eigenvalue $\lambda.$
Since the operators $L(c_k)$, for $0\leq k\leq r_+$, are symmetric with respect to the inner product \autoref{innerprod}, the direct sum \autoref{ds} is orthogonal.

Denote by $d$ the common dimension of the subspaces $V_{ij}$ ($i<j$) 
and by $e+1$ the common dimension of the subalgebras $V_{ii}.$   Then
$$\dim V=n=r_+(e+1)+ {r_+(r_+-1)} {d\over 2}.$$
 The Jordan algebra $V$ is called {\it split}  if $V_{ii}=\mathbb{R}c_i$ for every $1\leq i\leq r_+$ (equivalently $e=0$), otherwise $V$ is called {\it non-split}. By \cite{Helwig}, if $V$ is split  then $r=r_+$, otherwise $r=2r_+.$ We pin down   that every euclidean Jordan algebra is split.      
 

 There is  a classification of simple real Jordan algebra  given in  \cite{Helwig}  (see also \cite{Loos, HKM}). We refer the reader to \autoref{Appendix-A}  for the complete  list. 
More precisely, we have four types of algebras :
 
 \begin{tabular}{lll}
 Type I &:& $V$ is euclidean.\\ 
 Type II  &: & $V$ is split non-euclidean.\\ 
 Type III &:&  $V$ is  non-split  with no complex structure.\\
 Type IV &:&  $V$ is  non-split  with a complex structure.
 \end{tabular}
 
\smallskip
 
 \noindent Notice that every simple real Jordan algebra   is either :
\begin{enumerate}  
\item[(a)] a real form of a simple complex Jordan algebra (it is the case of type I, II, III);  or  
\item[(b)] a simple complex Jordan algebra viewed as a real one (it is the case of type IV).
\end{enumerate}
 
When  $V$ is a simple real Jordan algebra, the integers $n$, $r$, $d$ and $e$ are called the {\em structure constants of $V$}. 
 
 When dealing  with a simple   Jordan algebra over $\mathbb{C}$ we use the symbol $\mathbb{V}$ and we  denote by  $\mathbbl{n}$, $\mathbbl{r}$ and  $\mathbbl{d}$   its structure constants (over $\mathbb C$).

 
  Below we will establish the \emph{Bernstein} identity for $\det(x)^{s,\varepsilon},$ which takes slightly different forms depending on the type of the Jordan algebra.

We first consider the case where $V$ is a euclidean Jordan algebra where we recall from\cite{FK}  the Bernstein  identity  on the  (open) \emph{cone of squares} $\Omega$. We then use it to give the corresponding identity  for a simple complex Jordan algebra which is essentially obtained  by a holomorphic extension. This formula is  the key ingredient  to get the Bernstein identity on the set of invertible elements in each of the four  types of simple real Jordan algebras.

\addsubsection{The Bernstein identity for the cone $\Omega$}
Let $V$ be a simple euclidean Jordan algebra and let $\Omega$ be the associated symmetric cone (see \cite{FK} for more details on euclidean Jordan algebras). Throughout this paper, we will denote the Jordan determinant $\det$ of a euclidean Jordan algebra by $\Delta$. 
We recall that $\Delta(x)>0$ for all $x\in \Omega$.

For $k,l\in \mathbb N$,  let $b_{k,l}$ be the polynomial defined on $\mathbb{C}$ by
\begin{equation}\label{bkl}
b_{k,l} (s) = s\left(s+\frac{l}{2}\right)\ldots\left(s+(k-1)\frac{l}{2}\right).
\end{equation}  

\begin{proposition}[see {\cite[Proposition VII.1.4]{FK}}] For $s\in \mathbb C$  and $x\in \Omega$, we have
\begin{equation}\label{BSdetOmega}
\Delta\left(\frac{\partial }{\partial x}\right) \Delta(x)^s = b_{r,d}(s)\Delta(x)^{s-1}.
\end{equation}
\end{proposition}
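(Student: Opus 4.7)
My plan has two stages: first reduce the identity to determining a single scalar function $c(s)$ of $s$ via a covariance argument exploiting the transitivity of the structure group on $\Omega$; then identify $c(s)$ explicitly through the Gindikin gamma function of the cone.

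For the first stage, fix $x\in\Omega$ and set $g:=P(x^{1/2})$. This element lies in the structure group of $V$, is self-adjoint for the trace form $(u,v)=\tr(uv)$, sends $\mathbf{1}$ to $x$, and satisfies $\Delta(gu)=\Delta(x)\Delta(u)$. A direct chain-rule computation shows that for any $h\in GL(V)$ self-adjoint for $(\cdot\,,\,\cdot)$ and any $p\in\mathcal{P}(V)$,
\[
p(\partial/\partial z)\bigl[f(hz)\bigr] \;=\; \widetilde{p}(\partial/\partial y)f(y)\,\big|_{\,y=hz}, \qquad \widetilde{p}(w):=p(hw).
\]
Applied with $h=g$, $p=\Delta$ (so that $\widetilde{p}(w)=\Delta(x)\Delta(w)$), and $f(y)=\Delta(y)^s$, this yields at $z=\mathbf{1}$ (where $h\mathbf{1}=x$)
\[
\Delta(\partial/\partial x)[\Delta(x)^s] \;=\; \Delta(x)^{s-1}\cdot \Delta(\partial/\partial z)[\Delta(z)^s]\,\big|_{z=\mathbf{1}}.
\]
Thus the theorem reduces to computing the scalar
\[
c(s)\;:=\;\Delta(\partial/\partial z)[\Delta(z)^s]\,\big|_{z=\mathbf{1}},
\]
which is \emph{a priori} a polynomial in $s$ of degree at most $r$, since each first-order derivative hitting $\Delta(z)^s$ produces at most one new factor of $s$.

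For the second stage, I would pair the candidate identity $\Delta(\partial/\partial x)\Delta(x)^s=c(s)\Delta(x)^{s-1}$ against $e^{-\tr x}$ over $\Omega$ for $\Re s$ sufficiently large. Integration by parts is legitimate because $\Delta(x)^s$ vanishes to sufficient order on $\partial\Omega$; combined with
\[
\Delta(\partial/\partial x)\,e^{-\tr x}\;=\;\Delta(-\mathbf{1})\,e^{-\tr x}\;=\;(-1)^r e^{-\tr x}
\]
and the sign $(-1)^r$ absorbed when transposing a degree-$r$ operator, this produces
\[
c(s)\,\Gamma_\Omega(s+n/r-1) \;=\; \Gamma_\Omega(s+n/r),
\]
where $\Gamma_\Omega(\alpha):=\int_\Omega e^{-\tr x}\Delta(x)^{\alpha-n/r}\,dx$. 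Inserting Gindikin's product formula $\Gamma_\Omega(\alpha)=(2\pi)^{(n-r)/2}\prod_{j=0}^{r-1}\Gamma(\alpha-jd/2)$ and using the relation $n/r=1+(r-1)d/2$, the ratio telescopes to
\[
c(s)\;=\;\prod_{k=0}^{r-1}\bigl(s+k\tfrac{d}{2}\bigr)\;=\;b_{r,d}(s),
\]
and since both sides are polynomial in $s$, the equality extends from a half-plane to all of $\mathbb{C}$.

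The main obstacle I anticipate is the interplay between integration by parts on the open cone and analytic continuation in $s$: boundary contributions on $\partial\Omega$ must be shown to vanish for $\Re s$ sufficiently large, after which one propagates the scalar identity using polynomiality (respectively, holomorphy) of the two sides. An alternative route, avoiding Gindikin's formula entirely, would proceed by induction on the rank $r$ via the Peirce decomposition and the classical factorization of $\Delta$ associated to a primitive idempotent; that approach is more self-contained but demands careful bookkeeping of the constants introduced at each reduction step.
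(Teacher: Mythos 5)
Your argument is correct and is essentially the one in the cited Faraut--Kor\'anyi reference: both rest on the equivariance of $\Delta$ under the transitive action of the operators $P(x^{1/2})$, $x\in\Omega$ (which reduces the claim to a scalar $c(s)$, or equivalently establishes the full $y$-dependence of the Laplace transform of $\Delta^{s-n/r}$) together with the Gindikin product formula for $\Gamma_\Omega$ to identify $c(s)=b_{r,d}(s)$. The integration-by-parts justification you flag is standard --- for $\Re s\gg 0$ all boundary contributions vanish since $\Delta$ vanishes on $\partial\Omega$ and $e^{-\tr x}$ dominates at infinity --- and the polynomiality of $c(s)$ then extends the equality to all $s\in\mathbb{C}$.
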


\addsubsection{The Bernstein identity for a complex Jordan algebra}
Let $\mathbb V$ be a  simple complex Jordan algebra. 
Then $\mathbb V$ has a   simple euclidean real form $V$.
 The  structure constants   $\mathbbl{n}$, $\mathbbl{r}$ and  $\mathbbl{d}$  of $\mathbb V$ (viewed as a complex Jordan algebra) coincide with the  structure constants   $n$, $r$ and  $d$ of $V$ (see \autoref{Appendix-B}).  
As above, we will denote  the determinant of $V$ by $\Delta$. Then the determinant of $\mathbb V$ is the holomorphic extension  of $\Delta$, still  denoted by $\Delta$ (see \autoref{Appendix-B}).

Let $\mathbb V^\times := \{ z\in \mathbb V, \; \Delta(z)\neq 0\}$ be the open set of invertible elements in $\mathbb{V}$.    
\begin{proposition} For $s\in \mathbb C$ and for  $z\in \mathbb V^\times$, we have
\begin{equation}\label{BSdethol}
\Delta\left(\frac{\partial}{\partial z}\right)\Delta(z)^s = b_{{\rr}, {\dd}}(s) \Delta(z)^{s-1},
\end{equation}
where the powers of $\Delta$ are computed using a local branch of $\log \Delta$ near $z$.
\end{proposition}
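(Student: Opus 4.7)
The plan is to deduce \eqref{BSdethol} from the euclidean Bernstein identity \eqref{BSdetOmega} by holomorphic extension, exploiting that $\mathbb V$ is the complexification of the simple euclidean real form $V$ and that $\Delta$ on $\mathbb V$ is the holomorphic extension of the Jordan determinant on $V$. The first observation is that, since the structure constants of $\mathbb V$ (as a complex Jordan algebra) coincide with those of $V$, one has $b_{\rr,\dd}(s) = b_{r,d}(s)$, so the scalar on the right of \eqref{BSdethol} is exactly the one appearing in \eqref{BSdetOmega}.

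Next I would reduce the computation of $\Delta(\partial/\partial z)$ to that of $\Delta(\partial/\partial x)$. Fix an $\mathbb R$-basis $(e_1,\dots,e_n)$ of $V$; it is simultaneously a $\mathbb C$-basis of $\mathbb V$, and for any holomorphic function $f$ on an open subset of $\mathbb V$ the complex partials $\partial f/\partial z_j$ restricted to $V$ coincide with the real partials $\partial (f|_V)/\partial x_j$ of the restriction. Consequently, the constant coefficients holomorphic differential operator $\Delta(\partial/\partial z)$ on $\mathbb V$, applied to a holomorphic function and then evaluated on $V$, agrees with $\Delta(\partial/\partial x)$ applied to the restriction.

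Now fix $s\in\mathbb C$ and let $F(z):=\Delta(z)^s$, defined on an open neighborhood $U$ of $\Omega$ in $\mathbb V^\times$ using the principal branch of $\log\Delta$ (which is well-defined there since $\Delta>0$ on $\Omega$). Both $\Delta(\partial/\partial z)F$ and $b_{\rr,\dd}(s)\Delta^{s-1}$ are holomorphic on $U$, and by the previous paragraph combined with \eqref{BSdetOmega} they agree on the totally real submanifold $\Omega\subset\mathbb V$ of maximal real dimension $n=\dim_{\mathbb C}\mathbb V$. The identity principle for holomorphic functions then forces the equality throughout $U$. To pass to an arbitrary $z\in\mathbb V^\times$, one extends by analytic continuation along a path from $\Omega$ to $z$, at each step using a local branch of $\log\Delta$; both sides of the identity are multiplied by the same factor $e^{2\pi i k s}$ under a change of branch, so the identity is preserved. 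The only mildly delicate point is the clean translation between the holomorphic operator $\Delta(\partial/\partial z)$ on $\mathbb V$ and the real operator $\Delta(\partial/\partial x)$ on $V$; once this is in hand, the remainder of the proof is a direct application of the identity theorem.
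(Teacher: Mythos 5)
Your proof is correct and follows essentially the same route as the paper: restrict to the euclidean real form, invoke the Bernstein identity \eqref{BSdetOmega} on $\Omega$, and propagate to $\mathbb V^\times$ by the identity theorem for holomorphic functions, together with the branch-independence observation. The only cosmetic difference is that the paper, for each $z_0\in\mathbb V^\times$, directly chooses a single simply connected neighborhood meeting $\Omega$ (avoiding an explicit analytic-continuation-along-paths step), whereas you first establish the identity near $\Omega$ and then continue along a path to $z_0$ — both are standard implementations of the same analytic-continuation argument.
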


\begin{proof}
   Let $z_0\in \mathbb V^\times$.   
   The subset $\mathbb V^\times $ being pathwise-connected,  it is possible to choose a simply connected neighborhood $\mathcal O_0\subset \mathbb V^\times$ of $z_0$ such that $\mathcal O_0\cap \Omega \neq \emptyset$. Here $\Omega$ denotes the symmetric cone in $V$. By analytic continuation, there is a branch of $\log \Delta (z)$ on $\mathcal O_0$ which coincides  with $\ln  \Delta (x) $ on $\mathcal O_0\cap \Omega$. For $s\in \mathbb C,$  use this branch   to define $\Delta(z)^s$ for  $z\in\mathcal O_0$ by
$\Delta(z)^s= e^{s\,\log  \Delta (z)}$ and similarly for $\Delta(z)^{s-1}$.  On $\mathcal O_0\cap \Omega$, \autoref{BSdethol} reduces to \autoref{BSdetOmega}. Now, both hand sides of \autoref{BSdethol} are holomorphic functions on $\mathcal O_0$. As they coincide on $\mathcal O_0\cap \Omega$, they have to coincide on $\mathcal O_0$, and in particular at $z_0$. Since  $z_0$ was arbitrary chosen in $\mathbb V^\times,$ the conclusion follows. Needless to say, if the identity is valid for \emph{one} local branch of $\log \Delta (z)$, then it is true  for \emph{any} local branch. 
\end{proof}

\addsubsection{The Bernstein identity for a real Jordan algebra of  type IV}
Let $V$ be a simple complex Jordan algebra viewed as a  simple real  Jordan algebra. Recall that its (real) determinant $\det$ satisfies
\[\det(z) = \Delta(z)\overline{\Delta(z)},
\]
where $\Delta$ is the (complex) determinant of $V$.  
 Denote by   $\mathbbl{n}$, $\mathbbl{r}$ and  $\mathbbl{d}$   the  structure constants  of $V$ viewed as a complex Jordan algebra,     and let   $e$, $n$, $r$   and $d$ be the structure constants  of $V$ viewed as a real Jordan algebra. Then, $e=1$,  $n=2\mathbbl{n}$, $r=2\mathbbl{r}$ and $d=2\mathbbl{d}$.

\begin{proposition} For $s\in \mathbb C$ and for $z\in V^\times$, we have
\begin{equation}\label{BSdetCR}
\Delta\left(\frac{\partial} {\partial z}\right)\overline{\Delta}\left(\frac{\partial}{\partial \overline{z}}\right) \det(z)^s=b_{{\rr}, {\dd}}(s)^2 \det(z)^{s-1}.
\end{equation}
\end{proposition}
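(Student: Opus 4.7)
The plan is to reduce the identity to the complex Bernstein identity \autoref{BSdethol} by splitting $\det(z)^s$ according to its holomorphic and antiholomorphic factors, and exploiting the Wirtinger calculus. The starting observation is that since the Jordan determinant $\Delta$ of the euclidean real form has real coefficients in a suitable basis, its holomorphic extension to $\mathbb{V}=V$ satisfies $\overline{\Delta(z)}=\Delta(\overline z)$, and hence $\det(z)=\Delta(z)\overline{\Delta(z)}=|\Delta(z)|^2 \ge 0$. Therefore for $z\in V^\times$, working in a simply connected neighborhood $\mathcal O\subset V^\times$ where a branch of $\log\Delta$ is chosen, we can write
\begin{equation*}
\det(z)^s=\Delta(z)^s\,\overline{\Delta(z)}^s,
\end{equation*}
where $\Delta(z)^s:=e^{s\log\Delta(z)}$ is holomorphic and $\overline{\Delta(z)}^s:=e^{s\,\overline{\log\Delta(z)}}$ is antiholomorphic on $\mathcal O$. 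Consistency with $|\Delta(z)|^{2s}$ is immediate.

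Next, I would note that the two operators $\Delta(\partial/\partial z)$ and $\overline{\Delta}(\partial/\partial \overline z)$ commute, since they are polynomials in disjoint sets of Wirtinger derivatives. Also, $\Delta(\partial/\partial z)$ kills any antiholomorphic factor and treats it as a constant under Leibniz, and dually for $\overline{\Delta}(\partial/\partial \overline z)$. Applying the first operator,
\begin{equation*}
\Delta\!\left(\tfrac{\partial}{\partial z}\right)\!\bigl[\Delta(z)^s\overline{\Delta(z)}^s\bigr]=\overline{\Delta(z)}^s\cdot \Delta\!\left(\tfrac{\partial}{\partial z}\right)\!\Delta(z)^s=b_{\rr,\dd}(s)\,\overline{\Delta(z)}^s\,\Delta(z)^{s-1},
\end{equation*}
where the last equality is exactly \autoref{BSdethol}.

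Now I would apply $\overline{\Delta}(\partial/\partial \overline z)$ to the above and use the antiholomorphic version of \autoref{BSdethol}. Setting $w=\overline z$ and using that $\overline{\Delta(z)}=\Delta(w)$, the relation $\Delta(\partial/\partial w)\Delta(w)^s=b_{\rr,\dd}(s)\Delta(w)^{s-1}$ translates (via $\partial/\partial w=\partial/\partial\overline z$ and $\overline\Delta=\Delta$ in coefficients) into
\begin{equation*}
\overline{\Delta}\!\left(\tfrac{\partial}{\partial\overline z}\right)\overline{\Delta(z)}^s=b_{\rr,\dd}(s)\,\overline{\Delta(z)}^{s-1}.
\end{equation*}
Combining and using that $\Delta(z)^{s-1}$ is annihilated by $\partial/\partial\overline z_i$ yields
\begin{equation*}
\Delta\!\left(\tfrac{\partial}{\partial z}\right)\overline{\Delta}\!\left(\tfrac{\partial}{\partial\overline z}\right)\det(z)^s=b_{\rr,\dd}(s)^2\,\Delta(z)^{s-1}\overline{\Delta(z)}^{s-1}=b_{\rr,\dd}(s)^2\det(z)^{s-1},
\end{equation*}
which is the claim.

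The calculation is essentially routine once the framework is set up; the only subtle point, which I expect to be the main obstacle in making the argument airtight, is the correct interpretation of $\Delta(\partial/\partial z)$ and $\overline{\Delta}(\partial/\partial \overline z)$ as commuting Wirtinger operators and the verification that $\overline{\Delta(z)}^s$, defined via the conjugate branch of $\log\Delta$, is genuinely annihilated by the holomorphic derivatives, so that the Leibniz terms split cleanly. Once this is clarified (using real coefficients of $\Delta$ and the identity $\overline{\Delta(z)}=\Delta(\overline z)$), the rest is just two applications of the complex Bernstein identity \autoref{BSdethol}, and the branch issue raised at the end of the proof of that proposition shows that the result is independent of the chosen local branch.
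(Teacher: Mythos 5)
Your proof is correct and follows essentially the same route as the paper: factor $\det(z)^s=\Delta(z)^s\overline{\Delta(z)}^s$, apply the holomorphic Bernstein identity \autoref{BSdethol}, then apply its conjugate. The paper states this more tersely, but the underlying argument — two applications of \autoref{BSdethol} via Wirtinger calculus — is the same.
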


\begin{proof}
As $\det(z)>0$ on $V^\times$,  then $\det(z)^s$ is well defined. For any local branch of $\log \Delta(z)$, we have
\[\Delta(z)^s \,\overline{\Delta(z)}^s = \det(z)^s.
\]
First  apply  the holomorphic differential operator $\Delta\left(\frac{\partial} {\partial z}\right)$ to this equality using \autoref{BSdethol}. Then apply  the conjugate holomorphic operator $\overline{\Delta}\left(\frac{\partial}{\partial \overline{z}}\right)$ to get \autoref{BSdetCR}.
\end{proof}

\addsubsection{The Bernstein  identity for a real Jordan algebra of  type I and II}
Let $V$ be   a split simple real Jordan algebra  (either euclidean or non-euclidean, see \autoref{Appendix-A}).  Then its complexification $\mathbb V$ is a simple  complex Jordan algebra. Let $\Delta$ be the determinant of $\mathbb V$.  In this case, the determinant $\det$ of $V$ (which is the restriction of $\Delta$ to $V$) takes both positive and negative values. Hence we introduce on $V^\times$ the two expressions $\det(x)^{s,+}$ and $\det(x)^{s,-}$ using the convention that  for $\xi\in \mathbb R^*$ and $s\in \mathbb C$,

\begin{equation}\label{t-plus-moins}
\xi^{s,\varepsilon}=
\begin{cases} 
 \vert \xi\vert^s &\text{for } \varepsilon=+\\
 \mathrm{sign}(\xi) \vert \xi\vert^s &\text{for } \varepsilon=- 
\end{cases}
\end{equation}
 In this case the relation between the structure constants of $V$ and $\mathbb{V}$ is given by  $r=\mathbbl{r}$, $n=\mathbbl{n}$ and $d=\mathbbl{d}.$
\begin{proposition} For $(s,\varepsilon)\in  \mathbb C\times\{\pm\}$ and   for $x\in V^\times$, we have
\begin{equation}\label{BSdetred}
\det\left(\frac{\partial}{\partial x}\right)\det(x)^{s,\varepsilon} = b_{\mathbbl{r},\mathbbl{d}}(s) \det(x)^{s-1,-\varepsilon}.
\end{equation}
\end{proposition}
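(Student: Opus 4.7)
The plan is to deduce the claim from the holomorphic Bernstein identity \eqref{BSdethol} on the complexification $\mathbb V$ of $V$. Since $V$ is split, the structure constants satisfy $r=\rr$ and $d=\dd$; the polynomial $\det$ on $V$ is the restriction of $\Delta$ from $\mathbb V$, and the real constant-coefficient operator $\det(\partial/\partial x)$ is the restriction to $V$-valued functions of the holomorphic operator $\Delta(\partial/\partial z)$. Hence it suffices to apply \eqref{BSdethol} on a small neighborhood of an arbitrary $x_0\in V^\times$ in $\mathbb V^\times$ and restrict back to $V$, with a branch of $\log\Delta$ matched to the sign of $\det(x_0)$.

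Fix $x_0\in V^\times$ and, exactly as in the proof of \eqref{BSdethol}, choose a simply-connected neighborhood $\mathcal O_0\subset\mathbb V^\times$ of $x_0$ on which a holomorphic branch of $\log\Delta$ is defined. If $\det(x_0)>0$, take the real branch, so that $\log\Delta(x)=\log\det(x)\in\mathbb R$ on $V\cap\mathcal O_0$; then $\Delta(z)^s|_{V\cap\mathcal O_0}=|\det(x)|^s$, which by \eqref{t-plus-moins} equals both $\det(x)^{s,+}$ and $\det(x)^{s,-}$ on this component, and similarly for the exponent $s-1$. Restricting \eqref{BSdethol} to $V\cap\mathcal O_0$ yields the claim for either $\varepsilon$.

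If $\det(x_0)<0$, choose the branch with $\log\Delta(x_0)=\log|\det(x_0)|+i\pi$. Then $\Delta(z)^s|_{V\cap\mathcal O_0}=e^{i\pi s}|\det(x)|^s$ and hence $\Delta(z)^{s-1}|_{V\cap\mathcal O_0}=-e^{i\pi s}|\det(x)|^{s-1}$; restricting \eqref{BSdethol} and cancelling $e^{i\pi s}$ gives
\[
\det\!\left(\frac{\partial}{\partial x}\right)|\det(x)|^s \;=\; -\,b_{\rr,\dd}(s)\,|\det(x)|^{s-1}.
\]
On this component $\det(x)^{s,+}=|\det(x)|^s$ while $\det(x)^{s-1,-}=-|\det(x)|^{s-1}$, so the display reads $\det(\partial/\partial x)\det(x)^{s,+}=b_{\rr,\dd}(s)\det(x)^{s-1,-}$; negating both sides and using $\det(x)^{s,-}=-|\det(x)|^s$, $\det(x)^{s-1,+}=|\det(x)|^{s-1}$ gives the $\varepsilon=-$ version.

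The one subtle point, and the main obstacle, is the bookkeeping with local branches: the switch $\varepsilon\mapsto-\varepsilon$ on the right-hand side is generated precisely by the factor $e^{-i\pi}=-1$ produced when one shifts $s\mapsto s-1$ in $\Delta(z)^s=e^{i\pi s}|\det(x)|^s$ on the component where $\det<0$. Once this sign matching is recognised, the identity follows on each connected component of $V^\times$ by direct restriction of \eqref{BSdethol}, with no additional analytic input needed.
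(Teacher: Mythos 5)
Your proof is correct and follows essentially the same route as the paper: reduce to the holomorphic Bernstein identity on $\mathbb V^\times$, then restrict to a neighborhood of $x_0\in V^\times$ in $V$, choosing the real branch of $\log\Delta$ when $\det(x_0)>0$ and the branch shifted by $i\pi$ when $\det(x_0)<0$, the sign flip $\varepsilon\mapsto-\varepsilon$ coming from the factor $e^{-i\pi}=-1$ produced when passing from $\Delta(z)^s$ to $\Delta(z)^{s-1}$. The paper organises the bookkeeping for $\det(x_0)<0$ by writing $\det(x)^{s,\varepsilon}$ and $\det(x)^{s-1,-\varepsilon}$ directly in terms of $\Delta(x)^s$ and $\Delta(x)^{s-1}$ for each $\varepsilon$ separately, whereas you first derive the $\varepsilon=+$ identity and negate it to get $\varepsilon=-$, but this is only a cosmetic difference.
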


\begin{proof} Let $x_0\in V^\times$ and assume first that $\det (x_0)= \Delta(x_0) >0$. Let  $\mathrm{Log}\,z$ be the principal branch of the logarithm function over $\mathbb C\smallsetminus (-\infty, 0]$. Then, for $z$  in a small neighborhood of $x_0$ in $\mathbb V^\times $, we may choose $\mathrm{Log} \Delta(z)$ as a branch of $\log \Delta(z)$, and therefore, 
 \[\det(x)^{s,-\varepsilon} = \Delta(x)^s,\qquad \det(x)^{s-1, \varepsilon} = \Delta(x)^{s-1}, \]
for all $x\in V^\times$ close to $x_0$. Hence, on the small neighborhood of $x_0$,  \autoref{BSdetred} follows from
\autoref{BSdethol}.

We now assume that $\det(x_0)<0$. For $z$ in a neighborhood of $x_0$ in $\mathbb{V}$, we choose $\mathrm{Log}(-\Delta(z))+\pi\sqrt{-1}$ as a local branch of $\log \Delta (z)$. Then, for $x\in V^\times$ close to $x_0$,
\[ \Delta(x)^s = \vert\det(x)\vert^se^{s\pi \sqrt{-1}},\qquad \Delta(x)^{s-1} = \vert\det(x)\vert^{s-1}e^{(s-1)\pi\sqrt{-1}}\ . \]
For $\varepsilon=+$, we have
$$
\det (x)^{s,\varepsilon} = \vert \det (x)\vert^s = e^{-s\pi\sqrt{-1}} \Delta(x)^s,$$
and
$$\det(x)^{s-1, -\varepsilon}= -\vert \det(x)\vert^{s-1}= e^{-s\pi\sqrt{-1}} \Delta(x)^{s-1}.
$$
For $\varepsilon=-$, we have 
$$
\det(x)^{s,\varepsilon}= -\vert \det(x)\vert^s = e^{(-s+1)\pi\sqrt{-1}} \Delta(x)^s,$$
and
$$
\det(x)^{s-1,-\varepsilon}= \vert \det(x)\vert^{s-1} = e^{-(s-1)\pi\sqrt{-1}} \Delta(x)^{s-1}.$$
 Hence,  on the small neighborhood of $x_0$, the identity \autoref{BSdetred} follows from \autoref{BSdethol}.  As $x_0$ was chosen arbitrary in $V^\times,$ the conclusion holds true for every $x\in V^\times.$
\end{proof}

\addsubsection{The Bernstein  identity for a real Jordan algebra of  type III}
Let $V$ be a non-split  simple real  Jordan algebra without complex structure   and let $\det$ be its determinant. Then its complexification $\mathbb V$ is a simple complex Jordan algebra and we denote by  $\Delta$ its determinant. In this case, $\det (x)=\Delta(x)\geq 0$ for all $x\in V$ (see \autoref{Appendix-B}), 
  $n=\mathbbl{n},$ 
$r=\mathbbl{r}$ and $\mathbbl{d}=1,$  $2,$  $k-2$ ($k\geq 2$) in accordance with $d=4,$ $8,$ $0$ (see \autoref{Appendix-A}).

\begin{proposition} Let $s\in \mathbb C$ and  $x\in V^\times$, we have
\begin{equation}\label{BSdetnred}
\det\left(\frac{\partial}{\partial x}\right)\det(x)^s = b_{\mathbbl{r}, \mathbbl{d}}(s)\, \det(x)^{s-1}.
\end{equation}
\end{proposition}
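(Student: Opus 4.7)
The plan is to reduce \autoref{BSdetnred} to the complex Bernstein identity \autoref{BSdethol}, exactly as in the proof of \autoref{BSdetred}, but using the crucial simplification that in type III one has $\det(x)=\Delta(x)\ge 0$ on $V$, so no sign choices or branch ambiguities arise.

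First, I would fix $x_0\in V^\times$. Since $\det(x_0)=\Delta(x_0)>0$, I can pick a simply connected open neighborhood $\mathcal O_0$ of $x_0$ in $\mathbb V^\times$ on which the principal branch of the logarithm gives a well-defined holomorphic branch of $\log\Delta(z)$, normalized so that it is real on $V\cap\mathcal O_0$. Setting $\Delta(z)^s:=e^{s\log\Delta(z)}$, this holomorphic function on $\mathcal O_0$ agrees with the positive real quantity $\det(x)^s$ for every $x\in V\cap\mathcal O_0$.

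The second ingredient is the observation that, since $\det$ is a polynomial on $V$ whose holomorphic extension to $\mathbb V$ is exactly $\Delta$, the real constant-coefficient differential operator $\det\!\left(\frac{\partial}{\partial x}\right)$ on $V$ is the restriction to $V$ of the holomorphic operator $\Delta\!\left(\frac{\partial}{\partial z}\right)$: indeed, if $F$ is the restriction to $V$ of a holomorphic function $f$ on $\mathcal O_0$, then by the Cauchy–Riemann equations
\begin{equation*}
\det\!\left(\tfrac{\partial}{\partial x}\right)F(x)=\Delta\!\left(\tfrac{\partial}{\partial z}\right)f(z)\Big|_{z=x},\qquad x\in V\cap\mathcal O_0.
\end{equation*}
Applying this to $f(z)=\Delta(z)^s$, together with \autoref{BSdethol}, yields
\begin{equation*}
\det\!\left(\tfrac{\partial}{\partial x}\right)\det(x)^s
=\Delta\!\left(\tfrac{\partial}{\partial z}\right)\Delta(z)^s\Big|_{z=x}
=b_{\rr,\dd}(s)\,\Delta(x)^{s-1}
=b_{\rr,\dd}(s)\,\det(x)^{s-1}
\end{equation*}
for every $x\in V\cap\mathcal O_0$, and in particular at $x_0$. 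Since $x_0\in V^\times$ was arbitrary, this establishes \autoref{BSdetnred}.

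I do not anticipate any serious obstacle: unlike in type II (where one must track the phase $e^{s\pi\sqrt{-1}}$ across the hypersurface $\{\det=0\}$) or type IV (which required pairing the holomorphic operator with its conjugate), here the positivity $\det(x)\ge 0$ means a single holomorphic branch suffices, and the only mildly non-trivial point to verify cleanly is the identification of the real operator $\det(\partial/\partial x)$ with the restriction of $\Delta(\partial/\partial z)$, which is immediate from the Cauchy–Riemann equations applied coordinate-wise.
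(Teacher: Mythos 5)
Your proof is correct and follows essentially the same route as the paper: fix $x_0 \in V^\times$, use positivity of $\det = \Delta$ on $V$ (type III) to pick a single-valued holomorphic branch of $\log\Delta(z)$ near $x_0$ via the principal logarithm, observe that $\Delta(x)^s = \det(x)^s$ on $V$ near $x_0$, and invoke \autoref{BSdethol}. The one thing you make explicit that the paper leaves tacit — that $\det(\partial/\partial x)$ on $V$ is the restriction of the holomorphic operator $\Delta(\partial/\partial z)$, via the Cauchy–Riemann equations — is a worthwhile clarification but not a different argument.
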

\begin{proof} 
 Let $x_0\in V^\times$. Then $\Delta(x_0) =  \det (x_0)>0$, so that $\Re\, \Delta(z)>0$ for all $z$ in  a small neighborhood of $x_0$ in $\mathbb V^\times$. Hence we may use $\mathrm{Log} \Delta (z)$ as a local branch of $\log \det(z)$ near $x_0$. For this choice of branch and for $x$ near $x_0$, we have
\[\Delta(x)^s = \det(x)^{s},
\]
and therefore  \autoref{BSdetnred} follows from \autoref{BSdethol}.
\end{proof}

\section{Construction of the family   $D_{s,t}$}\label{section-Dst}
\addsubsection{Construction for the cone $\Omega$}\label{sub-str}
Let $V$ be a simple euclidean Jordan algebra with  structure constants $n$, $r$, and $d$. As above, let $\Delta$ be the determinant polynomial of $V,$ and let 
$\mathcal P = \mathcal P(V)$ be the space of real-valued polynomials on $V.$ Let $\mathcal W({\Delta})$ be the subspace of $\mathcal P$ generated by the partial derivatives of $\Delta$  (cf.  \autoref{Leibnitz}).  
 
Let $\Str(V)$ be the  structure group of $V$ defined as the set of $g\in\mathrm{GL}(V)$ such that
$$ (gx)^{-1}= (g^{-1})^t(x^{-1}), \quad \text{for all } \, x\in V^\times.
$$
The group $\Str(V)$ acts on the space $\mathcal P$  by
\[  \pi(g) p  = p\circ g^{-1},\quad g\in \Str(V).
\] 
The space $\mathcal W({\Delta})$ is invariant under this action of  $\Str(V)$. The representation $\pi$ (more exactly its complex extension)  was studied in \cite{FG}, showing that  it is multiplicity free and producing its explicit decomposition in irreducible components.

Fix a Jordan frame $\{ c_1,c_2,\dots, c_r\}$ of $ V$. Let $  \Delta_1,\dots, \Delta_r=\Delta$ be the associated \emph{principal minors} on $V$ (see \cite[Page 114]{FK}).  For our convenience  we  write $\Delta_0=1$. 
For $0\leq k\leq r$, we denote by  $\mathcal W({\Delta_k})$  the  subspace of $\mathcal{P}$ generated by $\pi(g)\Delta_k$, for $g\in\Str(V)$.
By {\cite[Proposition 6.1]{FG}}, the subspaces  $\mathcal W({\Delta_k})$ are absolutely  irreducible under the action of $\Str(V)$.

   For  $p\in  \mathcal W({\Delta})$, denote by $p^\sharp$ the function defined on $ V^\times$ by 
\[p^\sharp(x) = p(x^{-1}) \Delta(x).
\]
 The function $p^\sharp$  extends as a polynomial function   on $V$.


\begin{proposition}[see {\cite[Proposition XI.5.1]{FK}}] Let $p\in \mathcal W({\Delta_k})$. Then, for $x\in \Omega,$ we have
\begin{equation}\label{BS-for-p}
p\left( \frac{\partial}{\partial x}\right)\Delta^s(x) = b_{k,d}(s)\, p^\sharp(x)\, \Delta(x)^{s-1},
\end{equation}
where $b_{k,d}(s)$ is given by \autoref{bkl}.
\end{proposition}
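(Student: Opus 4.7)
The plan is to exploit the absolute irreducibility of $\mathcal{W}(\Delta_k)$ under $\Str(V)$ recalled above to reduce the identity, via Schur's lemma, to a single scalar evaluation, and then to evaluate that scalar by a Peirce-decomposition argument inside a Jordan subalgebra.

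Introduce two linear maps $\Phi_s,\Psi_s:\mathcal{W}(\Delta_k)\to C^\infty(\Omega)$ defined by
$$\Phi_s(p)(x)=p\!\left(\tfrac{\partial}{\partial x}\right)\Delta(x)^s,\qquad \Psi_s(p)(x)=p^\sharp(x)\,\Delta(x)^{s-1}.$$
A routine check using (i) the character identity $\Delta(gx)=\chi(g)\Delta(x)$, (ii) the defining relation $(gx)^{-1}=g^{-t}(x^{-1})$ of the structure group, and (iii) the chain-rule identity $p(\partial)[f\circ g]=(p\circ g^{t})(\partial)f\circ g$, shows that $\Phi_s$ and $\Psi_s$ are $\Str(V)$-equivariant with one and the same transformation law on $C^\infty(\Omega)$. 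The map $\Psi_s$ is injective, since $p^\sharp=p(x^{-1})\Delta(x)=0$ on $\Omega$ forces $p$ to vanish on the inversion of $\Omega$, hence everywhere, hence identically; thus its image is an irreducible $\Str(V)$-submodule of $C^\infty(\Omega)$ isomorphic to $\mathcal{W}(\Delta_k)$. The shared equivariance confines $\Phi_s(\mathcal{W}(\Delta_k))$ to this same irreducible copy, and Schur's lemma produces a scalar $c(s)\in\mathbb{C}$ (depending only on $s$) with $\Phi_s=c(s)\Psi_s$.

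It remains to identify $c(s)=b_{k,d}(s)$. Evaluate both sides at $p=\Delta_k$ and $x=\mathbf{1}$. Since $\Delta_k^\sharp(\mathbf{1})=\Delta_k(\mathbf{1})\Delta(\mathbf{1})=1$ and $\Delta(\mathbf{1})^{s-1}=1$, we reduce to computing $\Delta_k(\partial/\partial x)\Delta(x)^s|_{x=\mathbf{1}}$. Take the Peirce decomposition of $V$ with respect to $\mathbf{1}_k:=c_1+\cdots+c_k$: $V=V^{(k)}\oplus V(\mathbf{1}_k,\tfrac{1}{2})\oplus V^{(c)}$, where $V^{(k)}=V(\mathbf{1}_k,1)$ is a simple euclidean Jordan subalgebra of rank $k$ with the same Peirce dimension $d$, and $V^{(c)}=V(\mathbf{1}_k,0)$ has unit $\mathbf{1}_c:=c_{k+1}+\cdots+c_r$. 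The polynomial $\Delta_k$ depends only on the $V^{(k)}$-component of $x$, so $\Delta_k(\partial/\partial x)$ differentiates only in $V^{(k)}$-directions; moreover $V(\mathbf{1}_k,1)\cdot V(\mathbf{1}_k,0)=0$ gives the factorization $\Delta(\mathbf{1}_k+u+\mathbf{1}_c)=\Delta_k(\mathbf{1}_k+u)$ for every $u\in V^{(k)}$. Consequently the computation reduces to an intrinsic one inside $V^{(k)}$:
$$c(s)=\Delta_k\!\left(\tfrac{\partial}{\partial u}\right)\Delta_k(\mathbf{1}_k+u)^s\Big|_{u=0}=b_{k,d}(s),$$
the last equality being the Bernstein identity \autoref{BSdetOmega} applied in the simple euclidean Jordan algebra $V^{(k)}$.

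The delicate step is the Schur-lemma reduction itself: one must verify that $\Phi_s(\mathcal{W}(\Delta_k))$ and $\Psi_s(\mathcal{W}(\Delta_k))$ lie in the \emph{same} irreducible copy of $\mathcal{W}(\Delta_k)$ inside $C^\infty(\Omega)$, not in distinct copies of a potentially higher-multiplicity isotypic component. This is ensured by their shared transformation law under $\Str(V)$ together with the multiplicity-free decomposition of $\mathcal{P}(V)$ under $\Str(V)$ from \cite{FG}.
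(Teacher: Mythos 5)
The paper does not prove this statement; it simply cites \cite[Proposition~XI.5.1]{FK}, so there is no internal proof to compare against. Evaluating your argument on its own merits: the equivariance of $\Phi_s$ and $\Psi_s$ (both transforming by $F\mapsto\chi(g)^{-s}\,F\circ g^{t}$) is correct, the injectivity of $\Psi_s$ is correct, and the Peirce-decomposition evaluation of the constant at $x=\mathbf 1$ is correct. But the Schur-lemma reduction --- which you yourself flag as the delicate step --- has a genuine gap, and the justification you give for it does not hold up.

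The problem is with the assertion that ``the shared transformation law under $\Str(V)$ together with the multiplicity-free decomposition of $\mathcal P(V)$'' forces $\Phi_s(\mathcal W(\Delta_k))$ and $\Psi_s(\mathcal W(\Delta_k))$ to coincide. Multiplicity-freeness of $\mathcal P(V)$ controls the decomposition of the \emph{polynomial} ring under $\pi$. Your two images live inside $C^\infty(\Omega)$ under the twisted action $\tilde\pi(g)F=\chi(g)^{-s}F\circ g^{t}$, and $C^\infty(\Omega)$ is certainly not multiplicity-free: as an induced module it contains each irreducible with high (typically infinite) multiplicity. So equivariance alone gives you two (abstractly isomorphic) $\Str(V)$-submodules, not the same one, and $T_s:=\Psi_s^{-1}\circ\Phi_s$ is not yet a well-defined endomorphism of $\mathcal W(\Delta_k)$. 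To legitimately invoke \cite{FG} you would first need to show that $\Phi_s(p)$ has the form $q(x)\,\Delta(x)^{s-1}$ with $q$ a \emph{polynomial} (of degree $r-k$); then both images sit in $\mathcal P(V)\cdot\Delta^{s-1}$, the $q$-parts form submodules of $\mathcal P(V)$ under a character/transpose twist of $\pi$, and multiplicity-freeness does finish the job. But that divisibility of $p(\partial)\Delta^s$ by $\Delta^{s-1}$ is precisely the hard analytic content of the proposition, and it is not established anywhere in your argument.

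There is a way to close the gap without Schur's lemma at all, and it makes your own ingredients do more of the work. By definition, $\mathcal W(\Delta_k)$ is the linear span of $\{\pi(g)\Delta_k:g\in\Str(V)\}$. Your equivariance computation shows that if the identity holds for $p=\Delta_k$ \emph{at every point of $\Omega$}, then it holds for every $\pi(g)\Delta_k$, hence for all of $\mathcal W(\Delta_k)$ by linearity --- no Schur's lemma needed. The cost is that your Peirce argument must be carried out at a general $x\in\Omega$, not just at $x=\mathbf 1$: for $x$ with nontrivial $V(\mathbf 1_k,\tfrac12)$-component the clean factorization $\Delta(a+\mathbf 1_c)=\Delta_k(a)$ fails, and one has to eliminate the off-diagonal component first (this is exactly the Frobenius change-of-variables used in \cite[XI.5]{FK}). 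A single-point evaluation only recovers the constant $c(s)$ \emph{after} proportionality is known; it cannot be used to establish proportionality.

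In short: equivariance $\checkmark$, injectivity of $\Psi_s$ $\checkmark$, constant evaluation $\checkmark$, but the bridge from ``equivariant with the same law'' to ``proportional'' is unjustified, and the cited multiplicity-free result is about the wrong representation space. Either prove that $\Phi_s$ lands in $\mathcal P(V)\cdot\Delta^{s-1}$ before invoking \cite{FG}, or drop Schur's lemma and extend the Peirce computation to all of $\Omega$ so that linearity over the $\Str(V)$-orbit of $\Delta_k$ completes the proof.
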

 
 Let $d_k = \dim  \mathcal W({\Delta_k})$ and choose a basis $\{p_{j,k},\; 1\leq j\leq d_k\}$ of $\mathcal W({\Delta_k})$ which is orthonormal for the Fischer inner product.  
 In the present context, the Leibnitz formula \autoref{Rfgh} can be written   for $\mathbf{p}=\Delta$ as following :

\begin{proposition}\label{Deltafgh}
 Let $f,g,h$ be three smooth functions on $V$. Then there exist real numbers $a^{(lmn)}_{ijk}$ such that 
\begin{equation*}
 \Delta(fgh) =
  \sum_{l,m,n\geq 0 \atop    l+m+n=r} \sum_{i=1}^{d_l}\sum_{j=1}^{d_m}\sum_{k=1}^{d_n}a^{(lmn)}_{ijk}
\left(p_{i,l}\left( \frac{\partial}{\partial x}\right)f\right)\left(p_{j,m}\left( \frac{\partial}{\partial x}\right)
g\right)\left(p_{k,n}\left( \frac{\partial}{\partial x}\right)h\right).
\end{equation*}
\end{proposition}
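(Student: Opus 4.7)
The plan is to specialize the triple-product Leibnitz formula \autoref{Rfgh} to $\mathbf{p}=\Delta$ and then use the graded structure of $\mathcal W(\Delta)$ to collapse the triple index sum into one indexed by triples $(l,m,n)$ with $l+m+n=r$.

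First, I would establish the key structural fact that
\[
\mathcal W(\Delta)=\bigoplus_{k=0}^{r}\mathcal W(\Delta_k),
\]
where each $\mathcal W(\Delta_k)$ consists of homogeneous polynomials of degree $k$. Since $\Delta$ is homogeneous of degree $r$, any partial derivative of order $j$ is homogeneous of degree $r-j$, so $\mathcal W(\Delta)$ is graded by degree. The $\Str(V)$-invariance of $\mathcal W(\Delta)$ together with the multiplicity-free decomposition of $\mathcal P$ under $\Str(V)$ from \cite{FG} identifies the degree-$k$ homogeneous component of $\mathcal W(\Delta)$ with the irreducible $\Str(V)$-module $\mathcal W(\Delta_k)$; this is exactly the content used in \autoref{BS-for-p} quoted from \cite[Prop.~XI.5.1]{FK}.

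Next, for each $k$ I would pick the orthonormal basis $\{p_{i,k},\ 1\le i\le d_k\}$ of $\mathcal W(\Delta_k)$ provided by the Fischer inner product. Because polynomials in different $\mathcal W(\Delta_k)$ are homogeneous of different degrees and the Fischer inner product is orthogonal between distinct total degrees, the union $\{p_{i,k}\}_{0\le k\le r,\ 1\le i\le d_k}$ is an orthonormal basis of $\mathcal W(\Delta)$, which is the basis used in \autoref{Rfgh}. Substituting into \autoref{Rfgh} with $\mathbf p=\Delta$ gives an expansion
\[
\Delta\!\left(\frac{\partial}{\partial x}\right)(fgh)=\sum_{l,m,n=0}^{r}\sum_{i,j,k}a^{(lmn)}_{ijk}\left(p_{i,l}\!\left(\frac{\partial}{\partial x}\right)\!f\right)\!\left(p_{j,m}\!\left(\frac{\partial}{\partial x}\right)\!g\right)\!\left(p_{k,n}\!\left(\frac{\partial}{\partial x}\right)\!h\right),
\]
with coefficients $a^{(lmn)}_{ijk}=\sum_{q,\ell}(\Delta,p_{i,l}\,p_{\ell,q})_F\,(p_{\ell,q},p_{j,m}\,p_{k,n})_F$.

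Finally, I would exploit the homogeneity of the Fischer pairing. Since $\Delta$ is homogeneous of degree $r$ and $p_{i,l}\,p_{\ell,q}$ is homogeneous of degree $l+q$, the factor $(\Delta,p_{i,l}\,p_{\ell,q})_F$ vanishes unless $q=r-l$; similarly $(p_{\ell,q},p_{j,m}\,p_{k,n})_F$ vanishes unless $q=m+n$. Combining these constraints forces $l+m+n=r$, so all other terms drop out and the displayed identity in \autoref{Deltafgh} is exactly what remains.

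The only non-routine step is the graded decomposition $\mathcal W(\Delta)=\bigoplus_k\mathcal W(\Delta_k)$; everything else is a mechanical consequence of \autoref{Rfgh} and the Fischer-orthogonality of homogeneous components. This step is not really an obstacle here because it is furnished by the Faraut--Gindikin theory already invoked in the background, but it is the one point where real structural input beyond Leibnitz's rule is used.
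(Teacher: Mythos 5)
Your proof is correct and follows exactly the route the paper leaves implicit: the paper states \autoref{Deltafgh} as a direct specialization of \autoref{Rfgh} without writing out a justification, and your argument supplies the missing pieces (the graded decomposition $\mathcal W(\Delta)=\bigoplus_{k}\mathcal W(\Delta_k)$ from \cite{FG}, the Fischer-orthogonality of distinct homogeneous degrees so that $\bigcup_k\{p_{i,k}\}$ is an orthonormal basis of $\mathcal W(\Delta)$, and the degree count in the Fischer pairings that forces $l+m+n=r$). Nothing is missing or incorrect.
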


The vector  space $V\times V$ is naturally endowed with an inner product, and so is the vector space $\mathcal P(V\times V)$ of polynomial functions on $V\times V.$ To each $p\in \mathcal P(V\times V)$, we associate a differential operator $p\left(\frac{\partial}{\partial x}, \frac{\partial}{\partial y} \right)$. When $p$ is given by $p(x,y) = q(x-y)$, with $q\in\mathcal{P}(V)$, we use the notation $p\left(\frac{\partial}{\partial x}, \frac{\partial}{\partial y} \right) = q\left(\frac{\partial}{\partial x}-\frac{\partial}{\partial y}\right)$.

\begin{theorem}\label{Dst}
 For any $s,t\in \mathbb C$, there exists a differential operator $D_{s,t}$ on $V\times V$ such that for any smooth function f on $\Omega \times \Omega$,
\begin{equation}\label{DstOmega}
\Delta\left( \frac{\partial}{\partial x} -  \frac{\partial}{\partial y}\right) \Delta(x)^s\Delta(y)^t f(x,y) =\Delta(x)^{s-1} \Delta(y)^{t-1} \left(D_{s,t} f\right)(x,y).
\end{equation}
The operator $D_{s,t}$ has polynomial coefficients in $x,y$ and in $s,t$.
\end{theorem}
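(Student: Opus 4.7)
The plan is to apply the generalized Leibnitz formula \autoref{Rfgh} on the product space $V\times V$, with $\mathbf{p}(x,y)=\Delta(x-y)$, to the product $\Delta(x)^s\cdot\Delta(y)^t\cdot f(x,y)$, and then reduce each derivative landing on $\Delta(x)^s$ and $\Delta(y)^t$ via the Bernstein identity \autoref{BS-for-p}. The preliminary step is to identify $\mathcal W(\Delta(x-y))\subseteq\mathcal P(V\times V)$: every partial derivative of $\Delta(x-y)$ has the form $q(x-y)$ with $q$ a partial derivative of $\Delta$ on $V$, so $\mathcal W(\Delta(x-y))=\{\,q(x-y):q\in\mathcal W(\Delta)\,\}$. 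Under the linear change of variables $u=x-y$, the Fischer inner product on $V\times V$ restricted to this space is (up to explicit degree-dependent powers of $2$) pulled back from the Fischer inner product on $V$, so a Fischer-orthonormal basis of $\mathcal W(\Delta(x-y))$ is obtained from the $V$-level bases $\{p_{j,k}\}_{1\le j\le d_k}$ of the irreducible components $\mathcal W(\Delta_k)$.

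With this basis, the three-function version of Leibnitz expands $\Delta(\partial_x-\partial_y)[\Delta(x)^s\Delta(y)^t f]$ as a finite sum, over triples $(l,m,n)$ with $l+m+n=r$ and basis indices $(i,j,k)$, of terms of the form
\[
a^{(l,m,n)}_{i,j,k}\,\bigl(p_{i,l}(\partial_x-\partial_y)\Delta(x)^s\bigr)\bigl(p_{j,m}(\partial_x-\partial_y)\Delta(y)^t\bigr)\bigl(p_{k,n}(\partial_x-\partial_y)f\bigr).
\]
The crucial simplification is that, since $p_{i,l}$ is homogeneous of degree $l$ and $\Delta(x)^s$ does not depend on $y$, only the pure $\partial_x$ part of $p_{i,l}(\partial_x-\partial_y)$ contributes; hence $p_{i,l}(\partial_x-\partial_y)\Delta(x)^s=p_{i,l}(\partial_x)\Delta(x)^s$ and, by the same reasoning, $p_{j,m}(\partial_x-\partial_y)\Delta(y)^t=(-1)^m p_{j,m}(\partial_y)\Delta(y)^t$. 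Applying \autoref{BS-for-p} to each factor yields $b_{l,d}(s)\,p_{i,l}^\sharp(x)\,\Delta(x)^{s-1}$ and $(-1)^m b_{m,d}(t)\,p_{j,m}^\sharp(y)\,\Delta(y)^{t-1}$. Factoring out $\Delta(x)^{s-1}\Delta(y)^{t-1}$ then reads off \autoref{DstOmega} with $D_{s,t}$ given by a finite sum whose coefficients are products of $p^\sharp_{i,l}(x)\,p^\sharp_{j,m}(y)$ (polynomial in $(x,y)$) and $b_{l,d}(s)\,b_{m,d}(t)$ (polynomial in $(s,t)$), times the constant-coefficient operator $p_{k,n}(\partial_x-\partial_y)$ acting on $f$.

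The main obstacle is organizational rather than conceptual: one must set up the orthonormal decomposition of $\mathcal W(\Delta(x-y))$ and its relation to the $V$-level bases of $\mathcal W(\Delta_k)$ cleanly, tracking the rescalings from $u=x-y$, and verify that Proposition~\autoref{Deltafgh} extends verbatim with $\mathbf{p}=\Delta(x-y)$ on $V\times V$. An alternative route avoids naming an orthonormal basis altogether by using the coordinate-free Fischer-pairing form \autoref{Rfg2} of Leibnitz twice — first to split $\Delta(x)^s\Delta(y)^t$ from $f$, then to split $\Delta(x)^s$ from $\Delta(y)^t$ — at the cost of an extra computation of the polynomials $\mathbf{p}^\flat_j$ at the intermediate stage; either path produces the same $D_{s,t}$ with the stated polynomial dependence in $(x,y,s,t)$.
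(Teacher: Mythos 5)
Your argument is correct and ultimately produces the same $D_{s,t}$, but it packages the computation differently from the paper. The paper first changes variables to $u=x$, $v=x+y$, so that $\Delta\left(\frac{\partial}{\partial x}-\frac{\partial}{\partial y}\right)$ becomes $\Delta\left(\frac{\partial}{\partial u}\right)$ with $v$ frozen; it then applies Proposition~\autoref{Deltafgh} (the three-function Leibnitz formula with $\mathbf{p}=\Delta$ on $V$) to $\Delta(u)^s\cdot\Delta(v-u)^t\cdot f(u,v-u)$, invokes the Bernstein identity \autoref{BS-for-p} on the first two factors, and finally converts the surviving factor $p_{i,l}\left(\frac{\partial}{\partial u}\right)f(u,v-u)$ back into $p_{i,l}\left(\frac{\partial}{\partial x}-\frac{\partial}{\partial y}\right)f(x,y)$ via the chain rule. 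You instead stay on $V\times V$ and apply the Leibnitz expansion with $\mathbf{p}(x,y)=\Delta(x-y)$, identifying $\mathcal{W}(\Delta(x-y))$ with $\mathcal{W}(\Delta)$ pulled back under $u=x-y$ (with the Fischer norm rescaled by $2^k$ in each degree $k$) and using the cancellations $p\left(\frac{\partial}{\partial x}-\frac{\partial}{\partial y}\right)\Delta(x)^s=p\left(\frac{\partial}{\partial x}\right)\Delta(x)^s$ and $p\left(\frac{\partial}{\partial x}-\frac{\partial}{\partial y}\right)\Delta(y)^t=(-1)^{\deg p}p\left(\frac{\partial}{\partial y}\right)\Delta(y)^t$ to reduce the first two Leibnitz factors to the $V$-level Bernstein identity. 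Both routes exploit the same underlying fact — that $\frac{\partial}{\partial x}-\frac{\partial}{\partial y}$ acts like $\frac{\partial}{\partial u}$ on any function of a single argument — but the paper encodes it up front in the change of coordinates, which keeps all Fischer calculus on $V$ at the cost of a translation step at the end, while you encode it factor by factor after expanding on $V\times V$, which keeps the variables $(x,y)$ intact at the cost of the orthonormal-basis and rescaling bookkeeping you flag. Either is complete once that bookkeeping is carried out, and both deliver $D_{s,t}$ with coefficients polynomial in $(x,y)$ and in $(s,t)$.
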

\begin{proof} Let $\varphi$ and $\psi$ be two functions   so that 
$$\varphi(u,v) = \psi(u,v-u) \quad\text{ or equivalently} \quad \varphi(x,x+y) = \psi(x,y),$$
where $(u,v)\in\{(u,v)\in V\times V\;:\; u\in \Omega, v-u\in \Omega\}$ or equivalently $x,y\in \Omega$. Then
$$\Delta \left(  \frac{\partial}{\partial x} -   \frac{\partial}{\partial y} \right) \psi(x,y) =  \Delta\left( {\frac{\partial}{\partial u} } \right)\varphi   (u,v)_{\big | u=x, \,v=x+y}.$$
Assume that $\psi(x,y)=\Delta(x)^s\Delta(y)^tf(x,y).$ Then $\varphi(u,v)= \Delta(u)^s \Delta(v-u)^t f(u, v-u).$ 
Fix $v$ and apply now \autoref{Deltafgh} on the  open set $\Omega \cap (-v+\Omega)$ of $V$ to get
$$\Delta\left( {\frac{\partial}{\partial u} } \right)\left(\Delta(u)^s \Delta(v-u)^t f(u, v-u) \right)  =
\sum_{l=0}^r \sum_{i=1}^{d_l} 
q_{i,l}(u,v;s,t)\left(p_{i,l} \left( \frac{\partial}{\partial u} \right) f \right)(u, v-u),$$
where
$$q_{i,l}(u,v;s,t)=\sum_{m,n\geq 0 \atop    m+n=r-l}
\sum_{j=1}^{d_m}\sum_{k=1}^{d_n}  a_{ijk}^{(lmn)}  \,
\left(p_{j,m}\left(  \frac{\partial}{\partial u}  \right) \Delta(u)^s  \right)
 \left(p_{k,n}\left ( \frac{\partial}{\partial u}  \right) 
\Delta(v-u)^t\right).$$
Since 
$$ p_{k,n} \left( \frac{\partial}{\partial u} \right) 
\Delta(v-u)^t = (-1)^n \left(p_{k,n} \left( \frac{\partial}{\partial u}  \right)
\Delta(\,\cdot\,)^t \right)(v-u),$$
it follows from \autoref{BS-for-p} that 
\begin{multline}p_{j,m} \left(  \frac{\partial}{\partial u}  \right) \Delta(u)^s\, p_{k,n}  \left( \frac{\partial}{\partial u} \right) \Delta(v-u)^t  = \\
(-1)^n\,b_{m,d}(s) b_{n,d}(t) \Delta(u)^{s-1} \Delta (v-u)^{t-1}  p_{j,m}^\sharp(u)\, p_{k,n}^\sharp(v-u).
\end{multline}
The final formula is now a matter of putting the pieces together.
 \end{proof}

\addsubsection{Extension of $D_{s,t}$ to a  complex   Jordan algebra}
Let $\mathbb V$ be a simple complex Jordan algebra, and let $V$ be a euclidean real form of $\mathbb V$.  The differential operator $D_{s,t}$ constructed in  \autoref{Dst} has a natural extension to $\mathbb V\times \mathbb V$ as a holomorphic differential operator $\mathbb D_{s,t}$ by replacing $\frac{\partial}{\partial x}$ and $\frac{\partial}{\partial y}$ by $\frac{\partial}{\partial z}$ and $\frac{\partial}{\partial w}$, respectively, and extending holomorphically the polynomial coefficients to $\mathbb V\times \mathbb V$.

\begin{theorem}\label{Dsthol}
 For any smooth function f on $\mathbb V^\times \times \mathbb V^\times$, we have
\begin{equation}\label{DstRC2}
\Delta\left( \frac{\partial}{\partial z} -  \frac{\partial}{\partial w}\right) \Delta(z)^s\Delta(w)^t f(z,w) =\Delta(z)^{s-1} \Delta(w)^{t-1} \left({\mathbb D}_{s,t} f\right)(z,w),
\end{equation}
where the powers of $\Delta$ are computed with respect to a (any) local branch of $\log \Delta(\, .\,)$ near $z$ and near $w$.
\end{theorem}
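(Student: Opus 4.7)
The strategy is to analytically continue \autoref{Dst} from $\Omega\times\Omega$ to $\mathbb V^\times\times \mathbb V^\times$ in two steps: first for $f$ holomorphic, by the identity principle, and then for arbitrary smooth $f$, by a symbol-matching argument adapted to Wirtinger differential operators.

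I would fix $(z_0,w_0)\in\mathbb V^\times\times\mathbb V^\times$ and choose a simply connected neighborhood $\mathcal U$ of this point on which branches of $\log\Delta(z)$ and $\log\Delta(w)$ have been selected compatibly with a chain of overlapping simply connected open sets linking $\mathcal U$ to $\Omega\times\Omega$, exactly as in the proof of \autoref{BSdethol}. Suppose first that $f$ is holomorphic on $\mathcal U$. Then both sides of \autoref{DstRC2} are holomorphic on $\mathcal U$: on the left, because the Wirtinger derivatives of the holomorphic integrand $\Delta(z)^s\Delta(w)^t f(z,w)$ are again holomorphic; on the right, because $\mathbb D_{s,t}$ has holomorphic (in fact polynomial) coefficients. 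On any small piece of the real slice $\mathcal U'\subset\mathcal U\cap(\Omega\times\Omega)$ reached by the chain, the Wirtinger derivatives of a holomorphic $f$ reduce to the ordinary real partial derivatives of $f|_{\Omega\times\Omega}$, the branches of $\log\Delta$ reduce to the positive real logarithm, and $\mathbb D_{s,t}|_{V\times V}=D_{s,t}$ by the very definition of $\mathbb D_{s,t}$. Thus on $\mathcal U'$ the identity \autoref{DstRC2} coincides with \autoref{Dst}, and the identity principle for holomorphic functions propagates the equality along the chain to all of $\mathcal U$.

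For the second step, I would observe that each side of \autoref{DstRC2}, viewed as a linear operator in $f$, is a differential operator on $\mathcal U$ whose coefficients are holomorphic and which involves \emph{only} the Wirtinger derivatives $\tfrac{\partial}{\partial z}$ and $\tfrac{\partial}{\partial w}$ (no antiholomorphic derivative occurs either in the Leibniz expansion on the left or in $\mathbb D_{s,t}$ on the right). Two such operators are determined by their values on holomorphic test functions: evaluating both sides at $(z_0,w_0)$ on the holomorphic monomial $f(z,w)=(z-z_0)^{\alpha}(w-w_0)^{\beta}/(\alpha!\,\beta!)$ recovers the coefficient of $\tfrac{\partial^{|\alpha|+|\beta|}}{\partial z^{\alpha}\partial w^{\beta}}$. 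The first step gives equality on all holomorphic $f$, hence the two operators have identical coefficients and therefore agree on every smooth $f$. The only real obstacle is the bookkeeping with branches in the first step; once one verifies that the branch of $\log\Delta$ chosen on $\mathcal U$ matches the positive real logarithm on $\Omega$, the identity principle and the symbol argument close the proof.
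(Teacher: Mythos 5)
Your proof is correct and takes essentially the same approach as the paper: the paper reduces to holomorphic $f$ (observing both sides are holomorphic differential operators, which is the content of your symbol-matching step) and then invokes the analytic continuation from $\Omega\times\Omega$ exactly as in the proof of \autoref{BSdethol}. You have simply spelled out the justification of the reduction, which the paper states without proof.
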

\begin{proof} To prove this equality between two holomorphic differential operators, it is enough to prove the equality for  a holomorphic function $f$  on $\mathbb V^\times \times \mathbb V^\times$. But then, arguing as in the proof of  \autoref{BSdethol}, the equality follows from  \autoref{Dst} by analytic continuation, 
\end{proof}

\addsubsection{The construction  for  a  real  Jordan algebra of type IV}
Let $\mathbb V$ be a simple complex Jordan algebra, for which we keep notation as in the previous subsection. In particular $\Delta$ denotes its determinant. When $\mathbb{V}$ is viewed  as a (simple) real   Jordan algebra $V,$ its determinant polynomial is given by
$\det(z) = \Delta(z) \overline{\Delta(z)}.$

For  a holomorphic differential operator $\mathbb D$  on $\mathbb V$ with polynomial coefficients,  we let $\overline{\mathbb D}$ be its associated conjugate-holomorphic  differential operator.

\begin{theorem}For any smooth function $f$  defined on $ V^\times \times  V^\times$,  we have
\begin{equation}\label{doubleDst}
\det\left(\frac{\partial}{\partial z}-\frac{\partial}{\partial w}\right)\det(z)^s\det(w)^tf(z,w)= \det(z)^{s-1}\det(w)^{t-1} \left({\mathbb D}_{s,t}\overline{{\mathbb D}}_{s,t}f\right)(z,w).
\end{equation}
\end{theorem}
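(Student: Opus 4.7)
The plan is to factor both the real constant-coefficient operator $\det\bigl(\tfrac{\partial}{\partial z}-\tfrac{\partial}{\partial w}\bigr)$ and the multiplier $\det(z)^s\det(w)^t$ into a holomorphic and an anti-holomorphic piece, then apply \autoref{Dsthol} twice: once in the holomorphic Wirtinger variables and once in the anti-holomorphic ones. Throughout, one chooses a local branch of $\log\Delta(z)$ near the point of interest and the conjugate branch for $\log\overline{\Delta(z)}$; since $\det(z)=|\Delta(z)|^2>0$ on $V^\times$, the product $\Delta(z)^s\,\overline{\Delta(z)}^{s}=\det(z)^s$ is branch-independent (and similarly for $w$), so no ambiguity arises in the final formula.

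Exactly as in the proof of \autoref{BSdetCR}, the real constant-coefficient operator associated with the polynomial $u\mapsto\det(u)=\Delta(u)\overline{\Delta(u)}$ factors as
$$\det\!\left(\tfrac{\partial}{\partial z}-\tfrac{\partial}{\partial w}\right)=\Delta\!\left(\tfrac{\partial}{\partial z}-\tfrac{\partial}{\partial w}\right)\circ\overline{\Delta}\!\left(\tfrac{\partial}{\partial\overline z}-\tfrac{\partial}{\partial\overline w}\right),$$
the two factors commuting since they involve disjoint sets of Wirtinger derivatives. Set $g(z,w):=\overline{\Delta(z)}^{\,s}\,\overline{\Delta(w)}^{\,t}\,f(z,w)$, which is smooth on $V^\times\times V^\times$ and satisfies $\det(z)^s\det(w)^t f(z,w)=\Delta(z)^s\Delta(w)^t\,g(z,w)$. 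Applying the holomorphic factor first, \autoref{Dsthol} gives
$$\Delta\!\left(\tfrac{\partial}{\partial z}-\tfrac{\partial}{\partial w}\right)\Delta(z)^s\Delta(w)^t g(z,w)=\Delta(z)^{s-1}\Delta(w)^{t-1}(\mathbb{D}_{s,t}g)(z,w).$$
Since $\mathbb{D}_{s,t}$ involves only $\tfrac{\partial}{\partial z}$, $\tfrac{\partial}{\partial w}$ with holomorphic polynomial coefficients, it annihilates anti-holomorphic functions; hence $\mathbb{D}_{s,t}g=\overline{\Delta(z)}^{\,s}\,\overline{\Delta(w)}^{\,t}\,\mathbb{D}_{s,t}f$.

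It remains to apply $\overline{\Delta}\bigl(\tfrac{\partial}{\partial\overline z}-\tfrac{\partial}{\partial\overline w}\bigr)$. Complex-conjugating \autoref{Dsthol} (equivalently, running the same proof with the conjugate-holomorphic operator) yields, for every smooth $h$ on $V^\times\times V^\times$, the identity
$$\overline{\Delta}\!\left(\tfrac{\partial}{\partial\overline z}-\tfrac{\partial}{\partial\overline w}\right)\overline{\Delta(z)}^{\,s}\,\overline{\Delta(w)}^{\,t}\,h(z,w)=\overline{\Delta(z)}^{\,s-1}\,\overline{\Delta(w)}^{\,t-1}(\overline{\mathbb{D}}_{s,t}h)(z,w).$$
The holomorphic factors $\Delta(z)^{s-1}\Delta(w)^{t-1}$ produced in the previous step pass through the anti-holomorphic operator unchanged; applying the above with $h=\mathbb{D}_{s,t}f$ and using $\Delta\cdot\overline{\Delta}=\det$ together with the commutation of $\mathbb{D}_{s,t}$ with $\overline{\mathbb{D}}_{s,t}$ (they involve disjoint Wirtinger variables) produces the claimed formula. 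The only conceptually delicate point, namely the factorization of the real operator into Wirtinger pieces and the coherent management of logarithmic branches, has already been carried out in the proof of \autoref{BSdetCR}, so no new analytic work is required here.
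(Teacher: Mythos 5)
Your proof is correct and follows essentially the same route as the paper's: factor the real constant-coefficient operator and the multiplier into a holomorphic Wirtinger piece and its conjugate, invoke \autoref{Dsthol} and its complex conjugate, and use that holomorphic and conjugate-holomorphic differential operators commute. The paper compresses this to ``compose the operator identity \autoref{Dsthol} with its conjugate,'' while you unwind the composition step by step and make the implicit factorization $\det\bigl(\tfrac{\partial}{\partial z}-\tfrac{\partial}{\partial w}\bigr)=\Delta\bigl(\tfrac{\partial}{\partial z}-\tfrac{\partial}{\partial w}\bigr)\circ\overline{\Delta}\bigl(\tfrac{\partial}{\partial \overline z}-\tfrac{\partial}{\partial \overline w}\bigr)$ explicit, which is a helpful clarification. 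One phrase is slightly off: $\mathbb{D}_{s,t}$ does not \emph{annihilate} anti-holomorphic functions (it has a zeroth-order term); what is true, and what you actually use in the displayed identity $\mathbb{D}_{s,t}g=\overline{\Delta(z)}^{\,s}\overline{\Delta(w)}^{\,t}\mathbb{D}_{s,t}f$, is that $\mathbb{D}_{s,t}$ \emph{commutes with multiplication by} anti-holomorphic functions because its Wirtinger derivatives kill anti-holomorphic factors in the Leibniz expansion.
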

\begin{proof} We argue as in the proof of \autoref{BSdetCR}.  Indeed, \autoref{Dsthol} establishes the equality of two holomorphic differential operators. Compose each side with its conjugate differential operator and use the fact that a holomorphic differential operator commutes with
its conjugate-holomorphic differential operator to obtain \autoref{doubleDst}.
\end{proof}

\addsubsection{The construction  for  a real Jordan algebra of type I and II}
 Let $V$ be a  euclidean   or a split non-euclidean Jordan algebra (see \autoref{Appendix-A}).  Then its complexification $\mathbb V$ is a simple  complex Jordan algebra. 
 

 Let $\mathbb D=p\left(z,\frac{\partial}{\partial z}\right)$ be a holomorphic differential operator on $\mathbb{V}$. Then its restriction to $V$  is the differential operator $D=p(x,\frac{\partial}{\partial x})$. We extend this notation to differential operators on $V\times V$.
\begin{theorem}\label{main-id-split} For any smooth function $f$ on $V^\times\times V^\times$, we have
\begin{equation}\label{Dstred}
\det\left(\frac{\partial}{\partial x}-\frac{\partial}{\partial y}\right)\det(x)^{s,\varepsilon}\det(y)^{t,\eta}f(x,y)=\det(x)^{s-1,-\varepsilon}\det(y)^{t-1,-\eta}D_{s,t}f(x,y).
\end{equation}
\end{theorem}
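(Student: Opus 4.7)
The plan is to reduce this to the complex identity of \autoref{Dsthol} by choosing suitable local branches of $\log\Delta$ in neighborhoods of fixed points $x_0,y_0\in V^\times$ in $\mathbb{V}^\times$, exactly as was done in the single-variable proof of \autoref{BSdetred}. Both sides of \autoref{Dstred} are differential operators acting on $f$, so it suffices to check the pointwise identity at an arbitrary $(x_0,y_0)\in V^\times\times V^\times$, which splits naturally into four cases according to the signs of $\det(x_0)$ and $\det(y_0)$.

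First I would fix $(x_0,y_0)$ and pick simply-connected neighborhoods $\mathcal O_{x_0}\subset\mathbb V^\times$ and $\mathcal O_{y_0}\subset \mathbb V^\times$ of $x_0$ and $y_0$. On each neighborhood I choose a local branch of $\log\Delta$: if $\det(x_0)>0$, use $\mathrm{Log}\,\Delta(z)$; if $\det(x_0)<0$, use $\mathrm{Log}(-\Delta(z))+\pi\sqrt{-1}$, and similarly for $y$. With these branches, as in the proof of \autoref{BSdetred}, one has for $x\in V^\times$ close to $x_0$ the identities
\begin{equation*}
\det(x)^{s,\varepsilon}=\alpha(s,\varepsilon,x_0)\,\Delta(x)^{s},\qquad \det(x)^{s-1,-\varepsilon}=\alpha(s,\varepsilon,x_0)\,\Delta(x)^{s-1},
\end{equation*}
where $\alpha(s,\varepsilon,x_0)$ is a phase factor equal to $1$ when $\det(x_0)>0$, and equal to $e^{-s\pi\sqrt{-1}}$ or $e^{(-s+1)\pi\sqrt{-1}}$ (according to $\varepsilon=\pm$) when $\det(x_0)<0$. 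The crucial point, already observed in the proof of \autoref{BSdetred}, is that the \emph{same} phase factor $\alpha(s,\varepsilon,x_0)$ appears in front of both $\Delta(x)^s$ and $\Delta(x)^{s-1}$, and similarly $\beta(t,\eta,y_0)$ appears in front of both $\Delta(y)^t$ and $\Delta(y)^{t-1}$.

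Next I apply \autoref{Dsthol} to the holomorphic function $f$ viewed near $(x_0,y_0)$ in $\mathbb V\times\mathbb V$ (it is enough to establish the identity on such holomorphic $f$, and then invoke analytic continuation / density to conclude for smooth $f$, since both sides are differential operators whose coefficients depend holomorphically on the variables in a neighborhood). This yields
\begin{equation*}
\Delta\!\left(\tfrac{\partial}{\partial z}-\tfrac{\partial}{\partial w}\right)\Delta(z)^{s}\Delta(w)^{t}f(z,w)=\Delta(z)^{s-1}\Delta(w)^{t-1}(\mathbb D_{s,t}f)(z,w).
\end{equation*}
Substituting the branch expressions, the phase factors $\alpha(s,\varepsilon,x_0)\beta(t,\eta,y_0)$ appear on both sides and therefore cancel, and restricting to real $z=x,w=y$ produces exactly \autoref{Dstred} with $D_{s,t}$ the restriction of $\mathbb D_{s,t}$ to $V\times V$.

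The only delicate part is the bookkeeping of phase factors in the four sign cases, but this is handled in the same elementary way as in \autoref{BSdetred}: the key observation is that in passing from $\Delta^{s}$ to $\Delta^{s-1}$ the phase factor is preserved (only the modulus part $|\det|$ changes exponent), while the sign flip $\varepsilon\mapsto -\varepsilon$ is precisely what is needed to match $\mathrm{sign}(\det(x))$ on both sides. Since $(x_0,y_0)$ was arbitrary in $V^\times\times V^\times$, the identity holds everywhere.
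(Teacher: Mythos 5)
Your proof is correct and follows essentially the same route as the paper's: reduce to holomorphic $f$, pick local branches of $\log\Delta$ near $x_0$ and $y_0$ exactly as in the proof of the Bernstein identity \autoref{BSdetred}, and observe that the phase factors appearing on both sides of the identity (for both variables independently) cancel. The paper's proof simply says "argue as in the proof of \autoref{BSdetred} to deduce \autoref{Dstred} from \autoref{Dsthol}"; you have just spelled out that bookkeeping explicitly.
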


\begin{proof}To prove the equality of two differential operators, it is enough to prove that they coincide on  polynomial functions. Hence we may assume that $f$ is the restriction to $V^\times\times V^\times$ of a holomorphic polynomial function on $\mathbb V\times \mathbb V$. We now argue as in the proof of \autoref{BSdetred} to deduce \autoref{Dstred} from \autoref{Dsthol}.
\end{proof}

\addsubsection{The construction   for a  real Jordan algebra of type III}
Let $V$ be a non-split  simple real  Jordan algebra without complex structure. Then its complexification $\mathbb V$ is a simple complex Jordan algebra. 
For   a holomorphic differential operator $\mathbb D$ on $\mathbb V\times \mathbb V$, we denote by $D$ its restriction to $V\times V.$

\begin{theorem} For any smooth function $f$ on $V^ \times\times V^\times$, we have
\begin{equation}\label{Dstnonred}
\det\left(\frac{\partial}{\partial x}-\frac{\partial}{\partial y}\right)\det(x)^{s}\det(y)^{t}f(x,y)= \det (x)^{s-1} \det (y)^{t-1} \big(D_{s,t} f\big)(x,y).
\end{equation}
\end{theorem}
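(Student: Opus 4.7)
The plan is to deduce the identity as a restriction of the holomorphic identity \autoref{Dsthol} from $\mathbb{V}\times\mathbb{V}$ to $V\times V$, following exactly the same strategy as in the proof of the type I/II analogue \autoref{main-id-split} — with the simplification that no sign ambiguity ever arises.

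First, to prove the equality of two differential operators in $f$ it suffices to show they coincide on polynomial test functions; so I would assume that $f$ is the restriction to $V^\times\times V^\times$ of a holomorphic polynomial function on $\mathbb{V}\times\mathbb{V}$. Next I would fix $(x_0,y_0)\in V^\times\times V^\times$. Since $V$ is of type III, one has $\det(x_0)=\Delta(x_0)>0$ and $\det(y_0)=\Delta(y_0)>0$, exactly as used in the proof of \autoref{BSdetnred}. Hence there exist connected neighborhoods $\mathcal{O}_{x_0}$ of $x_0$ and $\mathcal{O}_{y_0}$ of $y_0$ in $\mathbb{V}^\times$ on which $\operatorname{Re}\Delta(z)>0$ and $\operatorname{Re}\Delta(w)>0$. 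On these neighborhoods I would take $\mathrm{Log}\,\Delta(z)$ and $\mathrm{Log}\,\Delta(w)$ as local branches of $\log\Delta$; with these choices one has
\[
\Delta(x)^{s}=\det(x)^{s},\qquad \Delta(y)^{t}=\det(y)^{t},
\]
and similarly for the exponents $s-1$ and $t-1$, for $x\in\mathcal{O}_{x_0}\cap V$ and $y\in\mathcal{O}_{y_0}\cap V$.

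Now I would apply \autoref{Dsthol} on $\mathcal{O}_{x_0}\times\mathcal{O}_{y_0}$ to the holomorphic extension of $f$, and restrict the resulting identity to the real slice $V\times V$. The restriction is compatible with everything in sight: $\det$ is the restriction of $\Delta$, the real operator $\det\bigl(\frac{\partial}{\partial x}-\frac{\partial}{\partial y}\bigr)$ is the restriction of the holomorphic $\Delta\bigl(\frac{\partial}{\partial z}-\frac{\partial}{\partial w}\bigr)$ by the conventional definition adopted in \autoref{Leibnitz}, and by the very definition adopted in the statement's preamble $D_{s,t}$ is the restriction of $\mathbb{D}_{s,t}$. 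This yields \autoref{Dstnonred} at $(x_0,y_0)$; since $(x_0,y_0)$ was arbitrary, the identity holds throughout $V^\times\times V^\times$, and the polynomial character of the coefficients of $D_{s,t}$ in $x,y,s,t$ is inherited from $\mathbb{D}_{s,t}$.

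I do not expect a substantive obstacle: this type III case is the cleanest of the four, since $\det>0$ on $V^\times$ removes any need for branch-matching gymnastics and no $\varepsilon$-labels appear. The only subtle point — that distinct local branches might give non-canonical powers — is a non-issue here because the principal branch is globally consistent on the region $\{\operatorname{Re}\Delta>0\}$ containing $V^\times$, just as in the proof of \autoref{BSdetnred}.
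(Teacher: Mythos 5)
Your proposal is correct and follows essentially the paper's own route: the paper's proof is a one-line remark deferring to the argument used for the Bernstein identity \autoref{BSdetnred}, namely reducing to the holomorphic identity \autoref{Dsthol} via the principal branch of $\log\Delta$ on a neighborhood where $\Re\Delta>0$, exactly as you do, together with the reduction to polynomial test functions $f$ borrowed from the type I/II case \autoref{main-id-split}. No substantive difference.
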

\begin{proof} The proof goes along the same lines as that  of the Bernstein identity \autoref{BSdetnred}. 
\end{proof}

\addsubsection{General formulation of the main identity}
In summary we have proved the following statement.
\begin{theorem}[Main identity]\label{Dstgen}
 Let $V$ be a simple real Jordan algebra $V$. For $(s,\varepsilon)$ and $(t,\eta)$ in $\mathbb{C}\times\{\pm\}$, there exists a differential operator $D_{s,t}$ on $V\times V$ such that for any smooth function $f$ on $V^\times \times V^\times$,
\begin{equation*}
\det\left(\frac{\partial}{\partial x}-\frac{\partial}{\partial y}\right)\det(x)^{s,\varepsilon}\det(y)^{t,\eta}f(x,y)=\det(x)^{s-1,-\varepsilon}\det(y)^{t-1,-\eta}D_{s,t}f(x,y).
\end{equation*} 
The differential operator $D_{s,t}$ has polynomial coefficients in $x,y$ and in $s,t$.
\end{theorem}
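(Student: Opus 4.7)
The plan is to prove the theorem by case analysis based on the classification of simple real Jordan algebras recalled in Appendix A, since each of the four types has already been handled in the preceding subsections. Specifically, types I (euclidean) and II (split non-euclidean) are covered by \autoref{main-id-split}, type III (non-split without complex structure) by \eqref{Dstnonred}, and type IV (complex Jordan algebra viewed as real) by \eqref{doubleDst}. In each instance the operator $D_{s,t}$ is the one produced by the type-specific construction, and its polynomial dependence on $s,t$ is inherited from the Bernstein polynomials $b_{k,d}$ of \eqref{bkl}, which are polynomial in $s$.

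Reviewing the common strategy underlying all four type-specific proofs, one begins with the euclidean case on the symmetric cone $\Omega \times \Omega$. The change of variables $u = x$, $v = x+y$ converts $\det(\partial/\partial x - \partial/\partial y)$ acting on $\Delta(x)^s \Delta(y)^t f(x,y)$ into $\Delta(\partial/\partial u)$ acting on $\Delta(u)^s \Delta(v-u)^t f(u,v-u)$. Applying the generalized Leibnitz formula of Section~\ref{Leibnitz} with $\mathbf{p} = \Delta$ and an orthonormal basis $\{p_{j,k}\}$ of each isotypic component $\mathcal{W}(\Delta_k)$, and then invoking the Bernstein-type identity \eqref{BS-for-p} to evaluate each $p_{j,m}(\partial/\partial u)\Delta(u)^s$ and $p_{k,n}(\partial/\partial u)\Delta(v-u)^t$, one extracts the prefactor $\Delta(x)^{s-1}\Delta(y)^{t-1}$ explicitly and reads off the polynomial coefficients of $D_{s,t}$.

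Passage from $\Omega \times \Omega$ to the other settings proceeds by analytic continuation. Extending the polynomial coefficients of $D_{s,t}$ holomorphically yields the identity \eqref{DstRC2} on $\mathbb{V}^\times \times \mathbb{V}^\times$ for any local branch of $\log\Delta$. Restriction to a real form of $\mathbb{V}$ handles types I, II, and III, while for type IV one composes the holomorphic identity with its conjugate-holomorphic counterpart, exploiting $\det(z) = \Delta(z)\overline{\Delta(z)}$ and the commutativity of holomorphic operators with anti-holomorphic ones.

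The most delicate point, and the main obstacle, is the bookkeeping of the signs $\varepsilon, \eta \in \{\pm\}$ in the split types I and II. On a neighborhood of a point $x_0$ with $\det(x_0) < 0$, one must switch to the non-principal branch $\mathrm{Log}(-\Delta(z)) + \pi\sqrt{-1}$, and the resulting phase factors $e^{\pm s\pi\sqrt{-1}}$ conspire, precisely because of the convention \eqref{t-plus-moins}, to produce the flip $\varepsilon \mapsto -\varepsilon$ and $\eta \mapsto -\eta$ on the right-hand side. The identity would not hold as stated if one used the naive $\det(x)^s$ in place of the signed powers $\det(x)^{s,\varepsilon}$, so this sign tracking, rather than any computation, is what makes the unified formulation of the main identity nontrivial.
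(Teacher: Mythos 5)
Your proposal reconstructs the paper's proof faithfully: the authors establish Theorem~\ref{Dstgen} by exactly the case analysis you describe, first proving \autoref{Dst} on $\Omega\times\Omega$ via the change of variables $u=x$, $v=x+y$ together with the generalized Leibnitz formula and the Bernstein identity \eqref{BS-for-p}, then extending holomorphically to $\mathbb V^\times\times\mathbb V^\times$ in \autoref{Dsthol}, and finally restricting to each real type with the branch-of-$\log\Delta$ argument you highlight. Your remark that the signed powers $\det(x)^{s,\varepsilon}$ are what make the split-type statement hold (via the phase factors coming from the branch $\mathrm{Log}(-\Delta(z))+\pi\sqrt{-1}$) matches the key observation in the paper's proof of \autoref{BSdetred} and \autoref{main-id-split}.
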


\section{Local Zeta functional equations}\label{sec-Zeta}
Let $V$  be a simple  real  Jordan algebra. 
Let  $\mathcal{S}(V)$  be the space of rapidly decreasing smooth functions on $V$ and let $\mathcal{S}'(V)$ be its dual, the space of tempered distributions on $V$.    

For $(s,\varepsilon)\in\mathbb{C}\times\{\pm\}$, 
we consider the following \emph{local Zeta integrals}
\begin{equation}\label{defzeta}
Z_{s,\varepsilon}(f)=\int_V f(x) \det(x)^{s,\varepsilon}dx,\quad  f\in \mathcal{S}(V),
\end{equation}
where $dx$ is the Lebesgue measure on $V$ and $\det(x)^{s,\varepsilon}$ is defined by \autoref{t-plus-moins}.  For $s\in\mathbb{C}$ with $\Re\, s>0$, the function $\vert\det(x)^{s,\varepsilon}\vert$ is  locally integrable on $V$. Therefore, 
\autoref{defzeta} defines a tempered distribution on $\mathcal S'(V)$.

When $\varepsilon = +$, it is known that the $ \mathcal{S}'(V)$-valued  function $s\mapsto Z_{s,+}$ extends to a meromorphic function on $\mathbb{C}$ (see for instance \cite{FK} for the euclidean case, \cite{BSZ} for the non-euclidean case except $\mathbb{R}^{p,q}$ and \cite{GS} for $\mathbb{R}^{p,q}$).
 
 Suppose now $\varepsilon=-$.
When $V$ is non-split (that is of type III and IV), $\det (x)$ is non-negative on $V$, therefore $Z_{s,-} = Z_{s,+}$ and we may drop the index $\pm$ in the notation.
When $V$ is split (that is of type I and II), the   meromorphic extension of $Z_{s,-}$ can be deduced  from the Bernstein  identity  \autoref{BSdetred} for the split case. 

Below, we will compute the Fourier transform of the distributions $Z_{s,\varepsilon}$. It is a classical subject in the literature under the name of \emph{local Zeta functional equation} (see  e.g.  \cite{BSZ, BR, Muller, SF, SS, Kayoya}).

The dual space $V'$   of $V$ will be identified with $V$ via the non-degenerate bilinear form
$$(x,y) =\tr(xy).$$
The Fourier transform of $f\in\mathcal{S}(V)$ is defined by  
\begin{equation}\label{def-Fourier}
\mathcal{F}(f)(x)=\hat{f}(x)=\int_V e^{2\sqrt{-1}\pi(x, y)}f(y)dy,
\end{equation} 
and extend it by duality to  the space  $\mathcal{S}'(V)$ of tempered distributions.
For $p\in\mathcal{P}(V)$, we recall the following classical 
formulas:
\begin{equation}\label{formule-Fourier}
   \mathcal{F}\left(p\left(\frac{\partial}{\partial x}\right)f\right)(x) =p\left(-2\pi\sqrt{-1}x\right)\mathcal{F}(f)(x),\;\;
  \mathcal{F}\left(pf\right)(x) =p\left(\frac{1}{2\pi\sqrt{-1}} \frac{\partial}{\partial x}\right)\mathcal{F}(f)(x).
 \end{equation}

 \addsubsection{Zeta functional equations for a  non-euclidean  Jordan algebra except $ \mathbb R^{p,q}$} 
    In this subsection  $V$ is of type II, III and IV except $\mathbb R^{p,q}$. The case $V=\mathbb{R}^{p,q}$ will be treated in \autoref{sec-Rpq}.    Recall from above that   the determinant polynomial $\det $ takes only positives values whenever $V$ is of type III and IV  ($V$ is non-split), while $\det $ takes positives   as well as negative values whenever $V$  is of type II ($V$ is split).
  
Recall that  $r_+$ denotes the split rank of $V$, and therefore   $r=r_+$ if $V$ is split and 
   $r=2r_+$ if $V$ is non-split; see \autoref{background}. For  $s\in\mathbb{C}$, let 
   \begin{equation}\label{Gamma-Non-Eucl}
   \Gamma_V(s):=\prod_{k=1}^{r_+}\Gamma\left(\frac{s}{2}-(k-1)\frac{d}{4}\right).
   \end{equation}






The following result can be found in \cite[Theorem 4.4]{BSZ}.  Our $\det$ is related to $\nabla$  in \cite{BSZ} by the relation $|\det|=\nabla$ whenever $V$ is split and  by $|\det|=\nabla^2$ whenever  $V$ is non-split.
\begin{theorem}\label{E-F-Z-plus}
 For every $s\in\mathbb{C}$,
the Fourier transform of the tempered distribution $Z_{s,+}$ is given by
\begin{equation}\label{eq-E-F-Z-plus}
\mathcal{F}(Z_{s,+})= \begin{cases}
\displaystyle \pi^{-rs-\frac{n}{2}}\frac{\Gamma_V\left(s+\frac{n}{r}\right)}{\Gamma_V(-s)}Z_{-s-\frac{n}{r},+}&\text{ Type II (split case)}, \\
\displaystyle \pi^{-rs-\frac{n}{2}}\frac{\Gamma_V\left(2s+\frac{2n}{r}\right)}{\Gamma_V\left(-2s\right)}Z_{-s-\frac{n}{r},+}&\text{ Type III and IV (non-split case)}. \\
\end{cases}
\end{equation}
\end{theorem}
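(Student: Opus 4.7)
The statement is quoted from \cite[Theorem 4.4]{BSZ}, so my first step would be to verify the translation between the two formulations. The function $\nabla$ of \cite{BSZ} is related to our determinant by $|\det|=\nabla$ in the split case and $|\det|=\nabla^2$ in the non-split case; on top of this one must check that our Fourier transform convention \autoref{def-Fourier} and Lebesgue normalization match those of \cite{BSZ}, carefully tracking the overall factor $\pi^{-rs-n/2}$ that arises from these normalizations.

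For a self-contained proof I would proceed in three steps. First, \emph{meromorphic continuation}: for $\Re s>0$ the integral \autoref{defzeta} converges absolutely since $|\det(x)|^s$ is locally integrable off $\{\det=0\}$; the Bernstein identities \autoref{BSdetred}, \autoref{BSdetnred} and \autoref{BSdetCR} yield, in $\mathcal{S}'(V)$, an identity of the form
\[\det\!\left(\tfrac{\partial}{\partial x}\right)Z_{s+1,\varepsilon}=c(s)\,Z_{s,-\varepsilon}\]
in the split case (and the analogous one without $\pm$ in the non-split case), with $c(s)$ an explicit polynomial. This propagates $Z_{s,+}$ meromorphically to all of $\mathbb C$, and hence so does $\mathcal F(Z_{s,+})$. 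Second, \emph{identification up to scalar}: $Z_{s,+}$ is homogeneous of degree $rs$ and semi-invariant under $\Str(V)^0$ via $g\mapsto|\chi(g)|^s$, where $\det(gx)=\chi(g)\det(x)$. By Fourier duality $\mathcal F(Z_{s,+})$ carries the dual transformation law, and a standard uniqueness argument -- using that $V^\times$ is a single $\Str(V)^0$-orbit in the non-split case and splits into two open orbits distinguished by the sign of $\det$ in the split case -- forces it to be a scalar multiple of $Z_{-s-n/r,+}$, the ``$+$'' piece being selected by parity of the relevant character.

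Third, and this is where the main work lies, \emph{computation of the scalar}: testing both sides against a Gaussian $f_0(x)=e^{-\pi\langle x,x\rangle}$ attached to a Cartan involution $\alpha$ reduces the problem to evaluating $\int_V e^{-\pi\langle x,x\rangle}|\det(x)|^s\,dx$. Using the decomposition $V=V_+\oplus V_-$ from \autoref{background} and polar coordinates on $V_+$ associated to a Jordan frame $\{c_1,\dots,c_{r_+}\}$, the integral factors via a Hua-type Jacobian $\prod_{i<j}|\lambda_i-\lambda_j|^d$ into a product of one-dimensional $\Gamma$-integrals. The Gaussian contribution of $V_-$ produces an elementary constant which is absorbed into $\pi^{-rs-n/2}$, while the eigenvalue integrals give $\Gamma_V(s+n/r)$ in the split case. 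In the non-split case $\det$ restricted to the spectral slice behaves as the square of an underlying euclidean determinant (explicitly via $\det=\Delta\bar\Delta$ in Type IV), so $|\det|^s$ becomes $|\lambda_i|^{2s}$ in the radial variables, doubling the argument of every $\Gamma$ and yielding $\Gamma_V(2s+2n/r)$. Combined with the Fourier-reflection relation $Z_{s,+}\leftrightarrow Z_{-s-n/r,+}$, this produces the quotient in \autoref{eq-E-F-Z-plus}. The principal obstacle is precisely this Gaussian calculation: one must verify the polar-decomposition formula with its Jacobian in each of Types II, III and IV, and carefully track the contribution of $V_-$ so that the final constant matches on the nose.
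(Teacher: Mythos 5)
The paper gives no proof of this theorem: the result is quoted from \cite[Theorem 4.4]{BSZ}, together with the dictionary $|\det|=\nabla$ in the split case and $|\det|=\nabla^2$ in the non-split case, exactly as in your opening paragraph, so on that front you have matched the paper. Your three-step self-contained argument is a genuinely different route, and its first two steps are sound: the Bernstein identity gives the meromorphic continuation, and semi-invariance under the full (disconnected) structure group together with the standard uniqueness theorem for homogeneous semi-invariant tempered distributions pins $\mathcal F(Z_{s,+})$ down to a scalar multiple of $Z_{-s-\frac{n}{r},+}$ for generic $s$.

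The gap is in step three. The Jordan determinant on $V$ is \emph{not} a function of the $V_+$-component alone, so the integral $\int_V e^{-\pi\langle x,x\rangle}|\det(x)|^s\,dx$ does not factor with ``$V_-$ producing an elementary constant.'' Already for $V=\mathrm{Mat}(2,\mathbb R)$, $V_+=\mathrm{Sym}(2,\mathbb R)$, $V_-=\mathrm{Skew}(2,\mathbb R)$: if $s\in V_+$ and $a\in V_-$ has off-diagonal entry $b$, then $\det(s+a)=\det(s)+b^2$, which visibly couples the two pieces. What is actually needed is a polar/singular-value type decomposition of $V^\times$ itself (for $\mathrm{Mat}(m,\mathbb R)$ this is the SVD $x=u\Sigma v^T$ with $u,v$ orthogonal), and its Jacobian is of the form $\prod_{i<j}|\sigma_i^2-\sigma_j^2|$ (with further powers of the $|\sigma_i|$ from the odd part of the isotropy), \emph{not} the euclidean spectral Jacobian $\prod_{i<j}|\lambda_i-\lambda_j|^d$ that you quote. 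It is precisely the appearance of the $|\sigma_i^2-\sigma_j^2|$ factors that makes the Gamma arguments come out correctly, including the doubling in the non-split case, so as written your step three would not reproduce the stated $\gamma$-factor.
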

  
Now let us consider the case $\varepsilon=-$. Recall that when $V$ is non-split non-euclidean  (that is of type III and IV), $Z_{s,-} = Z_{s,+}$.
 
\begin{theorem}\label{E-F-Z-moins} 
Assume that $V $ is a split non-euclidean Jordan algebra ($\not \cong \mathbb R^{p,q}$) of dimension $n$ and rank $r.$
 We have 
\begin{equation*}
 \mathcal{F}(Z_{ s,-}) = (\sqrt{-1})^r\pi^{-rs-\frac{n}{2}}  \frac{\Gamma_V(s+1+\frac{n}{r})}{\Gamma_V(-s+1)}
   Z_{  -s-{n\over r},-}.
  \end{equation*}
 \end{theorem}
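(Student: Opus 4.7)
\emph{Proof proposal.} The plan is to deduce the formula from the already-established Fourier transform of $Z_{s,+}$ (Theorem \autoref{E-F-Z-plus}, split case) by exploiting the Bernstein identity \autoref{BSdetred}. Because $V$ is split, that identity with $\varepsilon=+$ and parameter shifted to $s+1$ reads
\begin{equation*}
\det\!\left(\tfrac{\partial}{\partial x}\right)\det(x)^{s+1,+} \,=\, b_{r,d}(s+1)\,\det(x)^{s,-}
\end{equation*}
on $V^\times$. For $\Re s$ sufficiently large, both sides are locally integrable and $\det(x)^{s+1,+}$ is differentiable enough across $\{\det=0\}$ that the classical and distributional derivatives agree. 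Hence $\det(\tfrac{\partial}{\partial x})\,Z_{s+1,+} = b_{r,d}(s+1)\,Z_{s,-}$ in $\mathcal{S}'(V)$, and analytic continuation in $s$ extends this identity to a meromorphic equality valid on all of $\mathbb{C}$.

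Applying $\mathcal F$, invoking \autoref{formule-Fourier}, and using the homogeneity $\det(\lambda x)=\lambda^r\det(x)$ yields
\begin{equation*}
b_{r,d}(s+1)\,\mathcal F(Z_{s,-}) \,=\, (-2\pi\sqrt{-1})^r\,\det(x)\,\mathcal F(Z_{s+1,+}).
\end{equation*}
I plug Theorem \autoref{E-F-Z-plus} (split case) into the right-hand side to evaluate $\mathcal F(Z_{s+1,+})$, and use the elementary observation
\begin{equation*}
\det(x)\,Z_{u,+} \,=\, Z_{u+1,-} \qquad\bigl(\text{since } \det(x)\,|\det(x)|^u = \mathrm{sgn}(\det(x))\,|\det(x)|^{u+1}\bigr)
\end{equation*}
to recognise the resulting distribution as a scalar multiple of $Z_{-s-n/r,-}$.

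It then remains to verify that the overall scalar matches the one in the statement. Collecting powers of $\pi$ and $\sqrt{-1}$ (note that $\pi^r\cdot\pi^{-r(s+1)-n/2}=\pi^{-rs-n/2}$), the relation I obtain is
\begin{equation*}
\mathcal F(Z_{s,-}) \,=\, \frac{(-1)^r 2^r (\sqrt{-1})^r\,\pi^{-rs-n/2}}{b_{r,d}(s+1)}\,\frac{\Gamma_V(s+1+\tfrac{n}{r})}{\Gamma_V(-s-1)}\,Z_{-s-n/r,-},
\end{equation*}
so the claim reduces to the identity $\Gamma_V(-s+1)/\Gamma_V(-s-1) = ((-1)^r/2^r)\,b_{r,d}(s+1)$. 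This follows by applying $\Gamma(z+1)=z\Gamma(z)$ to each of the $r=r_+$ factors of \autoref{Gamma-Non-Eucl}: the $k$-th ratio equals $-\tfrac{1}{2}-\tfrac{s}{2}-(k-1)\tfrac{d}{4} = -\tfrac{1}{2}\bigl(s+1+(k-1)\tfrac{d}{2}\bigr)$, and the product of these over $k=1,\ldots,r$ is precisely $((-1)^r/2^r)\,b_{r,d}(s+1)$. The only genuinely delicate point in the argument is the first step, i.e.\ promoting the pointwise Bernstein identity to a distributional identity on all of $V$ via the above analytic-continuation argument; everything else is algebraic bookkeeping.
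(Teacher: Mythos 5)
Your proposal is correct and follows the same strategy as the paper: shift the Bernstein identity $\det(\partial)\det(x)^{s+1,+}=b_{r,d}(s+1)\det(x)^{s,-}$ to express $Z_{s,-}$ in terms of $Z_{s+1,+}$, apply $\mathcal F$ together with the formula for $\mathcal F(\det(\partial)\,\cdot\,)$, insert the known functional equation for $Z_{s+1,+}$, and absorb the residual factor of $\det(x)$ into the sign of the resulting Zeta distribution. Your explicit verification of the scalar identity $\Gamma_V(-s+1)/\Gamma_V(-s-1)=((-1)^r/2^r)\,b_{r,d}(s+1)$, and your remark on promoting the pointwise Bernstein identity to a distributional one by analytic continuation, are the only places you spell out what the paper leaves implicit; both are correct.
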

 \begin{proof} 
  Recall from the Bernstein identity \autoref{BSdetred} that\footnote{In this case $r=\mathbbl{r}$ and $d=\mathbbl{d}$.}
$$  \det(x)^{s,-}={1\over {b_{r,d}(s+1)}} \det\left(\frac{\partial}{\partial x}\right)   \det(x)^{s+1,+}.$$
 In view of \autoref{formule-Fourier}  and  the functional equation \autoref{eq-E-F-Z-plus} for   $Z_{s,+}$, we obtain
 \begin{eqnarray*}
 \mathcal{F}(Z_{s,-})
 &=& \frac{1}{b_{r,d}(s+1)} \det(-2\pi\sqrt{-1}x)  \mathcal F(Z_{s+1,+})\\
 &=& \frac{(-2\pi\sqrt{-1})^r}{b_{r,d}(s+1)}  \pi^{-r(s+1)-\frac{n}{2}}\frac{\Gamma_V(s+1+\frac{n}{r})}{\Gamma_V(-s-1)} \det(x)  Z_{-s-1-\frac{n}{r},+}   \\
  &=& \frac{(-2\sqrt{-1})^r  \pi^{-rs-\frac{n}{2}}}{b_{r,d}(s+1)}  \frac{\Gamma_V(s+1+\frac{n}{r})}{\Gamma_V(-s-1)}Z_{-s-\frac{n}{r},-}  \\
  &=&(\sqrt{-1})^r\pi^{-rs-\frac{n}{2}}  \frac{\Gamma_V(s+1+\frac{n}{r})}{\Gamma_V(-s+1)}Z_{-s-\frac{n}{r},-}.  
\end{eqnarray*}
  \end{proof}
 In summary we have proved that
  \begin{equation}\label{const-c(e,s)}
  \mathcal{F}(Z_{s,\varepsilon})=c(s,\varepsilon) Z_{-s-\frac{n}{r},\varepsilon},
  \end{equation}
  where
  \begin{equation}\label{valeur-c(e,s)}
 c(s,\varepsilon)=
  \begin{cases}   
 \displaystyle  \pi^{-rs-\frac{n}{2}}\frac{\Gamma_V\left(s+\frac{n}{r}\right)}{\Gamma_V(-s)}& \text{ if $\varepsilon=+$ and $V$ of type II (split) }\\
    \displaystyle (\sqrt{-1})^r\pi^{-rs-\frac{n}{2}}  \frac{\Gamma_V(s+1+\frac{n}{r})}{\Gamma_V(-s+1)}& \text{ if $\varepsilon=-$ and $V$ of type II (split)  }\\ 
  \displaystyle   \pi^{-rs-\frac{n}{2}}\frac{\Gamma_V\left(2s+\frac{2n}{r}\right)}{\Gamma_V(-2s)} &\text{ if $\varepsilon=\pm$ and $V$ of type III and IV (non-split)}.\\
  \end{cases}
  \end{equation}
  \begin{remark}{\rm 
 One can prove that the function $s\mapsto \widetilde{Z}_{s,\varepsilon}:=\tilde{c}(s,\varepsilon)^{-1}Z_{s,\varepsilon}$ where
$$\tilde{c}(s,\varepsilon)=
\begin{cases}
\Gamma_V(s+\frac{n}{r}) & \text{ if $\varepsilon=+$ and $V$ of type II (split) }\\
\Gamma_V(s+1+\frac{n}{r})  & \text{ if $\varepsilon=-$ and $V$ of type II (split) }\\
\Gamma_V(2s+\frac{2n}{r}) & \text{ if $\varepsilon=\pm$ and $V$ of type III and IV (non-split) },
\end{cases}$$
admits an analytic continuation as entire function of $s$ in $\mathbb{C}$. The case $\varepsilon=+$ goes back to \cite{BSZ}.
}
  \end{remark}  
  
 \addsubsection{Zeta functional equations for euclidean Jordan algebras}
Let  $V$ be a $n$-dimensional simple euclidean Jordan algebra (that is of type I) of rank $r$, and denote as usual by $\Delta$ its determinant. 
Let   $\{c_1,\ldots,c_r\}$ be a Jordan frame of $V.$ It is known (see for instance \cite{FK}) that every $x\in V$ can be written as 
 \begin{equation}\label{dec-spec}
 x=k\Big(\sum_{i=1}^r \lambda_jc_j\Big),
 \end{equation}
 where $\lambda_1\geq\cdots\geq \lambda_r$ and $k$  is an element of  the identity component  of the group of automorphisms of $V$. The $\lambda_i$'s in \autoref{dec-spec} are uniquely determined by $x$. Further, $x$ is invertible if and only if $\lambda_i\not=0$ for all $i$. We say that $x$ is of   {\it signature}   $(r-i, i)$ if $\lambda_1\geq \cdots\geq \lambda_{r-i}>0>\lambda_{r-i+1}\geq \cdots \geq \lambda_r$.
Denote by  $\Omega_i$ the set of all elements of signature $(r-i,i)$. Then the
 set of invertible elements $V^\times$  decomposes into the disjoint union as 
 $$V^\times =\bigcup_{i=0}^r\Omega_i.$$
  In particular $\Omega_0$ coincides with the symmetric cone $\Omega$ of $V$.  
  
  For $s\in\mathbb{C}$, let  
 $$ 
 \Gamma_\Omega(s)
 = (2\pi)^{\frac{n-r}{2}}\prod_{j=1}^r\Gamma\left(s-(j-1)\frac{d}{2}\right)
 $$
 be the Gindikin gamma function. 

For $s\in \mathbb C$, $ f\in\mathcal{S}(V)$ and  $0\leq i\leq r$, the Zeta integrals $Z_i(f,s)$ defined by
 $$Z_i(f,s)=\int_{\Omega_i} f(x)|\Delta(x)|^sdx $$
converges  for $\Re(s)>0$, and has a meromorphic continuation to   $\mathbb{C}$.
Further, they satisfy  the  functional equation
\begin{equation}\label{F-Eq-Eucl}
Z_i(\hat{f},s-\frac{n}{r})=(2\pi)^{-rs}e\left(\frac{rs}{2}\right)\Gamma_{\Omega}(s)\sum_{j=0}^ru_{ij}(s)Z_j(f,-s),
\end{equation}
where $u_{ij}(s)$ are polynomials in  $e\left(-{s}/{2}\right)$ with $e(z):=e^{2\pi\sqrt{-1}z}$; see \cite{SS, SF}.

  Put $x=e(-s/2)$ and write $u_{ij}(x)$ for $u_{ij}(s)$. Then, it is proved in  \cite{SF} that  the matrix coefficients $u_{ij}(x)$  satisfy  
\begin{equation}\label{E-eq-f-FS}
\sum_{i=0}^r y^iu_{ij}(x)=\xi^{-(r-j)}P_j(\xi x,y)P_{r-j}(1,\xi xy), \quad  \forall y\in \mathbb R,
\end{equation} 
where $\xi:=(\sqrt{-1})^{d(r+1)}$ and 
\begin{equation}\label{coef-mat-FS}  
P_j(x,y)=\begin{cases}
(x+y)^j & \text{if $d$ is even},\\
(x+y)^{ \lfloor  \frac{j}{2} \rfloor }(y-x)^{j-\lfloor  \frac{j}{2} \rfloor} &\text{if $d$ is odd}\, .
\end{cases}
\end{equation} 

Recall that if $x\in \Omega_i$,  then $x$ is of signature $(r-i,i)$, and therefore $\Delta(x)=(-1)^{i}|\Delta(x)|$. Thus we may rewrite the local Zeta integrals \autoref{defzeta} in terms of the $Z_i$'s as   follows 
\begin{equation}\label{Z+-Z-and Zj}
 \begin{array}{l @{\; =\;} l}
Z_{s,+}(f)&  \displaystyle \int_V f(x) |\Delta(x)|^sdx= \sum_{i=0}^r Z_i(f,s),\\
Z_{s,-}(f)&  \displaystyle \int_V f(x) \mathrm{sgn}(\Delta(x))|\Delta(x)|^sdx= \sum_{i=0}^r (-1)^i  Z_i(f,s).
 \end{array}
 \end{equation}
That is 
\begin{equation}\label{Z+-Z-pair-impair}
 \begin{array}{l @{\; =\;} l}
\displaystyle Z_{s,+}(f)&  \displaystyle \sum_{k=0}^{\lfloor \frac{r}{2}\rfloor} Z_{2k}(f,s)+ \sum_{k=0}^{\lfloor \frac{r-1}{2}\rfloor} Z_{2k+1}(f,s),\\
\displaystyle Z_{s,-}(f)&  \displaystyle \sum_{k=0}^{\lfloor \frac{r}{2}\rfloor} Z_{2k}(f,s)- \sum_{k=0}^{\lfloor \frac{r-1}{2}\rfloor} Z_{2k+1}(f,s).\\
 \end{array}
\end{equation}

 Let us introduce two more tempered distributions :
 \begin{equation}\label{Z-odd-even}
 Z^{\text{even}}_s(f)=Z^{\text{e}}_s(f)=\sum_{k=0}^{\lfloor \frac{r}{2}\rfloor} (-1)^kZ_{2k}(f,s)\quad\text{and }\quad  Z^{\text{odd}}_s(f)=Z^{\text{o}}_s(f)=\sum_{k=0}^{\lfloor \frac{r-1}{2}\rfloor}  (-1)^kZ_{2k+1}(f,s).
 \end{equation}
 We are now in a position to examine the functional equations for  $Z_{s,+}$ and $Z_{s,-}$.
 
 According to the classification of simple euclidean Jordan algebras (see \cite{FK, SF} or \autoref{Appendix-A}),  we will consider the following (all) possibilities  :
 $$
 \begin{array}{l @{\; :\quad} l}
 \text{Case (a)}  &  d\equiv0\ (\mathrm{mod}\ 4) \text{ or } d\equiv2\ (\mathrm{mod}\ 4) \text{ and } r \text{ odd},\\
  \text{Case (a')}  &  d\equiv 2\ (\mathrm{mod}\ 4) \text{ and }  r \text{ even},\\
   \text{Case (b)} &  r=2 \text{ and }  d \text{ odd},\\
      \text{Case (c)}  &  r  \text{ arbitrary and }  d=1.\\
 \end{array}
 $$
 
 \begin{theorem}\label{E-F-Z-eucl} For $s\in \mathbb C$, let $\gamma(s):=(2\pi)^{-rs}e(\frac{rs}{4})\Gamma_\Omega(s)$. Then   the following functional equations hold.
 
{\bf Case (a)} :  If $d\equiv 0\ (\mathrm{mod}\ 4)$ or  $d\equiv 2\ (\mathrm{mod}\ 4)$ and $r$ odd, then
\begin{equation*}
\mathcal{F}
\begin{pmatrix}
Z_{s,+} \\
Z_{s,-}
\end{pmatrix}
= 2^r \gamma(s+\frac{n}{r}) \mathbf{A}(s) \begin{pmatrix}
Z_{-s-\frac{n}{r},+} \\
Z_{-s-\frac{n}{r},-}
\end{pmatrix},
\end{equation*}
where
$$\mathbf{A}(s)=\begin{pmatrix}
\displaystyle \cos^r(\frac{\pi }{2}(s+\frac{n}{r})) & 0\\
0&\displaystyle  (\sqrt{-1})^r \sin^r(\frac{\pi }{2}(s+\frac{n}{r}))
\end{pmatrix}.$$

{\bf Case (a')} :  If $d\equiv 2\ (\mathrm{mod}\ 4)$ and  $r$ even, then
\begin{equation*}
\mathcal{F}
\begin{pmatrix}
Z_{s,+} \\
Z_{s,-}
\end{pmatrix}
= 2^r \gamma(s+\frac{n}{r}) \mathbf{A}(s) \begin{pmatrix}
Z_{-s-\frac{n}{r},+} \\
Z_{-s-\frac{n}{r},-}
\end{pmatrix},
\end{equation*}
where
$$\mathbf{A}(s)=\begin{pmatrix}
\displaystyle (\sqrt{-1})^r \sin^r(\frac{\pi }{2}(s+\frac{n}{r}))  & \displaystyle 0\\
0&  \displaystyle \cos^r(\frac{\pi }{2}(s+\frac{n}{r})) 
\end{pmatrix}.$$

{\bf Case (b\,-\,1)} :  If $r=2$ and $d\equiv 1\ (\mathrm{mod}\ 4)$, then
   
   \begin{equation}\label{eq-b-1}
\mathcal{F}
\begin{pmatrix}
Z_{s,+} \\
Z_{s,-}
\end{pmatrix}
= 4\sqrt{2}\gamma(s+\frac{n}{r}) \mathbf{A}(s) \begin{pmatrix}
Z_{-s-\frac{n}{r},+} \\
Z_{-s-\frac{n}{r},-}
\end{pmatrix},
\end{equation}
where
$$\mathbf{A}(s)=
  \left(\begin{array}{lr}
\displaystyle \sin(\frac{\pi}{2}(s+\frac{n+1}{2}))\cos(\frac{\pi}{2}(s+\frac{n}{2}))  & \displaystyle - \sin(\frac{\pi}{2}(s+\frac{n+1}{2}))\sin(\frac{\pi}{2}(s+\frac{n}{2})) \\
\displaystyle \cos(\frac{\pi}{2}(s+\frac{n+1}{2}))\cos(\frac{\pi}{2}(s+\frac{n}{2}))& \displaystyle \cos(\frac{\pi}{2}(s+\frac{n+1}{2}))\sin(\frac{\pi}{2}(s+\frac{n}{2}))
\end{array}\right).
$$

{\bf Case (b\,-\,2)} :  If  $r=2$ and $d\equiv 3\ (\mathrm{mod}\ 4)$, then 

  \begin{equation*}
\mathcal{F}
\begin{pmatrix}
Z_{s,+} \\
Z_{s,-}
\end{pmatrix}
= 4\sqrt{2}\gamma(s+\frac{n}{r}) \mathbf{A}(s) \begin{pmatrix}
Z_{-s-\frac{n}{r},+} \\
Z_{-s-\frac{n}{r},-}
\end{pmatrix},
\end{equation*}
where
$$\mathbf{A}(s)=
\left(\begin{array}{lr}
\displaystyle \cos(\frac{\pi}{2}(s+\frac{n+1}{2}))\cos(\frac{\pi}{2}(s+\frac{n}{2}))&\displaystyle  \cos(\frac{\pi}{2}(s+\frac{n+1}{2}))\sin(\frac{\pi}{2}(s+\frac{n}{2}))\\
\displaystyle \sin(\frac{\pi}{2}(s+\frac{n+1}{2}))\cos(\frac{\pi}{2}(s+\frac{n}{2}))  & \displaystyle - \sin(\frac{\pi}{2}(s+\frac{n+1}{2}))\sin(\frac{\pi}{2}(s+\frac{n}{2}))
\end{array}\right).
$$

{\bf Case (c\,-\,1)} :  If $d=1$ and $r\equiv 3\ (\mathrm{mod}\ 4)$, then
  \begin{equation}\label{44}
\mathcal{F}
\begin{pmatrix}
Z_{s,+} \\
Z_{s,-}
\end{pmatrix}
= -(-2\sqrt{-1})^{\lfloor\frac{r}{2} \rfloor}\gamma(s+\frac{n}{r})\sin^{\lfloor\frac{r}{2} \rfloor}(\pi(s+\frac{n}{r})) \mathbf{B}(s) \begin{pmatrix}
Z^{\mathrm{e}}_{-s-\frac{n}{r}} \\
Z^{\mathrm{o}}_{-s-\frac{n}{r}}
\end{pmatrix},
\end{equation}
where
$$\mathbf{B}(s)=
\left(\begin{array}{cc}
\displaystyle  \sqrt{-1} \sin(\frac{\pi }{2}(s+\frac{n}{r})) &  \displaystyle - \sqrt{-1} \sin(\frac{\pi }{2}(s+\frac{n}{r}))\\
\displaystyle  \cos(\frac{\pi }{2}(s+\frac{n}{r})) & \displaystyle  \cos(\frac{\pi }{2}(s+\frac{n}{r}))
\end{array}\right).
$$


{\bf Case (c\,-\,2)} :  If $d=1$ and $r\equiv 1\ (\mathrm{mod}\ 4) $, then
  
  \begin{equation*}
\mathcal{F}
\begin{pmatrix}
Z_{s,+} \\
Z_{s,-}
\end{pmatrix}
= (-2\sqrt{-1})^{\lfloor\frac{r}{2} \rfloor}\gamma(s+\frac{n}{r})\sin^{\lfloor\frac{r}{2} \rfloor}(\pi(s+\frac{n}{r})) \mathbf{B}(s) \begin{pmatrix}
Z^{\mathrm{e}}_{-s-\frac{n}{r}} \\
Z^{\mathrm{o}}_{-s-\frac{n}{r}}
\end{pmatrix},
\end{equation*}
where
$$\mathbf{B}(s)=
 \left(\begin{array}{cc}
 \displaystyle  \cos(\frac{\pi }{2}(s+\frac{n}{r})) &\displaystyle   \cos(\frac{\pi }{2}(s+\frac{n}{r}))\\
\displaystyle    \sqrt{-1} \sin(\frac{\pi }{2}(s+\frac{n}{r})) &  \displaystyle - \sqrt{-1} \sin(\frac{\pi }{2}(s+\frac{n}{r}))
\end{array}\right).
$$

{\bf Case (c\,-\,3)} :  If $d=1$ and $r\equiv 0\ (\mathrm{mod}\ 4) $, then
 
  \begin{equation*}
\mathcal{F}
\begin{pmatrix}
Z_{s,+} \\
Z_{s,-}
\end{pmatrix}
=  2^{\frac{r-1}{2}} e^{\sqrt{-1}\frac{\pi}{4}} \gamma(s+\frac{n}{r})\cos^{\frac{r}{2} }(\pi(s+\frac{n}{r})) \mathbf{A}(s) \begin{pmatrix}
Z_{-s-\frac{n}{r},+} \\
Z_{-s-\frac{n}{r},-}
\end{pmatrix},
\end{equation*}
where
$$\mathbf{A}(s)=
\begin{pmatrix}
-\sqrt{-1}& 1\\
1&-\sqrt{-1}
\end{pmatrix}.
$$

{\bf Case (c\,-\,4)} :  If $d=1$ and $r\equiv 2\ (\mathrm{mod}\ 4) $, then


   
    \begin{equation*}
\mathcal{F}
\begin{pmatrix}
Z_{s,+} \\
Z_{s,-}
\end{pmatrix}
=  2^{\frac{r-1}{2}} e^{\sqrt{-1}\frac{\pi}{4}} \gamma(s+\frac{n}{r})\cos^{\frac{r}{2} }(\pi(s+\frac{n}{r})) \mathbf{A}(s) \begin{pmatrix}
Z_{-s-\frac{n}{r},+} \\
Z_{-s-\frac{n}{r},-}
\end{pmatrix},
\end{equation*}
where
$$\mathbf{A}(s)=\begin{pmatrix}
1&-\sqrt{-1}\\
 -\sqrt{-1}&1
\end{pmatrix}.$$

   
 \end{theorem}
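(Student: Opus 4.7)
The plan is to derive each case of the Fourier-transform identity from the classical functional equation (\ref{F-Eq-Eucl}) by taking appropriate linear combinations of the stratified distributions $Z_i(\cdot,s)$. From (\ref{Z+-Z-and Zj}),
\[
\mathcal{F}(Z_{s,\varepsilon})(f) = Z_{s,\varepsilon}(\widehat{f}) = \sum_{i=0}^r \varepsilon_i\, Z_i(\widehat{f},s),\qquad \varepsilon_i:=\begin{cases}1&\text{if }\varepsilon=+,\\ (-1)^i &\text{if }\varepsilon=-,\end{cases}
\]
so substituting $s\mapsto \sigma:=s+\tfrac{n}{r}$ in (\ref{F-Eq-Eucl}) and exchanging the summations reduces the whole problem to computing the two generating sums
\[
U_j^{\pm}(\sigma) := \sum_{i=0}^r (\pm 1)^i\, u_{ij}(\sigma).
\]
Once they are known,
\[
\mathcal{F}(Z_{s,\varepsilon}) = (2\pi)^{-r\sigma}\, e(r\sigma/2)\,\Gamma_\Omega(\sigma)\, \sum_{j=0}^r U_j^{\varepsilon}(\sigma)\, Z_j(\cdot, -\sigma).
\]

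The sums $U_j^{\pm}$ are read off from the generating identity (\ref{E-eq-f-FS}) at $y=\pm 1$:
\[
U_j^{\varepsilon}(\sigma) = \xi^{-(r-j)}\, P_j(\xi x,\varepsilon)\, P_{r-j}(1,\varepsilon\xi x), \qquad x = e(-\sigma/2),
\]
with $\xi = (\sqrt{-1})^{d(r+1)}$. The subsequent trigonometric consolidation relies on
\[
1+e^{-\sqrt{-1}\pi t} = 2 e^{-\sqrt{-1}\pi t/2}\cos(\tfrac{\pi t}{2}), \qquad 1-e^{-\sqrt{-1}\pi t} = 2\sqrt{-1}\, e^{-\sqrt{-1}\pi t/2}\sin(\tfrac{\pi t}{2}),
\]
and the accumulated phase combines with the prefactor $e(r\sigma/2)$ to leave exactly $e(r\sigma/4)$, which is the phase packaged into $\gamma(\sigma)$.

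The remainder is a case analysis driven by (\ref{coef-mat-FS}) and the value $\xi\in\{\pm 1,\pm\sqrt{-1}\}$. For $d$ even, $P_j(x,y)=(x+y)^j$, and $U_j^{\varepsilon}$ factors as a sign $(\pm 1)^j$ times a $j$-independent power of $(1\pm\xi x)^r$; summing against $Z_j(\cdot,-\sigma)$ reassembles $Z_{-\sigma,\pm}$ and yields the diagonal matrices $\mathbf{A}(s)$ of cases (a) ($\xi=+1$) and (a') ($\xi=-1$). For $d$ odd, $P_j$ depends essentially on the parity of $j$ and the analysis splits further. In case (b), $r=2$ is small enough that one can enumerate $j\in\{0,1,2\}$ directly and express the results in the basis $\{Z_{-\sigma,+},Z_{-\sigma,-}\}$; the sub-split (b\,-\,1)/(b\,-\,2) is forced by $d\pmod 4$. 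In case (c) with $d=1$, the even-$j$ and odd-$j$ terms of $\sum_j U_j^{\varepsilon} Z_j(\cdot,-\sigma)$ naturally group into the auxiliary distributions $Z^{\mathrm{e}}_{-\sigma}$ and $Z^{\mathrm{o}}_{-\sigma}$ of (\ref{Z-odd-even}); when $r\equiv 0,2\pmod 4$ additional sign cancellations collapse these back onto $Z_{\pm}$ (cases (c\,-\,3), (c\,-\,4)), while when $r\equiv 1,3\pmod 4$ no such reassembly is possible and the answer must be stated directly in the $(Z^{\mathrm{e}},Z^{\mathrm{o}})$ basis (cases (c\,-\,1), (c\,-\,2)).

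The main obstacle is the combinatorial bookkeeping in the odd-$d$ cases, where three independent layers of signs interact: the factor $\xi^{-(r-j)}$, the parity-dependent structure of $P_j$, and the $(\pm 1)^i$ weights in $U_j^{\pm}$. Sorting them out modulo $4$ in both $d$ and $r$ is precisely what forces the six-fold split (b\,-\,1), (b\,-\,2), (c\,-\,1)--(c\,-\,4). Once the signs are pinned down, the trigonometric simplification is routine, but extracting the exact normalisations $2^{(r-1)/2}e^{\sqrt{-1}\pi/4}$ in (c\,-\,3), (c\,-\,4) and $(-2\sqrt{-1})^{\lfloor r/2\rfloor}$ in (c\,-\,1), (c\,-\,2) requires careful tracking of the factors of $2$ and $\sqrt{-1}$ pulled out of $(1\pm\xi x)^{\lfloor j/2\rfloor}(1\mp\xi x)^{j-\lfloor j/2\rfloor}$ as $j$ ranges over its allowed values.
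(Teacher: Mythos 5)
Your proposal takes essentially the same route as the paper: express $Z_{s,\pm}$ as signed sums of the stratified $Z_i$, push them through the Sato--Shintani functional equation (\ref{F-Eq-Eucl}), compute $\sum_i(\pm1)^i u_{ij}$ by evaluating the generating identity (\ref{E-eq-f-FS}) at $y=\pm 1$, and then sort the resulting phases by $d$ and $r$ modulo $4$ (including the reassembly into $Z^{\mathrm{e}},Z^{\mathrm{o}}$ in the $d=1$, $r$ odd cases). The only cosmetic difference is that you substitute $s\mapsto\sigma=s+\tfrac{n}{r}$ at the outset while the paper carries the original variable of (\ref{F-Eq-Eucl}) and reindexes at the end; the trigonometric bookkeeping you sketch is exactly what the paper executes case by case.
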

 
 \begin{proof}
 Recall from above that $x=e\left(-s/2\right)$. In view of  \autoref{Z+-Z-and Zj} and  \autoref{F-Eq-Eucl} we have
 \begin{equation}\label{eq-f-Z+Zj}
 Z_{s-\frac{n}{r},+}(\,\hat{f}\,) =(2\pi)^{-rs}e\left(\frac{rs}{2}\right)\Gamma_\Omega(s)\sum_{j=0}^r\left(\sum_{i=0}^r u_{ij}(x)\right) Z_j(f,-s),
 \end{equation}
 and
 \begin{equation}\label{eq-f-Z-Zj}
 Z_{s-\frac{n}{r},-}(\,\hat{f}\,)= (2\pi)^{-rs}e\left(\frac{rs}{2}\right)\Gamma_\Omega(s)\sum_{j=0}^r\left(\sum_{i=0}^r (-1)^iu_{ij}(x)\right) Z_j(f,-s).
 \end{equation}
We will use \autoref{E-eq-f-FS} and \autoref{coef-mat-FS} to   compute 
 $\sum_{i=0}^r u_{ij}(x)$ and $\sum_{i=0}^r (-1)^iu_{ij}(x)$ for any fixed $j$, $0\leq j\leq r$.
Observe first that $\xi=(\sqrt{-1})^{d(r+1)}$ reduces to
 $$ 
 \xi=\begin{cases}
 1 & \text{ in case (a)}\\
  -1 & \text{ in case (a')}\\
 (\sqrt{-1})^{3d}  & \text{ in case (b)}\\
 (\sqrt{-1})^{r+1} & \text{ in case (c)}.\\
 \end{cases}
  $$
 -- {\bf Case (a)} : Here  
  $d\equiv0\ (\mathrm{mod}\ 4)$ or  $d\equiv2\ (\mathrm{mod}\ 4)$  and  $r$ is odd. In this case 
 $$\begin{array}{l @{\; \; } l}
 \sum_{i=0}^r u_{ij}(x)&=  \displaystyle  (x+1)^{j}(1+x)^{r-j}\\
 & \displaystyle  =(1+x)^r\\
   &\displaystyle  =2^re\left(-\frac{rs}{4}\right)\cos^r\left(\frac{\pi s}{2}\right),
  \end{array}$$
  and
   $$\begin{array}{l @{\; \; } l}
  \sum_{i=0}^r (-1)^iu_{ij}(x)&= \displaystyle   (x-1)^j(1-x)^{r-j}\\
   \displaystyle  &= \displaystyle  (-1)^j(1-x)^r \\
  \displaystyle   & \displaystyle =
  (-1)^j (2\sqrt{-1})^r e\left(-\frac{rs}{4}\right) \sin^r\left(\frac{\pi s}{2}\right).
 \end{array}$$
Hence \autoref{eq-f-Z+Zj} reduces to
  $$
 Z_{s-\frac{n}{r},+}(\,\hat{f}\,)
 = 2^r (2\pi)^{-rs}  e\left(\frac{rs}{4}\right) \cos^r\left(\frac{\pi s}{2}\right) \Gamma_\Omega(s) Z_{-s,+}(f),
 $$
 and  \autoref{eq-f-Z-Zj} to
   $$
 Z_{s-\frac{n}{r},-}(\,\hat{f}\,)
 = (2\sqrt{-1})^r(2\pi)^{-rs} e\left(\frac{rs}{4}\right)\sin^r\left(\frac{\pi s}{2}\right) \Gamma_\Omega(s) Z_{-s,-}(f).
  $$
  -- {\bf Case (a')} :    Here   $d\equiv 2\ (\mathrm{mod}\ 4)$  and $r$ is even. In this case, 
  $$\begin{array}{l @{\; \; } l}
  \sum_{i=0}^r u_{ij}(x)&  \displaystyle = (-1)^{-(r-j)} (-x+1)^j(1-x)^{r-j}\\ 
 &  \displaystyle =(-1)^j (1-x)^r\\
 &   \displaystyle = (-1)^j (2\sqrt{-1})^r e\left(-\frac{rs}{4}\right)\sin^r\left(\frac{\pi s}{2}\right),
  \end{array}$$
  and
  $$\begin{array}{l @{\; \; } l}
 \sum_{i=0}^r (-1)^iu_{ij}(x)&= (-1)^{-(r-j)} (-x-1)^j(1+x)^{r-j} \\
 &=(1+x)^r\\
 &\displaystyle=  2^re\left(-\frac{rs}{4}\right)\cos^r\left(\frac{\pi s}{2}\right).
  \end{array}$$
  Then, by  \autoref{eq-f-Z+Zj}  and \autoref{eq-f-Z-Zj}, one has     
$$ 
 Z_{s-\frac{n}{r},+}(\,\hat{f}\,)  =(2\sqrt{-1})^r (2\pi)^{-rs}e\left(\frac{rs}{4}\right) \sin^r\left(\frac{\pi s}{2}\right) \Gamma_\Omega(s)Z_{-s,-}(f),
 $$
and  
$$ 
 Z_{s-\frac{n}{r},-}(\,\hat{f}\, )=2^r(2\pi)^{-rs}e\left(\frac{rs}{4}\right)\Gamma_\Omega(s)  \cos^r\left(\frac{\pi s}{2}\right)Z_{-s,+}(f).
 $$
-- {\bf Case (b)} :   In this case $r=2$ and $\xi=(\sqrt{-1})^{3d}$ with $d$ odd. 
By  \cite[page 481]{SF} we have
 $$ \sum_{i=0}^2 y^iu_{ij}(x)= \begin{cases} 
     1+x^2y^2 & \text{for } j=0\\
 x+\xi(1-x^2)y + xy^2, & \text{for } j=1 \\
  x^2+y^2, & \text{for } j=2
   \end{cases}$$
 for any  real number $y$. 
Then, for $\varepsilon=\pm$, \autoref{eq-f-Z+Zj} and \autoref{eq-f-Z-Zj} become
 $$
 Z_{s-\frac{n}{2},\varepsilon}(\,\hat{f}\,)= \gamma(s) \left[(1+x^2)\{Z_0(f,-s)+Z_2(f,-s)\}
  +(2x+\varepsilon\xi(1-x^2))Z_1(f,-s) \right].
  $$
  Since 
  $Z_{-s,+}(f)=Z_0(f,-s)+Z_1(f,-s)+Z_2(f,-s)$ and $Z_{-s,-}(f)=Z_0(f,-s)-Z_1(f,-s)+Z_2(f,-s)$,
   the above equality for $\varepsilon=+$ becomes  
    $$ 
 Z_{s-\frac{n}{2},+}(\,\hat{f}\,)= \frac{\gamma(s)}{2} \left[\left((1+x)^2 +\xi(1-x^2)\right) Z_{-s,+}(f)
  +\left((1-x)^2 -\xi(1-x^2)\right) Z_{-s,-}(f) \right],
  $$
and for $\varepsilon=-$ it becomes
  $$
 Z_{s-\frac{n}{2},-}(\,\hat{f}\,)= \frac{\gamma(s)}{2} \left[ \left((1+x)^2 -\xi(1-x^2)\right) Z_{-s,+}(f)  +\left((1-x)^2 +\xi(1-x^2)\right) Z_{-s,-}(f) \right].
  $$
  Since $d$ is odd, therefore $d\equiv 1\ (\mathrm{mod}\ 4)$ or $d\equiv 3\ (\mathrm{mod}\ 4)$. 
  
  {\bf (b\,-\,1)} : If $d\equiv 1\ (\mathrm{mod}\ 4)$, then $\xi=-\sqrt{-1}$, and we obtain 
      \begin{eqnarray*} 
  Z_{s-\frac{n}{2},+}(\,\hat{f}\,)&=&c_1(s)\left[\cos(\frac{\pi s}{2}) Z_+(f,-s)- \sin(\frac{\pi s}{2}) Z_{-s,-}(f)\right],\\
Z_{s-\frac{n}{2},-}(\,\hat{f}\,)&=&c_2(s)\left[\cos(\frac{\pi s}{2}) Z_+(f,-s)+ \sin(\frac{\pi s}{2}) Z_{-s,-}(f)\right],
      \end{eqnarray*}
 
   where $c_1(s)=2\sqrt{2}\gamma(s)\sin(\frac{\pi s}{2}+\frac{\pi}{4})$ and $c_2(s)=2\sqrt{2}\gamma(s)\cos(\frac{\pi s}{2}+\frac{\pi}{4})$.
   
  {\bf (b\,-\,2)} :  If  $d\equiv 3\ (\mathrm{mod}\ 4)$, then $\xi=\sqrt{-1}$ and
     \begin{eqnarray*} 
  Z_{s-\frac{n}{2},+}(\,\hat{f}\,)&=&c_2(s)\left[\cos(\frac{\pi s}{2}) Z_+(f,-s)+ \sin(\frac{\pi s}{2}) Z_{-s,-}(f)\right],\\
Z_{s-\frac{n}{2},-}(\,\hat{f}\,)&=&c_1(s)\left[\cos(\frac{\pi s}{2}) Z_+(f,-s)- \sin(\frac{\pi s}{2}) Z_{-s,-}(f)\right],
      \end{eqnarray*}
 
  with the same  $c_1(s)$ and $c_2(s)$ as in (b\,-\,1).
  
 -- {\bf Case (c)} : In this case $d=1$, $r$ is arbitrary  and $\xi=(\sqrt{-1})^{r+1}$. Then we shall consider four cases.
 
  {\bf (c\,-\,1)}   :  If $r\equiv 3\ (\text{mod } 4)$, then $r$ is odd, say $r=2\rho+1$, and $\xi=1$. Thus
$$\begin{array}{rl}
 \sum_{i=0}^r u_{ij}(x)&=  P_j(x,1)P_{r-j}(1,x)\\
 &= (x+1)^{\lfloor \frac{j}{2}\rfloor}(1-x)^{j-\lfloor \frac{j}{2}\rfloor}(x+1)^{\lfloor \frac{r-j}{2}\rfloor}(x-1)^{r-j-\lfloor \frac{r-j}{2}\rfloor},
  \end{array}$$
  and
  $$\begin{array}{r @{\,\,} l}
  \sum_{i=0}^r (-1)^iu_{ij}(x)&  =  P_j(x,-1)P_{r-j}(1,-x)\\
   &=    (x-1)^{\lfloor \frac{j}{2}\rfloor}(-1-x)^{j-\lfloor \frac{j}{2}\rfloor}(1-x)^{\lfloor \frac{r-j}{2}\rfloor}(-x-1)^{r-j-\lfloor \frac{r-j}{2}\rfloor}.
  \end{array}$$
  If  $j$ is even, say $j=2k$, then 
  $$\sum_{i=0}^r u_{ij}(x)=(-1)^k (x^2-1)^\rho(x-1) \quad\text{and}\quad \sum_{i=0}^r (-1)^i u_{ij}(x)=(-1)^{k+1} (x^2-1)^\rho(x+1).$$
If $j$ is odd, say $j=2k+1$, then 
  $$\sum_{i=0}^r u_{ij}(x)=(-1)^{k+1} (x^2-1)^\rho(x-1) \quad\text{and}\quad  \sum_{i=0}^r (-1)^iu_{ij}(x)=(-1)^{k+1} (x^2-1)^\rho(x+1).$$
   Therefore, by \autoref{eq-f-Z+Zj} and by \autoref{eq-f-Z-Zj}, we have
  \begin{eqnarray*}
    Z_{s-\frac{n}{r},+}(\,\hat{f}\,)&=& c_3(s) \Big[\sum_{k=0}^\rho (-1)^kZ_{2k}(f,-s)- \sum_{k=0}^\rho (-1)^{k}Z_{2k+1}(f,-s)\Big]\\
  & =& c_3(s)\left[Z_{-s}^{\text{e}}(f)-Z_{-s}^{\text{o}}(f)\right],
  \end{eqnarray*}
  where $c_3(s)=(-2\sqrt{-1})^{\rho+1}\gamma(s)\sin^{\rho}(\pi s) \sin(\frac{\pi s}{2}),$  and
  \begin{eqnarray*}
  Z_{s-\frac{n}{r},-}(\hat{f})&=& -c_4(s)\Big[\sum_{k=0}^\rho (-1)^{k}Z_{2k}(f,-s) +\sum_{k=0}^\rho (-1)^{k}Z_{2k+1}(f,-s)\Big]\\
   &=& -c_4(s)\left[Z_{-s}^{\text{e}}(f)+Z_{-s}^{\text{o}}(f)\right],
  \end{eqnarray*}
    where $c_4(s)=2(-2\sqrt{-1})^{\rho}\gamma(s)\sin^{\rho}(\pi s) \cos(\frac{\pi s}{2})$.

  
  {\bf (c\,-\,2)}    :  If $r\equiv 1\ (\text{mod } 4)$, then $r$ is odd, say $r=2\rho+1$, and $\xi=-1$. Then
 $$ 
 \sum_{i=0}^r u_{ij}(x)=  (-1)^{-(r-j)}P_j(-x,1)P_{r-j}(1,-x),$$ and
  $$\sum_{i=0}^r (-1)^iu_{ij}(x)=  (-1)^{-(r-j)}P_j(-x,-1)P_{r-j}(1,x).$$
  If $j$ is   even,  say $j=2k$, we have
  $$\sum_{i=0}^r u_{ij}(x)=(-1)^k (x^2-1)^\rho(x+1) \quad \text{and} \quad \sum_{i=0}^r (-1)^iu_{ij}(x)=(-1)^{k+1} (x^2-1)^\rho(x-1).$$ 
  If $j$ is odd, say $j=2k+1$, then
   $$\sum_{i=0}^r u_{ij}(x)=(-1)^{k} (x^2-1)^\rho(x+1) \quad \text{and} \quad \sum_{i=0}^r (-1)^iu_{ij}(x)=(-1)^{k} (x^2-1)^\rho(x-1). $$
 Therefore, by \autoref{eq-f-Z+Zj}, \autoref{eq-f-Z-Zj} and \autoref{Z-odd-even}, we have 
  \begin{eqnarray*}
  Z_{s-\frac{n}{r},+}(\,\hat{f}\,)&=& c_4(s) \Big[\sum_{k=0}^\rho (-1)^kZ_{2k}(f,-s)+ \sum_{k=0}^\rho (-1)^{k}Z_{2k+1}(f,-s)\Big]\\
  &=& c_4(s)\left[Z_{-s}^{\text{e}}(f)+Z_{-s}^{\text{o}}(f)\right],
  \end{eqnarray*}
  and
  \begin{eqnarray*}
  Z_{s-\frac{n}{r},-}(\,\hat{f}\,)&  = & c_3(s) \Big[-\sum_{k=0}^\rho (-1)^{k}Z_{2k}(f,-s)+ \sum_{k=0}^\rho (-1)^{k}Z_{2k+1}(f,-s)\Big]\\
    &   =& c_3(s)\left[-Z_{-s}^{\text{e}}(f)+Z_{-s}^{\text{o}}(f)\right],
  \end{eqnarray*}
      where the  $c_3(s)$ and $c_4(s)$ are the same as in (c\,-\,1).
 
  {\bf (c\,-\,3)}     :  If $r\equiv 0\ (\text{mod } 4)$, then $r$ is even, say $r=2\rho$, and $\xi=\sqrt{-1}$. Thus
$$
 \sum_{i=0}^r u_{ij}(x)=  (\sqrt{-1})^{-(r-j)}P_j(\sqrt{-1}x,1)P_{r-j}(1,\sqrt{-1}x),
 $$
 and
 $$\sum_{i=0}^r (-1)^iu_{ij}(x)=  (\sqrt{-1})^{-(r-j)}P_j(\sqrt{-1}x,-1)P_{r-j}(1,-\sqrt{-1}x).$$
 If $j$ is even,  then 
 $$\sum_{i=0}^r u_{ij}(x)=(1+x^2)^\rho \quad \text{and} \quad  \sum_{i=0}^r (-1)^iu_{ij}(x)=(1+x^2)^\rho.$$
 If $j$ is odd,  then 
 $$\sum_{i=0}^r u_{ij}(x)=-\sqrt{-1} (1+x^2)^\rho \quad \text{and} \quad  \sum_{i=0}^r (-1)^iu_{ij}(x)=\sqrt{-1} (1+x^2)^\rho. $$
 Therefore, by \autoref{eq-f-Z+Zj}, \autoref{eq-f-Z-Zj} and  \autoref{Z+-Z-pair-impair}, we have
  \begin{eqnarray*}
 Z_{s-\frac{n}{r},+}(\,\hat{f}\,)&=& c_5(s) \Big[\sum_{k=0}^\rho Z_{2k}(f,-s)-\sqrt{-1} \sum_{k=0}^\rho Z_{2k+1}(f,-s)\Big]\\
 &=&\frac{1}{\sqrt{2}}c_5(s)e^{\frac{\sqrt{-1}\pi}{4}}\left[-\sqrt{-1} Z_{-s,+}(f)+ Z_{-s,-}(f)\right],
  \end{eqnarray*}
 
 and
  \begin{eqnarray*}
   Z_{s-\frac{n}{r},-}(\,\hat{f}\,)&=& c_5(s) \Big[\sum_{k=0}^\rho Z_{2k}(f,-s)+\sqrt{-1} \sum_{k=0}^\rho Z_{2k+1}(f,-s)\Big]\\
   &=&\frac{1}{\sqrt{2}}c_5(s)e^{\frac{\sqrt{-1}\pi}{4}}\left[ Z_{-s,+}(f)-\sqrt{-1} Z_{-s,-}(f)\right],
  \end{eqnarray*}
    where  $c_5(s)=2^{\rho}\gamma(s)\cos^{\rho}({\pi s})$.     
    
 
 
  {\bf (c\,-\,4)}     :  If $r\equiv 2\ (\text{mod } 4)$, then $r$ is even, say $r=2\rho$, and $\xi=-\sqrt{-1}$. Thus
    $$ 
 \sum_{i=0}^r u_{ij}(x)=  (-\sqrt{-1})^{-(r-j)}P_j(-\sqrt{-1}x,1)P_{r-j}(1,-\sqrt{-1}x),$$
 and
 $$ \sum_{i=0}^r (-1)^iu_{ij}(x)=  (-\sqrt{-1})^{-(r-j)}P_j(-\sqrt{-1}x,-1)P_{r-j}(1,\sqrt{-1}x).$$
  If $j$ is even, 
  $$\sum_{i=0}^r u_{ij}(x)=(1+x^2)^\rho \quad\text{and}\quad \sum_{i=0}^r (-1)^i u_{ij}(x)=(1+x^2)^\rho.$$ 
If $j$ is odd, 
$$\sum_{i=0}^r u_{ij}(x)=\sqrt{-1} (1+x^2)^\rho \quad\text{and}\quad  \sum_{i=0}^r (-1)^i  u_{ij}(x)=-\sqrt{-1} (1+x^2)^\rho.$$
 Therefore, similarly to the case (c\,-\,3), we can prove that
  \begin{eqnarray*}
 Z_{s-\frac{n}{r},+}(\,\hat{f}\,)&=&c_5(s) \Big[\sum_{k=0}^\rho Z_{2k}(f,-s)+\sqrt{-1} \sum_{k=0}^\rho Z_{2k+1}(f,-s)\Big]\\
 &=&\frac{1}{\sqrt{2}}c_5(s)e^{\frac{\sqrt{-1}\pi}{4}} \left[ Z_{-s,+}(f)-\sqrt{-1}Z_{-s,-}(f)\right],
   \end{eqnarray*}
   and
     \begin{eqnarray*}
   Z_{s-\frac{n}{r},-}(\,\hat{f}\,)&=& c_5(s) \Big[\sum_{k=0}^\rho Z_{2k}(f,-s)-\sqrt{-1} \sum_{k=0}^\rho Z_{2k+1}(f,-s)\Big]\\
    &=&\frac{1}{\sqrt{2}}c_5(s)e^{\frac{\sqrt{-1}\pi}{4}} \left[-\sqrt{-1}Z_{-s,+}(f)+Z_{-s,-}(f)\right],
       \end{eqnarray*}
    where $c_5(s)$ is the same as in (c\,-\,3).
      \end{proof}
    



\section{Construction of the family $E_{s,t}$}\label{sec-Est}
Recall from  \autoref{Dstgen} the definition of the differential operator $D_{s,t}$ with $(s,t)\in\mathbb{C}\times\mathbb{C}$.
 Let $E_{s,t}$ be the differential operator with polynomial coefficients defined on the Schwartz space $\mathcal{S}(V\times V)$ by  
  \begin{equation}\label{def-E(s,t)}
 \mathcal{F} \circ E_{s,t}= D_{s,t} \circ \mathcal{F}.
  \end{equation}
  
  For $(s,\varepsilon)\in\mathbb{C}\times\{\pm\}$   let
  \begin{equation}\label{def-J-st}
  J_{s,\varepsilon}f(x):=\int_V f(y) \det(x-y)^{s,\varepsilon}  dy,\quad f\in\mathcal{S}(V).
  \end{equation}
  This integral, initially well defined for $\Re(s)\gg0$, can be extended meromorphically to $\mathbb{C}$. 
We may think of  $J_{s,\varepsilon}$ as a convolution operator  :
  $$J_{s,\varepsilon}f=Z_{s,\varepsilon}\ast f,$$
  where $Z_{s,\varepsilon}$ is the tempered distribution defining the local Zeta integral \autoref{defzeta}.
 
 Let $M$  be the multiplication operator defined on $\mathcal{S}(V\times V)$ by
  \begin{equation}\label{def-M}
  Mf(x,y)=\det(x-y)f(x,y).
  \end{equation}

  \begin{theorem}\label{them-main}
  For generic $(s,\varepsilon)$ and $(t,\eta)$ in $\mathbb{C}\times \{\pm\}$,   we have
\begin{equation*}
M\circ   [J_{s,\varepsilon}\otimes   J_{t,\eta}]=\kappa(s,t)  [J_{s+1,-\varepsilon}\otimes J_{t+1,-\eta}]\circ E_{-s-\frac{n}{r},-t-\frac{n}{r}}, 
\end{equation*}
where $\kappa(s,t)$ is a meromorphic function on $\mathbb C\times \mathbb C$.
  \end{theorem}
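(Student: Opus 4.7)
The plan is to pass to the Fourier side, where convolutions become multiplications, the multiplication operator $M$ becomes a constant-coefficient differential operator, and $E_{-s-n/r,-t-n/r}$ becomes the polynomial-coefficient operator $D_{-s-n/r,-t-n/r}$; the main identity \autoref{Dstgen} then supplies the required intertwining. Concretely, using \autoref{formule-Fourier} one has
\[
\mathcal F\circ M = \frac{1}{(2\pi\sqrt{-1})^r}\det\!\left(\frac{\partial}{\partial x}-\frac{\partial}{\partial y}\right)\circ\mathcal F,
\]
while convolution by $Z_{s,\varepsilon}$ gives $\mathcal F\circ(J_{s,\varepsilon}\otimes J_{t,\eta})=(\widehat{Z_{s,\varepsilon}}\otimes \widehat{Z_{t,\eta}})\,\mathcal F$, and the defining relation \autoref{def-E(s,t)} yields $\mathcal F\circ E_{-s-n/r,-t-n/r}=D_{-s-n/r,-t-n/r}\circ\mathcal F$. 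Applying $\mathcal F$ to both sides of the claimed identity and writing $g=\mathcal Ff$, the theorem reduces to the operator identity
\[
\frac{1}{(2\pi\sqrt{-1})^r}\det\!\left(\frac{\partial}{\partial x}-\frac{\partial}{\partial y}\right)\!\bigl[(\widehat{Z_{s,\varepsilon}}\otimes \widehat{Z_{t,\eta}})\,g\bigr]=\kappa(s,t)\,(\widehat{Z_{s+1,-\varepsilon}}\otimes \widehat{Z_{t+1,-\eta}})\,D_{-s-n/r,-t-n/r}g
\]
on $\mathcal S(V\times V)$.

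Next, by the Zeta functional equations (\autoref{E-F-Z-moins}, \autoref{E-F-Z-eucl}), for generic $s$ the distribution $\widehat{Z_{s,\varepsilon}}$ agrees on $V^\times$ with a finite linear combination, with coefficients meromorphic in $s$, of the smooth functions $\det(x)^{-s-n/r,\pm}$. Substituting these expansions into the left-hand side and invoking the main identity \autoref{Dstgen} with parameters $(-s-n/r,\varepsilon')$ and $(-t-n/r,\eta')$ term by term, each summand becomes a scalar multiple of
\[
\det(x)^{-s-n/r-1,-\varepsilon'}\det(y)^{-t-n/r-1,-\eta'}\,D_{-s-n/r,-t-n/r}g,
\]
which are precisely the monomials that appear in the expansion of $(\widehat{Z_{s+1,-\varepsilon}}\otimes \widehat{Z_{t+1,-\eta}})\,D_{-s-n/r,-t-n/r}g$.

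It then remains to show that the coefficient ratios between the two sides assemble into a single meromorphic factor $\kappa(s,t)$ independent of $\varepsilon,\eta$. This compatibility is forced by the Bernstein identities of \autoref{sec-Bernstein}: the relation $\det(\tfrac{\partial}{\partial x})Z_{s,\varepsilon}=b(s)\,Z_{s-1,-\varepsilon}$ (with $b$ the type-appropriate Bernstein polynomial) Fourier-transforms to
\[
(-2\pi\sqrt{-1})^{r}\det(x)\,\widehat{Z_{s,\varepsilon}}=b(s)\,\widehat{Z_{s-1,-\varepsilon}},
\]
which ties the expansion coefficients of $\widehat{Z_{s,\varepsilon}}$ to those of $\widehat{Z_{s+1,-\varepsilon}}$ in exactly the way needed, so that the ratio $c(s,\varepsilon)/c(s+1,-\varepsilon)$ is independent of $\varepsilon$. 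I expect the main obstacle to be the case-by-case bookkeeping in the euclidean setting, in particular cases (b) and (c) of \autoref{E-F-Z-eucl}, where the functional equations exhibit genuine mixing between the $+$ and $-$ components and involve the auxiliary distributions $Z^{\mathrm e}_s$, $Z^{\mathrm o}_s$. A patient type-by-type verification, guided by the Bernstein compatibility above, should show that the mixings are consistent and yield an explicit formula for $\kappa(s,t)$ as a ratio of products of Gamma-function factors together with elementary trigonometric factors, completing the proof.
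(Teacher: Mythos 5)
Your overall strategy coincides with the paper's: Fourier-transform everything, reducing $M$ to $\det(\partial/\partial x-\partial/\partial y)$ up to $(2\pi\sqrt{-1})^{-r}$, the $J$'s to multiplication by $\widehat{Z_{s,\varepsilon}}$, and $E_{s,t}$ to $D_{s,t}$; then use the functional equations of the $Z$'s and the main identity term by term. The added remark that the needed sign relations among the matrix coefficients of the Zeta functional equation (\autoref{coefA(s+11)}, \autoref{55}) follow from Fourier-transforming the Bernstein identity is a nice conceptual unification that the paper achieves instead by direct inspection.

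However, one intermediate claim is not accurate and conceals the genuine delicacy. You assert that for generic $s$ the distribution $\widehat{Z_{s,\varepsilon}}$ is, on $V^\times$, a linear combination of $\det(x)^{-s-n/r,\pm}$. This fails in the cases (c\,-\,1) and (c\,-\,2) of \autoref{E-F-Z-eucl}: there the Fourier transforms are expressed in the auxiliary distributions $Z^{\mathrm e}_{-s-n/r}$ and $Z^{\mathrm o}_{-s-n/r}$, which on $\Omega_i$ carry the factor $(-1)^{\lfloor i/2\rfloor}$ and hence are not linear combinations of the two functions $\det(x)^{\sigma,\pm}$. Consequently the main identity \autoref{Dstgen} cannot be invoked verbatim; one must first derive its analogues for products $Z^{\mathrm e}\otimes Z^{\mathrm e}$, $Z^{\mathrm o}\otimes Z^{\mathrm o}$, $Z^{\mathrm e}\otimes Z^{\mathrm o}$, $Z^{\mathrm o}\otimes Z^{\mathrm e}$, and the last two acquire an extra factor $-1$, as the paper records explicitly. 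Moreover, the Bernstein-compatibility step is subtler than you state: since $\det(x)\,Z^{\mathrm e}_\sigma = Z^{\mathrm e}_{\sigma+1}$ while $\det(x)\,Z^{\mathrm o}_\sigma = -Z^{\mathrm o}_{\sigma+1}$ on $V^\times$, Fourier-transforming the Bernstein identity forces the even and odd matrix coefficients to satisfy relations with \emph{opposite} signs (exactly \autoref{55}); a single scalar ratio $\kappa(s,t)$ emerges only because these opposite signs compensate the opposite signs in the $Z^{\mathrm e}/Z^{\mathrm o}$ version of the main identity. You flag the (b) and (c) cases as requiring bookkeeping, which is honest, but as written the argument gives the impression that Bernstein compatibility alone yields a single $\kappa(s,t)$; the actual compensation between the two independent sign-twists is a substantive check that must be performed, and it is precisely what the paper's case-by-case computation does.
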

 
  \addsubsection{Proof of \autoref{them-main} in the non-euclidean case except $  \mathbb R^{p,q}$ (type II, III and IV)}
 Recall from \autoref{const-c(e,s)} that 
 $$\mathcal F (Z_{s,\varepsilon})= c(s,\varepsilon) Z_{-s-{n\over r},\varepsilon}$$
 where $c(s,\varepsilon)$ is as in \autoref{valeur-c(e,s)}. 
 Now,  by  the main identity in \autoref{Dstgen} for the non-euclidean case we have 
\begin{eqnarray*}
&&\mathcal F\Big(M\circ (J_{s,\varepsilon}\otimes J_{t,\eta})f\Big)(x,y)\\
&=&
\frac{1}{(2\pi\sqrt{-1})^r} \det\left(\frac{\partial}{\partial x} -\frac{\partial}{\partial y} \right)  \mathcal F (Z_{s,\varepsilon}) (x) \mathcal F (Z_{t,\eta}) (y)\mathcal F f (x,y)\\
&=&
\frac{c(s,\varepsilon)c(t,\eta)}{(2\pi\sqrt{-1})^r}   \det\left(\frac{\partial}{\partial x} -\frac{\partial}{\partial y} \right) Z_{-s-{n\over r},\varepsilon}(x) Z_{-t-{n\over r},\eta} (y)
\mathcal F f (x,y)\\
&=&
\frac{c(s,\varepsilon)c(t,\eta)}{(2\pi\sqrt{-1})^r}    Z_{-s-1-{n\over r},-\varepsilon}(x) Z_{-t-1-{n\over r},-\eta} (y)
D_{-s-\frac{n}{r},-t-\frac{n}{r}}\mathcal{F}f (x,y)\\
&=&
\frac{c(s,\varepsilon)c(t,\eta)}{(2\pi\sqrt{-1})^r}    Z_{-s-1-{n\over r},-\varepsilon}(x) Z_{-t-1-{n\over r},-\eta} (y)
 \mathcal{F}(E_{-s-\frac{n}{r},-t-\frac{n}{r}}f)(x,y)\\
 &=& \frac{c(s,\varepsilon)c(t,\eta)}{(2\pi\sqrt{-1})^r c(s+1,-\varepsilon)c(t+1,-\eta)} \mathcal F(Z_{s+1,-\varepsilon})(x) \mathcal F(Z_{t+1,-\eta})(y) 
\mathcal F(E_{-s-{n\over r}, -t-{n\over r}} f )(x,y)\\
&=& \frac{c(s,\varepsilon)c(t,\eta)}{(2\pi\sqrt{-1})^r c(s+1,-\varepsilon)c(t+1,-\eta)} 
\mathcal F\Big( (J_{s+1,-\varepsilon}\otimes J_{t+1,-\eta})\circ E_{-s-{n\over r}, -t-{n\over r}} f\Big)(x,y).
\end{eqnarray*}
By the injectivity of the Fourier transform we get 
$$M\circ (J_{s,\varepsilon}\otimes J_{t,\eta})
= 
\kappa(s,t)
(J_{s+1,-\varepsilon}\otimes J_{t+1,-\eta})\circ E_{-s-{n\over r}, -t-{n\over r}},$$
where
$$\begin{array}{rl}
\kappa(s,t)&=\displaystyle \frac{c(s,\varepsilon)c(t,\eta)}{(2\pi\sqrt{-1})^r c(s+1,-\varepsilon)c(t+1,-\eta) }\\
&=
\begin{cases}
 \displaystyle  \frac{(2\pi\sqrt{-1})^r}{b_{r,d}(s+1) b_{r,d}(s+1)  } & \text{ split case}\\
 \displaystyle  \frac{(-8{\pi}\sqrt{-1})^r}{b_{2r,d}(-2s-\frac{2n}{r})b_{2r,d}(2s+2) b_{2r,d}(-2t-\frac{2n}{r})b_{2r,d}(2t+2)}& \text{ non-split case.}
\end{cases}
\end{array}
$$
 

\addsubsection{Proof of \autoref{them-main} in the euclidean case (type I)}
 The proof of  \autoref{them-main} in the cases (a) and  (a') is similar to the one in the non-euclidean case. We will give a proof only in the cases (b\,-\,1) and  (c\,-\,1). We leave the remaining cases to the reader. 
  
  - {\bf Case   (b\,-\,1)} :  Here $r=2$ and $d\equiv 1\,(\mathrm{mod}\ 4)$.
  Recall from \autoref{eq-b-1}   that
  $$\mathcal{F}\begin{pmatrix} Z_{s,+}\\ Z_{s,-}\end{pmatrix}
  =4\sqrt{2}\gamma(s+\frac{n}{2})\mathbf{A}(s)\begin{pmatrix} Z_{-s-\frac{n}{2},+}\\ Z_{-s-\frac{n}{2},-}\end{pmatrix},
  $$
  where $$\mathbf{A}(s)=\begin{pmatrix} a_{+,+}(s) & a_{+,-}(s)\\ a_{-,+}(s) & a_{-,-}(s)\\ \end{pmatrix},$$
  and 
 \begin{eqnarray*}
  \displaystyle a_{+,+}(s)&=&\;\;\;\,\sin\frac{\pi}{2}(s+\frac{n+1}{2})\, \cos\frac{\pi}{2}(s+\frac{n}{2}),\\
     \displaystyle  a_{+,-}(s)&=&-\sin\frac{\pi}{2}(s+\frac{n+1}{2})\, \sin\frac{\pi}{2}(s+\frac{n}{2}),\\
     \displaystyle   a_{-,+}(s)&=&\;\;\;\,\cos\frac{\pi}{2}(s+\frac{n+1}{2})\, \cos\frac{\pi}{2}(s+\frac{n}{2}),\\
      \displaystyle    a_{-,-}(s)&=&\;\;\;\,\cos\frac{\pi}{2}(s+\frac{n+1}{2})\, \sin\frac{\pi}{2}(s+\frac{n}{2}).
      \end{eqnarray*}
Observe that for any $\varepsilon,\eta =\pm$, we have 
   \begin{equation}\label{coefA(s+11)}
 a_{\varepsilon,\eta}(s)=  -a_{-\varepsilon,-\eta}(s+1).
   \end{equation}
  Let $f\in \mathcal{S}(V\times V)$. 
     Since $J_{s,\varepsilon}$ is a convolution operator with $Z_{s,\varepsilon}$, we have 
\begin{eqnarray*}
 &&\mathcal{F}(M\circ (J_{s,\varepsilon}\otimes J_{t,\eta})f)(x,y)\\
 &=&\frac{1}{(2\pi\sqrt{-1})^r}\Delta\left(\frac{\partial}{\partial x}-\frac{\partial}{\partial y}\right)\left\{\mathcal{F}(Z_{s,\varepsilon})(x)\mathcal{F}(Z_{t,\eta})(y)\mathcal{F}f(x,y)\right\}\\
       &=&\frac{c (s,t)}{(2\pi\sqrt{-1})^r} \Delta\left(\frac{\partial}{\partial x}-\frac{\partial}{\partial y}\right) \left[\left(a_{\varepsilon,+}(s) Z_{-s-\frac{n}{2},+}(x)+a_{\varepsilon,-}(s) Z_{-s-\frac{n}{2},-}(x)\right) \right.\\
       &&\left.\left(a_{\eta,+}(t) Z_{-t-\frac{n}{2},+}(y)+a_{\eta,-}(t) Z_{-t-\frac{n}{2},-}(y)\right)  \mathcal{F}f(x,y)\right],
\end{eqnarray*}   
 where $c(s,t)=32\gamma(s+\frac{n}{2}) \gamma(t+\frac{n}{2})$. In view of  \autoref{coefA(s+11)} and  \autoref{main-id-split} we get
\begin{eqnarray*}
&&\mathcal{F}(M\circ (J_{s,\varepsilon}\otimes J_{t,\varepsilon})f)(x,y)\\
&=&\frac{c(s,t)}{(2\pi\sqrt{-1})^r} \left[\left(a_{-\varepsilon,+}(s+1) Z_{-s-1-\frac{n}{2},+}(x)+a_{-\varepsilon,-}(s+1)
Z_{-s-1-\frac{n}{2},-}(x)\right)\right. \\
&&\left.\left(a_{-\eta,+}(t+1) Z_{-t-1-\frac{n}{2},+}(y)+a_{-\eta,-}(t+1) Z_{-t-1-\frac{n}{2},-}(y)\right)\right] D_{-s-\frac{n}{2},-t-\frac{n}{2}}(\mathcal{F} f)(x,y),\\
        &=&\frac{c (s,t)}{(2\pi\sqrt{-1})^r c (s+1,t+1)} \mathcal{F}(Z_{s+1,-\varepsilon})(x)\mathcal{F}(Z_{t+1,-\eta})(y)  \mathcal{F} (E_{-s-\frac{n}{2},-t-\frac{n}{2}}f)(x,y)\\
       &=&\frac{c (s,t)}{(2\pi\sqrt{-1})^r c (s+1,t+1)}  \mathcal{F}((J_{s+1,-\varepsilon}\otimes J_{t+1,-\eta})  \circ  (E_{-s-\frac{n}{2},-t-\frac{n}{2}}f))(x,y)  .
\end{eqnarray*}      
 Now, using the inverse Fourier transform, we obtain 
        $$M\circ   [J_{s,\varepsilon}\otimes   J_{t,\eta}]=\kappa(s,t)  [J_{s+1,-\varepsilon}\otimes J_{t+1,-\eta}]\circ E_{-s-\frac{n}{r},-t-\frac{n}{r}},$$
        where 
                $$\kappa(s,t) = \frac{(2\pi\sqrt{-1})^r}{b_{r,d}(s+1)b_{r,d}(t+1)}.$$

     -- {\bf Case (c\,-\,2)}  : Here $d=1$ and $r\equiv 1\,(\mathrm{mod}\ 4)$. 
     Recall from  \autoref{44}  that 
 $$\mathcal{F}\begin{pmatrix} Z_{s,+}\\ Z_{s,-}\end{pmatrix}
  =(-2\sqrt{-1})^{\lfloor\frac{r}{2} \rfloor}\gamma(s+\frac{n}{r})\sin^{\lfloor\frac{r}{2} \rfloor}(\pi(s+\frac{n}{r})) \mathbf{B}(s)\begin{pmatrix}  Z^{\text{e}}_{-s-\frac{n}{2}}\\ Z^{\text{o}}_{-s-\frac{n}{2}}\end{pmatrix},
  $$
  where $$\mathbf{B}(s)=\begin{pmatrix} a_{+}^{\mathrm{e}}(s) & a_{+}^{\mathrm{o}}(s)\\ a_{-}^{\mathrm{e}}(s) & a_{-}^{\mathrm{o}}(s)\\ \end{pmatrix},$$
  and 
  $$\begin{array}{r @{\;=\;} l}
  a_{+}^\text{e}(s)&  \cos(\frac{\pi}{2}(s+\frac{n}{2})),\\         
    a_{+}^\text{o}(s)&   \cos(\frac{\pi}{2}(s+\frac{n}{2})).\\     
    a_{-}^\text{e}(s)&    \sqrt{-1} \sin(\frac{\pi}{2}(s+\frac{n}{2})),\\
      a_{-}^\text{o}(s)&     -\sqrt{-1} \sin(\frac{\pi}{2}(s+\frac{n}{2})),\\
         \end{array}$$
Observe that for any $\varepsilon=\pm$, we have
   \begin{equation}\label{55}
   a_{\varepsilon}^\text{e}(s) =-\sqrt{-1}a_{-\varepsilon}^\text{e}(s+1),\quad
   a_{\varepsilon}^\text{o}(s) =\sqrt{-1}a_{-\varepsilon}^\text{o}(s+1).
   \end{equation}
Let $f\in \mathcal{S}(V\times V)$.  We have,
 \begin{eqnarray*}
&& \mathcal{F}[(M\circ(J_{s,\varepsilon}\otimes J_{t,\eta}))f](x,y) \\
  &=&
  \frac{1}{(2\pi\sqrt{-1})^r} \det\left(\frac{\partial}{\partial x} -\frac{\partial}{\partial y}\right)
  \mathcal{F}(Z_{s,\varepsilon})(x)\mathcal{F}(Z_{t,\eta})(y)\mathcal{F}(f)(x,y),\\
  &=&    \frac{c(s,t) }{(2\pi\sqrt{-1})^r} \det\left(\frac{\partial}{\partial x} -\frac{\partial}{\partial y}\right) (a_{\varepsilon}^\text{e}(s)Z^{\text{e}}_{-s-\frac{n}{2}}(x)+a_{\varepsilon}^\text{o}(s)Z^{\text{o}}_{-s-\frac{n}{2}}(x))\times\\
  &&\qquad \times (a_{\eta}^\text{e}(t)Z^{\text{e}}_{-t-\frac{n}{2}}(y)+a_{\eta}^\text{o}(t)Z^{\text{o}}_{-t-\frac{n}{2}}(y))\mathcal{F}(f)(x,y),
     \end{eqnarray*}
     where $c(s,t)=(-2\sqrt{-1})^{r-1}\gamma(s+\frac{n}{r})\gamma(t+\frac{n}{r})\sin^{\lfloor\frac{r}{2} \rfloor}(\pi(s+\frac{n}{r})) \sin^{\lfloor\frac{r}{2} \rfloor}(\pi(t+\frac{n}{r}))$. 
     Using the fact that
     $$\begin{array}{r @{\; =\; } l}
    \det\left(\frac{\partial}{\partial x} -\frac{\partial}{\partial y}\right) [(Z^\text{e}_{-s-\frac{n}{r}}\otimes Z^\text{e}_{-t-\frac{n}{r}})\mathcal{F}(f)] & [(Z^\text{e}_{-s-1-\frac{n}{r}}\otimes Z^\text{e}_{-t-1-\frac{n}{r}})D_{-s-\frac{n}{r},-t-\frac{n}{r}}\mathcal{F}(f)],\\
      \det\left(\frac{\partial}{\partial x} -\frac{\partial}{\partial y}\right) [(Z^\text{o}_{-s-\frac{n}{r}}\otimes Z^\text{o}_{-t-\frac{n}{r}})\mathcal{F}(f)] & [(Z^\text{o}_{-s-1-\frac{n}{r}}\otimes Z^\text{o}_{-t-1-\frac{n}{r}})D_{-s-\frac{n}{r},-t-\frac{n}{r}}\mathcal{F}(f)],\\
       \det\left(\frac{\partial}{\partial x} -\frac{\partial}{\partial y}\right) [(Z^\text{e}_{-s-\frac{n}{r}}\otimes Z^\text{o}_{-t-\frac{n}{r}})\mathcal{F}(f)] &- [(Z^\text{e}_{-s-1-\frac{n}{r}}\otimes Z^\text{o}_{-t-1-\frac{n}{r}})D_{-s-\frac{n}{r},-t-\frac{n}{r}}\mathcal{F}(f)],\\
       \det\left(\frac{\partial}{\partial x} -\frac{\partial}{\partial y}\right) [(Z^\text{o}_{-s-\frac{n}{r}}\otimes Z^\text{e}_{-t-\frac{n}{r}})\mathcal{F}(f)] &- [(Z^\text{o}_{-s-1-\frac{n}{r}}\otimes Z^\text{e}_{-t-1-\frac{n}{r}})D_{-s-\frac{n}{r},-t-\frac{n}{r}}\mathcal{F}(f)],
         \end{array}$$
    and the identities \autoref{55} we obtain 
   \begin{eqnarray*}
    &&\mathcal{F}[(M\circ(J_{s,\varepsilon}\otimes J_{t,\eta}))f](x,y)\\
    &=&- \frac{c(s,t) }{(2\pi\sqrt{-1})^r c(s+1,t+1)}  \mathcal{F}(Z_{s+1,-\varepsilon}) \mathcal{F}(Z_{t+1,-\eta}) D_{-s-\frac{n}{r},-t-\frac{n}{r}}\mathcal{F}(f)(x,y)\\
   &=& - \frac{c(s,t) }{(2\pi\sqrt{-1})^r c(s+1,t+1)}  \mathcal{F}[(J_{s+1,-\varepsilon}\otimes J_{t+1,-\varepsilon})\circ E_{-s-\frac{n}{r},-t-\frac{n}{r}}(f)](x,y).
     \end{eqnarray*}
    By the injectivity of the Fourier transform we obtain the desired result.

 \section{The degenerate principal series and the Knapp-Stein operators}\label{sec-DPS-KS}
 
  \addsubsection{The conformal group of a  real   Jordan algebra}\label{subsec-Co}
 
 Let $V$ be a simple real   Jordan algebra.  Recall that $\Str(V)$ denotes the  structure group of $V$ (see \autoref{sub-str}).  If $\ell\in \Str(V)$, then for any $x\in V$
\begin{equation}\label{def-chi}
\det(\ell(x)) = \chi(\ell) \det (x),
\end{equation}
for some  $\chi(\ell)\in \mathbb R^*$, and  the function $\ell \mapsto \chi(\ell)$ is a character of $\Str(V)$. It is known that for  $x\in V^\times$, the quadratic operator $P(x)$ (see \autoref{quad-rep})    belongs to $\Str(V)$, and for any $x,y\in V$
\begin{equation*}
\det\big(P(x)y) =\det(x)^2\det(y),
\end{equation*}
 which implies
  \begin{equation}\label{chiP}
  \chi\big(P(x)\big) = \det(x)^2 .
 \end{equation}
 
  The \emph{inversion} $\imath$ is a rational transformation of $V$ defined on $V^\times$ by
\begin{equation*}
 \imath(x)= -x^{-1} .
\end{equation*}
Its differential at $x\in V^\times$ is given by
\begin{equation}\label{deriota}
D\imath(x) = P(x^{-1})=P(x)^{-1} .
\end{equation}
For $a\in V$, denote by $n_a$ the translation $x\mapsto x+a$. Let 
\[N:=\{ n_a,\ a\in V\}\]
 be the (abelian Lie) group of all translations.
The  \emph{conformal group} $\Co(V)$ of $V$ is by definition  the group of rational transforms of $V$ generated by $\Str(V)$, $N$ and the inversion $\imath$. It can be shown that $\Co(V)$ is a  simple Lie group (see \cite{Bertram, koe}). 
We will denote by $\Str(V)^+$ the subgroup of $\Str(V)$ defined by
\[\Str(V)^+= \{ \ell\in \Str(V),\quad \chi(\ell)>0\}.
\]
 Define the \emph{proper conformal group} $\Co(V)^+$ to be the group generated by $\Str(V)^+,$ $N$ and the inversion $\imath$.
 
 Let $\Aff(V)=\Str(V)\ltimes N$ (resp. $\Aff(V)^+=\Str(V)^+\ltimes N)$ be the group generated by $\Str(V)$ (resp. $\Str(V)^+)$ and $N$. Then $\Aff(V)$ is a parabolic subgroup of $\Co(V)$ equal to the normalizer of $N$ in $\Co(V)$. As $\Aff(V)^+ = \Co(V)^+\cap \Aff(V)$, $\Aff(V)^+$ is the normalizer of $N$ in $\Co(V)^+,$ and hence a parabolic subgroup of $\Co(V)^+.$ 
 
The center of $\Str(V)$ is the group of dilations $\{\delta_t: v\mapsto tv, \;t\in \mathbb R^*\}$. Let 
 $$A:=\{\delta_t,\quad t\in \mathbb {R}_{>0}\} .
 $$
 As $\chi(\delta_t) = t^r,$  $A$ is contained in $\Str(V)^+$. 
 
For $g\in \Co(V)^+$,  let 
\begin{equation}\label{Theta}
\Theta (g)=  \alpha\circ \imath\circ g\circ \imath\circ \alpha, 
\end{equation}
where $\alpha$ is  a Cartan involution of $V$ (see \autoref{background}).
As $\alpha$ is an automorphism of $V$, $\chi(\alpha) = 1$ and  $\alpha\in \Str(V)^+$. So $\Theta$ is an automorphism of $\Co(V)^+$, which can be shown to be a Cartan involution of $\Co(V)^+$. Moreover, 
\[\Theta(\imath) = \imath \quad{\text{and}}  \quad \imath\circ\delta_t\circ\imath=\delta_{1/t}, \]
 so that $\imath$ belongs to the maximal compact subgroup $(\Co(V)^+)^\Theta$
of $\Co(V)^+$ and is a representantative  of   the non trivial Weyl group element for $(\Co(V)^+,\Aff(V)^+)$.
Since $\alpha$ commutes to $\imath$, the involution $\Theta$  preserves $\Str(V)^+$. Let
\begin{equation*}
\bar{N} := \Theta(N) = \imath \circ N\circ \imath ,\quad \bar{P} := \Theta(P) =\Str(V)^+\ltimes \bar{N}\ .
\end{equation*}
   
 For any $g\in \Co(V)$, denote by $V_g$ the (open, dense) subset of $V$ where $g$ is defined. It is known that for $x\in V_g$, the differential $Dg(x)$ of $g$ at $x$ belongs to  $\Str(V)$.  Moreover for any $g\in \Co(V)$, the map
$x\mapsto Dg(x)^{-1}$ extends  \emph{polynomially}  from $V$ into $\End(V)$.
As a consequence of the chain rule and using \autoref{deriota} and \autoref{chiP}, for any $g\in \Co(V)^+$ and any $x\in V_g,$ the differential $Dg(x)$   belongs to $\Str(V)^+$. 
  For $g\in \Co(V)^+$ and $x\in V_g$, let
\begin{equation*}
c(g,x) := \chi\big(Dg(x)\big)^{-1}\ .
\end{equation*}
In view of the chain rule, the map $g\mapsto c(g,x)$ satisfies the following cocycle relation :
   \begin{equation}\label{cocyclec}
c(g_1g_2, x) = c\big(g_1,g_2(x)\big) c(g_2,x)\quad g_1,g_2\in \Co(V)^+,\quad  x\in V
\end{equation}
whenever both sides are defined.
Moreover,
 \begin{itemize}
\item[(i)] For any $\ell\in \Str(V)^+$ and $x\in V$
\[c(\ell, x) = \chi(\ell)^{-1}.
\]
\item[(ii)]  For any $v\in V$ and $x\in V$
\[c(n_v,x) = 1.
\]
\item[(iii)]  For any $x\in V^\times$,
\[c(\imath, x) = \det(x)^2.
\]
\end{itemize}
The cocycle property \autoref{cocyclec} 
satisfied by $c$ implies that, for any $g\in \mathrm{Co}(V)^+$, the function $x\mapsto c(g,x)$ is a rational function on $V$. On the other hand, for $\ell\in \mathrm{Str}(V)^+$, 
$   \chi(\ell)^{{2n}\over r}=\Det(\ell)^2$, so that for $x\in V_g$
\[
c(g,x)^{2n\over r} = \chi\big(Dg(x)^{-1}\big)^{2n\over r}= \Det\big(Dg(x)^{-1}\big)^2.
\]
Hence, for any $g\in \mathrm{Co}(V)^+$, the function $x\longmapsto c(g,x)^{2n\over r}$ extends as a polynomial on $V.$ But if a power\footnote{Recall that $2n\over r$ is an integer.} of a rational function coincides on a Zariski open subset with a polynomial, then the rational function  has to be a polynomial, and therefore $x\longmapsto c(g,x)$ extends as a polynomial on $V.$

\begin{proposition} For any $g\in \Co(V)^+,$ there exists a polynomial $p_g\in \mathcal{P}(V)$ such that, for $x\in V_g$
\begin{equation}\label{pg}
c(g,x) = p_g(x)^2.
\end{equation}
The polynomial $p_g$ is unique up to a sign $\pm$.
\end{proposition}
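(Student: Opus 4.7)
My plan is to build $p_g$ generator by generator, using the cocycle identity \autoref{cocyclec}, and then invoke the fact that $\mathcal{P}(V)$, being a polynomial ring over $\mathbb{R}$, is a UFD (hence integrally closed in its field of fractions $\mathbb{R}(V)$). The key extraction step will be the algebraic lemma: \emph{if $\varphi\in \mathbb{R}(V)$ satisfies $\varphi^{2}\in \mathcal{P}(V)$, then $\varphi\in \mathcal{P}(V)$}. Indeed, writing $\varphi=p/q$ with $\gcd(p,q)=1$, the relation $q^{2}\mid p^{2}$ forces $q$ to be a nonzero constant.

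First I would check the claim on a generating set of $\Co(V)^{+}$, namely $N$, $\Str(V)^{+}$, and the inversion $\imath$. By properties (i)--(iii) above:
\begin{itemize}
\item[$-$] for $n_{v}\in N$, $c(n_{v},x)=1$, so one may take $p_{n_{v}}\equiv 1$;
\item[$-$] for $\ell\in\Str(V)^{+}$, $c(\ell,x)=\chi(\ell)^{-1}>0$ by the very definition of $\Str(V)^{+}$, so the constant polynomial $p_{\ell}=\chi(\ell)^{-1/2}$ works;
\item[$-$] for $\imath$, $c(\imath,x)=\det(x)^{2}$, so one may take $p_{\imath}=\det$.
\end{itemize}

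Next, given $g_{1},g_{2}\in\Co(V)^{+}$ for which $p_{g_{1}}$ and $p_{g_{2}}$ have been constructed, the cocycle relation \autoref{cocyclec} yields on the dense open set $V_{g_{1}g_{2}}\cap V_{g_{2}}$
\[
c(g_{1}g_{2},x)\;=\;c\bigl(g_{1},g_{2}(x)\bigr)\,c(g_{2},x)\;=\;\bigl[\,p_{g_{1}}(g_{2}(x))\,p_{g_{2}}(x)\,\bigr]^{2}.
\]
The function $\varphi(x):=p_{g_{1}}(g_{2}(x))\,p_{g_{2}}(x)$ is a priori only a rational function on $V$ (since $g_{2}$ is rational), but its square coincides on a dense open set with the polynomial $c(g_{1}g_{2},\,\cdot\,)$ (already shown in the text to extend polynomially to $V$). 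Applying the algebraic lemma above, $\varphi$ itself extends to an element of $\mathcal{P}(V)$, and we set $p_{g_{1}g_{2}}:=\varphi$. Since every $g\in\Co(V)^{+}$ is a finite product of the generators listed, an obvious induction on word length produces the desired $p_{g}$ for all $g$.

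Finally, uniqueness up to sign is immediate: if $p_{g}^{2}=q_{g}^{2}$ in $\mathcal{P}(V)$, then $(p_{g}-q_{g})(p_{g}+q_{g})=0$, and since $\mathcal{P}(V)$ is an integral domain one of the factors vanishes, i.e.\ $q_{g}=\pm p_{g}$. The only delicate point in the whole argument is the algebraic lemma, but it is truly elementary in a UFD; no analytic issue arises because the cocycle itself has already been shown to be polynomial in $x$, which is what makes the square-root extraction purely algebraic. This two-valued ambiguity in the choice of sign of $p_{g}$ is exactly what is reflected by the passage, in \autoref{sec-DPS-KS}, to the twofold covering $G$ of $\Co(V)^{+}$.
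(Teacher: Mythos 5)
Your proposal is correct and follows essentially the same route as the paper: verify the claim on the generators $N$, $\Str(V)^+$, and $\imath$, propagate via the cocycle relation, then extract a polynomial square root using the fact that a rational function whose square is a polynomial must itself be a polynomial, and conclude uniqueness by factoring $p_g^2-q_g^2$ in the integral domain $\mathcal P(V)$. You have merely spelled out the UFD argument behind the square-root extraction, which the paper states without proof.
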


\begin{proof} In fact,
\begin{itemize}
\item[-]  if $g=n_v$ is a translation, then $p_{n_v} \equiv 1$ satisfies \autoref{pg},
 \item[-]  if $g=\ell\in \Str(V)^+$,   then $p_\ell\equiv\chi(\ell)^{-\frac{1}{2}}$  satisfies  \autoref{pg},
 \item[-] if  $g=\imath$, then $p_\imath(x) = \det(x)$ satisfies \autoref{pg}.
\end{itemize}
 Recall that  $\Co(V)^+$ is generated by $\Str(V)^+, N$ and $\imath$. Hence, for any $g\in \Co(V)^+$,  the existence of a rational function  satisfying \autoref{pg} follows from the cocycle relation \autoref{cocyclec}. But if the square of a rational function  is a polynomial, then the rational function  has to be a polynomial. For the uniqueness,  let $g\in \Co(V)^+$.  If $p_g$ and $q_g$ are two such that $p_g^2(x)=q_g^2(x)= c(g,x)$ for $x\in V_g$, then by continuity, $p_g^2 = q_g^2$  or equivalently $(p_g+q_g)(p_g-q_g) = 0$,   and $p_g = \pm q_g$ follows.
\end{proof}

Let \[ G = \Big\{(g, p_g); \quad g\in \Co(V)^+,\; p_g\in \mathcal{P}(V),\;  p_g(x)^2 = c(g,x)\, \forall x \in V\Big\},\] 
and define the product of two elements $\widetilde g_1 = (g_1, p_{g_1})$ and $\widetilde g_2=(g_2, p_{g_2})$ of $G$ by  
\[\widetilde g_1 \widetilde g_2=\big(g_1g_2, (p_{g_1}\circ g_2) p_{g_2}\big).g
\]
To see that this definition makes sense, observe that $(p_{g_1}\circ g_2)\,p_{g_2}$ is a rational function on $V$. Its square (wherever defined) satisfies 
\[\big((p_{g_1}\circ g_2)\,p_{g_2}\big)^2(x) = c(g_1,g_2(x))\, c(g_2,x) = c(g_1g_2, x),
\]
which extends as a polynomial on $V.$ As a consequence, $(p_{g_1}\circ g_2)\,p_{g_2}$ itself extends to $V$ as a polynomial, and its square equals   $c(g_1g_2, x)$, this proves that $\big(g_1g_2, (p_{g_1}\circ g_2) p_{g_2}\big)$ is an element of $G$.

Now $G$ can be endowed with a  Lie group structure yielding a twofold covering of $\Co(V)^+$.
The various notions defined (and results obtained) for $\Co(V)^+$ have their counterparts for $G$. The subgroups $A$ and $N$ of $\Co(V)^+$ are identified with subgroups of $G$ via
\[\delta_t\longmapsto (\delta_t, t^{-\frac{r}{2}}), \qquad n\longmapsto (n,1).
\] 
The inversion $\imath$ is identified with $(\imath, \det(x))$ and the Cartan involution 
 $\alpha$ is identified with $(\alpha,1)$.
Finally, let $L$ and $P$ be the subgroups of $G$ defined by
\begin{eqnarray*}
L&=&\{ \big(\ell, \pm  \,\chi(\ell)^{-\frac{1}{2}}\big);\quad \ell\in \Str(V)^+\},\\
P&=& \{ \big(p,\pm \, \chi(p)^{-\frac{1}{2}}\big);\quad p\in \Aff(V)^+ \},
\end{eqnarray*}
which are twofold (trivial) coverings of respectively $\Str(V)^+$ and $\Aff(V)^+$.
Above we have extended the character $\chi$ of $\Str(V)$ to a character of $\Aff(V)^+=\Str(V)^+\ltimes N$ by letting it acting trivially on $N$. Then $P$ is a maximal parabolic subgroup of $G$ with Langlands decomposition $P = LN$.
We keep the notation   $\Theta$ for the Cartan involution of $ G$ given by $\Theta(g) = \alpha\circ \imath \circ g\circ \imath\circ \alpha$. 
Bruhat decomposition of $G$ takes the form
 \begin{equation}\label{Bruhat}
 G=P\imath P \cup P.
 \end{equation}
  For a given $\widetilde g = (g, p_g)\in G$ and $x\in V_g$, we set $\widetilde g(x) := g(x)$. Further,  we wil use the following notation 
\begin{equation}\label{a}  
a(\widetilde g, x) := p_g(x),\quad x\in V.
\end{equation}
 Then $a(\widetilde g,\,.\,)$ is a polynomial on $V$ satisfying the cocycle relation
\begin{equation}\label{cocyclep}
a(\widetilde g_1\, \widetilde g_2, x) = a\big(\widetilde g_1, \widetilde g_2(x)\big)\, a(\widetilde g_2,x),\quad \widetilde g_1, \widetilde g_2\in  G, x\in V.
\end{equation}
We will abuse notation and denote elements of $G$ without tilde.
Remark that for all $g\in  G,$
\[g \text{ defined at } x \Longleftrightarrow a(g,x)\neq 0.
\]
The following equalities are immediate from above
$$\begin{array}{rllll}
a(n_v,x)&=&1 & \quad a_v\in N\\
a((\ell, \pm  \,\chi(\ell)^{-\frac{1}{2}}),x)&=&\pm (\chi(\ell))^{-\frac{1}{2}} & \quad  \ell\in \Str(V)^+\\
a(\imath,x)&=&\det(x) .& 
\end{array}$$
\begin{proposition} Let $ g\in  G$ and $x,y\in V$ such that $ g$ is defined at $x$ and $y$. Then
\begin{equation}\label{covdet}
\det\big( g(x) -  g(y)\big) = a( g,x)^{-1} \det(x-y) \,a( g, y)^{-1}.
\end{equation}
\end{proposition}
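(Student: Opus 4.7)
The plan is to verify the identity \autoref{covdet} separately on a set of generators of $G$, and then propagate it to arbitrary $g\in G$ by means of the cocycle relation \autoref{cocyclep}. Recall from the construction of $G$ that every element of $G$ is a product of translations $n_v$ $(v\in V)$, elements of $L$ coming from $\Str(V)^+$, and the lift of the inversion $\imath=(\imath,\det)$. Note also that both sides of \autoref{covdet} are rational functions in $x,y$ that coincide on a Zariski open set iff they coincide everywhere they are defined, so we may work at generic $x,y$ with $x-y,\, g(x)-g(y),\, x,\,y$ all invertible.

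First I would check the three types of generators. For a translation $n_v$ one has $n_v(x)-n_v(y)=x-y$ and $a(n_v,\cdot)\equiv 1$, so both sides equal $\det(x-y)$. For $\ell\in\Str(V)^+$ lifted to $g=(\ell,\pm\chi(\ell)^{-1/2})\in L$, linearity gives $g(x)-g(y)=\ell(x-y)$, hence by \autoref{def-chi}
\[
\det(g(x)-g(y))=\chi(\ell)\det(x-y)=\bigl(\pm\chi(\ell)^{-1/2}\bigr)^{-1}\det(x-y)\bigl(\pm\chi(\ell)^{-1/2}\bigr)^{-1},
\]
which is exactly $a(g,x)^{-1}\det(x-y)\,a(g,y)^{-1}$ (the two sign choices are the same polynomial $a(g,\cdot)$, so the product of signs is $+1$). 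For the inversion, one has $a(\imath,x)=\det(x)$, so the desired identity reads
\[
\det(-x^{-1}+y^{-1})=\det(x)^{-1}\det(x-y)\det(y)^{-1},
\]
which is the classical Jordan-algebraic inversion identity for the determinant. I would establish it by using the quadratic representation: the formula $y^{-1}-x^{-1}=P(y)^{-1}(x-y)\cdot(\text{suitable factor})$ combined with \autoref{chiP} and the multiplicativity $\det(P(z)w)=\det(z)^2\det(w)$ gives the result; alternatively one can argue by analytic continuation from the euclidean case, where the identity is standard (e.g.\ \cite{FK}).

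Once the identity is known on generators, I propagate it by induction on word length. Assume \autoref{covdet} holds for $g_1$ and $g_2$. Then, at any $x,y$ where everything is defined,
\begin{align*}
\det\bigl(g_1g_2(x)-g_1g_2(y)\bigr)
&=a(g_1,g_2(x))^{-1}\,\det(g_2(x)-g_2(y))\,a(g_1,g_2(y))^{-1}\\
&=a(g_1,g_2(x))^{-1}a(g_2,x)^{-1}\,\det(x-y)\,a(g_2,y)^{-1}a(g_1,g_2(y))^{-1}\\
&=a(g_1g_2,x)^{-1}\,\det(x-y)\,a(g_1g_2,y)^{-1},
\end{align*}
where the last equality uses the cocycle relation \autoref{cocyclep}. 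Hence \autoref{covdet} holds for $g_1g_2$, and, by induction, for every $g\in G$.

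The main obstacle is the inversion step, since the identity $\det(y^{-1}-x^{-1})=\det(x)^{-1}\det(y)^{-1}\det(x-y)$ is genuinely a Jordan-algebraic fact rather than a formal consequence of the cocycle. Everything else is essentially bookkeeping: the translation case is trivial, the structure case follows from the defining character $\chi$ of $\Str(V)$, and the reduction from general $g\in G$ to generators is automatic from \autoref{cocyclep}.
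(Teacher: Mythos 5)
Your proposal is correct and follows essentially the same route as the paper: verify the identity on the generators $N$, $L$, and $\imath$ (the last reducing to Hua's formula), then propagate to all of $G$ via the cocycle relation \autoref{cocyclep}. The only difference is that you spell out the cocycle-based induction step explicitly, which the paper leaves as a one-line remark.
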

\begin{proof} For $\ g = (n_v,1)$ the identity  \autoref{covdet} is obvious. For $\ g = (\ell, \pm\chi(\ell)^{-\frac{1}{2}})$ we have
\[\det\big(\ell(x-y)\big) = \chi(\ell) \det(x-y)= \big(\pm\,\chi(\ell)^{-\frac{1}{2}})^{-1} \,\det(x-y) \,\big(\pm\,\chi(\ell)^{-\frac{1}{2}})^{-1}.
\]
  Finally, for $ g = (\imath, \det(x))$, we have 
$$
\det\big(\imath(x)-\imath(y)\big) = \det(-x^{-1} +y^{-1})=(\det \,x)^{-1} \det(x-y) (\det \,y)^{-1},
$$
which is essentially {\it Hua's formula}. The general formula follows by using the cocycle relation \autoref{cocyclep}.
\end{proof}

 \addsubsection{The degenerate principal series and Knapp-Stein intertwining operators}\label{sub-sec-deg-series}

The degenerate principal series is the family of representations of $G$ (smoothly) induced by the characters of the parabolic subgroup opposed to $P$. In this section, we will consider the \emph{non-compact} realization of these representations.

For $(\lambda, \varepsilon)\in \mathbb C\times \{ \pm\}$ and for  $g\in G$ let
\begin{equation}\label{degprinseries}
\big(\pi_{\lambda, \varepsilon}(g)f\big)(x) = a(g^{-1},x)^{-\lambda, \varepsilon} f\big(g^{-1}(x)\big) .
\end{equation}
In this formula, $f$ is a smooth function on $V$. As the action of $G$ on $V$ is not defined everywhere, we let
$\mathcal{H}_{\lambda,\varepsilon}$ to be the subspace of functions $f\in C^\infty(V)$ such that 
\[ \forall g\in  G:\quad 
V_g\ni x\longmapsto \pi_{\lambda, \varepsilon}(g) f (x) \;\text{extends as a smooth function on } V.
\]
By the Bruhat decomposition $ G =  P\cup  P\imath P$,   the space $\mathcal H_{\lambda, \varepsilon}$ can be realized as the subspace of functions $f\in C^\infty(V)$ such that
\[V^\times \ni x\longmapsto  \big(\pi_{\lambda, \varepsilon}(\imath) f \big)(x)\;  \text{ extends as a smooth function on } V\ .
\]
The space $\mathcal H_{\lambda, \varepsilon}$ is equipped with a topology using the semi-norms defining the usual topology of $C^\infty(V)$ for both $f$ and $\pi_{\lambda, \varepsilon}(\imath)f$. Then \autoref{degprinseries} defines a continuous representation of $ G$ on $\mathcal H_{\lambda, \varepsilon}$. 

The space $\mathcal H_{\lambda, \varepsilon}$ is not well fitted in order to use Fourier analysis on $V$. So we will replace the representation of the group by its infinitesimal version. 

Let $f$ be in the space $\mathcal{D}(V)$ of  smooth functions on $V$ with compact support. Then for $g$ close to the neutral element of $G$, $g^{-1}$ is defined on the support of $f$, so that $\pi_{\lambda, \varepsilon}(g) f$ is a well defined smooth function on $V$. Denote by $\mathfrak{g}$ the Lie algebra of $G$. For  $X\in \mathfrak g$, let
$${\rm d}\pi_{\lambda, \varepsilon}(X) f =  {{\rm d}\over {{\rm d}t}}\pi_{\lambda,\varepsilon}(\exp(tX))f_{\big | t=0}.$$
This defines a representation of $\mathfrak g$ on $\mathcal D(V)$.  If $g$ is close enough to the identity, then $a(g^{-1},x)>0$ for all  $x\in \mathrm{Supp}(f)$, and therefore  the infinitesimal representation $d\pi_{\lambda,\varepsilon}$  does not depend on $\varepsilon$. Hence we will drop the index $\varepsilon$ from $d\pi_{\lambda,\varepsilon}.$ 

Finally, it is well-known (see e.g. \cite{Pevzner})  that $d\pi_\lambda(X)$ is a first order differential operator on $V$ with polynomial coefficients of degree $\leq 2$. As a consequence we let the infinitesimal representation act on the Schwartz space $\mathcal S(V)$.


Let $(\lambda, \varepsilon) \in \mathbb C\times \{\pm\}$. For $f\in \mathcal S(V)$,  define the \emph{Knapp-Stein intertwining operator} by
\begin{equation}\label{KSdef}
I_{\lambda, \varepsilon} f(x) := \int_V \det(x-y)^{-\frac{2n}{r}+\lambda,\varepsilon} f(y) dy.
\end{equation}

For $\Re \lambda \gg 0$, the function $\det(x)^{-\frac{2n}{r}+\lambda, \varepsilon}$ is locally integrable and has polynomial growth at infinity, so that it can be considered as a tempered distribution on $V$, and hence \autoref{KSdef} defines an operator from $\mathcal S(V)$ into $\mathcal S'(V)$. Next, the definition is extended meromorphically to $\mathbb C$ by analytic continuation in the parameter $\lambda$, thus defining a family of operators from $\mathcal S(V)$ into $\mathcal S'(V)$ depending meromorphically on $\lambda$.

Let $f\in \mathcal D( V)$. For any $g\in G$ such that $g$ is defined on the support of $f$,
\[\big(I_{\lambda, \varepsilon} \circ \pi_{\lambda, \varepsilon}(g)\big)f = \big(\pi_{\frac{2n}{r}-\lambda,\varepsilon}(g) \circ I_{\lambda, \varepsilon}\big)f,
\]
as can be proved first  for $\Re \lambda\gg 0$, using \autoref{covdet}, followed by analytic continuation in the parameter $\lambda$. In particular it implies that 
\begin{equation}\label{71}
  I_{\lambda,\varepsilon}\circ d\pi_\lambda(X) = d\pi_{\frac{2n}{r}-\lambda}(X)\circ I_{\lambda,\varepsilon},\quad \forall X\in \mathfrak g,
\end{equation}
as an equality of operators from $\mathcal S(V)$ into $\mathcal S'(V).$
\section{The covariance property of the operators $F_{\lambda, \mu}$}\label{sec-cov-Fst}
\addsubsection{The infinitesimal covariance property}\label{subsec-inf-cov}
Recall that $M$ denotes the multiplication operator on $\mathcal S(V\times V)$  defined  by
\[Mf(x,y)=\det(x-y)f(x,y) .
\]
\begin{lemma}
Let $f$ be a smooth function on $V\times V$ with compact support. Let $g\in  G$ close to the neutral element so that $g$ (acting diagonally on $V\times V$) is defined on the support of $f$. Then,  for any $(\lambda, \varepsilon)$ and $(\mu, \eta)$ in $\mathbb C\times\{\pm\}$, we have
\[M\circ \big(\pi_{\lambda, \varepsilon}(g) \otimes \pi_{\mu, \eta}(g)\big) f = \left(\big(\pi_{\lambda-1, -\varepsilon}(g) \otimes \pi_{\mu-1, -\eta}(g)\big)\circ M\right) f.\]
\end{lemma}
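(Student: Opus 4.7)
The plan is to verify the identity pointwise on $V \times V$ by expanding both sides from the definition and invoking the covariance of the Jordan determinant. Since $g$ is close to the identity, $g^{-1}$ is defined on $\mathrm{Supp}(f)$, so every pullback and every cocycle factor $a(g^{-1},\cdot)$ below is unambiguously defined (and nonzero on $\mathrm{Supp}(f)$).

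First, using the formula \autoref{degprinseries} for $\pi_{\lambda,\varepsilon}$, the value at $(x,y)$ of the left-hand side is
\[\det(x-y)\, a(g^{-1},x)^{-\lambda,\varepsilon}\, a(g^{-1},y)^{-\mu,\eta}\, f\!\left(g^{-1}(x),\, g^{-1}(y)\right),\]
and that of the right-hand side is
\[a(g^{-1},x)^{-(\lambda-1),-\varepsilon}\, a(g^{-1},y)^{-(\mu-1),-\eta}\, \det\!\left(g^{-1}(x)-g^{-1}(y)\right)\, f\!\left(g^{-1}(x),\, g^{-1}(y)\right).\]
Next I would apply the Hua-type covariance identity \autoref{covdet} to $g^{-1}$ to rewrite
\[\det\!\left(g^{-1}(x)-g^{-1}(y)\right) = a(g^{-1},x)^{-1}\,\det(x-y)\, a(g^{-1},y)^{-1}.\]
Substituting this into the right-hand side, the comparison with the left-hand side reduces to the purely algebraic identity
\[a^{-1}\cdot a^{s,\varepsilon} = a^{s-1,-\varepsilon},\qquad a\in\mathbb{R}^*,\]
which follows in one line from the convention \autoref{t-plus-moins} (the factor $a^{-1}$ contributes $|a|^{-1}$ to the modulus and a factor $\mathrm{sgn}(a)$ that flips the $\pm$ index). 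Applying this identity once in the $x$-variable with $s=-(\lambda-1)$ and once in the $y$-variable with $s=-(\mu-1)$ produces exactly the sign/exponent shifts needed to turn the right-hand side into the left-hand side.

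No real obstacle is expected: the argument is entirely symbolic and the lemma simply records how the covariance rule \autoref{covdet}, which introduces two factors $a(g^{-1},\cdot)^{-1}$, interacts with the sign/exponent convention underlying the definition of $\pi_{\lambda,\varepsilon}$. This also explains structurally why the target parameters on the right shift as $(\lambda,\mu)\mapsto(\lambda-1,\mu-1)$ together with $(\varepsilon,\eta)\mapsto(-\varepsilon,-\eta)$: the two scalar factors coming from \autoref{covdet} each decrease the exponent by one and flip the sign index, in accordance with the $a^{s,\varepsilon}$ calculus of \autoref{t-plus-moins}.
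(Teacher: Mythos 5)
Your proof is correct and is precisely the paper's argument: the paper disposes of this lemma in one sentence, citing the covariance identity \autoref{covdet}, and your proposal simply unwinds that sentence pointwise, applies \autoref{covdet} to $g^{-1}$, and records the elementary $a^{-1}\cdot a^{s,\varepsilon}=a^{s-1,-\varepsilon}$ calculus needed to match exponents and sign indices. No gap, same route.
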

\begin{proof} This is an immediate consequence of the covariance property \autoref{covdet} of the function $\det(x-y)$.
\end{proof}

For $X\in\mathfrak{g}$, let $d(\pi_\lambda\otimes\pi_\mu)(X)=d\pi_\lambda(X)\otimes\mathrm{Id}+\mathrm{Id}\otimes d\pi_\mu(X)$ be the infinitesimal representation of $\mathfrak{g} $ induced by $\pi_{\lambda,\varepsilon}\otimes \pi_{\mu,\eta}.$
The infinitesimal version of the above lemma reads :
 \begin{proposition}\label{prop71} 
For any $X\in \mathfrak g$, we have
\[M\circ  d(\pi_\lambda\otimes \pi_\mu)(X) =   d(\pi_{\lambda - 1} \otimes \pi_{\mu -1} )(X) \circ M.\]
\end{proposition}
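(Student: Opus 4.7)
The plan is to deduce \autoref{prop71} from the preceding lemma by differentiation at the identity of $G$, followed by a density argument.

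First, I would fix $X \in \mathfrak{g}$, a test function $f \in \mathcal{D}(V\times V)$, and a small enough $\epsilon > 0$ so that for all $|t| < \epsilon$ the element $g_t = \exp(tX) \in G$ is defined on $\operatorname{Supp}(f)$ (acting diagonally). By shrinking $\epsilon$ further if needed, $a(g_t^{-1}, x) > 0$ for every $x$ in the (compact) projections of $\operatorname{Supp}(f)$ and every $|t|<\epsilon$; in this regime the values of $\varepsilon,\eta \in \{\pm\}$ play no role in the definition of $\pi_{\lambda,\varepsilon}(g_t) f$, so the lemma can legitimately be applied. Applying it with $g = g_t$ yields, for all $|t| < \epsilon$, the identity
\[
M \circ \bigl(\pi_{\lambda,\varepsilon}(g_t)\otimes \pi_{\mu,\eta}(g_t)\bigr) f
= \bigl(\pi_{\lambda-1,-\varepsilon}(g_t)\otimes \pi_{\mu-1,-\eta}(g_t)\bigr)\circ M f.
\]

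Next, I would differentiate both sides at $t = 0$. Note first that $Mf$ still lies in $\mathcal{D}(V\times V)$, since multiplication by the polynomial $\det(x-y)$ preserves compact support; hence the right-hand side makes sense in the same framework. Since $M$ does not depend on $t$, differentiation of the left-hand side gives $M \circ d(\pi_\lambda\otimes \pi_\mu)(X) f$, where the $\varepsilon,\eta$ labels have dropped because the infinitesimal representation $d\pi_{\lambda,\varepsilon}$ does not depend on $\varepsilon$ (as noted in \autoref{sub-sec-deg-series}). Similarly, the right-hand side becomes $d(\pi_{\lambda-1}\otimes \pi_{\mu-1})(X) \circ M f$. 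Thus the identity of \autoref{prop71} holds when tested against any $f \in \mathcal{D}(V\times V)$.

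It remains to upgrade the identity from $\mathcal{D}(V\times V)$ to $\mathcal{S}(V\times V)$. By the discussion in \autoref{sub-sec-deg-series}, each $d\pi_\lambda(X)$ is a first-order differential operator with polynomial coefficients of degree at most two, and $M$ is multiplication by the polynomial $\det(x-y)$; therefore both sides of the asserted identity are continuous linear endomorphisms of $\mathcal{S}(V\times V)$. Since $\mathcal{D}(V\times V)$ is dense in $\mathcal{S}(V\times V)$, the equality extends by continuity, completing the proof. There is no substantive obstacle here: the only delicate point is observing that the sign flip $\varepsilon \mapsto -\varepsilon$ that appears at the group level in the preceding lemma is invisible at the infinitesimal level, which is exactly why the statement of \autoref{prop71} involves no signs.
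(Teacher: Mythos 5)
Your proof is correct and follows exactly the route the paper intends: the paper offers no written proof of \autoref{prop71}, merely announcing it as ``the infinitesimal version of the above lemma,'' and you have carefully spelled out what that means — apply the lemma to $g_t=\exp(tX)$, differentiate at $t=0$, note that the $\varepsilon$-sign is invisible infinitesimally (since $a(g_t^{-1},\cdot)>0$ for small $t$ on a compact set), and extend from $\mathcal D(V\times V)$ to $\mathcal S(V\times V)$. The one step you flag correctly — that $Mf$ stays compactly supported so the lemma and the sign-insensitivity apply to it too — is indeed the only point worth checking, and you handle it.
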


To make connection with the notation of \autoref{sec-Est}  and \autoref{sec-DPS-KS}, first observe that
\[I_{\lambda, \varepsilon} = J_{-\frac{2n}{r}+\lambda, \varepsilon},
\]
where $J_{s,\varepsilon}$ was defined in \autoref{def-J-st}.
For notational convenience, we let
$$F_{\lambda, \mu} = E_{\frac{n}{r}-\lambda, \frac{n}{r}-\mu},$$
where $E_{s,t}$ is defined by \autoref{def-E(s,t)}. 
Then we may rephrase \autoref{them-main} as follows:
\begin{proposition}\label{72} For all $(\lambda,\varepsilon)$ and $(\mu,\eta)$ in $\mathbb C\times\{\pm\},$ we have 
$$M\circ (I_{\lambda, \varepsilon}\otimes I_{\mu, \eta}) = k(\lambda , \mu )(I_{\lambda+1,-\varepsilon}\otimes I_{\mu+1,-\eta}) \circ F_{\lambda, \mu},
$$
where $k(\lambda, \mu)$ is a meromorphic function on $\mathbb C\times\mathbb C$.
\end{proposition}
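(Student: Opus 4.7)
The plan is to deduce this proposition directly from the Main Theorem (\autoref{them-main}) by the change of variables dictated by the notational conventions introduced just before the statement. Specifically, I would set $s=-\tfrac{2n}{r}+\lambda$ and $t=-\tfrac{2n}{r}+\mu$, so that, by the definition $I_{\lambda,\varepsilon}=J_{-\frac{2n}{r}+\lambda,\varepsilon}$, we have $J_{s,\varepsilon}=I_{\lambda,\varepsilon}$ and $J_{t,\eta}=I_{\mu,\eta}$. Under the same substitution, $s+1=-\tfrac{2n}{r}+(\lambda+1)$ and $t+1=-\tfrac{2n}{r}+(\mu+1)$, so $J_{s+1,-\varepsilon}=I_{\lambda+1,-\varepsilon}$ and $J_{t+1,-\eta}=I_{\mu+1,-\eta}$.

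Next, I would track the argument of the differential operator on the right-hand side. One computes
\[
-s-\tfrac{n}{r}=\tfrac{2n}{r}-\lambda-\tfrac{n}{r}=\tfrac{n}{r}-\lambda,\qquad -t-\tfrac{n}{r}=\tfrac{n}{r}-\mu,
\]
hence $E_{-s-\frac{n}{r},-t-\frac{n}{r}}=E_{\frac{n}{r}-\lambda,\frac{n}{r}-\mu}$, which by the definition of $F_{\lambda,\mu}$ is precisely $F_{\lambda,\mu}$. Plugging these identifications into the identity provided by \autoref{them-main} yields
\[
M\circ(I_{\lambda,\varepsilon}\otimes I_{\mu,\eta}) = \kappa\!\left(-\tfrac{2n}{r}+\lambda,\,-\tfrac{2n}{r}+\mu\right)(I_{\lambda+1,-\varepsilon}\otimes I_{\mu+1,-\eta})\circ F_{\lambda,\mu},
\]
so that setting $k(\lambda,\mu):=\kappa(-\tfrac{2n}{r}+\lambda,\,-\tfrac{2n}{r}+\mu)$ gives the desired identity. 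Since $\kappa$ is meromorphic on $\mathbb{C}\times\mathbb{C}$ by \autoref{them-main}, so is $k$.

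There is essentially no obstacle here: the statement is genuinely a reformulation of \autoref{them-main} in the representation-theoretic language set up in \autoref{sec-DPS-KS}. The only care required is to confirm that the identity, which was initially established for generic $(s,\varepsilon),(t,\eta)$, extends meromorphically to all of $\mathbb{C}\times\mathbb{C}$; this follows from the fact that both sides of the equation, viewed as operator-valued functions on $\mathcal{S}(V\times V)$, depend meromorphically on $(\lambda,\mu)$ (the distributions $\det(x-y)^{-\frac{2n}{r}+\lambda,\varepsilon}$ extend meromorphically in $\lambda$, $F_{\lambda,\mu}$ has polynomial coefficients in $\lambda,\mu$ by \autoref{Dstgen}, and $M$ is independent of the parameters), so the identity of meromorphic families is determined by its validity on a nonempty open set.
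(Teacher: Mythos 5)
Your proposal is correct and takes essentially the same approach as the paper: the paper itself introduces the identifications $I_{\lambda,\varepsilon}=J_{-\frac{2n}{r}+\lambda,\varepsilon}$ and $F_{\lambda,\mu}=E_{\frac{n}{r}-\lambda,\frac{n}{r}-\mu}$ immediately before \autoref{72} and presents the proposition as a rephrasing of \autoref{them-main}, and your substitutions carry out this rephrasing exactly, with the final paragraph on meromorphic extension being a useful (if implicit in the paper) clarification of the passage from ``generic'' to ``all'' parameters.
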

Further, we have the following covariance property of the operators $F_{\lambda,\mu}.$

\begin{theorem} For all  $\lambda, \mu\in \mathbb C$ and  for any $X\in \mathfrak g$, we have 
\begin{equation}\label{covE}
F_{\lambda, \mu} \circ \big[(d(\pi_\lambda\otimes \pi_\mu)(X)\big]  = \big[(d(\pi_{\lambda+1}\otimes \pi_{\mu+1})(X)\big]\circ F_{\lambda, \mu}\end{equation}
\end{theorem}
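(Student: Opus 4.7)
The plan is to derive the identity \autoref{covE} from the three covariance facts already at hand: \autoref{prop71} for the multiplication operator $M$, the Knapp-Stein intertwining relation \autoref{71}, and the factorization of $M\circ(I_{\lambda,\varepsilon}\otimes I_{\mu,\eta})$ through $F_{\lambda,\mu}$ given in \autoref{72}. Fix any signs $\varepsilon,\eta\in\{\pm\}$, pick $(\lambda,\mu)$ generic enough that $k(\lambda,\mu)\neq 0,\infty$ and that the Knapp-Stein operators $I_{\lambda+1,-\varepsilon}$ and $I_{\mu+1,-\eta}$ are injective (this is true off a locally finite union of hyperplanes, by the explicit $c$-functions in \autoref{valeur-c(e,s)}). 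Then \autoref{72} expresses $(I_{\lambda+1,-\varepsilon}\otimes I_{\mu+1,-\eta})\circ F_{\lambda,\mu}$ as $\frac{1}{k(\lambda,\mu)}\,M\circ(I_{\lambda,\varepsilon}\otimes I_{\mu,\eta})$, which is a composition of three operators all of which intertwine infinitesimal representations with a predictable shift of the parameter.

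Composing the identity of \autoref{72} on the right with $d(\pi_\lambda\otimes\pi_\mu)(X)$, I would push the infinitesimal action leftwards through the three factors in turn. First, applying \autoref{71} in each tensor slot converts $d(\pi_\lambda\otimes\pi_\mu)(X)$ into $d(\pi_{\frac{2n}{r}-\lambda}\otimes \pi_{\frac{2n}{r}-\mu})(X)$ once it has crossed $I_{\lambda,\varepsilon}\otimes I_{\mu,\eta}$. Next, \autoref{prop71} lets it cross $M$ at the cost of shifting the parameters down by $1$, producing $d(\pi_{\frac{2n}{r}-\lambda-1}\otimes \pi_{\frac{2n}{r}-\mu-1})(X)$. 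Finally, a second application of \autoref{71} (with parameters $\lambda+1$ and $\mu+1$) rewrites this last operator, when followed by $I_{\lambda+1,-\varepsilon}\otimes I_{\mu+1,-\eta}$, as $(I_{\lambda+1,-\varepsilon}\otimes I_{\mu+1,-\eta})\circ d(\pi_{\lambda+1}\otimes \pi_{\mu+1})(X)$. Putting the three steps together, one obtains
\[
(I_{\lambda+1,-\varepsilon}\otimes I_{\mu+1,-\eta})\circ F_{\lambda,\mu}\circ d(\pi_\lambda\otimes\pi_\mu)(X)
= (I_{\lambda+1,-\varepsilon}\otimes I_{\mu+1,-\eta})\circ d(\pi_{\lambda+1}\otimes\pi_{\mu+1})(X)\circ F_{\lambda,\mu}.
\]

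Cancelling the injective operator $I_{\lambda+1,-\varepsilon}\otimes I_{\mu+1,-\eta}$ on the left yields \autoref{covE} for generic $(\lambda,\mu)$. To conclude for \emph{all} $(\lambda,\mu)\in\mathbb C\times\mathbb C$, I would invoke analytic continuation: by construction $F_{\lambda,\mu}=E_{\frac{n}{r}-\lambda,\frac{n}{r}-\mu}$ is a differential operator whose coefficients are polynomial in $(x,y)$ and in $(\lambda,\mu)$, and $d\pi_\lambda(X)$ is a first-order differential operator with coefficients depending polynomially on $\lambda$. Consequently both sides of \autoref{covE} are, when applied to any fixed $f\in\mathcal S(V\times V)$, smooth functions on $V\times V$ depending polynomially on $(\lambda,\mu)$; agreement on a non-empty Zariski open set forces agreement everywhere.

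The main obstacle in this argument is of a technical flavour: justifying the cancellation of the Knapp-Stein factor and organising the chain of intertwinings on the correct function spaces. The operators $I_{\lambda,\varepsilon}$ map $\mathcal S(V)$ into $\mathcal S'(V)$ and are only known to be invertible (on appropriate subspaces) for generic parameters, so one must work either with the distributional composition $M\circ(I_{\lambda,\varepsilon}\otimes I_{\mu,\eta})$ as a meromorphic family and carefully track where each side makes sense, or restrict to a dense subspace (for example, Schwartz functions whose image under Fourier transform vanishes on the zero set of $\det$) on which the Knapp-Stein operators are injective. Once that is handled, the rest of the proof is a purely formal manipulation of the three structural identities.
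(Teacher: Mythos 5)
Your proposal is correct and follows essentially the same route as the paper's proof: compose $(I_{\lambda+1,-\varepsilon}\otimes I_{\mu+1,-\eta})\circ F_{\lambda,\mu}$ with $d(\pi_\lambda\otimes\pi_\mu)(X)$, rewrite via Proposition~\ref{72}, push the Lie algebra action leftward through $I_{\lambda,\varepsilon}\otimes I_{\mu,\eta}$, $M$, and then back using \autoref{71}, \autoref{prop71}, \autoref{71}, and finally cancel the generically injective Knapp--Stein factor and extend by continuity (you make the last step a bit more explicit by pointing to the polynomial dependence on the parameters, which is a welcome clarification). Nothing to add.
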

\begin{proof} In view of \autoref{prop71}, \autoref{72} and the identity \autoref{71}, we have  
\begin{eqnarray*}
 &&\big(I_{\lambda+1, -\varepsilon} \otimes I_{\mu+1,- \eta}\big)\circ F_{\lambda, \mu}\circ \big(d(\pi_\lambda\otimes \pi_\mu)(X)\big)\\
&&\qquad\qquad\qquad= \frac{1}{\kappa(\lambda, \mu)} M\circ (I_{\lambda, \varepsilon}\otimes I_{\mu, \eta})\circ d(\pi_\lambda\otimes d\pi_\mu)(X)\\
&&\qquad\qquad\qquad= \frac{1}{\kappa(\lambda, \mu)} M\circ (d(\pi_{\frac{2n}{r}-\lambda}\otimes \pi_{\frac{2n}{r}-\mu})(X))\circ \big(I_{\lambda, \varepsilon}\otimes I_{\mu, \eta}\big)\\
&&\qquad\qquad\qquad= \frac{1}{\kappa(\lambda, \mu)}d(\pi_{\frac{2n}{r}-\lambda-1}\otimes \pi_{\frac{2n}{r}-\mu-1})(X)\circ M\circ  \big(I_{\lambda, \varepsilon}\otimes I_{\mu, \eta}\big)\\
&&\qquad\qquad\qquad= d(\pi_{\frac{2n}{r}-\lambda-1}\otimes \pi_{\frac{2n}{r}-\mu-1})(X)\circ \big(I_{\lambda+1, -\varepsilon}\otimes I_{\mu+1, -\eta}\big)\circ F_{\lambda,\mu}\\
&&\qquad\qquad\qquad=\big(I_{\lambda+1, -\varepsilon}\otimes I_{\mu+1, -\eta}\big)\circ\big(d(\pi_{\lambda+1}\otimes \pi_{\mu+1})(X)\big)\circ F_{\lambda, \mu}.
\end{eqnarray*}
For generic $\lambda$, the operator $I_{\lambda,\varepsilon}$ is injective on $\mathcal S(V)$, so that \autoref{covE} follows. By continuity, \autoref{covE} extends to all $\lambda, \mu\in \mathbb C$.
\end{proof}

\addsubsection{The global covariance property}\label{subsec-global-cov}
There is a global formulation for the covariance property of the operators $F_{\lambda, \mu}$, but it is nicer to work in  the compact setting. Using the notation of \autoref{subsec-Co}, let $\overline P= \Theta(P)$ be the opposite parabolic subgroup of $G$. Let $\mathcal{X}= G/ \overline P$. The map
\[V\ni v\;  \longmapsto \; n_v\, {\overline P}
\] 
is a diffeomorphism onto an open dense subset of $\mathcal{X}$ (for that reason $\mathcal{X}$ is usually called the conformal compactification of $V$). Let $\chi_{\frac{1}{2}}$ be the character of $\overline{P}$ which is trivial on $\overline N=\Theta(N)$ and which on $L$ is defined by
\[\chi_{\frac{1}{2}}\big(\ell, \pm\chi(\ell)^{-\frac{1}{2}}\big) = \pm\chi(\ell)^{\frac{1}{2}},
\]
where $\chi$ is the character of $\Str(V)$ defined by \autoref{def-chi}.

For $(\lambda, \varepsilon)\in \mathbb C\times \{\pm\}$, consider the character $\chi_{\frac{1}{2}}^{\lambda, \varepsilon}$  of $ L$ and form the line bundle 
\[
\mathcal E_{\lambda, \varepsilon} =  G\otimes_{\overline P}\,  \mathbb C_{\chi_{\frac{1}{2}}^{\lambda, \varepsilon}}.
\]
Finally denote by $\Gamma(\mathcal E_{\lambda, \varepsilon})$ the space of smooth sections of $\mathcal E_{\lambda, \varepsilon}$. The natural actions of $G$ on $\Gamma(\mathcal E_{\lambda, \varepsilon})$ defines a smooth representation of $ G$ which is the compact realization of the representation $\pi_{\lambda, \varepsilon}$ considered previously in \autoref{degprinseries}. We keep same notation for this representation. 

Similarly, consider the \emph{normalized} Knapp-Stein intertwining operators 
$\widetilde J_{\lambda, \varepsilon} : \Gamma(\mathcal E_{\lambda, \varepsilon})\longrightarrow \Gamma(\mathcal E_{\frac{2n}{r}-\lambda, \varepsilon})$ (see \cite{Knapp}).  They form a \emph{holomorphic} family of intertwining operators. It is known that for generic $\lambda$, the representation $\pi_{\lambda, \varepsilon} $ is irreducible, so that 
\begin{equation}\label{depsilon}
 \widetilde J_{\frac{2n}{r}-\lambda, \varepsilon}\circ \widetilde J_{\lambda, \varepsilon}=d_\varepsilon(\lambda) \id 
 \end{equation}
for some holomorphic function $d_\varepsilon$.

By \autoref{covdet}, the kernel $\det(x-y)$ correspond to a $G$-invariant section of 
$\mathcal E_{-1,-} \otimes \mathcal E_{-1,-}$ and hence, for any $(\lambda, \varepsilon)$ and $(\mu,\eta),$ the corresponding multiplication operator  
$$M : \Gamma(\mathcal E_{\lambda,\varepsilon}\otimes \mathcal E_{\mu, \eta})\longrightarrow \Gamma(\mathcal E_{\lambda-1,-\varepsilon}\otimes \mathcal E_{\mu-1, -\eta})$$
intertwines $\pi_{\lambda, \varepsilon}\otimes \pi_{\mu, \eta}$ and $\pi_{\lambda-1, -\varepsilon}\otimes \pi_{\mu-1, -\eta}$.

Let $(\lambda, \varepsilon), (\mu,\eta)\in \mathbb C\times \{\pm\}$ and
 consider the operator
$$F_{(\lambda, \varepsilon),(\mu, \eta)} : \Gamma(\mathcal E_{\lambda,\varepsilon} \otimes \mathcal E_{\mu,\eta})\longrightarrow \Gamma(\mathcal E_{\lambda+1,-\varepsilon} \otimes \mathcal E_{\mu+1,-\eta})$$
defined by
$$F_{(\lambda, \varepsilon),(\mu, \eta)}= \big( \widetilde {J}_{\frac{2n}{r}-\lambda-1, -\varepsilon}\otimes \widetilde {J}_{\frac{2n}{r}-\mu-1, -\eta}\big)\circ M\circ \big(\widetilde {J}_{\lambda, \varepsilon}\otimes \widetilde {J}_{\mu, \eta}\big).$$
Notice that the family $F_{(\lambda, \varepsilon),(\mu, \eta)}$ depends holomorphically on $\lambda$ and $\mu.$ 
\begin{theorem} The operator $F_{(\lambda, \varepsilon),(\mu, \eta)}$ is a  {differential operator} which intertwines  the representations $\pi_{\lambda, \varepsilon}\otimes \pi_{\mu, \eta}$ and $\pi_{\lambda+1, -\varepsilon}\otimes \pi_{\mu+1, -\eta}$. 
\end{theorem}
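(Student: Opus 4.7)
My plan is to handle the two claims—intertwining and differential-operator—essentially separately, both by reducing to results already established in the paper.

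First, I will establish the intertwining property by inspecting the definition: $F_{(\lambda,\varepsilon),(\mu,\eta)}$ is a composition of three factors, each of which is $G$-equivariant for the pair of representations it connects. The outer Knapp-Stein tensor products intertwine the appropriate principal series by the standard theory recalled in \autoref{sub-sec-deg-series}, and the middle factor $M$ intertwines $\pi_{\frac{2n}{r}-\lambda,\varepsilon}\otimes\pi_{\frac{2n}{r}-\mu,\eta}$ with $\pi_{\frac{2n}{r}-\lambda-1,-\varepsilon}\otimes\pi_{\frac{2n}{r}-\mu-1,-\eta}$ because the kernel $\det(x-y)$ transforms covariantly under the diagonal $G$-action, as expressed by \autoref{covdet} (the integrated form of \autoref{prop71}). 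Chaining these three equivariance properties delivers the intertwining claim immediately.

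The substantive content is to show that $F_{(\lambda,\varepsilon),(\mu,\eta)}$ is a \emph{differential} operator. I would work on the big Bruhat cell in the non-compact realization, writing $\widetilde J_{\lambda,\varepsilon}=n(\lambda,\varepsilon)\,I_{\lambda,\varepsilon}$ for a scalar renormalization making the family holomorphic. Substituting into the definition and pulling the middle $M$ through the inner pair by \autoref{72} converts the middle composition into a scalar multiple of
\[
\bigl(\widetilde J_{\frac{2n}{r}-\lambda-1,-\varepsilon}\circ\widetilde J_{\lambda+1,-\varepsilon}\bigr)\otimes\bigl(\widetilde J_{\frac{2n}{r}-\mu-1,-\eta}\circ\widetilde J_{\mu+1,-\eta}\bigr)\circ F_{\lambda,\mu}.
\]
The Knapp-Stein inversion relation \autoref{depsilon}, applied to each tensor factor, collapses this to a scalar times $F_{\lambda,\mu}$, giving
\[F_{(\lambda,\varepsilon),(\mu,\eta)}=\sigma(\lambda,\mu;\varepsilon,\eta)\,F_{\lambda,\mu}\]
for some meromorphic scalar $\sigma$. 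Since $F_{\lambda,\mu}=E_{\frac{n}{r}-\lambda,\frac{n}{r}-\mu}$ is by construction a differential operator with coefficients polynomial in $x,y$ and in $\lambda,\mu$ (\autoref{sec-Est}), this identifies the left-hand side as a differential operator for generic $(\lambda,\mu)$.

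To finish, I would extend the identification to all $(\lambda,\mu)\in\mathbb{C}\times\mathbb{C}$. The family $F_{(\lambda,\varepsilon),(\mu,\eta)}$ is holomorphic in $(\lambda,\mu)$ by construction, and the order of $F_{\lambda,\mu}$ is uniformly bounded; a standard limiting argument then produces a holomorphic family of differential operators of uniformly bounded order throughout $\mathbb{C}\times\mathbb{C}$. The main obstacle I anticipate is the bookkeeping at the penultimate step: one must verify that, once the normalizing factors $n(\cdot)$ are folded in, the scalar $\sigma$ is regular wherever $F_{(\lambda,\varepsilon),(\mu,\eta)}$ is, so that no spurious poles obstruct the conclusion. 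This amounts to matching the gamma factors coming from the Knapp-Stein normalization against those produced by the local Zeta functional equations of \autoref{sec-Zeta}; at generic $(\lambda,\mu)$ the two sources match by the very construction of $\widetilde J$, but care is required near reducibility points.
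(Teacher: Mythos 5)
Your proposal is correct and follows essentially the same route as the paper's proof: both arguments establish the intertwining property by reading off the equivariance of each of the three factors, and both prove the differential-operator claim by combining the Knapp-Stein inversion relation with \autoref{72} and the non-compact expression of $\widetilde J_{\lambda,\varepsilon}$ in terms of $I_{\lambda,\varepsilon}$ to identify $F_{(\lambda,\varepsilon),(\mu,\eta)}$ with a meromorphic multiple of $F_{\lambda,\mu}$ for generic parameters, then extend by holomorphy. The only cosmetic difference is the order of operations—you push $M$ through the inner pair first and then cancel the Knapp-Stein operators, whereas the paper left-composes with $\widetilde J_{\lambda+1,-\varepsilon}\otimes\widetilde J_{\mu+1,-\eta}$ first and then uses injectivity—but the logical content is identical.
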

\begin{proof}
The intertwining property of $F_{(\lambda, \varepsilon),(\mu, \eta)}$ follow immediately from its  construction. Now, by composing $F_{(\lambda, \varepsilon),(\mu, \eta)}$ with $\widetilde {J}_{\lambda+1,-\varepsilon}\otimes \widetilde  {J}_{\mu+1, -\eta}$ from the left and using \autoref{depsilon}, we get
$$\big(\widetilde {J}_{\lambda+1,-\varepsilon}\otimes \widetilde  {J}_{\mu+1, -\eta}\big) \circ F_{(\lambda, \varepsilon),(\mu, \eta)}= d_{-\varepsilon}(\lambda+1) d_{-\eta}(\mu+1) M\circ \big(\widetilde {J}_{\lambda, \varepsilon}\otimes \widetilde {J}_{\mu, \eta}\big).
$$
When translating this relation in the non-compact setting, the local expression of  $\widetilde {J}_{\lambda, \varepsilon}$ is (up to a  function of $\lambda$) equal to $I_{\lambda, \varepsilon}$ and $M$ corresponds to the multiplication by $\det(x-y)$. 
Comparing with the result obtained in \autoref{sec-cov-Fst}, this implies that the local expression of 
 $F_{(\lambda, \varepsilon),(\mu, \eta)}$ in the non-compact setting  is equal to $F_{\lambda, \mu}$, up to a meromorphic function of $\lambda$ and $\mu$. Hence $F_{(\lambda, \varepsilon),(\mu, \eta)}$ is a differential operator. This works for generic $\lambda$ and $ \mu$ and hence for every $\lambda$ and $ \mu$ by continuity. 
\end{proof}

\section{Conformally  covariant bi-differential operators}\label{sec-Bst}
Let $\mathbf{res} : \mathcal C^\infty(V\times V)\to \mathcal C^\infty(V)$ be the restriction map defined by 
$$\mathbf{res}(\varphi(x))=\varphi(x,x),\quad x\in V.$$
It satisfies 
$$\mathbf{res}\circ (\pi_{\lambda,\varepsilon}(g)\otimes \pi_{\mu,\eta}(g))=\pi_{\lambda+\mu,\varepsilon\eta}(g)\circ\mathbf{res},\qquad \forall g\in {G}.$$
For $\lambda, \mu\in\mathbb{C}$ and $N\in\mathbb{N}^*$ we define the bi-differential operator $ B_{\lambda,\mu}^{(N)}  : \mathcal C^\infty(V\times V)\longrightarrow  \mathcal C^\infty(V)$  by
$$B_{\lambda,\mu}^{(N)}=\mathbf{res}\circ F_{\lambda+N-1, \mu+N-1}\circ\cdots\circ  F_{\lambda, \mu}.$$
The covariance property of the operators $F_{\lambda, \mu}$ and of $\mathbf{res}$ imply the following statement.
\begin{theorem} For all $(\lambda,\varepsilon)$ and $ (\mu,\eta)$ in $\mathbb{C}\times \{\pm\}$ and for all $N$ in $\mathbb{N}^*$, the operator $B_{\lambda,\mu}^{(N)}$ is covariant with respect to $(\pi_{\lambda,\varepsilon}\otimes \pi_{\mu,\eta}, \pi_{\lambda+\mu+2N,\varepsilon\eta})$, i.e.
$$ B_{\lambda,\mu}^{(N)}\circ (\pi_{\lambda,\varepsilon}(g)\otimes \pi_{\mu,\eta}(g))= \pi_{\lambda+\mu+2N,\varepsilon\eta}(g)\circ B_{\lambda,\mu}^{(N)} ,\quad \forall g\in G.$$
\end{theorem}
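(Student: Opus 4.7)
The proof is a direct composition argument built on two ingredients already established: the intertwining property of each $F_{\lambda,\mu}$ (from the preceding theorem in \autoref{subsec-global-cov}) and the equivariance of the restriction map $\mathbf{res}$ stated at the beginning of this section.

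The plan is to track, step by step, how the parameters $(\lambda,\varepsilon)$ and $(\mu,\eta)$ evolve along the chain of operators defining $B_{\lambda,\mu}^{(N)}$. The previous theorem gives us, for any $(\nu,\sigma),(\rho,\tau)\in\mathbb{C}\times\{\pm\}$, the intertwining identity
\[
F_{(\nu,\sigma),(\rho,\tau)}\circ\bigl(\pi_{\nu,\sigma}(g)\otimes\pi_{\rho,\tau}(g)\bigr)=\bigl(\pi_{\nu+1,-\sigma}(g)\otimes\pi_{\rho+1,-\tau}(g)\bigr)\circ F_{(\nu,\sigma),(\rho,\tau)},
\]
for every $g\in G$. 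Iterating this identity $N$ times, starting from $(\lambda,\varepsilon)$ and $(\mu,\eta)$, I obtain by induction on $N$ that the composition
\[
F_{(\lambda+N-1,(-1)^{N-1}\varepsilon),(\mu+N-1,(-1)^{N-1}\eta)}\circ\cdots\circ F_{(\lambda,\varepsilon),(\mu,\eta)}
\]
intertwines $\pi_{\lambda,\varepsilon}\otimes\pi_{\mu,\eta}$ with $\pi_{\lambda+N,(-1)^N\varepsilon}\otimes\pi_{\mu+N,(-1)^N\eta}$.

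Next I compose with $\mathbf{res}$ on the left. The equivariance relation
\[
\mathbf{res}\circ\bigl(\pi_{\alpha,\sigma}(g)\otimes\pi_{\beta,\tau}(g)\bigr)=\pi_{\alpha+\beta,\sigma\tau}(g)\circ\mathbf{res}
\]
applied with $(\alpha,\sigma)=(\lambda+N,(-1)^N\varepsilon)$ and $(\beta,\tau)=(\mu+N,(-1)^N\eta)$ yields the target representation $\pi_{\lambda+\mu+2N,\,(-1)^{2N}\varepsilon\eta}=\pi_{\lambda+\mu+2N,\varepsilon\eta}$, using $(-1)^{2N}=1$. Putting everything together,
\[
B_{\lambda,\mu}^{(N)}\circ\bigl(\pi_{\lambda,\varepsilon}(g)\otimes\pi_{\mu,\eta}(g)\bigr)=\pi_{\lambda+\mu+2N,\varepsilon\eta}(g)\circ B_{\lambda,\mu}^{(N)},
\]
which is the desired covariance.

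There is in fact no substantial obstacle here: the content of the theorem has been absorbed into the intertwining property of the differential operators $F_{(\lambda,\varepsilon),(\mu,\eta)}$, which is the nontrivial step carried out in \autoref{subsec-global-cov}. The only point to watch is the bookkeeping of the sign parameter, where the key observation is that each application of $F$ flips \emph{both} $\varepsilon$ and $\eta$, so that after $N$ iterations the product $\varepsilon\eta$ is multiplied by $(-1)^{2N}=1$ and is therefore preserved, matching the sign in $\pi_{\lambda+\mu+2N,\varepsilon\eta}$.
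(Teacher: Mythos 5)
Your proposal is correct and takes essentially the same approach as the paper: the paper's proof is simply the one-sentence remark that the covariance of the $F$-operators and of $\mathbf{res}$ imply the statement, and you have carefully spelled out the iteration, in particular the sign bookkeeping showing that $N$ flips of both $\varepsilon$ and $\eta$ leave the product $\varepsilon\eta$ unchanged after restriction.
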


 It is possible to give a slightly different presentation of the bi-differential operators $B_{\lambda,\mu}^{(N)}$. Below we will use the notation of \autoref{subsec-global-cov}. For $N\geq 1$, define the operator $F^{(N)}_{(\lambda, \varepsilon), (\mu,\eta)}$   by the following diagram 
\[
\begin{CD}
\mathcal H_{(\lambda,\varepsilon), (\mu,\eta)} @> F^{(N)}_{(\lambda, \varepsilon), (\mu,\eta)}>> \mathcal H_{(\lambda+N,\varepsilon_N),(\mu+N,\eta_N)}\\
@V\widetilde J_{\lambda,\varepsilon}\otimes \widetilde J_{\mu,\eta}VV @AA\widetilde J_{\frac{2n}{r}-\lambda-N, \varepsilon_N}\otimes\, \widetilde J_{\frac{2n}{r}-\mu-N,\eta_N}A\\
\mathcal H_{(\frac{2n}{r}-\lambda,\varepsilon), (\frac{2n}{r}-\mu,\eta)} @>M^N> >\mathcal H_{(\frac{2n}{r}-\lambda-N, \varepsilon_N),(\frac{2n}{r}-\mu-N,\eta_N)}
\end{CD}
\]
with the convention $\varepsilon_{N}=(-1)^N\varepsilon$. It is a generalization of the diagram presented in the introduction which corresponds to the case $N=1$. The operator $F^{(N)}_{(\lambda, \varepsilon), (\mu,\eta)}$ is by construction covariant with respect to $\pi_{\lambda,\varepsilon}\otimes \pi_{\mu,\eta}$ and  $\pi_{\lambda+N, \varepsilon_N} \otimes \pi_{\mu+N, \eta_N}$. By induction on $N$, it is possible to prove that $F^{(N)}_{(\lambda, \varepsilon), (\mu,\eta)}$ is a \emph{differential} operator on $\mathcal X\times \mathcal X$. In fact $F^{(N)}_{(\lambda, \varepsilon), (\mu,\eta)}$ coincides (up to a meromorphic function in $\lambda$ and $\mu$) with the operator
\[F_{(\lambda+N, \varepsilon_N),(\mu+N, \eta_N)}\circ\dots \circ F_{(\lambda,\varepsilon),(\mu, \eta)}\ .
\]
The corresponding bi-differential operator is
\[\res\circ F^{(N)}_{(\lambda, \varepsilon), (\mu, \eta)},
\]
and its expression in the non-compact realization coincides (up to a meromorphic function in $\lambda$ and $\mu$) with $B^{(N)}_{\lambda, \mu}$.
\section{The case of $\mathbb R^{p,q}$ with $ p\geq 2$ and $ q\geq 1$}\label{sec-Rpq}

  Let $E$ be a real vector space of dimension $n-1$, endowed with a non degenerate symmetric bilinear form $\beta:E\times E\rightarrow  \mathbb R$ of signature $(p-1,q)$ where $p+q=n$. Then
  $V:=\mathbb R \times  E$ is a simple real  Jordan algebra with multiplication  given by
$$(\lambda,v)\cdot (\mu,w) = (\lambda \mu-\beta(v,w), \lambda w+\mu v).$$ 
 The dimension of  $V$ equals  $n=p+q$ and  its rank is  $2.$ The   neutral element of $V$  is $e=(1,0_E)$ and its   determinant is given by
$$\det(\lambda, v) = \lambda^2+\beta(v,v),$$
 which is a quadratic form of signature $(p,q)$ and the trace  is
 $$\tr(\lambda,v)=2\lambda.$$Therefore,   $(\lambda, v)\in V$ is invertible if and only if $ \lambda^2+\beta(v,v) \neq 0$ and its inverse is given by
$$(\lambda, v)^{-1} = (\lambda^2+\beta(v,v))^{-1}(\lambda, -v).$$
 Fix a basis $(e_2,\dots, e_n)$ of $E$ with coordinates $(x_2,\dots, x_n)$ chosen so that, for $v=(x_2,\dots, x_n)$,
$$\beta(v,v) = x_2^2+\dots+x_p^2-x_{p+1}^2-\dots -x_n^2.$$
 Below we will denote $V$ by $\mathbb R^{p,q}$ and  its determinant   by $$P(x) = x_1^2+x_2^2+\dots+x_p^2-x_{p+1}^2-\dots -x_n^2,$$
 where $x=(x_1,x_2,\ldots, x_n)\in V.$
 In this notation the neutral element is $e=e_1$. 
 
 \addsubsection{The Zeta functional equation}
%

Let $V'$ be the dual space  of $V$. The symmetric bilinear form on $V$ associated to $P$ induces an isomorphism of $V'$ with $V$, and so we can transfer the quadratic form on $V$ to a quadratic form on $V',$ which we denote also by $P$. More explicitly, let $(e'_1,\dots, e'_n)$ be the basis of $V'$ which is the dual to the canonical basis of $V$, and denote by $(\xi_1,\dots, \xi_n)$  the coordinates of an arbitrary element $\xi\in V'$. Let 
\[P(\xi) = \xi_1^2+\cdots+\xi_p^2-\xi_{p+1}^2-\dots-\xi_n^2.
\]
The corresponding symmetric bilinear form on $V'\times V'$ is given by 
$$P(\xi,\zeta) = \xi_1\zeta_1+\dots +\xi_p\zeta_p-\xi_{p+1} \zeta_{p+1} -\dots -\xi_n\zeta_n.$$

In a departure from our general convention  \autoref{def-Fourier}, we define in this section    the Fourier transform $\mathcal F : \mathcal S(V) \longrightarrow \mathcal S(V')$ by
 $$\mathcal Ff(\xi) = \int_V e^{i(\xi,x)} f(x) dx,
 $$ and extend it by duality to $\mathcal S'(V)$. 
 
Below, for $(s,\varepsilon)\in \mathbb C\times \{\pm\},$  $P^{s,\varepsilon}(x)$ stands for $\det(x)^{s,\varepsilon}.$


 \begin{theorem}\label{E-F-Z-pq}
  For every $s\in \mathbb C$, we have 

 \begin{equation}\label{Fzetaquad}
 \mathcal F\begin{pmatrix}P^{s,+}\\P^{s,-}  \end{pmatrix}= \gamma(s){\mathbf A}(s)\begin{pmatrix}P^{-s-\frac{n}{2},+}\\P^{-s-\frac{n}{2},-}  \end{pmatrix}
 \end{equation}
 where 
 $$\mathbf{A}(s)=
  \left(\begin{array}{lr}
 \displaystyle \cos\frac{(p-q)\pi}{4} \left(-\sin(s+\frac{n}{4}) \pi+ \sin \frac{n\pi}{4}\right)
& \displaystyle 
\sin\frac{(p-q)\pi}{4} \left(\cos(s+\frac{n}{4}) \pi- \cos \frac{n\pi}{4}\right)\\
\displaystyle 
 \sin\frac{(p-q)\pi}{4} \left(\cos(s+\frac{n}{4}) \pi+ \cos \frac{n\pi}{4}\right)
& \displaystyle 
-\cos\frac{(p-q)\pi}{4} \left(\sin(s+\frac{n}{4}) \pi+ \sin \frac{n\pi}{4}
  \right)
\end{array}\right).
$$    and
\begin{equation}\label{gammma}\gamma(s) = 2^{2s+n}\pi^{\frac{n}{2}-1} \Gamma(s+1)\Gamma(s+\frac{n}{2}).\end{equation}
 \end{theorem}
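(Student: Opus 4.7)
The plan is to exploit the classical link between the real-power distributions $P^{s,\pm}$ and the holomorphic boundary-value distributions $(P\pm i0)^s$, whose Fourier transforms admit a clean closed form (Gelfand--Shilov). Splitting $V$ according to $\mathrm{sgn}(P)$ and using the principal branch of the logarithm, one verifies the identities
\begin{equation*}
(P \pm i0)^s \;=\; e^{\pm i\pi s/2}\Bigl[\cos\tfrac{\pi s}{2}\, P^{s,+} \;\mp\; i\sin\tfrac{\pi s}{2}\, P^{s,-}\Bigr],
\end{equation*}
which can be inverted to express $P^{s,+}$ and $P^{s,-}$ as linear combinations of $(P+i0)^s$ and $(P-i0)^s$. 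So it suffices to compute $\mathcal{F}(P\pm i0)^s$, and then convert back.

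\textbf{Key Fourier computation.} For $\Re s$ in a suitable strip I would use the representation
\begin{equation*}
(P+i0)^{s} \;=\; \frac{1}{\Gamma(-s)}\int_0^\infty t^{-s-1}\, e^{\,it(P(x)+i0)}\, dt,
\end{equation*}
exchange the order of integration, and reduce to the Gaussian integral $\int_V e^{itP(x)+i(\xi,x)}\,dx$. The latter is computed variable by variable: completing squares in the $p$ positive directions and the $q$ negative directions produces the product
\begin{equation*}
(it)^{-p/2}(-it)^{-q/2}\, \pi^{n/2}\, e^{-i(\xi,\xi)/(4t)} \;=\; e^{-i\pi(p-q)/4}\,\pi^{n/2}\,t^{-n/2}\,e^{-iP(\xi)/(4t)}.
\end{equation*}
Performing the remaining $t$-integral gives
\begin{equation*}
\mathcal{F}(P+i0)^s \;=\; K(s)\, e^{-i\pi(p-q)/4}\,(P(\xi)-i0)^{-s-n/2},
\end{equation*}
and an entirely parallel calculation yields the conjugate formula for $(P-i0)^s$, with opposite phase and $(P(\xi)+i0)^{-s-n/2}$. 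Using $\Gamma(-s)\Gamma(1+s)=-\pi/\sin(\pi s)$ and the duplication formula, the constant $K(s)$ collapses to $-\gamma(s)\sin(\pi s)$ with $\gamma(s)$ as in \autoref{gammma}.

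\textbf{Assembly of the matrix.} Applying the inversion displayed in the first paragraph with $s$ replaced by $-s-n/2$ produces $(P(\xi)\mp i0)^{-s-n/2}$ as linear combinations of $P(\xi)^{-s-n/2,+}$ and $P(\xi)^{-s-n/2,-}$. Substituting these into the two Fourier formulas and then solving the $2\times 2$ linear system for $\mathcal{F}(P^{s,+})$ and $\mathcal{F}(P^{s,-})$ (using the inverse of the first-paragraph relation), one collects the phases $e^{\pm i\pi s/2}$, $e^{\mp i\pi(p-q)/4}$ and $e^{\pm i\pi(s+n/2)/2}$. The combined angles simplify to $\pm\pi q/2$ and $\pm\pi(s+n/4)$, and the product-to-sum identities $2\sin A\cos B=\sin(A+B)+\sin(A-B)$ (and its companions) convert the resulting expressions into the exact trigonometric combinations $\cos\tfrac{(p-q)\pi}{4}\bigl[\pm\sin\pi(s+\tfrac{n}{4})\pm\sin\tfrac{n\pi}{4}\bigr]$ and $\sin\tfrac{(p-q)\pi}{4}\bigl[\pm\cos\pi(s+\tfrac{n}{4})\pm\cos\tfrac{n\pi}{4}\bigr]$ displayed in $\mathbf{A}(s)$; the prefactor $\sin(\pi s)$ of $K(s)$ cancels with the $\cos(\pi s/2)$ and $\sin(\pi s/2)$ appearing in the inversion, leaving precisely $\gamma(s)$.

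\textbf{Main obstacle.} The delicate point is the bookkeeping of phases and branch conventions. The signature-dependent phase $e^{\mp i\pi(p-q)/4}$ from the Gaussian integral interacts with the complex exponentials introduced by the two inversion formulas (at parameters $s$ and $-s-n/2$), and only the simultaneous use of both relations produces the correctly coupled $2\times 2$ matrix. An alternative would be to analytically continue the known Euclidean ($q=0$) functional equation by rotating the last $q$ coordinates $x_j\mapsto e^{i\pi/2}x_j$, which gives an independent derivation of the $e^{\mp i\pi(p-q)/4}$ factor and provides a useful consistency check for the final expression.
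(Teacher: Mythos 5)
Your proposal is correct in outline but follows a genuinely different route from the paper's. The paper does not recompute anything from scratch: it quotes the Gelfand--Shilov functional equations for the one-sided powers $P_\pm^s$ directly (their formulas (2.8) and (2.9), also in Strichartz), expresses $P^{s,\pm}=P_+^s\pm P_-^s$, and then does the trigonometric algebra (product-to-sum on $\sin(\tfrac{q}{2}+s)\pi$, $\sin(\tfrac{p}{2}+s)\pi$, $\sin\tfrac{p\pi}{2}$, $\sin\tfrac{q\pi}{2}$) to produce $\mathbf A(s)$. You instead work in the boundary-value basis $(P\pm i0)^s$ and re-derive the Fourier transform via the subordination integral and the signature-weighted Gaussian, which is a self-contained derivation of the input that the paper simply cites. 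Both bases are equivalent and related by the identity you display (which I checked and is correct); the $(P\pm i0)^s$ basis is algebraically cleaner because its Fourier transform involves only exponential phases rather than sines, at the cost of having to invert the change of basis twice.

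Two minor imprecisions worth flagging. First, the subordination formula as written is off by a phase: for $\Im z>0$ one has $\int_0^\infty t^{-s-1}e^{itz}\,dt=\Gamma(-s)\,(-iz)^s=\Gamma(-s)\,e^{-i\pi s/2}z^s$, so the displayed representation should carry an extra $e^{i\pi s/2}$; similarly the final $t$-integral contributes a phase $e^{-i\pi(s+n/2)/2}$. You do list $e^{\pm i\pi(s+n/2)/2}$ among the phases to be collected at the assembly stage, so you are aware of it, but then the intermediate claim that $K(s)$ equals the real quantity $-\gamma(s)\sin(\pi s)$ cannot be literally correct --- $K(s)$ must absorb these phases. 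Second, the passage from $\Gamma(s+\tfrac n2)/\Gamma(-s)$ to $\gamma(s)$ uses only Euler's reflection formula, not the duplication formula. Neither issue affects the soundness of the method; the phase bookkeeping is exactly the delicate point you flag, and the $q=0$ rotation check you propose is a good way to pin down the signature-dependent factor $e^{\mp i\pi(p-q)/4}$.
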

 \begin{proof} Introduce \[P_+(x) = \left\{\begin{matrix}P(x) &\text{ on }\{P(x)>0\}\\ \\ 0 &\text{ on } \{P(x)<0\} \end{matrix} \right. 
\qquad P_-(x) = \left\{\begin{matrix}0 &\text{ on }\{P(x)>0\}\\ \\ -P(x)&\text{ on } \{P(x)<0\} \end{matrix} \right.\ .\]
For $s\in \mathbb C$ with  $\Re (s)>-1$,  the functions $P_+^s$ and $P_-^s$ are locally integrable  with moderate growth at infinity. Further, they can be extended, as tempered distributions, meromorphically for $s\in \mathbb C$. Their Fourier transforms are given by
 \[\mathcal F(P_+^s) = \gamma(s)\left\{ -\sin\left(\frac{q}{2}+s\right)\!\!\pi\,P_+^{-s-\frac{n}{2}}+ \sin \left(\frac{p\pi}{2}\right)\, P_-^{-s-\frac{n}{2}}\right\},
  \]
  and
  \[\mathcal F(P_-^s) =  \gamma(s)\left\{ \sin\left(\frac{q\pi}{2}\right)P_+^{-s-\frac{n}{2}} -\sin\left(s+\frac{p}{2}\right)\!\!\pi \,P_-^{-s-\frac{n}{2}}\right\},
  \]
 where $\gamma(s) = 2^{2s+n}\pi^{\frac{n}{2}-1} \Gamma(s+1)\Gamma(s+\frac{n}{2})$.
 See \cite[(2.8) and (2.9)]{GS} (or \cite{S}). Now
 \[P^{s,+} = P_+^s + P_-^s \quad\text{and}\quad  P^{s,-} = P_+^s-P_-^s,
 \]
 and \autoref{Fzetaquad} follows by routine computation.
 \end{proof}
We will use the following notation for the coefficients of the matrix ${\mathbf A}(s)$ :
$${\mathbf A}(s) = \begin{pmatrix} a_{+,+}(s)&a_{+,-}(s)\\ a_{-,+}(s)&a_{-,-}(s)\end{pmatrix}.$$
 Observe that the matrix-valued function ${\mathbf A}(s)$ is periodic of period $2$ and   the coefficients $a_{\varepsilon, \eta}$ satisfy 
 \begin{equation}\label{s+1}
 a_{\varepsilon, \eta}(s+1) = -a_{-\varepsilon, -\eta}(s),
 \end{equation}
 for every   $\varepsilon, \eta =\pm$.

\addsubsection{The main identity for $\mathbb R^{p,q}$}
Recall from  \autoref{Dstgen} that there exists a differential operator $D_{s,t}$ on $V'\times V'$ such that
\begin{equation}\label{defDst}
P\left(\frac{\partial}{\partial \xi}-\frac{\partial}{\partial \zeta}\right)P(\xi)^{s, \varepsilon}P(\zeta)^{t, \eta}f(\xi,\zeta)= P(\xi)^{s-1,-\varepsilon} P(\zeta)^{t-1,-\eta}D_{s,t} f(\xi,\zeta),
\end{equation}
for every   $f$ in $\mathcal S(V'\times V')$.    
\begin{theorem} For $V=\mathbb R^{p,q},$ the differential operator $D_{s,t}$ is given explicitly by 
\begin{equation}\label{DstRpq} 
\begin{array} {rcl}
D_{s,t} &=& \displaystyle  P(\xi)P(\zeta)\,P \left(\frac{\partial}{\partial \xi} -\frac{\partial}{\partial \zeta}\right)\\
&&\displaystyle  +4s\, P(\zeta) \sum_{i=1}^n \xi_j\left( \frac{\partial }{\partial \xi_j}-  \frac{\partial }{\partial \zeta_j}\right)+4t\,P(\xi)\sum_{j=1}^n \zeta_j\left( \frac{\partial }{\partial \zeta_j}-  \frac{\partial }{\partial \xi_j}\right)\\
&&\displaystyle  +2t(2t-2+n) P(\xi)-8stP(\xi,\zeta) +2s(2s-2+n) P(\zeta).
\end{array}
\end{equation}
 
\end{theorem}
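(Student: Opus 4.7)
The plan is to verify the formula by direct computation from the defining equation
$$P\!\left(\frac{\partial}{\partial\xi}-\frac{\partial}{\partial\zeta}\right)P(\xi)^{s,\varepsilon}P(\zeta)^{t,\eta}f(\xi,\zeta)=P(\xi)^{s-1,-\varepsilon}P(\zeta)^{t-1,-\eta}D_{s,t}f(\xi,\zeta).$$
Since $P$ is a quadratic form of signature $(p,q)$, we have
$$P\!\left(\frac{\partial}{\partial\xi}-\frac{\partial}{\partial\zeta}\right)=P\!\left(\frac{\partial}{\partial\xi}\right)-2\,P\!\left(\frac{\partial}{\partial\xi},\frac{\partial}{\partial\zeta}\right)+P\!\left(\frac{\partial}{\partial\zeta}\right),$$
where $P(\cdot,\cdot)=\sum_j\varepsilon_j\,\xi_j\zeta_j$ (with $\varepsilon_j=+1$ for $j\le p$ and $-1$ otherwise) is the polarization of $P$. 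The elementary derivative rule $\tfrac{d}{du}u^{s,\varepsilon}=s\,u^{s-1,-\varepsilon}$ on $\mathbb{R}^*$, combined with the chain rule, yields
$$\frac{\partial}{\partial\xi_j}P(\xi)^{s,\varepsilon}=2s\varepsilon_j\xi_j\,P(\xi)^{s-1,-\varepsilon},$$
which is the only input beyond Leibniz's rule.

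The strategy is to apply each of the three pieces $\partial_{\xi_j}^2$, $\partial_{\xi_j}\partial_{\zeta_j}$, $\partial_{\zeta_j}^2$ to $\psi=P(\xi)^{s,\varepsilon}P(\zeta)^{t,\eta}f$, and factor out the common prefactor $P(\xi)^{s-1,-\varepsilon}P(\zeta)^{t-1,-\eta}$. The only subtle point is that after differentiating twice in $\xi_j$ one gets a term carrying $P(\xi)^{s-2,\varepsilon}$; this is rewritten as $P(\xi)^{-1}P(\xi)^{s-1,-\varepsilon}$ using the identity $P^{s,\varepsilon}=P\cdot P^{s-1,-\varepsilon}$ (which follows at once from the one-variable rule $u\cdot u^{s-1,-\varepsilon}=u^{s,\varepsilon}$). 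Similarly $P(\zeta)^{t,\eta}=P(\zeta)\cdot P(\zeta)^{t-1,-\eta}$ lets us rewrite the lower-order prefactors.

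After expanding $\sum_j\varepsilon_j[\partial_{\xi_j}^2-2\partial_{\xi_j}\partial_{\zeta_j}+\partial_{\zeta_j}^2]\psi$ and extracting the prefactor, the resulting coefficient of $f$ and its derivatives sorts itself into three natural classes. The second-order-in-$f$ contribution assembles immediately into $P(\xi)P(\zeta)\,P(\partial_\xi-\partial_\zeta)f$. The first-order-in-$f$ contributions, coming from cross terms where one derivative hits $P(\xi)^{s,\varepsilon}$ and the other hits $f$, produce Euler-type operators; using $\sum_j\varepsilon_j\cdot\varepsilon_j\xi_j=\xi_j$ their combined coefficient becomes $4s\,P(\zeta)\sum_j\xi_j(\partial_{\xi_j}-\partial_{\zeta_j})+4t\,P(\xi)\sum_j\zeta_j(\partial_{\zeta_j}-\partial_{\xi_j})$. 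Finally, the zero-order-in-$f$ contributions involve sums $\sum_j\varepsilon_j\cdot\varepsilon_j=n$, $\sum_j\varepsilon_j\xi_j^2=P(\xi)$, $\sum_j\varepsilon_j\zeta_j^2=P(\zeta)$ and $\sum_j\varepsilon_j\xi_j\zeta_j=P(\xi,\zeta)$; combining them gives $[2sn+4s(s-1)]P(\zeta)+[2tn+4t(t-1)]P(\xi)-8st\,P(\xi,\zeta)=2s(2s{+}n{-}2)P(\zeta)+2t(2t{+}n{-}2)P(\xi)-8st\,P(\xi,\zeta)$, consistent with the Bernstein b-polynomial $b(s)=2s(2s{+}n{-}2)$ that one recovers from \autoref{BSdetred} in the special case $r=2$, $d=n-2$. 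This is exactly the formula \autoref{DstRpq}.

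The main obstacle is purely bookkeeping: carefully tracking the alternation $\varepsilon\leftrightarrow-\varepsilon$ in the $P^{s,\varepsilon}$-derivatives when factoring, and verifying that the term $\xi_j^2\,P(\xi)^{-1}\,P(\xi)^{s-1,-\varepsilon}$ produced by $\partial_{\xi_j}^2 P(\xi)^{s,\varepsilon}$ sums, after multiplication by $\varepsilon_j$, to a \emph{polynomial} expression. This is ensured precisely because $\sum_j\varepsilon_j\xi_j^2=P(\xi)$ cancels the spurious $P(\xi)^{-1}$, leaving polynomial coefficients in $\xi,\zeta$ and polynomial dependence in $s,t$, as predicted by \autoref{Dstgen}. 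Since $D_{s,t}$ is uniquely determined by the main identity (both sides being differential operators whose coefficients are determined by their action on smooth test functions on the open dense set $V^\times\times V^\times$), this computation establishes the formula.
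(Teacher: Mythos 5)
Your proof is correct and follows essentially the same route as the paper: a direct Leibniz expansion of $P(\partial_\xi-\partial_\zeta)$ applied to $P(\xi)^{s,\varepsilon}P(\zeta)^{t,\eta}f$, followed by factoring out $P(\xi)^{s-1,-\varepsilon}P(\zeta)^{t-1,-\eta}$. The only (cosmetic) difference is that you track the $\varepsilon$-signs throughout via $\tfrac{d}{du}u^{s,\varepsilon}=s\,u^{s-1,-\varepsilon}$, whereas the paper sidesteps this by first verifying the identity on the open cone $\{P(\xi)>0,P(\zeta)>0\}$ and then appealing to the polynomial nature of the coefficients of $D_{s,t}$ to extend to $V^\times\times V^\times$.
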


\begin{proof} It is enough to prove the identity on the open subset \[\{(\xi, \zeta),\quad  P(\xi), P(\zeta) >0\} .\]
Elementary calculations show that for $f$ a smooth function on $\{P(\xi)>0\}$, we have 
\begin{multline*}P\left( \frac{\partial}{\partial \xi}\right)P(\xi)^s f(\xi)=
2s(2s-2+n) P(\xi)^{s-1} f(\xi) \\+ 4s \sum_{i=1}^n \xi_i\,P(\xi)^{s-1} \frac{ \partial f}{\partial \xi_i}(\xi)+ P(\xi)^{s} P\left(\frac{\partial }{\partial \xi}\right) f(\xi),
\end{multline*}
and for $f$ a smooth function on $\{P(\xi)>0,\,P(\zeta)>0\}$, we have 
$$\begin{array} {rcl}
&&\displaystyle P\left(\frac{\partial}{\partial \xi }, \frac{\partial}{\partial \zeta } \right)\big(P(\xi)^sP(\zeta)^t f(\xi,\zeta) \big)\\
&&\qquad\qquad\qquad\qquad\displaystyle=  4st\,P(\xi, \zeta) P(\xi)^{s-1} P(\zeta)^{t-1} f (\xi,\zeta) \\
&&\qquad\qquad\qquad\qquad\quad\displaystyle+2s\, P(\xi)^{s-1} P(\zeta)^t \sum_{i=1}^n \xi_i\frac{\partial f}{\partial \zeta_i}+2t P(\xi)^s P(\zeta)^{t-1}\sum_{i=1}^n \zeta_i\frac{\partial f}{\partial \xi_i} \\
&&\qquad\qquad\qquad\qquad\quad \displaystyle+P(\xi)^sP(\zeta)^t P\left(\frac{\partial}{\partial \xi }, \frac{\partial}{\partial \zeta } \right)f(\xi,\zeta).
\end{array}$$
Now the identity \autoref{DstRpq} is a matter of putting pieces together. 
\end{proof}

\addsubsection{Explicit form of the operator $E_{s,t}$}
Recall from \autoref{def-E(s,t)} the definition of the   differential operator $E_{s,t}$ defined on $V\times V$ by $  E_{s,t}  = \mathcal F^{-1}\circ D_{s,t} \circ \mathcal F.$
\begin{proposition}\label{Est} In the $V=\mathbb R^{p,q}$ case, the differential operator $E_{s,t}$ is  given explicitly by 
\begin{eqnarray*}
&&E_{s,t} = -P(x-y) P\left(\frac{\partial}{\partial x}\right)P\left(\frac{\partial}{\partial y}\right) + 
\\
&&\qquad \quad+4(s-1) \sum_{j=1}^n (x_j-y_j)\frac{\partial}{\partial x_j}P\left(\frac{\partial}{\partial y}\right) + 4(t-1) \sum_{j=1}^n (y_j-x_j)\frac{\partial}{\partial y_j}P\left(\frac{\partial}{\partial x}\right)
\\
&&\qquad\quad -2(s-1)(2s-n) P\left(\frac{\partial}{\partial y} \right)
+8(s-1)(t-1) P\left(\frac{\partial}{\partial x}, \frac{\partial}{\partial y}\right)
-2(t-1)(2t-n) P\left(\frac{\partial}{\partial x} \right).
\end{eqnarray*}
\end{proposition}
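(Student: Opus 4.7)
The plan is to compute $E_{s,t}=\mathcal{F}^{-1}\circ D_{s,t}\circ\mathcal{F}$ directly from the explicit formula \autoref{DstRpq} for $D_{s,t}$, applying Fourier conjugation term by term. In the convention of this section, $\xi_j\mathcal{F}(g)=\mathcal{F}(\sqrt{-1}\,\partial_{x_j}g)$ and $\partial_{\xi_j}\mathcal{F}(g)=\mathcal{F}(\sqrt{-1}\,x_jg)$, and analogously for $\zeta,y$; since $P$ is quadratic this yields the symbol correspondences $P(\xi)\leftrightarrow -P(\partial_x)$, $P(\zeta)\leftrightarrow -P(\partial_y)$, $P(\xi,\zeta)\leftrightarrow -P(\partial_x,\partial_y)$, and $P(\partial_\xi-\partial_\zeta)\leftrightarrow$ multiplication by $-P(x-y)$. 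The crucial feature is that Fourier conjugation reverses operator composition: a product ``(polynomial in $\xi,\zeta$)$\,\cdot\,$(differential in $\partial_\xi,\partial_\zeta$)'' on the dual side becomes ``(differential in $\partial_x,\partial_y$)$\,\circ\,$(polynomial multiplication in $x,y$)'' on the primal side.

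Applying this dictionary to the six summands of \autoref{DstRpq} produces six contributions to $E_{s,t}f$, none of which is yet in the normal form (polynomial coefficients outside all derivatives) of the proposition. For instance, the leading cubic piece $P(\xi)P(\zeta)P(\partial_\xi-\partial_\zeta)$ becomes $-P(\partial_x)P(\partial_y)\bigl[P(x-y)f\bigr]$, with $P(x-y)$ sitting inside the two outer differentiations; the two Euler-type summands of $D_{s,t}$ become second-order operators mixing multiplications by $x_j,y_j$ with $P(\partial_x)$ or $P(\partial_y)$; and the three purely polynomial summands become the pure differential operators $-2s(2s-2+n)P(\partial_y)$, $+8st\,P(\partial_x,\partial_y)$, $-2t(2t-2+n)P(\partial_x)$.

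Next I would put every contribution in normal form using the Leibnitz identity
\[
P(\partial_x)\bigl[P(x-y)\,f\bigr]=P(x-y)\,P(\partial_x)f+4\sum_{i=1}^n(x_i-y_i)\partial_{x_i}f+2n\,f,
\]
which follows from $\partial_{x_i}P(x-y)=2\epsilon_i(x_i-y_i)$ (where $\epsilon_i=\pm 1$ is the signature weight) and $P(\partial_x)[P(x-y)]=2n$; the products $\epsilon_i^2=1$ collapse, producing the unweighted field $\sum(x_i-y_i)\partial_{x_i}$ that appears in the statement. With the analogous $y$-identity and the commutation $[P(\partial_y),y_i]=2\epsilon_i\partial_{y_i}$, applied to the leading cubic contribution first in $x$ then in $y$, this produces the desired $-P(x-y)P(\partial_x)P(\partial_y)f$ together with the cross-corrections $-4\sum(x_i-y_i)\partial_{x_i}P(\partial_y)f$, $-4\sum(y_j-x_j)\partial_{y_j}P(\partial_x)f$, $+8P(\partial_x,\partial_y)f$, $-2nP(\partial_x)f$, $-2nP(\partial_y)f$. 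A lighter reordering of the Euler-type contributions produces the further pieces $+4s\sum(x_i-y_i)\partial_{x_i}P(\partial_y)f$, $+4t\sum(y_j-x_j)\partial_{y_j}P(\partial_x)f$, $+4snP(\partial_y)f$, $+4tnP(\partial_x)f$, $-8sP(\partial_x,\partial_y)f$, $-8tP(\partial_x,\partial_y)f$.

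Summing the six rewritten contributions and collecting coefficients yields exactly the formula in the statement: $-1$ in front of $P(x-y)P(\partial_x)P(\partial_y)$; $-4+4s=4(s-1)$ and $-4+4t=4(t-1)$ in front of the Euler-type fields; $8-8s-8t+8st=8(s-1)(t-1)$ in front of $P(\partial_x,\partial_y)$; and $-2n+4sn-2s(2s-2+n)=-2(s-1)(2s-n)$, with the mirror $-2(t-1)(2t-n)$, in front of $P(\partial_y)$ and $P(\partial_x)$ respectively. The only real obstacle is the bookkeeping: tracking signs from each factor of $\sqrt{-1}$ and from each Leibnitz commutation, and observing that the shifts $s\mapsto s-1$ and $t\mapsto t-1$ visible in the final formula arise precisely because the reordering corrections to the cubic piece absorb one unit of $s$ and of $t$ from the Euler-type summands.
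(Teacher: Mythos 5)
Your proposal is correct and follows essentially the same route as the paper: pass the explicit formula for $D_{s,t}$ through the Fourier dictionary of \autoref{formule-Fourier}, obtaining a first expression for $E_{s,t}$ in which polynomial multiplications sit to the right of (i.e.\ are applied before) the differentiations, then reduce to normal form via the two commutation identities you state and collect coefficients. The paper's proof records exactly the two normalizations you use — $P(\partial_x)P(\partial_y)P(x-y)$ expanded, and $P(\partial_y)\sum_i\partial_{x_i}(x_i-y_i)$ expanded — and your coefficient bookkeeping ($-4+4s$, $8-8s-8t+8st$, $-2n+4sn-2s(2s-2+n)=-2(s-1)(2s-n)$, etc.) reproduces the paper's final formula.
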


\begin{proof} 
From   \autoref{formule-Fourier} it follows that 
\begin{eqnarray*} && E_{s,t} = - P\left(\frac{\partial}{\partial x}\right)P\left(\frac{\partial}{\partial y}\right) P(x-y) \\
&&\qquad\quad+4s P\left(\frac{\partial}{\partial y}\right)\left(\sum_{i=1}^n \frac{\partial}{\partial x_i} (x_i-y_i)\right)\,+4t P\left(\frac{\partial}{\partial x}\right)\left(\sum_{i=1}^n \frac{\partial}{\partial y_i} (y_i-x_i)\right) 
\\
&&\qquad\quad-2t(2t-2+n) P\left(\frac{ \partial}{\partial x}\right) +8stP\left(\frac{ \partial}{\partial x}, \frac{ \partial}{\partial y}\right)-2s(2s-2+n) P\left( \frac{ \partial}{\partial y}\right) .
\end{eqnarray*}
It remains to put the differential operator in normal form, multiplication preceding differentiation.  First
\begin{eqnarray*}
&&P\left(\frac{\partial}{\partial x}\right)P\left(\frac{\partial}{\partial y}\right) P(x-y) \\
&&\qquad =  P(x-y) P\left(\frac{\partial}{\partial x} \right)P\left(\frac{\partial}{\partial y} \right)
+4\sum_{j=1}^n(x_j-y_j)\left(\frac{\partial}{\partial x_j} P\left( \frac{\partial}{\partial y}\right) - \frac{\partial}{\partial y_j} P\left( \frac{\partial}{\partial x}\right)\right)\\
&&\quad\qquad+2nP\left( \frac{\partial}{\partial x}\right)-8P\left( \frac{\partial}{\partial x},\frac{\partial}{\partial y}\right)+2nP\left( \frac{\partial}{\partial y}\right).
\end{eqnarray*}
Next,
$$P\left( \frac{\partial}{\partial y}\right)\left(\sum_{i=1}^n \frac{\partial}{\partial x_i} (x_i-y_i)\right)  =
nP\left( \frac{\partial}{\partial y} \right) -2P\left(\frac{\partial}{\partial x},\frac{\partial}{\partial y} \right)+\sum_{j=1}^n (x_j-y_j)\frac{\partial}{\partial x_j} P\left( \frac{\partial}{\partial y} \right).$$
Putting all pieces  together yields Proposition \ref{Est}.
\end{proof}

\addsubsection{The main theorem for $\mathbb R^{p,q}$}
For $(s,\varepsilon )\in \mathbb C\times\{ \pm\}$, recall the Knapp-Stein intertwining operator 
$$J_{s,\varepsilon}f(x) = \int_V  P(x-y)^{s,\varepsilon} f(y) dy.$$
The integral is well defined for $f \in \mathcal S(V)$ when $\Re s>-1$. It defines a convolution operator from $\mathcal S(V)$ into $\mathcal S'(V)$ and the function $s\longmapsto J_{s, \varepsilon}$ can be extended  meromorphically on $\mathbb C$.

Let $M$ be the multiplication operator given for $f\in \mathcal S(V\times V)$ (or $\mathcal S'(V\times V)$) by 
$$Mf(x,y) = P(x-y) f(x,y).$$ 

In view of the functional equation \autoref{Fzetaquad} of $P^{s,\varepsilon}$ and the relation \autoref{s+1}, the  proof of the statement below goes   along the same lines as that  of  \autoref{them-main} 
\begin{theorem}\label{maintheoremquad} For $(s,\varepsilon )$ and $ (t,\eta)$ in $\mathbb C\times\{ \pm\}$, we have
$$M\circ  \left( J_{s,\varepsilon}\otimes  J_{t,\eta} \right)=\kappa(s,t) \left(J_{s+1,-\varepsilon}\otimes J_{t+1,-\eta}\right)\circ E_{-s-\frac{n}{2},-t-\frac{n}{2}},$$ where 
 $$\kappa(s,t)=
\frac{1}{16(s+1)(s+\frac{n}{2})(t+1)(t+\frac{n}{2})} .$$
\end{theorem}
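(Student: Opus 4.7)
The plan is to mimic the argument used to establish \autoref{them-main} in the non-euclidean case, replacing the scalar Zeta functional equation by the $2\times 2$ matrix form \autoref{Fzetaquad} specific to $\mathbb R^{p,q}$, and to use crucially the ``shift identity'' \autoref{s+1} for the coefficients $a_{\varepsilon,\eta}(s)$ of the matrix $\mathbf A(s)$. First I would take the Fourier transform of $M\circ(J_{s,\varepsilon}\otimes J_{t,\eta})f$: since $J_{s,\varepsilon}$ is convolution with $P^{s,\varepsilon}$, Fourier turns it into multiplication by $\mathcal F(P^{s,\varepsilon})$, and since $M$ is multiplication by $P(x-y)$, Fourier turns it (up to an explicit constant depending on the convention used in this section) into the differential operator $P\!\bigl(\tfrac{\partial}{\partial \xi}-\tfrac{\partial}{\partial \zeta}\bigr)$. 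Applying \autoref{Fzetaquad} to expand $\mathcal F(P^{s,\varepsilon})$ and $\mathcal F(P^{t,\eta})$ in the basis $\{P^{-s-n/2,\pm}\}$ produces a sum of four terms of the form
\[
\gamma(s)\gamma(t)\,a_{\varepsilon,\sigma}(s)\,a_{\eta,\tau}(t)\,P(\xi)^{-s-\frac{n}{2},\sigma}\,P(\zeta)^{-t-\frac{n}{2},\tau}\,\mathcal Ff(\xi,\zeta),
\]
to each of which the main identity \autoref{Dstgen} (in its explicit form \autoref{DstRpq}) can be applied. This turns $P(\tfrac{\partial}{\partial \xi}-\tfrac{\partial}{\partial \zeta})$ into the factor $P(\xi)^{-(s+1)-n/2,-\sigma}P(\zeta)^{-(t+1)-n/2,-\tau}$ times $D_{-s-\frac{n}{2},-t-\frac{n}{2}}\mathcal Ff$.

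The next step, which is the key algebraic reorganisation, is to recognise the resulting double sum in $(\sigma,\tau)$ as (a multiple of) $\mathcal F(P^{s+1,-\varepsilon})(\xi)\,\mathcal F(P^{t+1,-\eta})(\zeta)$. This is precisely where the relation $a_{\varepsilon,\eta}(s+1)=-a_{-\varepsilon,-\eta}(s)$ of \autoref{s+1} enters: applying it and relabelling $\sigma\mapsto -\sigma$, $\tau\mapsto -\tau$, the matrix row $(a_{\varepsilon,\sigma}(s))_\sigma$ acting on the shifted powers $P^{-(s+1)-n/2,-\sigma}$ becomes, up to the scalar $-\gamma(s)/\gamma(s+1)$, the row giving the Fourier expansion of $P^{s+1,-\varepsilon}$; and similarly on the $t$-side. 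Using $\mathcal F\circ E_{-s-n/2,-t-n/2}=D_{-s-n/2,-t-n/2}\circ\mathcal F$ from \autoref{def-E(s,t)} and the injectivity of the Fourier transform, one then reads off the identity
\[
M\circ(J_{s,\varepsilon}\otimes J_{t,\eta})=\kappa(s,t)\,(J_{s+1,-\varepsilon}\otimes J_{t+1,-\eta})\circ E_{-s-\frac{n}{2},-t-\frac{n}{2}},
\]
with
\[
\kappa(s,t)= C\cdot \frac{\gamma(s)\gamma(t)}{\gamma(s+1)\gamma(t+1)},
\]
where $C$ is the constant produced by the two Fourier conversions of $P(x-y)\leftrightarrow P(\tfrac{\partial}{\partial \xi}-\tfrac{\partial}{\partial \zeta})$ combined with the two signs coming from \autoref{s+1} (one for each factor); these two signs cancel and the remaining $(-1)^{2}$ from $P$ being a quadratic form make $C=1$ with the Fourier convention fixed in this section.

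To complete the identification of $\kappa(s,t)$ with the stated value $\dfrac{1}{16(s+1)(s+\frac{n}{2})(t+1)(t+\frac{n}{2})}$, I would use the explicit expression \autoref{gammma}, $\gamma(s)=2^{2s+n}\pi^{n/2-1}\Gamma(s+1)\Gamma(s+\frac{n}{2})$, giving immediately
\[
\frac{\gamma(s)}{\gamma(s+1)}=\frac{1}{2^{2}(s+1)(s+\frac{n}{2})},
\]
whence $\kappa(s,t)=\dfrac{1}{16(s+1)(s+\frac{n}{2})(t+1)(t+\frac{n}{2})}$ as announced. The argument is carried out first for $\Re s,\Re t$ large enough (so that all distributions are honest functions and all Fourier computations are literal), and extended to arbitrary $(s,\varepsilon),(t,\eta)\in\mathbb C\times\{\pm\}$ by meromorphic continuation.

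The main potential obstacle is bookkeeping rather than conceptual: one must keep careful track of the sign produced by $P(-i\,(\partial_\xi-\partial_\zeta))=-P(\partial_\xi-\partial_\zeta)$ on the Fourier side, of the sign from \autoref{s+1} appearing twice (once for each variable), and of the matrix action of $\mathbf A(s)$ under the involution $\sigma\mapsto-\sigma$. Once these signs are reconciled, the four scalar sub-cases $(\varepsilon,\eta)\in\{\pm\}^{2}$ collapse into the single clean formula, precisely because the relation \autoref{s+1} is the ``square root'' of the statement that $\mathbf A(s+1)\mathbf A(s)$ is scalar on the relevant subspace.
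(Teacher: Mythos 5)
Your plan reproduces exactly what the paper means by ``the proof goes along the same lines as \autoref{them-main}'': Fourier conjugation turning $M$ into a constant-coefficient operator, expansion by the matrix functional equation \autoref{Fzetaquad}, the main identity \autoref{defDst}, the shift relation \autoref{s+1} to reassemble the $(s+1,-\varepsilon)$ and $(t+1,-\eta)$ factors, the definition \autoref{def-E(s,t)} of $E_{s,t}$, and the telescoping $\gamma(s)/\gamma(s+1)$ from \autoref{gammma}; this is the intended argument. One caution on the sign tally for $C$: the passage from $D_{-s-\frac{n}{2},-t-\frac{n}{2}}\mathcal{F}f$ to $\mathcal{F}(E_{-s-\frac{n}{2},-t-\frac{n}{2}}f)$ is definitional ($\mathcal{F}\circ E_{s',t'}=D_{s',t'}\circ\mathcal{F}$) and contributes no factor, so there is only one Fourier conversion of $P(x-y)$, namely $\mathcal{F}\circ M=-P\bigl(\tfrac{\partial}{\partial\xi}-\tfrac{\partial}{\partial\zeta}\bigr)\circ\mathcal{F}$; a literal count then gives $C=(-1)\cdot(-1)^2=-1$ rather than $+1$, an overall sign relative to the stated $\kappa(s,t)$ that is worth re-checking against the precise Fourier convention used in this section.
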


\addsubsection{The conformal group of $V=\mathbb R^{p,q}$}
For $t\in \mathbb R^*$, let $\delta_t $ be the dilation given by $V\ni  v\longmapsto tv$.
The structure group of $V$ is equal to 
\[ \Str(V) = \{ h\circ \delta_t,\quad  h\in {\rm O}(p,q),\; t\in \mathbb R_{>0}\}= {\rm O}(p,q)\times \mathbb R^*/{\sim} 
\] 
where $\sim$ is the equivalence relation $(h,\delta_t)\sim (-h,\delta_{-t})$. As 
\[P\big((h\circ \delta_t) (x)\big) = t^2 P(x),
\]
the character $\chi$ of $\Str(V)$ is given by $\chi(h\circ \delta_t) = t^2$ and therefore $\Str(V)^+ = \Str(V)$.

Let $W= \mathbb R \times V\times \mathbb R$. Set $e_0=(1,0,0)$ and $e_{n+1}=(0,0,1)$. We will use the following convention : for $w=(\alpha, v,\beta)\in W,$
\[\alpha(w) = \alpha,\quad (w)_V = v,\quad \beta(w) = \beta.
\]

On $W$ define the quadratic form
\[Q(\alpha, v, \beta) = P(v)-\alpha \beta.
\]
Notice that $Q$ is of signature $(p+1,q+1)$.
Let us consider the proper isotropic cone
\[\Xi = \big\{ w\in W, \quad w\neq 0, \;Q(w) = 0\big\},
\]
and let $X = \Xi/\mathbb R^*$, a real projective quadric. For $w\in W\smallsetminus\{0\}$, let $\llbracket w\rrbracket$ be its image in the projective space $\mathbb P(W)$.

\begin{lemma} The map $\kappa : V\longrightarrow X$ defined  by
\begin{equation}\label{completion}
V\ni v \longmapsto\kappa(v) =  \llbracket (1,v,P(v) )\rrbracket
\end{equation}
is a diffeomorphism of $V$ onto a dense open subset of $X$.
\end{lemma}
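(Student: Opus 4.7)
The plan is to identify $\kappa(V)$ explicitly with the open subset
\[
U := \bigl\{\llbracket w\rrbracket \in X : \alpha(w) \neq 0\bigr\}
\]
of $X$ and then write down a smooth inverse on $U$. Note that $U$ is well defined on projective classes (the condition $\alpha\neq 0$ is scale-invariant), and it is open in $X$ since its preimage in $\Xi$ is the complement of a closed subset.

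First I would check $\kappa$ is well defined and smooth: for any $v \in V$, the vector $w(v) := (1, v, P(v))$ is nonzero and satisfies $Q(w(v)) = P(v) - 1\cdot P(v) = 0$, so it lies in $\Xi$, and the composition $v \mapsto w(v) \mapsto \llbracket w(v)\rrbracket$ is smooth. By construction $\kappa(V) \subset U$. Conversely, given a class $\llbracket(\alpha, v, \beta)\rrbracket \in U$, rescaling the representative by $1/\alpha$ yields $(1, v/\alpha, \beta/\alpha)$, and the isotropy relation $Q(\alpha, v, \beta) = P(v) - \alpha\beta = 0$ forces $P(v/\alpha) = P(v)/\alpha^2 = \beta/\alpha$. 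Hence every class in $U$ has a \emph{unique} representative of the form $(1, v', P(v'))$, showing that $\kappa$ is a bijection from $V$ onto $U$ with smooth inverse $\llbracket(\alpha, v, \beta)\rrbracket \mapsto v/\alpha$.

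Finally, I would establish density of $U$ in $X$. Since $Q$ is non-degenerate on $W$, the cone $\Xi$ is a smooth hypersurface of dimension $n+1$, and the free action of $\mathbb{R}^*$ by homotheties makes $X = \Xi/\mathbb{R}^*$ a smooth manifold of dimension $n$. The complement $X \setminus U$ consists of classes $\llbracket(0, v, \beta)\rrbracket$ with $(v,\beta) \neq 0$ and $P(v) = 0$; this is a closed subset of $X$ carved out inside the projective hyperplane $\{\alpha = 0\}\subset\mathbb{P}(W)$ by the single equation $P(v)=0$, and therefore has dimension at most $n-1$. A closed subset of strictly smaller dimension in a smooth manifold is nowhere dense, so $U$ is dense in $X$. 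The only delicate point in the argument is verifying that the normalized representative on an $\mathbb{R}^*$-orbit in $\Xi \cap \{\alpha \neq 0\}$ is indeed forced to take the specific form $(1, v', P(v'))$, but this is precisely the content of the isotropy condition $P(v) = \alpha\beta$.
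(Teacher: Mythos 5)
Your proof is correct and follows the same strategy as the paper's: identify the image with the affine chart $\{\alpha(w)\neq 0\}$ of $X$ by using the isotropy relation $P(v)=\alpha\beta$ to produce the normalized representative $(1,v/\alpha,P(v/\alpha))$. You supply a bit more detail than the paper (the explicit smooth inverse and the dimension count that the hyperplane section $\{\alpha=0\}\cap X$ is nowhere dense), but the core argument is identical.
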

\begin{proof}
Since for $v\in V$,  $(1,v,P(v))$ belongs to $\Xi$ and is $\neq 0$, the map is well defined. It is smooth and injective. Let $(\alpha,v,\beta)$ be in $\Xi$. If $\alpha\neq 0$, then $\llbracket(\alpha,v,\beta)\rrbracket=\llbracket (1,v',P(v')\rrbracket$ where $v'=\alpha^{-1} v$. Hence the image of the map given by \autoref{completion} is equal to 
$X\cap \{\alpha(w) \neq 0\}$ and the lemma follows.
\end{proof}

In the sequel we let $\infty = \llbracket e_{n+1}\rrbracket$ and $o = \llbracket e_0\rrbracket$, both points of $X$.

Let $G={\rm O}(Q)\simeq {\rm O}(p+1,q+1)$ be the orthogonal group of the form $Q$. Then $G$ preserves $\Xi$ and commutes to the dilations, so acts on the space\footnote{Observe that this is a twofold covering of $O(Q)/\{\pm \Id\}$, which already acts on $X$.}$X$. This allows to define a (rational) action of $G$ on $V$ by setting
\[g(x) = \alpha\big(g\kappa(x)\big)^{-1} \,\big(g \kappa(x)\big)_V.
\]

Let $L$ be the subgroup of $G$ given by
\begin{equation*} L = \left\{
\begin{pmatrix} t^{-1}&0&0\\0&h&0\\0&0&t
 \end{pmatrix}, \quad  h\in  {\rm O}(p,q),\; t\in \mathbb R^*
 \right\}.
\end{equation*}
The elements $\pm \begin{pmatrix} t^{-1}&0&0\\0&h&0\\0&0&t
 \end{pmatrix}$ both act on $V$ by $v\longmapsto t\,hv$, which realizes $L$ as a twofold covering of the structure group $\Str(V)$.
 
For $a\in V,$ let $n_a$ be the linear transform of $W$ defined by
\[n_a(\alpha, v, \beta) = (\alpha, \alpha a +v, \alpha P(a)+2P(a,v)+\beta).
\]
It is easily verified that $n_a$ belongs to $G$.  The action of $n_a$ on $V$ is given by $n_a(v) = v+a$. Further, for $a,b\in V$, we have $n_a\circ n_b= n_{a+b}$   so that  $ N= \{ n_a,\; a\in V\}$ is an abelian subgroup of $G$. 
\begin{lemma} Let  $P=LN$. The stabilizer $G^\infty$ of $\infty$ in $G$ is equal to $P.$
\end{lemma}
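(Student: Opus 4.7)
The plan is to prove both inclusions $P\subseteq G^\infty$ and $G^\infty \subseteq P$. The first is a direct computation: for $\ell=\mathrm{diag}(t^{-1},h,t)\in L$ one has $\ell\cdot e_{n+1}=t\,e_{n+1}$, so $\ell$ fixes the line $\mathbb{R} e_{n+1}$; and substituting $(\alpha,v,\beta)=(0,0,1)$ into the defining formula
\[
n_a(\alpha,v,\beta)=(\alpha,\,\alpha a+v,\,\alpha P(a)+2P(a,v)+\beta)
\]
returns $(0,0,1)$, so every $n_a\in N$ fixes $e_{n+1}$ pointwise. Hence $P=LN\subseteq G^\infty$.

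For the reverse inclusion, I take $g\in G^\infty$ and aim to produce $a\in V$ such that $g':=n_{-a}\,g$ lies in $L$. Write $g\cdot e_{n+1}=t\,e_{n+1}$ with $t\in\mathbb{R}^*$ and $g\cdot e_0=(\alpha_0,v_0,\beta_0)$, and let $\tilde Q$ denote the symmetric bilinear form on $W$ polarizing $Q$. Preservation of $\tilde Q$ by $g$, together with $\tilde Q(e_0,e_{n+1})=-\tfrac12$ and $\tilde Q(g\cdot e_0,\,t\,e_{n+1})=-t\alpha_0/2$, forces $\alpha_0=1/t$; and the isotropy relation $Q(g\cdot e_0)=0$ forces $\beta_0=t\,P(v_0)$. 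Setting $a:=t\,v_0$, a short calculation using
\[
n_{-a}(\alpha,v,\beta)=(\alpha,\,-\alpha a+v,\,\alpha P(a)-2P(a,v)+\beta)
\]
applied to $(1/t,v_0,t\,P(v_0))$ yields $g'\cdot e_0=(1/t)\,e_0$, while $g'\cdot e_{n+1}=t\,e_{n+1}$ because $N$ fixes $e_{n+1}$.

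Thus $g'$ preserves each of the lines $\mathbb{R} e_0$ and $\mathbb{R} e_{n+1}$, hence also the $\tilde Q$-orthogonal complement of their sum, which is precisely $\{(0,v,0):v\in V\}\simeq V$; since $Q$ restricts to $P$ on this subspace, $g'|_V$ lies in $\mathrm{O}(p,q)$, and $g'$ has exactly the block form defining an element of $L$. Therefore $g=n_a\,g'\in NL$, and since a direct conjugation computation gives $\ell n_a\ell^{-1}=n_{\ell a}$ (so that $L$ normalizes $N$), we have $NL=LN=P$, whence $g\in P$. The only delicate point is the bookkeeping in the calculation of $g'\cdot e_0$, which requires $\alpha_0=1/t$ and $\beta_0=t\,P(v_0)$ simultaneously; everything else is routine linear algebra inside $\mathrm{O}(Q)$.
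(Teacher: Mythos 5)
Your proof is correct and follows essentially the same strategy as the paper's: show $P\subseteq G^\infty$ directly, then for $g\in G^\infty$ use the isotropy of $g\cdot e_0$ and preservation of the form to compose $g$ with an affine element so that the result fixes (the lines through) both $e_0$ and $e_{n+1}$, preserves $V=(\mathbb{R}e_0\oplus\mathbb{R}e_{n+1})^\perp$, and hence has the block form of an element of $L$. The only cosmetic difference is that you normalize via a single translation $n_{-a}$ and let the scalar $1/t$ be absorbed into the $L$-block, whereas the paper composes with a translation and a dilation so as to fix $e_0$ exactly; both amount to the same reduction.
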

\begin{proof} First, clearly $P$ stabilizes $\infty$. Next
let $g\in G^\infty$. 
Then $g$ preserves the subspace $(\mathbb Re_{n+1}\big)^\perp=  V\oplus \mathbb R e_{n+1}$. 
Let
$g e_0 = (\alpha, v,\beta)$. As $Q(e_0,e_{n+1}) =1$, $g e_0\notin (\mathbb Re_{n+1}\big)^\perp$ and  $\alpha$ is different from $0$. Moreover, \[0=Q(e_0) = Q(g e_0) = P(v) -\alpha \beta,\] so that
\[(\delta_{\frac{1}{\alpha}}\circ t_v) (1,0,0) = (\alpha,v,\beta).
\]
Let $g_1 = \big(\delta_{\frac{1}{\alpha}}\circ t_v \big)^{-1}\circ g$, so that
$g_1 e_0 = e_0$. Then $g_1$ stabilizes both $\infty$ and $o$. Hence $g_1$ stabilizes $(\mathbb R e_0\oplus \mathbb R e_{n+1})^\perp = V$ and  the restriction of $g_1$ to $V$ preserves the quadratic form $Q_{\vert V} = P$. So the matrix of $g_1$ is of the form
\[ \begin{pmatrix}s&0&0\\0&h&0\\0&0&t
 \end{pmatrix},
\]
where $h\in  {\rm O}(p,q)$ and $  st = 1$. In other words, $g_1$ belongs to $L$, and hence $g\in P$. The conclusion follows.
\end{proof}
Let $\imath$ be the element of $G$  defined by
$$\begin{pmatrix}  
0 & 0& 1\\
0& \mathrm{I}_{1,n-1}&0 \\
-1& 0& 1
\end{pmatrix}$$
where $\mathrm{I}_{1,n-1}=\mathrm{diag}(-1,1,\ldots,1)$.
Then for $x=(x_1,x_2,\dots, x_n)\in V^\times$, $\imath(x) = -\frac{\check{x}}{P(x)}=-x^{-1}$,  where    ${\check { x}} = (x_1,-x_2,\dots, -x_n)$. The group $G$ is generated by $P$ and $\imath$.

The character $\chi$ of $\Str(V)$    is given on $L$ by
\[\chi\left( \begin{pmatrix}t^{-1}&0&0\\0&h&0\\0&0&t
 \end{pmatrix}\right) = t^2,
\]
so that we choose $\chi_{\frac{1}{2}}$ to be defined  by
\[\chi_{\frac{1}{2}}\left( \begin{pmatrix}t^{-1}&0&0\\0&h&0\\0&0&t
 \end{pmatrix}\right) =t.
\]
It is then easily verified that the cocycle  $a(g,x)$, defined  in the general situation by \autoref{a}, is given by
$$a(g,x) = \alpha\big(g\kappa(x)\big).$$

For $(\lambda, \varepsilon)\in \mathbb C\times \{ \pm\}$, the  principal series representations $\pi_{\lambda, \varepsilon}$ is given by
\[\pi_{\lambda, \varepsilon}(g)f(x) = a(g,x)^{-\lambda, \varepsilon} f(g^{-1}(x)) .
\]
The Knapp-Stein intertwining operator is given by
\[I_{\lambda,\varepsilon} f(x) = \int_V f(y) P(x-y)^{-n+\lambda,\varepsilon} \,dy
\]
and satisfies
\[I_{\lambda, \varepsilon}\circ \pi_{\lambda, \varepsilon} (g)= \pi_{n-\lambda, \varepsilon}(g)\circ I_{\lambda, \varepsilon}.
\]

\addsubsection{The operator  $F_{\lambda,\mu}$}

As in the general case,   the main  \autoref{maintheoremquad} can be reinterpreted to give a covariance property for the differential operator $F_{\lambda, \mu}$ (or its global version as a differential operator on $X\times X$), just by the change of parameters $s={n\over 2}-\lambda$ and $ t={n\over 2}-\mu$.

The differential operator $F_{\lambda,\mu}$ is given by 
$$
\begin{array}{rl}
\displaystyle  F_{\lambda,\mu}
 &=  \displaystyle-P(x-y) P\left(\frac{\partial}{\partial x}\right) P\left(\frac{\partial}{\partial y}\right) 
\\
& \displaystyle\quad+4(-\lambda+\frac{n}{2}-1) \sum_{j=1}^n (x_j-y_j)\frac{\partial}{\partial x_j}P\left(\frac{\partial}{\partial y}\right)  + 4(-\mu+\frac{n}{2}-1) \sum_{j=1}^n (y_j-x_j)\frac{\partial}{\partial x_j}P\left(\frac{\partial}{\partial x}\right)
\\
&\displaystyle\quad 
+4\lambda (-\lambda+\frac{n}{2}-1)  P\left(\frac{\partial}{\partial y} \right)
+4\mu (-\mu+\frac{n}{2}-1)  P\left(\frac{\partial}{\partial x} \right)
\\
& \displaystyle\quad+8(-\lambda+\frac{n}{2}-1)(-\mu+\frac{n}{2}-1) P\left(\frac{\partial}{\partial x}, \frac{\partial}{\partial y}\right).
\end{array}$$

\begin{theorem}
For $(\lambda ,\varepsilon)$ and $ (\mu,\eta)$ in $ \mathbb C\times  \{ \pm\}$, the operator $F_{\lambda, \mu}$ is covariant with respect to 
$(\pi_{\lambda, \varepsilon} \otimes \pi_{\mu, \eta})$ and $ (\pi_{\lambda+1,-\varepsilon}\otimes \pi_{\mu+1, -\eta})$.
\end{theorem}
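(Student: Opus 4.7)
The plan is to repeat the argument of Section~\ref{sec-cov-Fst} in the specific setting of $V=\mathbb{R}^{p,q}$, where now $r=2$, the bilinear form on Jordan elements is realized by the quadratic form $P$, and the conformal cocycle is the explicit function $a(g,x)=\alpha\bigl(g\kappa(x)\bigr)$ introduced above. The three ingredients I need are (a) the Main Theorem for $\mathbb{R}^{p,q}$ (Theorem~\ref{maintheoremquad}), (b) the covariance of $M$ under the diagonal action, and (c) the Knapp--Stein intertwining property. With these three inputs the theorem is almost formal.

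First I would rewrite Theorem~\ref{maintheoremquad} in the $\pi_{\lambda,\varepsilon}$-normalisation. Since $I_{\lambda,\varepsilon}=J_{-n+\lambda,\varepsilon}$, substituting $s=-n+\lambda$, $t=-n+\mu$ in the identity of Theorem~\ref{maintheoremquad} gives
\begin{equation*}
M\circ \bigl(I_{\lambda,\varepsilon}\otimes I_{\mu,\eta}\bigr)
= k(\lambda,\mu)\bigl(I_{\lambda+1,-\varepsilon}\otimes I_{\mu+1,-\eta}\bigr)\circ F_{\lambda,\mu},
\end{equation*}
with $k(\lambda,\mu)$ a nowhere-zero meromorphic function of $(\lambda,\mu)$ obtained from $\kappa$ by the change of variables. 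Next I verify that for every $g\in G$ defined on the support under consideration,
\begin{equation*}
M\circ\bigl(\pi_{\lambda,\varepsilon}(g)\otimes\pi_{\mu,\eta}(g)\bigr)
= \bigl(\pi_{\lambda-1,-\varepsilon}(g)\otimes\pi_{\mu-1,-\eta}(g)\bigr)\circ M,
\end{equation*}
which is the direct analogue of the identity proved earlier for $\det$: it follows from the transformation rule $P(g(x)-g(y))=a(g,x)^{-1}P(x-y)a(g,y)^{-1}$, a consequence of the cocycle identity and Hua's formula applied to the generators $n_v$, $L$, $\imath$ of $G$. Finally, the Knapp--Stein intertwining relation recorded just before this theorem asserts
\begin{equation*}
I_{\lambda,\varepsilon}\circ\pi_{\lambda,\varepsilon}(g)=\pi_{n-\lambda,\varepsilon}(g)\circ I_{\lambda,\varepsilon}.
\end{equation*}

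With these three identities in hand the proof is a one-line chase, mimicking the argument at the end of Subsection~\ref{subsec-inf-cov}. Compose the Main Theorem identity on the right with $\pi_{\lambda,\varepsilon}(g)\otimes\pi_{\mu,\eta}(g)$: move the representations through $I_{\lambda,\varepsilon}\otimes I_{\mu,\eta}$ by Knapp--Stein (turning $\pi_{\lambda,\varepsilon}$ into $\pi_{n-\lambda,\varepsilon}$), then through $M$ by the covariance of $M$ (turning $\pi_{n-\lambda,\varepsilon}$ into $\pi_{n-\lambda-1,-\varepsilon}$), then back through $I_{\lambda+1,-\varepsilon}\otimes I_{\mu+1,-\eta}$ by Knapp--Stein once more (turning $\pi_{n-\lambda-1,-\varepsilon}$ into $\pi_{\lambda+1,-\varepsilon}$). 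The net effect is
\begin{equation*}
\bigl(I_{\lambda+1,-\varepsilon}\otimes I_{\mu+1,-\eta}\bigr)\circ F_{\lambda,\mu}\circ\bigl(\pi_{\lambda,\varepsilon}(g)\otimes\pi_{\mu,\eta}(g)\bigr)
=\bigl(I_{\lambda+1,-\varepsilon}\otimes I_{\mu+1,-\eta}\bigr)\circ\bigl(\pi_{\lambda+1,-\varepsilon}(g)\otimes\pi_{\mu+1,-\eta}(g)\bigr)\circ F_{\lambda,\mu}.
\end{equation*}
For generic $(\lambda,\mu)$ the operators $I_{\lambda+1,-\varepsilon}$ and $I_{\mu+1,-\eta}$ are injective on $\mathcal S(V)$, so the desired covariance follows at the generic locus; since both sides depend continuously (in fact meromorphically) on $(\lambda,\mu)$, the identity extends to all $(\lambda,\mu)\in\mathbb C^2$.

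The main obstacle I anticipate is purely bookkeeping: one must check that the cocycle $a(g,x)=\alpha(g\kappa(x))$ really satisfies the identity $P(g(x)-g(y))=a(g,x)^{-1}P(x-y)a(g,y)^{-1}$ on each generator $\ell\in L$, $n_v\in N$, and on $\imath$, so that $M$ genuinely intertwines the tensor products $\pi_{\lambda,\varepsilon}\otimes\pi_{\mu,\eta}$ and $\pi_{\lambda-1,-\varepsilon}\otimes\pi_{\mu-1,-\eta}$ in the $\mathbb{R}^{p,q}$ realisation; once this is in place, everything else is a meromorphic continuation in $(\lambda,\mu)$ of an algebraic identity proved for $\Re\lambda,\Re\mu\gg 0$.
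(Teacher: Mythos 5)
Your proposal is essentially the paper's own argument: the paper proves this theorem simply by invoking the machinery of Section~\ref{sec-cov-Fst} applied to $V=\mathbb{R}^{p,q}$, using Theorem~\ref{maintheoremquad} in place of the general Theorem~\ref{them-main}, the transformation rule $P(g(x)-g(y))=a(g,x)^{-1}P(x-y)\,a(g,y)^{-1}$ (the $\mathbb{R}^{p,q}$ specialization of \autoref{covdet}) for the covariance of $M$, and the Knapp--Stein intertwining relation. Your chase through $I\otimes I$, then $M$, then $I\otimes I$, followed by injectivity of the Knapp--Stein operator for generic parameters and meromorphic continuation, is exactly the chain of equalities the paper executes.

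One small point of comparison: in Section~\ref{subsec-inf-cov} the paper deliberately performs this chase at the \emph{infinitesimal} level (with $d\pi_\lambda(X)$ in place of $\pi_{\lambda,\varepsilon}(g)$), precisely because $\pi_{\lambda,\varepsilon}(g)$ is only a rational action on $V$ and the intermediate objects after applying the convolution operator $I_{\lambda,\varepsilon}$ live in $\mathcal S'(V)$, where composing with the rational action requires care. Your version runs the chase at the group level, which is what the $\mathbb{R}^{p,q}$ section of the paper actually records (the intertwining relation $I_{\lambda,\varepsilon}\circ\pi_{\lambda,\varepsilon}(g)=\pi_{n-\lambda,\varepsilon}(g)\circ I_{\lambda,\varepsilon}$ is stated there at the group level without further comment), so this is consistent with the paper's presentation; but to be fully watertight one should either restrict to $g$ near the identity and $f$ compactly supported with controlled support throughout, or pass to the infinitesimal picture as the general section does. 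This is a refinement of the bookkeeping, not a different route, and your proposal is otherwise complete and correct.
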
  

The construction of covariant bi-differential operators for ${\rm O}(p+1,q+1)$ is then obtained as in the general case. Let us state the formula for $B^{(1)}_{\lambda, \mu}$, which is covariant with respect to 
$(\pi_{\lambda, \varepsilon} \otimes \pi_{\mu, \eta}, \pi_{\lambda+\mu+2, \varepsilon \eta})$ :
\begin{multline*}
B^{(1)}_{\lambda, \mu}=4\,\mathbf{res}\Big\{
\mu (-\mu+\frac{n}{2}-1) P\left(\frac{ \partial}{\partial x}\right) +\lambda (-\lambda+\frac{n}{2}-1)  P\left( \frac{ \partial}{\partial y}\right) \\
+2(-\lambda+\frac{n}{2}-1)(-\mu+\frac{n}{2}-1) P\left(\frac{ \partial}{\partial x}, \frac{ \partial}{\partial y}\right)\Big\}.
\end{multline*}
We point out  that these covariant bi-differential operators were already introduced in \cite{or}.

  \newpage
\section{Appendix A : Classification of simple real Jordan algebras}\label{Appendix-A}
 

  \noindent\resizebox{\textwidth}{8.5cm}{%
 \begin{threeparttable}
\begin{tabular}{||c||c|c|c|c||}
 \bottomrule
    
    &
   Euclidean &        
   Non-Euclidean&
  Non-Complex type  & Complex type\\
    
    &
   (split) &        
   (split)&
   (non split) &(non-split)\\

     &
  Type I &
   Type II  &
  Type III ($m=2\ell$) &Type IV \\
  \toprule
  \bottomrule
 $V$
    &$\mathrm{Sym}(m,\mathbb{R})$ & $\times$ & $\mathrm{Sym}(2\ell,\mathbb{R})\cap \mathrm{Mat}(\ell,\mathbb{H})$ &$\mathrm{Sym}(m,\mathbb{C})$\\
  $\mathfrak{co}(V)$
    & $\mathfrak{sp}(m,\mathbb{R})$&$\times$ &$\mathfrak{sp}(\ell,\ell)$ &$\mathfrak{sp}(m,\mathbb{C}$ \\
    $\mathfrak{str}(V)$
       &$\mathfrak{sl}(m,\mathbb{R})\oplus\mathbb{R}$  &$\times$ &$\mathfrak{sp}(2\ell)\oplus\mathbb{R}$ &$\mathfrak{sl}(m,\mathbb{C})\oplus\mathbb{C}$\\
 $n$
    &$m(m+1)/2$ &$\times$ &$\ell(2\ell+1)$ &$m(m+1)$\\
  $r$
    & $m$& $\times$& $2\ell$ &$2m$\\
   $d$
     & $1$&$\times$ &$4$ &$2$\\
    $e$
      &0 & $\times$&2 &1 \\
      $V^+$
         &$\mathrm{Sym}(m,\mathbb{R})$ & $\times$&$\mathrm{Herm}(\ell,\mathbb{C})$ &$\mathrm{Sym}(m,\mathbb{R})$\\  
        $r_+$
          &$m$ &$\times$ &$\ell$ &$m$ \\
          $d_+$
          &$1$ &$\times$ &$2$ &$1$ \\
        
\toprule
        \bottomrule
    $V$
      &$\mathrm{Herm}(m,\mathbb{C})$ & $\mathrm{Mat}(m,\mathbb{R})$ & $\mathrm{Mat}(\ell,\mathbb{H})$ &$\mathrm{Mat}(m,\mathbb{C})$\\
  $\mathfrak{co}(V)$
     & $\mathfrak{su}(m,m)$& $\mathfrak{sl}(2m,\mathbb{R})$&$\mathfrak{su}^*(4\ell)$ &$\mathfrak{sl}(2m,\mathbb{C})$\\
    $\mathfrak{str}(V)$
      & $\mathfrak{sl}(m,\mathbb{R})\oplus\mathbb{R}$&$\mathfrak{sl}(m,\mathbb{R})\oplus \mathfrak{sl}(m,\mathbb{R})\oplus\mathbb{R}$ &$\mathfrak{su}^*(2\ell)\oplus \mathfrak{su}^*(2\ell)\oplus\mathbb{R}$ &$\mathfrak{sl}(m,\mathbb{C})\oplus \mathfrak{sl}(m,\mathbb{C})\oplus\mathbb{C}$\\
 $n$
    &$m^2$ &$m^2$ &$4\ell^2$ &$2m^2$\\
  $r$
     & $m$& $m$&$2\ell$ &$2m$\\
   $d$
     &2 &2 &8 & 4\\
    $e$
       & 0&0 &3 &1\\
      $V^+$
         &$\mathrm{Herm}(m,\mathbb{C})$ &$\mathrm{Sym}(m,\mathbb{R})$ &$\mathrm{Herm}(\ell,\mathbb{H})$ &$\mathrm{Herm}(m,\mathbb{C})$\\  
        $r_+$
           & $m$&$m$ &$\ell$ &$m$\\
            $d_+$
           & $2$&$1$ &$4$ &$2$\\
    \toprule
        \bottomrule
   $V$
      &$\mathrm{Herm}(m,\mathbb{H})$ & $\mathrm{Skew}(2m,\mathbb{R})$ & $\times$ &$\mathrm{Skw}(2m,\mathbb{C})$\\
  $\mathfrak{co}(V)$
     & $\mathfrak{so}^*(4m)$& $\mathfrak{so}(2m,2m)$&$\times$ &$\mathfrak{so}(4m,\mathbb{C})$\\
    $\mathfrak{str}(V)$
       & $\mathfrak{su}^*(2m)\oplus\mathbb{R}$& $\mathfrak{sl}(2m,\mathbb{R})\oplus\mathbb{R}$&$\times$ &$\mathfrak{sl}(2m,\mathbb{C})\oplus\mathbb{C}$\\
 $n$
    &$m(2m-1)$ &$m(2m-1)$ &$\times$ &$2m(2m-1)$\\
  $r$
     & $m$& $m$&$\times$ &$2m$\\
   $d$
     & 4& 4&$\times$ & 8\\
    $e$
      &0 & 0&$\times$ &1\\
      $V^+$
        &$\mathrm{Herm}(m,\mathbb{H})$ &$\mathrm{Herm}(m,\mathbb{C})$ &$\times$ &$\mathrm{Herm}(m,\mathbb{H})$ \\  
        $r_+$
           & $m$& $m$&$\times$ &$m$\\
           $d_+$
           & $4$& $2$&$\times$ &$4$\\
    \toprule
        \bottomrule
  $V$
     &  $\mathbb{R}^{1,k-1}$ & $\mathbb{R}^{p,q} $ & $\mathbb{R}^{k,0}$ &$\mathbb{C}^{k}$\\
  $\mathfrak{co}(V)$
     &$\mathfrak{so}(2,k)$ & $\mathfrak{so}(p+1,q+1)$&$\mathfrak{so}(k+1,1)$ &$\mathfrak{so}(k+2,\mathbb{C})$\\
    $\mathfrak{str}(V)$
       & $\mathfrak{so}(1,k-1)\oplus\mathbb{R}$ & $\mathfrak{so}(p,q)\oplus\mathbb{R}$& $\mathfrak{so}(k)\oplus\mathbb{R}$ &$\mathfrak{so}(n,\mathbb{C})\oplus\mathbb{C}$\\
 $n$
   &$m$ &$p+q=k$ &$k$ &$2k$\\
  $r$
     &2 &2 &2 &4\\
   $d$
      &$k-2$ & $p+q-2$&0 &$2(k-2)$\\
    $e$
       & 0& 0&$k-1$ &1\\
      $V^+$
         &$\mathbb{R}^{1,k-1}$ &$\mathbb{R}^{1,q}$ &$\mathbb{R}^{1,0}$ &$\mathbb{R}^{1,k-1}$\\  
        $r_+$
           &2 &2 &1 &2\\
           $d_+$
           &$k-2$ &$q-1$ &0 &$k-2$\\
    \toprule
        \bottomrule
$V$
  &$\mathrm{Herm}(3,\mathbb{O})$ & $\mathrm{Herm}(3,\mathbb{O}_s)$ & $\times$ &$\mathrm{Herm}(3,\mathbb{O})_\mathbb{C}$\\
  $\mathfrak{co}(V)$
    & $\mathfrak{e}_{7(-25)}$&$\mathfrak{e}_{7(7)}$ &$\times$ &$\mathfrak{e}_7(\mathbb{C})$\\
    $\mathfrak{str}(V)$
        & $\mathfrak{e}_{6(-26)}\oplus\mathbb{R}$&$\mathfrak{e}_{6(6)}\oplus\mathbb{R}$ &$\times$ &$\mathfrak{e}_6(\mathbb{C})\oplus\mathbb{C}$\\
 $n$
   &27 & 27&$\times$ &54\\
  $r$
    & 3&3 &$\times$ & 16\\
   $d$
      &8 &8 &$\times$ &1\\
    $e$
       &0 &0 &$\times$ &1\\
      $V^+$
         & $\mathrm{Herm}(3,\mathbb{O})$& $\mathrm{Herm}(3,\mathbb{O})$&$\times$ &$\mathrm{Herm}(3,\mathbb{O})$\\  
        $r_+$
           & 3&3 &$\times$ &3\\
           $d_+$
           & 8& 4&$\times$ &8\\ 
    \toprule

  \end{tabular}
     \end{threeparttable}
}

 \section{Appendix B : Rank, generic minimal polynomial and determinant}\label{Appendix-B}
Because of lack of a convenient reference, the purpose of this appendix is to clarify the relations between a real simple Jordan algebra and its complexification.
  \addsubsection{Rank, generic minimal polynomial and determinant}
 
 Let $V$ be a unital Jordan algebra over $\mathbb F=\mathbb R \text{ or } \mathbb C$ and denote by $\mathbf{1}$ its unit element. Recall that the rank of an element $x\in V$ is defined by
 \[\mathrm{rk}_\mathbb{F}(x) = \min\{ k>0, (e,x,x^2,\dots,x^k) \text{ are $\mathbb F$-linearly dependant}\}
 \]

 The rank of $V$ is defined as 
 \[\mathrm{rk}_\mathbb{F}(V)= \max\{\mathrm{rk}_\mathbb{F}(x), x\in V\}.
 \]
 
  \begin{proposition} Let $V$ be a unital  real Jordan algebra and  $\mathbb V$ its complexification. Then 
 \[\mathrm{rk}_\mathbb{R}(V) = \mathrm{rk}_{\mathbb{C}}(\mathbb V).
 \]
 \end{proposition}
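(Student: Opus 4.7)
My plan is to prove the equality by establishing two opposite inequalities, each relying on an elementary observation.

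The easy direction is $\mathrm{rk}_{\mathbb{R}}(V) \leq \mathrm{rk}_{\mathbb{C}}(\mathbb{V})$. To obtain it, I would first show that for any $x \in V \subset \mathbb{V}$ the two ranks $\mathrm{rk}_{\mathbb{R}}(x)$ and $\mathrm{rk}_{\mathbb{C}}(x)$ coincide. One inclusion is trivial since a real linear relation among $\mathbf{1}, x, \ldots, x^k$ is \emph{a fortiori} a complex one. For the converse, given a complex relation $\sum_{j=0}^k c_j x^j = 0$ with $c_j = a_j + \sqrt{-1}\, b_j$ not all zero, the fact that $x^j \in V$ forces the real identities $\sum a_j x^j = 0$ and $\sum b_j x^j = 0$, at least one of which is nontrivial; hence $\mathrm{rk}_{\mathbb{R}}(x) \leq \mathrm{rk}_{\mathbb{C}}(x)$. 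Taking the maximum over $x \in V$ yields the claimed inequality.

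For the reverse inequality, set $r = \mathrm{rk}_{\mathbb{C}}(\mathbb{V})$; it suffices to exhibit a single $x \in V$ with $\mathrm{rk}_{\mathbb{C}}(x) = r$, as the previous step then gives $\mathrm{rk}_{\mathbb{R}}(x) = r$ as well. My plan is a Zariski-density argument. After fixing a $\mathbb{C}$-basis of $\mathbb{V}$, the condition ``$\mathbf{1}, z, z^2, \ldots, z^{r-1}$ are $\mathbb{C}$-linearly independent'' is expressed by the non-vanishing of a certain polynomial (some $r \times r$ minor of the matrix whose columns are the coordinates of these vectors) in the coordinates of $z$. By the very definition of $r$, this polynomial is not identically zero on $\mathbb{V}$, so its non-vanishing locus $\Omega_r$ is a nonempty Zariski-open subset of $\mathbb{V}$.

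The only step deserving attention is to verify that $\Omega_r \cap V \neq \emptyset$, i.e. the Zariski-density of $V$ in $\mathbb{V}$. Choosing an $\mathbb{R}$-basis of $V$, which is simultaneously a $\mathbb{C}$-basis of $\mathbb{V}$, any polynomial on $\mathbb{V}$ vanishing on $V$ has coordinate expression whose restriction to $\mathbb{R}^n$ is identically zero, hence is the zero polynomial. Combining this density with the non-emptiness of $\Omega_r$ yields the desired $x \in V$ with $\mathrm{rk}_{\mathbb{C}}(x) \geq r$, whence $\mathrm{rk}_{\mathbb{R}}(V) \geq r$. I do not anticipate any genuine obstacle: the argument is essentially a combination of descent of scalars for linear dependencies with the standard Zariski density of a real form in its complexification.
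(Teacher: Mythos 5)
Your proposal is correct and takes essentially the same route as the paper: both hinge on the density of the real form $V$ in $\mathbb{V}$ (you invoke Zariski density to find a regular point in $V$, while the paper invokes the identity theorem to show the holomorphic map $z\mapsto \mathbf{1}\wedge z\wedge\cdots\wedge z^{r}$ vanishes on all of $\mathbb{V}$ once it vanishes on $V$ — contrapositives of the same fact for polynomial maps) together with the elementary descent of linear dependence relations from $\mathbb{C}$ to $\mathbb{R}$ for vectors lying in $V$. Isolating the pointwise equality $\mathrm{rk}_{\mathbb{R}}(x)=\mathrm{rk}_{\mathbb{C}}(x)$ as a separate step is a mild reorganization, not a different argument.
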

 \begin{proof}
 Let $\mathrm{rk}_\mathbb{R}(V) = r$. By assumption, there exists an element $x\in V$ such that
 \begin{equation}\label{r-1}
 \mathbf{1}\wedge x\wedge \dots\wedge x^{r-1}\neq 0,
 \end{equation}
 whereas for all elements $y\in V$,
 \begin{equation}\label{vanr}
 \mathbf{1}\wedge y \wedge \dots \wedge y^r=0.
 \end{equation}
 Let $\rho= \mathrm{rk}_\mathbb{C}(\mathbb V)$. Now \autoref{r-1} implies that $\rho\geq r$. Next the map 
 \[\mathbb V \ni z \longmapsto {\mathbf 1}\wedge z \wedge \dots \wedge z^r \in \Lambda^{r+1} (\mathbb V)\]
is holomorphic and vanishes on $V$ by \autoref{vanr}, hence everywhere. This shows that $\rho\leq r$. We can conclude that $r=\rho$.
 \end{proof}
 
 An element $x\in V$ is said to be regular if $\mathrm{rk}_\mathbb{F}(x) = \mathrm{rk}_\mathbb{F}(V)$. In the case where  $V$ is a  real Jordan algebra,    an element of $V$ is {\em regular} if and only if it is regular as an element of $\mathbb V$.
 
 Let $x\in V$. Then the subalgebra $\mathbb F [x]$ generated by $\mathbf{1}$ and $x$ is commutative and power associative. Let 
$ I(x) $ be the ideal of $\mathbb F[x]$ defined by
\[I(x)= \{ p\in \mathbb F [T], \quad p(x)=0\}.
\]
Since $\mathbb F[T]$ is a principal ring, $I(x)$ is generated by a monic polynomial, called the minimal polynomial of $x$ and denoted by $p_x$.

\begin{proposition} There exists polynomials $a_1,a_2,\dots, a_r\in \mathbb F[T]$ such that the minimal polynomial $p_x$ of every regular element of $V$ is given by
\[p_x(T) = T^r-a_1(x)T^{r-1}+\dots +(-1)^r a_r(x).
\]
\end{proposition}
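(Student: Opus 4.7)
The strategy is to construct $a_1,\ldots,a_r\in\mathbb{F}[V]$ by first producing an auxiliary \emph{local} Cramer-type formula on a Zariski open subset of regular points, then upgrading this rational expression to a genuine polynomial identity on all of $V$. The end-product is a Cayley--Hamilton-type identity, valid everywhere on $V$, which specializes at each regular element to the minimal polynomial relation.

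First, fix a regular element $x_0\in V$. By hypothesis $(\mathbf{1},x_0,\dots,x_0^{r-1})$ is $\mathbb{F}$-linearly independent, so one may choose $\varphi_1,\ldots,\varphi_r\in V^*$ with $\varphi_i(x_0^{j-1})=\delta_{ij}$. Define
\[
\Phi(x):=\bigl(\varphi_i(x^{j-1})\bigr)_{1\le i,j\le r},\qquad D(x):=\det\Phi(x),
\]
both polynomial in $x$, with $D(x_0)=1$. On the Zariski open set $U:=\{D\neq 0\}$ the vectors $\mathbf{1},x,\dots,x^{r-1}$ remain linearly independent (their images under $(\varphi_1,\ldots,\varphi_r)$ span $\mathbb{F}^r$), so every $x\in U$ is regular and satisfies a unique relation $x^r=\sum_{k=0}^{r-1} b_k(x)\,x^k$. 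Applying each $\varphi_i$ and solving by Cramer's rule yields $b_k(x)=P_k(x)/D(x)$ for explicit $P_k\in\mathbb{F}[V]$, and the $V$-valued polynomial identity
\[
D(x)\,x^r=\sum_{k=0}^{r-1}P_k(x)\,x^k
\]
then extends from $U$ to all of $V$ by Zariski density of $U$.

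The main obstacle is the upgrade from rational to polynomial dependence: a priori $b_k$ is only a rational function defined wherever $D$ is nonzero. To settle this, one varies the construction: repeating it with different dual frames $(\varphi_i^{(\alpha)})$ near each point of $V_{\mathrm{reg}}$ produces local formulas $(P_k^{(\alpha)},D^{(\alpha)})$ agreeing on overlaps by uniqueness of the minimal polynomial, so each $b_k$ is a well-defined rational function on $V$ that is regular on all of $V_{\mathrm{reg}}$. Complexifying to $\mathbb{V}$, in the same spirit as the arguments in \autoref{sec-Bernstein}, one may work over an algebraically closed field, where a weight/homogeneity analysis (each $b_k$ is homogeneous of degree $r-k$ in $x$, while each $D^{(\alpha)}$ factors through irreducible components of $\mathbb{V}\setminus\mathbb{V}_{\mathrm{reg}}$ with controlled weights) forces the denominators to cancel against the numerators, yielding a genuinely polynomial $b_k\in\mathbb{F}[V]$.

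Once polynomiality is secured, setting $a_{r-k}(x):=(-1)^{r-k+1}\,b_k(x)$ packages the identity as the Cayley--Hamilton-type relation
\[
x^r-a_1(x)\,x^{r-1}+\cdots+(-1)^r a_r(x)\,\mathbf{1}=0
\]
holding on all of $V$. For a regular $x$ the minimal polynomial is the unique monic degree-$r$ annihilator of $x$ (thanks to linear independence of $\mathbf{1},x,\dots,x^{r-1}$), so the left-hand side coincides with $p_x(x)$ and gives the stated formula.
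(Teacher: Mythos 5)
Your passage from the local Cramer expressions to a single rational function $b_k$ that is regular on all of $V_{\mathrm{reg}}$ is sound, and your Cramer formula essentially reproduces the local expression that the paper cites from \cite[proof of Proposition II.2.1]{FK}. The gap is the upgrade from rational to polynomial, which is precisely the nontrivial content of the statement. The ``weight/homogeneity analysis'' you invoke does not do the job: homogeneity of a rational function ($b_k$ of degree $r-k$) by itself never forces polynomiality, and the claim that each $D^{(\alpha)}$ ``factors through irreducible components of $\mathbb{V}\setminus\mathbb{V}_{\mathrm{reg}}$'' is false --- the zero locus of $D^{(\alpha)}$ also contains many \emph{regular} elements, namely those $x$ for which the fixed forms $\varphi_i^{(\alpha)}$ fail to separate $\mathbf{1},x,\dots,x^{r-1}$ even though these vectors are independent. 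What is true is only that a \emph{common} irreducible factor of \emph{all} the $D^{(\alpha)}$ would cut out a hypersurface contained in the non-regular locus; you neither rule out such common factors nor show that the numerators cancel them.

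Such common factors do in fact occur, so no codimension or degree count alone can save the argument. In the rank-two algebra $V=\mathbb{R}^{1,1}$ of dimension $2$, the non-regular locus $\mathbb{R}\,\mathbf{1}=\{v=0\}$ is a hypersurface, and for every admissible dual frame $(\varphi_i^{(\alpha)})$ one computes $D^{(\alpha)}(\lambda,v)=v/v_0^{(\alpha)}$, so $v$ divides every $D^{(\alpha)}$. Polynomiality of $a_1=2\lambda$ and $a_2=\lambda^2-v^2$ then rests on the Cramer numerators \emph{also} vanishing along $\{v=0\}$ --- a cancellation your proposal asserts but does not establish. This ``rational $\Rightarrow$ polynomial'' step is exactly where a genuine argument is needed; the paper does not supply one but refers to \cite[Proposition II.2.1]{FK}, where the polynomiality is proved for power-associative algebras by a dedicated argument.
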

The polynomial 
$$m(T,x)= m_x(T)=T^r-a_1(x)T^{r-1}+\dots +(-1)^r a_r(x)$$ is called the \emph{generic minimal polynomial} of $V$ at $x$.
The linear form $a_1$ is the {\em trace} of $V$ and the polynomial $a_r$ is the {\em determinant} of $V$,
$$\begin{array}{r @{\,=\,} l}
\tr(x)&a_1(x),\\
\det(x)&a_r(x).
\end{array}$$

\begin{proposition} Let $V$ be a real Jordan algebra and $\mathbb V$ its complexification. The generic minimal polynomial of $\mathbb V$ restricts to the generic minimal polynomial of $V$.
\end{proposition}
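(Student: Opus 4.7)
The plan is to prove the statement in two steps: first identify the two minimal polynomials pointwise on regular elements of $V$, then promote this equality to an identity of polynomial functions on all of $V$ via Zariski density of the regular set.

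In the first step, I will fix a regular $x \in V$. By the preceding proposition $x$ is regular in $\mathbb V$ as well, and both algebras share the common rank $r$. The family $(\mathbf{1}, x, \ldots, x^{r-1})$ is then $\mathbb{R}$-linearly independent in $V$ and $\mathbb{C}$-linearly independent in $\mathbb V$. The element $x^r$ admits a unique $\mathbb{R}$-linear expansion in that family, and by the complex independence this same identity is necessarily also its unique $\mathbb{C}$-linear expansion. Consequently the minimal polynomial of $x$ computed in $V$ coincides, as a polynomial, with the minimal polynomial of $x$ computed in $\mathbb V$. Writing $a_i$ and $a_i^{\mathbb V}$ for the coefficients of the generic minimal polynomials of $V$ and $\mathbb V$ respectively, this yields $a_i(x) = a_i^{\mathbb V}(x)$ for every $i = 1, \ldots, r$ and every regular $x \in V$.

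In the second step, both $a_i$ and the restriction $a_i^{\mathbb V}|_V$ are polynomial functions on $V$, and they agree on the set $V^{\mathrm{reg}}$ of regular elements of $V$. I will then observe that $V^{\mathrm{reg}}$ is Zariski-dense in $V$: it is the nonvanishing locus of a polynomial in $x$ (for instance a maximal minor of the matrix expressing $\mathbf{1}, x, \ldots, x^{r}$ in a fixed basis of $V$), which is not identically zero because $V$ does contain regular elements by definition of its rank. Hence $V^{\mathrm{reg}}$ is a nonempty Zariski-open subset of $V$, in particular dense, and the polynomial identities $a_i \equiv a_i^{\mathbb V}|_V$ follow on all of $V$, which is the desired conclusion.

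The argument is essentially formal once the preceding proposition is granted. The substantive content is the elementary linear-algebra observation in the first step, and the only mildly technical input is the density of $V^{\mathrm{reg}}$, a standard fact in Jordan theory; I do not expect any genuine obstacle.
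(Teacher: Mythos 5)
Your proof is correct and follows essentially the same route as the paper's: establish that the coefficients coincide at regular elements of $V$, then extend by Zariski density since they are polynomial functions. The only cosmetic difference is that you argue the pointwise coincidence by abstract uniqueness of the solution to the linear system (equivalently, uniqueness of the monic minimal polynomial), whereas the paper invokes the explicit Cramer-type formula for the $a_j$ from Faraut--Kor\'anyi and observes that the same formula is valid over $\mathbb{R}$ and over $\mathbb{C}$.
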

\begin{proof} This is a consequence of the local expression of these coefficients near a regular element, namely
\[a_j(x) = (-1)^{j-1}\frac{ \Det(\mathbf{1},x,\dots,x^{j-1},x^r,x^{j+1},\dots,x^{r-1},e_{r+1},\dots,e_n)}{\Det(\mathbf{1},x,x^2,\dots,x^{r-1},e_{r+1},\dots,e_n)},
\]
where $e_{r+1}, \dots, e_n$ are elements completing $e,x,x^2,\dots, x^{r-1}$ to a basis of $V$ (see \cite[proof of Proposition II.2.1]{FK}).
\end{proof}

\begin{corollary} Let $V$ be a real Jordan algebra, and let $\mathbb V$ be its complexification. Then the restriction to $V$ of the  determinant of $\mathbb V$ coincides with the determinant of $V$.
\end{corollary}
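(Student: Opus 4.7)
The corollary is essentially an immediate consequence of the preceding proposition, which asserts that the generic minimal polynomial of $\mathbb V$ restricts to the generic minimal polynomial of $V$. My plan is therefore to argue as follows.

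First, recall that by the very first proposition of this appendix, the rank of $V$ over $\mathbb R$ equals the rank of $\mathbb V$ over $\mathbb C$; call this common integer $r$. Hence the generic minimal polynomials of $V$ and of $\mathbb V$ both have degree $r$, and can be written as
\[
m^V_x(T) = T^r - a_1^V(x)T^{r-1} + \cdots + (-1)^r a_r^V(x), \qquad
m^{\mathbb V}_z(T) = T^r - a_1^{\mathbb V}(z)T^{r-1} + \cdots + (-1)^r a_r^{\mathbb V}(z),
\]
with $a_j^V \in \mathcal P(V)$ and $a_j^{\mathbb V}$ the holomorphic polynomial extensions on $\mathbb V$.

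Second, by definition, the determinant of $V$ is the polynomial $a_r^V$, and the determinant of $\mathbb V$ is the polynomial $a_r^{\mathbb V}$. The previous proposition asserts that for every $x \in V$ the polynomial $m^{\mathbb V}_x(T)$ coincides with $m^V_x(T)$ as a polynomial in $T$, so by identifying coefficients one gets $a_j^{\mathbb V}(x) = a_j^V(x)$ for every $j$ and every $x \in V$. Specializing to $j = r$ gives
\[
\left(a_r^{\mathbb V}\right)_{|V} = a_r^V,
\]
which is exactly the statement that the determinant of $\mathbb V$ restricts to the determinant of $V$. There is no real obstacle here; the work has already been done in proving that the coefficients of the generic minimal polynomial transfer from $\mathbb V$ to $V$, and the corollary is simply the extraction of the top-degree coefficient.
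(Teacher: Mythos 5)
Your proposal is correct and follows exactly the argument the paper intends: the paper states this as a corollary with no separate proof precisely because the determinant is, by definition, the top coefficient $a_r$ of the generic minimal polynomial, so the claim follows immediately from the preceding proposition by comparing coefficients. Your write-up simply spells out that one-line deduction, which matches the paper's implicit reasoning.
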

\addsubsection{Primitive idempotents} 

Let $x\in \mathbb V$. A complex number $\lambda$ is called an \emph{eigenvalue} of $x$ if  $\lambda$ is a root of the minimal polynomial $p_x$. An element is said to be semi-simple if its minimal polynomial has only simple roots. 

\begin{proposition} Let $(d_1,d_2,\dots, d_l)$ be a complete system of orthogonal idempotents, and let $(\mu_1,\dots, \mu_l)$ be $l$ distinct complex numbers. Let $x=\sum_{j=1}^l \mu_j d_j$. Then $x$ is semisimple, $d_j\in \mathbb C[x]$ for every $1\leq j\leq l $ and the eigenvalues of $x$ are $\mu_1,\dots, \mu_l$.
\end{proposition}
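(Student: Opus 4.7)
The plan is to exploit the orthogonality relations $d_i d_j = \delta_{ij} d_i$ (which come from $d_i^2 = d_i$ and $d_i d_j = 0$ for $i \neq j$) together with the completeness $d_1 + \cdots + d_l = \mathbf{1}$, to give a closed-form expression for every polynomial in $x$. A straightforward induction on $k$ using orthogonality yields
\[
x^k = \sum_{j=1}^l \mu_j^k\, d_j, \qquad k \geq 0,
\]
where for $k = 0$ completeness is used. By linearity this extends to any polynomial $p \in \mathbb{C}[T]$, giving the fundamental identity
\[
p(x) = \sum_{j=1}^l p(\mu_j)\, d_j.
\]

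First I will show that each $d_j$ lies in $\mathbb{C}[x]$. Since the $\mu_j$ are pairwise distinct, the Lagrange interpolation polynomials
\[
L_i(T) = \prod_{j \neq i} \frac{T - \mu_j}{\mu_i - \mu_j}
\]
satisfy $L_i(\mu_k) = \delta_{ik}$. Applying the fundamental identity to $L_i$ gives
\[
L_i(x) = \sum_{j=1}^l L_i(\mu_j)\, d_j = d_i,
\]
so $d_i \in \mathbb{C}[x]$ as desired.

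Next I will identify the minimal polynomial of $x$. Let $q(T) = \prod_{j=1}^l (T - \mu_j)$. The fundamental identity yields $q(x) = \sum_j q(\mu_j) d_j = 0$, so the minimal polynomial $p_x$ divides $q$. Since $q$ has only simple roots, so does $p_x$, which is the definition of $x$ being semisimple. To see that $p_x = q$, suppose $p_x$ had degree strictly less than $l$; then some $\mu_k$ is not a root of $p_x$, so $p_x(\mu_k) \neq 0$, while $p_x(x) = \sum_j p_x(\mu_j)\, d_j = 0$. Multiplying this relation by $d_k$ and using orthogonality gives $p_x(\mu_k)\, d_k = 0$, forcing $d_k = 0$, which contradicts the standing assumption that the idempotents in a complete system are nonzero. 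Hence $p_x = q$ and the eigenvalues of $x$ are exactly $\mu_1, \ldots, \mu_l$.

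The only subtle point — really the only place where one needs more than bookkeeping — is the last step, which quietly uses that the $d_j$ are nonzero (equivalently, linearly independent, as follows by multiplying any dependence relation by $d_i$ and invoking orthogonality). Everything else is a direct consequence of the two formulas $x^k = \sum \mu_j^k d_j$ and $d_i = L_i(x)$.
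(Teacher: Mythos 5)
Your proof is correct and follows essentially the same route as the paper's: both establish the identity $p(x)=\sum_j p(\mu_j)d_j$, use Lagrange interpolation to realize each $d_j$ as a polynomial in $x$, and conclude that $\Pi(T)=\prod_j(T-\mu_j)$ is the minimal polynomial. You have merely spelled out the linear-independence (nonvanishing) of the $d_j$ that the paper uses implicitly when it asserts $p(x)=0\Longleftrightarrow p(\mu_j)=0$ for all $j$.
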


\begin{proof}Let $p\in \mathbb C[T]$. Then
\[p(x) = \sum_{j=1}^l p(\mu_j) d_j.
\]
For $1\leq j\leq l$, let $p_j$ be the unique polynomial of degree $l-1$ such that $p_j(\mu_i) = \delta_{ij}$. Then $p_j(x) = d_j$, so that $d_j\in \mathbb C[x]$.

Now let $p\in \mathbb C[T]$. Then
\[p(x)=0 \Longleftrightarrow p(\mu_j)=0.
\]
This shows that $p$ belongs to   the ideal $I(x)$ if and only if $p$ is a multiple of $\Pi(T) = \prod_{j=1}^l (T-\mu_j)$. In other words $\Pi$ is the minimal polynomial of $x$ and the conclusion follows.
\end{proof}
\begin{proposition} Let $x\in \mathbb V$ be a semi-simple element, with distinct eigenvalues $\lambda_1,\dots,\lambda_k$. There exists a unique (up to permutation of the indices) system of orthogonal idempotents $(c_1,c_2,\dots, c_k)$ in $\mathbb C[x]$ such that 
\[c_1+c_2+\dots+c_k=\mathbf{1},\qquad x=\lambda_1 c_1+\dots +\lambda_k c_k.
\]
\end{proposition}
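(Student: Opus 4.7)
The plan is to construct the idempotents explicitly by Lagrange interpolation, and then to deduce uniqueness from the Chinese Remainder Theorem applied to $\mathbb{C}[x]$.

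\textbf{Existence.} Since $x$ is semi-simple with distinct eigenvalues $\lambda_1,\dots,\lambda_k$, its minimal polynomial is $p_x(T)=\prod_{j=1}^k (T-\lambda_j)$. For each $j$, I would introduce the Lagrange interpolation polynomial
\[
L_j(T)=\prod_{i\neq j}\frac{T-\lambda_i}{\lambda_j-\lambda_i},
\]
which satisfies $L_j(\lambda_i)=\delta_{ij}$, as well as the polynomial identities $\sum_{j=1}^k L_j(T)=1$ and $\sum_{j=1}^k \lambda_j L_j(T)=T$ (both checked by evaluating at the $k$ distinct points $\lambda_1,\dots,\lambda_k$, which determines a polynomial of degree $<k$). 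I then set $c_j:=L_j(x)\in\mathbb{C}[x]$. The relations $c_j^2=c_j$ and $c_ic_j=0$ for $i\neq j$ follow because the polynomials $L_j(T)^2-L_j(T)$ and $L_i(T)L_j(T)$ both vanish at every root $\lambda_\ell$ of $p_x$, hence are divisible by $p_x$; since $p_x(x)=0$, they are annihilated upon substituting $x$. The identities $\sum c_j=\mathbf{1}$ and $\sum \lambda_j c_j=x$ are immediate from the polynomial identities above.

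\textbf{Uniqueness.} The key observation is that, because $p_x$ is a product of distinct linear factors, the Chinese Remainder Theorem gives an isomorphism of unital commutative algebras
\[
\mathbb{C}[x]\;\cong\;\mathbb{C}[T]/(p_x(T))\;\cong\;\prod_{j=1}^k \mathbb{C}[T]/(T-\lambda_j)\;\cong\;\mathbb{C}^k,
\]
under which $x$ corresponds to $(\lambda_1,\dots,\lambda_k)$. Suppose $(c_1',\dots,c_k')$ is another system in $\mathbb{C}[x]$ satisfying the stated properties. Under the above isomorphism each $c_j'$ becomes an idempotent of $\mathbb{C}^k$, i.e.\ the indicator of some subset $S_j\subseteq\{1,\dots,k\}$, and the orthogonality together with $\sum c_j'=\mathbf{1}$ means the $S_j$ form a partition. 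The condition $x=\sum_j \lambda_j c_j'$ then reads, on the $i$-th coordinate, $\lambda_i=\lambda_{j(i)}$ where $j(i)$ is the unique index with $i\in S_{j(i)}$. Since the $\lambda_j$ are distinct, this forces $j(i)=i$ after a permutation, and hence $S_j=\{j\}$, so that $c_j'=L_j(x)=c_j$.

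\textbf{Expected main obstacle.} The argument is essentially linear-algebraic and I do not anticipate a real difficulty; the only subtle point is the need to work \emph{inside} $\mathbb{C}[x]$ for uniqueness — a priori one might imagine spurious idempotents of $\mathbb{V}$ outside this subalgebra — but the statement restricts to $\mathbb{C}[x]$, where CRT gives the complete description.
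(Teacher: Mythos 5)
Your proof is correct and follows essentially the same route the paper indicates: the paper cites Faraut--Kor\'anyi (Prop.~VIII.3.2) for existence and remarks that uniqueness holds because necessarily $c_j=p_j(x)$ with $p_j$ the Lagrange interpolation polynomial, which is exactly the construction you carry out. Your CRT reformulation of the uniqueness step is a clean way to make the paper's one-line remark rigorous, but it is the same underlying idea rather than a different approach.
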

This is (part of)  \cite[Proposition VIII.3.2]{FK}. The uniqueness statement comes from the fact that necessarily $c_j = p_j(x)$ where $p_j$ is the polynomial of degree $k-1$ which satisfies $p_j(\lambda_i) = \delta_{ij}$.
\begin{proposition} Let $\mathbb V$ be a complex simple Jordan algebra.The set of semi-simple regular elements of $\mathbb V$ is open and dense in $\mathbb V$.
\end{proposition}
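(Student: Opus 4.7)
The strategy is to realize both "regular" and "semi-simple" as Zariski-open conditions on $\mathbb V$, each cut out by the non-vanishing of an explicit polynomial function, and then to exhibit a single element which is simultaneously regular and semi-simple so as to know that neither polynomial is identically zero. Since $\mathbb V$ is an irreducible complex variety, a non-empty Zariski-open subset is automatically dense in the ordinary topology.

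First I would handle regularity. Fix a basis $(f_1,\dots,f_n)$ of $\mathbb V$. Using the formula for $a_j(x)$ recalled in the proof of the preceding proposition, the condition that $\mathbf 1, x, x^2, \dots, x^{r-1}$ be $\mathbb C$-linearly independent is equivalent to the non-vanishing of at least one $r\times r$ minor of the $n\times r$ matrix whose columns express $\mathbf 1,x,\dots,x^{r-1}$ in this basis; each such minor is a polynomial function of $x\in\mathbb V$. Hence the set of regular elements is the union of finitely many Zariski-open subsets, and it is non-empty by the very definition of the rank $r=\mathrm{rk}_{\mathbb C}(\mathbb V)$. So the regular locus $\mathbb V_{\mathrm{reg}}$ is open and dense.

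Next I would handle semi-simplicity on the regular locus. For $x\in\mathbb V_{\mathrm{reg}}$, the minimal polynomial $p_x$ coincides with the generic minimal polynomial
\[
m(T,x)=T^r-a_1(x)T^{r-1}+\cdots+(-1)^r a_r(x),
\]
whose coefficients are polynomial in $x$. Let $\mathrm{Disc}(x)$ denote the discriminant of $m(T,x)$ viewed as a polynomial in $T$; this is again a polynomial function of $x$. For a regular $x$, semi-simplicity is equivalent to $\mathrm{Disc}(x)\neq 0$, so the semi-simple regular elements form the intersection $\mathbb V_{\mathrm{reg}}\cap \{\mathrm{Disc}\neq 0\}$.

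The only thing left is to show $\mathrm{Disc}$ does not vanish identically on $\mathbb V_{\mathrm{reg}}$, and here I would invoke the existence of a Jordan frame. Since $\mathbb V$ is simple, it admits a complete system $\{c_1,\dots,c_r\}$ of orthogonal primitive idempotents with $c_1+\cdots+c_r=\mathbf 1$. Choose $r$ pairwise distinct complex numbers $\mu_1,\dots,\mu_r$ and set $x_0=\sum_{j=1}^r \mu_j c_j$. By the first of the two propositions proved just above, $x_0$ is semi-simple with minimal polynomial $\prod_{j=1}^r(T-\mu_j)$, which has degree $r$; in particular $x_0$ is regular, and $\mathrm{Disc}(x_0)=\prod_{i<j}(\mu_i-\mu_j)^2\neq 0$. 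Therefore $\{\mathrm{Disc}\neq 0\}$ is a non-empty, hence dense, Zariski-open subset of $\mathbb V$, and its intersection with the dense open set $\mathbb V_{\mathrm{reg}}$ is again dense and open, which is the desired conclusion. The only step with any subtlety is the very first one (checking that regularity is a Zariski-open condition globally on $\mathbb V$, not merely near a given element), but this is handled uniformly by taking the union over all $r\times r$ minors as above.
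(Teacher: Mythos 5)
Your proof is correct and follows essentially the same route as the paper's: produce a semi-simple regular element by taking $\sum_j \mu_j c_j$ with distinct $\mu_j$'s over a complete system of orthogonal primitive idempotents, observe that regularity and simplicity-of-roots of the generic minimal polynomial are each cut out by the non-vanishing of polynomials, and conclude by Zariski density. Two small presentational differences: you argue Zariski-openness of the regular locus directly via $r\times r$ minors (the paper cites \cite[Proposition IV.1.1]{FKKLR} instead), and you assert the existence of the complete system of primitive idempotents directly for the complex algebra, whereas the paper obtains it by first passing to a euclidean real form and taking a Jordan frame there; both are valid, though the paper's route is the more standard reference point for that existence.
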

\begin{proof} $\mathbb V$ has a real form $V$ which is euclidean and simple. Let $r$ be the rank of $V$ and let $(c_1,c_2,\dots, c_r)$ be a Jordan frame of $V$. Choose
$r$ distinct complex numbers $\lambda_1,\lambda_2,\dots,\lambda_r$, and let $x= \lambda_1c_1+\dots+\lambda_r c_r$. Then $x$ is semi-simple, and $x$ is regular as the minimal polynomial of $x$ is of degree $r$. The set of regular elements is an open Zariski subset of $\mathbb V$ (see \cite[Proposition IV.1.1]{FKKLR}). For $x$ regular, the minimal polynomial is equal to the generic minimal polynomial. The set  where the generic minimal polynomial $m_x$ has only simple roots is a Zariski open subset, as this is the set where the discriminant of $m_x$ (which is polynomial in $x$) vanishes. Hence the set of semi-simple regular elements is a Zariski open subset, which is non empty by the first part and hence dense. 
\end{proof}   
 A real Jordan algebra $V$ is said to be complex, if there exists a linear isomorphism $J$ of $V$ such that
 \[J^2 = -\Id,\qquad J(xy) = (Jx)y,\quad  \forall x,y\in V.
 \]
 \begin{proposition} Let $V$ be a simple real Jordan algebra. Let $\mathbb V$ be its complexification. Then $\mathbb V$ is simple, unless $V$ has a complex structure in which case $\mathbb V$ is isomorphic to $V\oplus V_{opp}$.
 \end{proposition}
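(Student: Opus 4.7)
The plan is to use the complex conjugation $\sigma \colon \mathbb V \to \mathbb V$ associated with the real form $V$ to control the complex ideals of $\mathbb V$, and then to relate the existence of a proper ideal to the existence of a complex structure on $V$. Concretely, $\sigma$ is the $\mathbb R$-linear Jordan automorphism of $\mathbb V$ whose fixed-point set is exactly $V$. A general fact I would use throughout is that any complex ideal $\mathbb J$ of $\mathbb V$ stable under $\sigma$ is the complexification of the real ideal $\mathbb J \cap V$ of $V$; hence by simplicity of $V$, such a $\mathbb J$ is either $0$ or $\mathbb V$.

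Next, suppose $\mathbb I$ is a proper non-zero complex ideal of $\mathbb V$. Then $\sigma(\mathbb I)$ is again a complex ideal of $\mathbb V$ (note that $\sigma$ exchanges multiplication by $i$ with multiplication by $-i$, so stability of $\mathbb I$ under $i$ forces stability of $\sigma(\mathbb I)$ under $-i$, which is the same condition). Applying the previous remark to the $\sigma$-invariant ideals $\mathbb I + \sigma(\mathbb I)$ and $\mathbb I \cap \sigma(\mathbb I)$ forces $\mathbb I \cap \sigma(\mathbb I) = 0$ and $\mathbb I + \sigma(\mathbb I) = \mathbb V$, that is $\mathbb V = \mathbb I \oplus \sigma(\mathbb I)$. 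Let $\pi \colon \mathbb V \to \mathbb I$ denote the projection; being projection onto an ideal, it is an $\mathbb R$-algebra homomorphism. Its restriction to $V$ is injective, because any $v \in V$ with $\pi(v)=0$ satisfies $v=\sigma(v)\in\sigma(\mathbb I)\cap\mathbb I=0$; a dimension count $\dim_{\mathbb R} V = \dim_{\mathbb C} \mathbb V = \dim_{\mathbb R}\mathbb I$ upgrades it to an $\mathbb R$-linear isomorphism $V \to \mathbb I$ of Jordan algebras. Transporting multiplication by $i$ on $\mathbb I$ back to $V$ produces an $\mathbb R$-linear map $J$ on $V$ satisfying $J^2=-\mathrm{Id}$ and $J(xy)=(Jx)y$, which is precisely a complex structure on $V$.

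For the converse, suppose $V$ carries a complex structure $J$. Extending $J$ $\mathbb C$-linearly to $\mathbb V$, the $\pm i$-eigenspaces $\mathbb V_{\pm}$ yield a $\mathbb C$-vector space decomposition $\mathbb V = \mathbb V_+ \oplus \mathbb V_-$; the identity $J(xy)=(Jx)y$ then forces each $\mathbb V_\pm$ to be an ideal, so $\mathbb V$ is not simple. The two projections $v \mapsto \tfrac{1}{2}(v \mp i J v)$ identify $\mathbb V_+$ with $V$ viewed as a complex Jordan algebra via $J$, and $\mathbb V_-$ with the same underlying real $V$ viewed via the opposite complex structure $-J$; this gives the identification $\mathbb V \simeq V \oplus V_{opp}$ claimed in the statement.

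The main technical point will be the case analysis in the middle paragraph, i.e.\ showing that a non-$\sigma$-invariant proper ideal must split $\mathbb V$ as a direct sum of two ideals exchanged by $\sigma$; everything rests on applying simplicity of $V$ to the two $\sigma$-invariant ideals $\mathbb I \cap \sigma(\mathbb I)$ and $\mathbb I + \sigma(\mathbb I)$. The one subtlety to be careful about is that $\sigma$ is only $\mathbb R$-linear, so a brief verification is needed that $\sigma(\mathbb I)$ remains a complex ideal; once the splitting is in hand, both the construction of $J$ from the decomposition and the converse direction are essentially formal.
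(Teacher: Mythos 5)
Your proof is correct and takes essentially the same approach as the paper: both rely on the conjugation $\sigma$, use simplicity of $V$ to force $\mathbb J \cap \sigma(\mathbb J)=0$ and $\mathbb J + \sigma(\mathbb J)=\mathbb V$, and transport the complex structure from the resulting factor to $V$ (your projection $\pi|_V\colon V\to\mathbb I$ is just the inverse of the paper's map $\mathbb J\ni x\mapsto x+\sigma(x)\in V$). You are slightly more thorough in isolating the general lemma about $\sigma$-stable ideals and in spelling out the converse implication, but the substance is identical.
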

 
 \begin{proof} Assume that $\mathbb V$ is \emph{not} simple. There exists a non trivial ideal $\mathbb J\subset \mathbb V$. Let $\sigma$ be the conjugation with respect to $V$. Then $\sigma(\mathbb J)$ is also an ideal of $\mathbb V$. Let 
 $\mathbb H = \mathbb J\cap \sigma(\mathbb J)$. Then $\mathbb H$ is an ideal of $\mathbb V$, which moreover is $\sigma$-stable. So $\mathbb H = H\oplus i  H$ for some subspace $H$ of $V$. Now $H$ is an ideal of $V$. Hence $H=0$ or $H=V$. But the second assumption leads to $\mathbb H=\mathbb V$ and hence $\mathbb J = \mathbb V$, a contradiction. Hence $\mathbb J\cap \sigma(\mathbb J) = \{0\}$. Along the same lines, one can prove that $\mathbb J+ \mathbb \sigma( \mathbb  J) = \mathbb V$. As a result, the Jordan algebra $\mathbb V$ splits as  $\mathbb V=\mathbb J\oplus \sigma(\mathbb J)$. Now consider the (real linear) map
 \[\mathbb J \ni x\ \longmapsto \ x+\sigma(x) \in V,
 \]
it is both injective and surjective, and hence an isomorphism. But as $\mathbb J$ and $\sigma(\mathbb J)$ are ideals of $\mathbb V$,
 \[(x+\sigma(x))(y+\sigma(y))= (xy+\sigma(xy))
 \]
 and hence $V$ is isomorphic to $\mathbb J$. Moreover, as $\sigma$ is $\mathbb C$-conjugate linear, the complex structure of $\sigma(\mathbb J)$ is the opposite complex of that of $\mathbb J$.
  \end{proof}

\begin{proposition}\label{primitiveRvsC}
Let $V$ be a real simple Jordan algebra. Let $c$ be a primitive idempotent element of $V$. Then $c$, viewed as an element of $\mathbb V$, is either  a primitive idempotent or it can be decomposed as $c=d+\sigma(d)$, where $\sigma$ is the conjugation w.r.t. $V$, and $d$ and $\sigma(d)$ are orthogonal idempotents of $\mathbb V$. 
\end{proposition}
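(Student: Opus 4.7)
The plan is to work inside the Peirce subalgebra
$\mathbb{V}(c,1):=\{z\in\mathbb{V}:cz=z\}$ and use the Galois-type
compatibility between $\sigma$ and the spectral decomposition of a
generic real element. Since $c\in V$, the conjugation $\sigma$ preserves
$\mathbb{V}(c,1)$, so $V(c,1):=\mathbb{V}(c,1)\cap V$ is a unital real Jordan
subalgebra with unit $c$ whose complexification is exactly $\mathbb{V}(c,1)$.
Moreover $\mathbb{V}(c,1)$ is semisimple, being the Peirce $1$-component
of the semisimple Jordan algebra $\mathbb{V}$.

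The first step is to observe that the only idempotents of $V(c,1)$ are
$0$ and $c$: if $e\in V(c,1)$ is idempotent, then $c-e$ is an idempotent
of $V$ orthogonal to $e$ (a direct check), so $c=e+(c-e)$ decomposes $c$ into
orthogonal idempotents of $V$, and primitivity forces $e\in\{0,c\}$.
Let $m$ be the rank of $\mathbb{V}(c,1)$ over $\mathbb{C}$. If $m=1$, then
$\mathbb{V}(c,1)=\mathbb{C}c$, hence the only idempotents of $\mathbb{V}$
dominated by $c$ are $0$ and $c$, so $c$ is primitive in $\mathbb{V}$: this is
the first alternative.

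Assume now $m\geq 2$. Since $\mathbb{V}(c,1)$ is semisimple, its generic minimal
polynomial is square-free, so regular semisimple elements form a Zariski-dense
open subset of $\mathbb{V}(c,1)$; because $V(c,1)$ is Zariski-dense in $\mathbb{V}(c,1)$,
this subset meets $V(c,1)$ in a non-empty (Euclidean-)open set. Choose
such an $x\in V(c,1)$ and write, in $\mathbb{V}(c,1)$,
\[
x=\sum_{i=1}^{m}\lambda_i\,e_i,\qquad c=\sum_{i=1}^{m}e_i,
\]
where the $\lambda_i\in\mathbb{C}$ are pairwise distinct and the $e_i\in\mathbb{C}[x]$
are primitive orthogonal idempotents of $\mathbb{V}(c,1)$. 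Applying $\sigma$ yields
$x=\sum\bar\lambda_i\,\sigma(e_i)$, and by uniqueness of the spectral decomposition
there is a permutation $\tau$ of $\{1,\ldots,m\}$ such that $\bar\lambda_i=\lambda_{\tau(i)}$
and $\sigma(e_i)=e_{\tau(i)}$. A fixed point of $\tau$ would produce a non-trivial
$\sigma$-fixed primitive idempotent $e_i\in V(c,1)\setminus\{0,c\}$, contradicting the
previous step; hence $\tau$ is a fixed-point-free involution, $m$ is even, and for
each $\tau$-orbit $\{i,\tau(i)\}$ the element $e_i+e_{\tau(i)}$ is a non-zero
$\sigma$-fixed idempotent of $\mathbb{V}(c,1)$, hence equals $c$. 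Summing over the
$m/2$ orbits forces $m=2$, and setting $d=e_1$ produces the desired decomposition
$c=d+\sigma(d)$ with $d$ and $\sigma(d)$ orthogonal idempotents of $\mathbb{V}$.

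The main obstacle I anticipate is verifying cleanly the structural preliminaries:
that the Peirce $1$-component $\mathbb{V}(c,1)$ of the semisimple complex algebra
$\mathbb{V}$ is itself semisimple (so that its generic minimal polynomial is square-free),
and the density of complex regular semisimple elements inside the real
form $V(c,1)$. Both are standard but deserve care, particularly in the type IV
setting where $\mathbb{V}\cong V\oplus V_{\mathrm{opp}}$ is only semisimple (not simple);
there the argument still applies and recovers directly the decomposition
$c=(c,0)+(0,c)$. Once these ingredients are in place, the remainder of the
proof is driven entirely by the uniqueness of the complex spectral decomposition
and a short fixed-point analysis of $\sigma$ on the primitive idempotents of
$\mathbb{V}(c,1)$.
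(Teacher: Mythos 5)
Your proof is correct and takes a genuinely different route from the paper's. The paper assumes $c$ is not primitive in $\mathbb{V}$, shows that the Peirce $1$-component $V_1 = V(c,1)$ is a \emph{simple} real Jordan algebra whose only idempotents are $0$ and $c$, and then invokes Helwig's structure theory (Section 6 of \cite{Helwig}) to conclude that $V_1$ is a Jordan field, i.e.\ the Jordan algebra of a positive-definite quadratic form $\beta$; from the explicit multiplication law it then computes directly that complex idempotents are of the form $\tfrac{1}{2}c+\xi$ with $\beta(\xi,\xi)=-\tfrac14$, and constructs $d=\tfrac12 c+\sqrt{-1}\,w$ by hand. Your argument avoids the Jordan-field classification entirely: after reducing to $V(c,1)$ and noting that its only idempotents are $0$ and $c$, you use Zariski-density of the real form, density of regular semisimple elements, and uniqueness of the complex spectral decomposition to get a permutation action of $\sigma$ on the spectral idempotents; fixed-point-freeness (forced by the absence of nontrivial $\sigma$-fixed idempotents) plus the observation that every orbit sum must equal $c$ collapses the rank to $2$. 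Your route is more conceptual and classification-free, at the price of needing the standard facts that Peirce $1$-components of semisimple algebras are semisimple and that regular semisimple elements are dense; the paper's route is more explicit and gives the idempotent $d$ concretely, but rests on Helwig's structural result. One small economy: you do not actually need primitivity of the $e_i$ anywhere; distinctness of the eigenvalues already gives $e_i\neq 0$, hence $e_i\neq c$ when $m\geq 2$, which is all the fixed-point step uses.
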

\begin{proof} Assume $c$ is not primitive in $\mathbb V$. Let $\mathbb V_1=\mathbb V_1(c)$ and $V_1=V_1(c)$. Now $V_1$ is a real semi-simple Jordan algebra. As $V$ has $c$ as its unique idempotent, $V_1$ is simple. By \cite[Section 6]{Helwig}, $V_1$ is a \emph{Jordan field}, and there exists a \emph{positive-definite} bilinear $\beta$ form on $V_1$ such that
\[xy= \beta(x,c)y+\beta(c,y)x-\beta(x,y)c.
\]
Then $\beta(c,c)=1$.  Let $W= (\mathbb R c)^\perp\neq\{0\}$. Rewrite elements of $V_1$ as 
$sc+v$ where $s\in \mathbb R$ and $v \in W$, so that the Jordan product of $V_1$ can be written as
\[(sc+v)(tc+w) = (st-\beta(v,w))c +(sw+tv).
\]
Now let $\mathbb V_1$ the complexification of $V_1$ which can be seen as \[\mathbb V_1 = \{(\lambda c+\xi), \quad \lambda\in \mathbb C, \xi\in \mathbb W\}\]
with Jordan product given by
\[(\lambda c+\xi)(\mu c+ \eta) = \big(\lambda \mu-\beta(\xi,\eta)\big) c+ \mu \xi+\lambda\eta,
\]
where $\beta$ is the $\mathbb C$-bilinear extension of $\beta$ to $\mathbb V_1\times \mathbb V_1$.
As \[(\lambda c+\xi)^2 = \big(\lambda^2-\beta(\xi,\xi)\big)c+ 2\lambda \xi\ ,\] idempotents of $\mathbb V_1$ are of the form $(\frac{1}{2}c + \xi)$ where $\beta(\xi,\xi)=-\frac{1}{4}$.

Now choose $w\in W$ such that $\beta(w,w)=\frac{1}{4}$. Then let
$d=\frac{1}{2}c+iw$ and $f=\overline d = \frac{1}{2}c -iw$. Both $d$ and $f$ are idempotents, $df = 0$ and $c=d+f$, thus justifying the lemma.
\end{proof}

\addsubsection{Complex simple Jordan algebra viewed as a real Jordan algebra}

Let $\mathbb V$ be a simple complex Jordan algebra, and let $\Delta$ be its determinant. When viewed as a real Jordan algebra, it is still simple. Let $\det$ be its determinant.

\begin{lemma}
 For any $z\in \mathbb V$
\begin{equation}\label{detRC}
\det(z) = \Delta(z)\overline {\Delta(z)}.
\end{equation}
\end{lemma}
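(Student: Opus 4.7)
Both sides of the claimed equality are polynomial (hence continuous) real-valued functions on $\mathbb V$, so it suffices to check the identity on a dense subset. The plan is to exhibit a dense subset on which the real generic minimal polynomial of $z$ can be computed explicitly from the complex one, and then read off the determinant as the constant term (up to sign).

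The dense subset I will use is
\[
U=\big\{z\in\mathbb V:\; z \text{ is regular semi-simple in } \mathbb V,\ \lambda_1(z),\dots,\lambda_{\mathbbl r}(z) \text{ pairwise distinct and none real},\ \lambda_i\neq\overline{\lambda_j}\,\forall i,j\big\},
\]
where $\lambda_1,\dots,\lambda_{\mathbbl r}$ are the complex eigenvalues of $z$. Each condition cutting out $U$ is Zariski-open in $\mathbb V$, and $U$ is non-empty (take e.g.\ $z=\sum \lambda_j c_j$ for a Jordan frame $(c_1,\dots,c_{\mathbbl r})$ of a euclidean real form and well-chosen complex $\lambda_j$'s): by the last proposition of the previous subsection the set of regular semi-simple elements is already Zariski-open dense, and the remaining conditions only cut out proper Zariski-closed subsets.

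Fix $z\in U$ and write its complex spectral decomposition $z=\sum_{i=1}^{\mathbbl r}\lambda_i d_i$ with $d_i$ orthogonal primitive complex idempotents summing to $\mathbf 1$; then $\Delta(z)=\prod_i\lambda_i$, hence
\[
\Delta(z)\,\overline{\Delta(z)}=\prod_{i=1}^{\mathbbl r}\lambda_i\overline{\lambda_i}.
\]
For any polynomial $p\in\mathbb C[T]$ one has $p(z)=\sum_i p(\lambda_i)d_i$, so $p(z)=0$ if and only if $p$ vanishes on $\{\lambda_1,\dots,\lambda_{\mathbbl r}\}$. Restricting to $p\in\mathbb R[T]$, any such $p$ must also vanish on $\{\overline{\lambda_1},\dots,\overline{\lambda_{\mathbbl r}}\}$; by the genericity conditions defining $U$ these $2\mathbbl r$ numbers are pairwise distinct, so the monic real polynomial of least degree that annihilates $z$ is
\[
p_z(T)=\prod_{i=1}^{\mathbbl r}(T-\lambda_i)(T-\overline{\lambda_i}),
\]
of degree $2\mathbbl r=r$. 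This is then the generic minimal polynomial of $V$ at $z$, and its constant term (times $(-1)^r=1$) is by definition $\det(z)$. Hence
\[
\det(z)=\prod_{i=1}^{\mathbbl r}\lambda_i\overline{\lambda_i}=\Delta(z)\,\overline{\Delta(z)},\qquad z\in U.
\]
Since both members are polynomial in $z$ and $U$ is dense, the identity extends to all of $\mathbb V$.

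The only delicate point is ensuring that the minimal polynomial of $z$ over $\mathbb R$ has degree exactly $r=2\mathbbl r$ rather than something smaller: this is precisely what the two non-degeneracy conditions (no real $\lambda_i$ and no two $\lambda_i$'s complex-conjugate to each other) guarantee, and it is the reason for restricting to the subset $U$ before passing to a limit by continuity.
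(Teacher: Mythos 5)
Your proof is correct and follows essentially the same strategy as the paper's: spectral-decompose $z$ on a dense set where the eigenvalues $\lambda_i,\overline{\lambda_i}$ are all distinct, identify the real minimal polynomial as $\prod_i(T-\lambda_i)(T-\overline{\lambda_i})$, read off $\det(z)=\prod_i\lambda_i\overline{\lambda_i}=\Delta(z)\overline{\Delta(z)}$, and extend by density. The only difference is that you formalize the genericity conditions and the density argument more carefully (making $U$ explicitly Zariski-open and citing non-emptiness), whereas the paper simply asserts that "these elements form a dense set" — both are fine.
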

\begin{proof} Let $x\in \mathbb V$ and let $p$ be its minimal polynomial over $\mathbb C$. Then $p\overline p (x)=0$, and $p\overline p$ is a polynomial with real coefficients. As $\deg(p)\leq r$, where $r$ is the rank of $\mathbb V$, the degree of the minimal polynomial over $\mathbb R$ of an element is $\leq 2r$. Conversely, consider a complete system of primitive idempotents $(c_1,c_2,\dots, c_r)$ of $\mathbb V$. Let $x = \sum_{i} \lambda_jc_j$ with $\lambda_i\in \mathbb C$. Suppose that $p$ is a  polynomial with real coefficients such that $p(x)=0$. Then, necessarily
$p(\lambda_j)=0$ for $1\leq j\leq r$. But as $p$ is real valued, $p(\overline{\lambda}_j) = 0$ for $1\leq j\leq r$. Assume that $\lambda_j,\overline \lambda_j$ are all distinct. Then $p$ is a multiple of $\prod_{j=1}^r(T-\lambda_j)(T-\overline{\lambda}_j)$. Hence $\prod_{j=1}^r(T-\lambda_j)(T-\overline{\lambda}_j)$ \emph{is} its minimal polynomial over $\mathbb R$. Moreover $\det(x) = \prod_{j=1}^r\lambda_j\overline{\lambda}_j = \Delta(x)\overline {\Delta(x)}$. Hence the identity 
\autoref{detRC} is valid for the elements of the form considered. But clearly these elements form a dense set in $\mathbb V$, hence the identity holds in general.
\end{proof}

\addsubsection{Non complex non-split simple real Jordan algebra}

The algebra $V$ is said to be split if a (hence any) primitive idempotent is primitive in the complexification $\mathbb V$. Otherwise it is said to be non-split. 

Let $V$ be a non-split simple real Jordan algebra. Let $(c_1,c_2,\dots, c_\rho)$ be a maximal set of orthogonal primitive idempotents. Then there exists $d_j,f_j\in \mathbb V(c_j)$ , such that
\[f_j=\sigma(d_j), \qquad c_j=d_j+f_j,\quad 1\leq j\leq \rho
\]
By Proposition, \ref{primitiveRvsC}, $c_j$ decomposes as a sum $c_j=d_j+f_j$, and so $(d_j,f_j)_{1\leq j\leq \rho}$ is a maximal family of orthogonal idempotents of $\mathbb V$. Hence $r=2\rho$. 

\begin{proposition} Let $V$ be a real simple Jordan algebra which has no complex structure and which is non-split. Then $\det (x)\geq 0$ for every $x\in V$.
\end{proposition}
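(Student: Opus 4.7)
The plan is to show that for $x$ in a dense open subset of $V$, the roots of the generic minimal polynomial of $x$ viewed in $\mathbb V$ come in complex conjugate pairs, so that $\det(x)=\Delta(x)$ factors as a product of moduli squared and is therefore non-negative; the conclusion on all of $V$ then follows by continuity of $\det$.

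Let $\sigma$ denote the conjugation of $\mathbb V$ with respect to the real form $V$. First I would check that the set $U\subset V$ of elements which are both regular and semi-simple when viewed in $\mathbb V$ is Zariski-open and dense in $V$. The condition ``regular semi-simple'' is cut out by the non-vanishing of polynomial functions on $\mathbb V$ (a Jacobian-type minor and the discriminant of the generic minimal polynomial), and these polynomials are not identically zero on $\mathbb V$ by the density result already established for complex simple Jordan algebras. Since $V$ is a real form of $\mathbb V$, a non-zero complex polynomial on $\mathbb V$ restricts to a non-zero real-analytic function on $V$, so $U$ is non-empty and hence dense in $V$.

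For $x\in U$ the spectral decomposition in $\mathbb V$ reads
\[x=\sum_{j=1}^r \mu_j\, e_j,\]
with $\mu_1,\dots,\mu_r$ pairwise distinct complex numbers and $(e_1,\dots,e_r)$ a complete system of orthogonal primitive idempotents of $\mathbb V$; by the uniqueness of spectral decompositions recalled earlier in Appendix B, this data is unique up to permutation. Applying $\sigma$ yields another spectral decomposition $x=\sum \overline{\mu_j}\,\sigma(e_j)$, whence by uniqueness there is a permutation $\tau$ of $\{1,\dots,r\}$ with $\mu_{\tau(j)}=\overline{\mu_j}$ and $\sigma(e_j)=e_{\tau(j)}$; in particular $\tau$ is an involution.

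The crucial step, and the main obstacle, is to show that $\tau$ has no fixed points, and this is exactly where the non-split hypothesis enters. If $\tau(j)=j$, then $e_j$ is a primitive idempotent of $\mathbb V$ fixed by $\sigma$, hence $e_j\in V$. But an idempotent of $V$ which is primitive in $\mathbb V$ is automatically primitive in $V$ (any non-trivial decomposition in $V$ would extend to one in $\mathbb V$), and this would make $V$ split, contradicting the assumption. Hence $\tau$ is a fixed-point-free involution, its orbits pair up the indices, and
\[\det(x)=\Delta(x)=\prod_{j=1}^r\mu_j=\prod_{\{j,\tau(j)\}}\mu_j\overline{\mu_j}=\prod|\mu_j|^2\ge 0.\]
Since $\det$ is a polynomial on $V$ which is non-negative on the dense subset $U$, continuity gives $\det(x)\ge 0$ for every $x\in V$.
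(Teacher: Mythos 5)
Your proof is correct, and it is considerably more careful than the paper's. Both arguments hinge on the same idea: take a regular semi\mbox{-}simple $x$, pass to its spectral decomposition in $\mathbb V$, and use the conjugation $\sigma$ to pair the idempotents and eigenvalues; the product of eigenvalues then becomes a product of squared moduli. The genuine difference is in how explicitly the non\mbox{-}split hypothesis is used. The paper's proof writes $x=\sum_{j=1}^{\rho}t_j c_j$ with the $c_j$ primitive idempotents of $V$ and then passes to the decomposition $c_j=d_j+\sigma(d_j)$; taken literally this places $x$ in the $\rho$\mbox{-}dimensional span of the $c_j$, which is not dense, and the final display $\prod t_j^2\ge 0$ only makes sense if the coefficients $t_j$ attached to $d_j$ and $f_j=\sigma(d_j)$ are understood as complex conjugates of each other (i.e.\ $t_j^2$ should read $t_j\overline{t_j}$). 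The real content, namely that no primitive idempotent of $\mathbb V$ occurring in the spectral decomposition can be $\sigma$\mbox{-}fixed, is left implicit in the paper but is exactly the step you isolate and justify: a $\sigma$\mbox{-}fixed $e_j$ would be a primitive idempotent of $V$ that stays primitive in $\mathbb V$, contradicting the non\mbox{-}split assumption. Your version also bypasses the appeal to the Jordan\mbox{-}field structure of the Peirce spaces $V(c_j)$ used in \autoref{primitiveRvsC} by working directly with the spectral decomposition and the uniqueness statement of Appendix B, which makes the argument more self\mbox{-}contained. In short, your proof is a correct and cleaner rendering of the paper's intended argument, and it surfaces the point where the hypothesis is actually used.
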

\begin{proof}
Let $x\in V$ be a regular semi-simple element. There exists a maximal set $(c_1,c_2,\dots, c_\rho)$ of orthogonal primitive idempotents such that  $x= \sum_{j=1}^\rho t_jc_j $.  For each $j, 1\leq j \leq \rho$, let $d_j,f_j\in \mathbb V(c_j)$ , such that
\[f_j=\sigma(d_j), \qquad c_j=d_j+f_j,\quad 1\leq j\leq \rho, 
\]
so that $x=\sum_{j=1}^\rho t_j d_j+\sum_{j=1}^\rho t_j f_j$ and hence ${\det}_V(x) = {\det}_{\mathbb V} (x) = \prod_{j=1}^\rho t_j^2\geq 0$.
The conclusion follows as regular semi-simple elements are dense in $V$.
\end{proof}

\end{document}